\newtheorem{thm}{Theorem}[section]
\newtheorem{prop}[thm]{Proposition}
\newtheorem{lem}[thm]{Lemma}
\newtheorem{cor}[thm]{Corollary}
\newtheorem{dfn}[thm]{Definition}
\newtheorem{rem}[thm]{Remark}
\newtheorem{fact}[thm]{Fact}
\newcommand{\hK}{hyper-K\"ahler\ }
\newcommand{\HK}{Hyper-K\"ahler\ }
\newcommand{\Z}{\mathbb{Z}}
\newcommand{\R}{\mathbb{R}}
\newcommand{\C}{\mathbb{C}}
\newcommand{\bbS}{\mathbb{S}}
\newcommand{\bbP}{\mathbb{P}}
\newcommand{\del}{\partial}
\newcommand{\delb}{\overline{\partial}}
\newcommand{\SpmGH}{\stackrel{S^1\mathchar`-{\rm pmGH}}{\longrightarrow}}
\newcommand{\Crt}{{\rm Crt}}
\title[Geometric quantization on $K3$ surfaces]{Spectral convergence in geometric quantization on $K3$ surfaces}
\author[K. Hattori]{Kota Hattori}
\address{Keio University, 
3-14-1 Hiyoshi, Kohoku, Yokohama 223-8522, Japan}
\email{hattori@math.keio.ac.jp}
\begin{document}
\maketitle

\begin{abstract}
We study the geometric 
quantization on 
$K3$ surfaces from the 
viewpoint of the spectral convergence. 
We take a special Lagrangian 
fibrations on the $K3$ surfaces 
and a family of \hK structures tending to 
large complex structure limit, and show a spectral convergence 
of the $\delb$-Laplacians on the prequantum line bundle 
to the spectral structure 
related to 
the set of Bohr-Sommerfeld fibers. 
\end{abstract}

\tableofcontents

\section{Introduction}
In this paper we study the 
geometric quantization on the 
$K3$ surfaces from the viewpoint 
of the spectral convergence 
of the $\delb$-Laplacian acting on sections of 
the prequantum line bundle.

The prequantum line bundle 
on a symplectic manifold 
$(X,\omega)$ is a 
triple $(L,h,\nabla)$ 
of a complex line 
bundle $\pi\colon L\to X$ equipped with 
a hermitian metric $h$ 
and a hermitian connection 
$\nabla$ whose curvature 
form $F^\nabla$ is 
equal to $-\sqrt{-1}\omega$. 
The geometric quantization 
is the procedure 
to derive the quantum Hilbert 
space 
consisting of 
the regular sections 
of $L$ in the appropriate sense. 
To derive it, 
we consider 
the K\"ahler quantization 
coming from the integrable 
complex structures 
and the real quantization 
coming from 
the Lagrangian fibrations
in this paper.

Let $J$ be 
an integrable complex 
structure on $X$ and 
suppose that it is 
$\omega$-compatible.  
Then $\omega$ is a 
K\"ahler form 
on the complex manifold 
$X_J:=(X,J)$ 
and $L$ is a holomorphic 
line bundle over $X_J$. 
The quantum Hilbert space 
coming from $J$ is defined by 
\begin{align*}
    V_J:=H^0(X_J,L).
\end{align*}

Next we take a Lagrangian fibration 
$\mu\colon X\to B$. 
We suppose that 
$B$ is a 
smooth manifold of 
dimension $\dim X/2$, 
$\mu$ is almost everywhere  submersion and 
$\omega|_{\mu^{-1}(b)}\equiv 0$ for every regular value 
$b\in B$. 
By the Lagrangian condition, 
the restriction of 
$(L,\nabla)$ to every 
fiber $\mu^{-1}(b)$
is a flat bundle. 
The fiber 
$\mu^{-1}(b)$ is 
called a {\it Bohr-Sommerfeld 
fiber} if 
$(L|_{\mu^{-1}(b)},\nabla|_{\mu^{-1}(b)})$ has 
a nontrivial parallel section. 
We can also define this notion 
even if $b$ is a critical value. 
Here, we put 
\begin{align*}
BS&:=\{ b\in B|\, \mu^{-1}(b)\mbox{ is a Bohr-Sommerfeld fiber}\},\\
    V_\mu &:= \C^{\# BS}.
\end{align*}
The $\omega$-compatible 
complex structures and 
the Lagrangian fibrations can be 
treated uniformly by the 
notion of polarizations. 
Now, suppose that 
a family of 
$\omega$-compatible 
complex structures $\{ J_s\}_{s>0}$ 
is given and it 
converges to a 
Lagrangian fibration $\mu$ 
as $s\to 0$ 
in the sense of polarizations. 
The aim of this paper 
is to show 
the convergence of the 
quantum Hilbert spaces 
$V_{J_s}\to V_\mu$ as $s\to 0$. 
Such a phenomenon 
has already been observed in the 
several examples. 
In \cite{BMN2010}, 
Baier, Mour\~{a}o and 
Nunes showed such convergence 
on the smooth abelian varieties, 
and in \cite{BFMN2011}, 
Baier, Florentino, Mour\~{a}o and 
Nunes showed it 
on the smooth toric varieties. 
In \cite{HamiltonKonno2014}, 
Hamilton and Konno 
showed it on the flag manifolds.
In these results, 
they constructed a family of 
complex structures $J_s$ and 
the basis $\{ \vartheta_{1,s},\ldots,\vartheta_{N,s}\}$ of $V_{J_s}$ explicitly 
then showed that the sections $\vartheta_{i,s}$ 
converge to the delta function section 
of $L$ supported by the Bohr-Sommerfeld 
fibers as $s\to 0$. 
In \cite{yoshida2019adiabatic}, 
Yoshida studied the convergence 
of the holomorphic sections on 
the neighborhood of nonsingular 
fibers of $\mu$ by 
only using the local description 
of the almost complex structures. 

In \cite{HY2019}, 
Yamashita and the author introduced the new approach 
to this problem. 
We identified the holomorphic sections 
of $(X_{J_s},L)$ with the 
eigenfunctions of the Laplace operators 
on some Riemannian manifolds related to 
$J_s$ and $\nabla$, and then showed the spectral convergence as $s\to 0$ 
in the sense of Kuwae and Shioya \cite{KuwaeShioya2003}. 
In \cite{HY2019}, we considered 
the case of $\mu$ has only nonsingular fibers 
and in \cite{HY2020} we considered 
the case of the smooth toric varieties, 
then obtained the another proof 
of the results in \cite{BMN2010}, \cite{BFMN2011}, 
respectively. 

In this paper we show the convergence 
$V_{J_s}\to V_\mu$ in the case 
of the $K3$ surface, 
where $J_s$ come from the 
family of \hK structures 
tending to a 
large complex structure limit 
in the sense of \cite{GW2000}, 
and $\mu$ comes from the elliptic fibration. 
One of the difficulty to work on the 
$K3$ surfaces 
is that we cannot describe the complex structures 
$J_s$ and the holomorphic sections 
explicitly, since the \hK structures on the 
$K3$ surfaces are determined by 
the solutions of the Monge-Amp\`ere equation. 
However, the method developed in 
\cite{HY2019} does not require 
the explicit description of $V_{J_s}$. 
We mention that 
$\dim V_J=\dim V_\mu$ has 
been proved by Tyurin 
in the case of the $K3$ 
surfaces 
in \cite{Tyurin1999}. 
Moreover, Chan and Suen 
constructed the canonical 
isomorphism $V_J\cong V_\mu$ 
via the SYZ transforms 
in the case of the semi-flat 
Lagrangian torus fibrations 
over the compact complete 
special integral affine manifolds 
and compact toric manifolds 
\cite{ChanSuen2020}. 

Next we explain the main result. 
Let $X$ be a smooth manifold 
of dimension $4$ diffeomorphic to 
the $K3$ surfaces, 
$(g,J_1,J_2,J_3)$ be a \hK structure 
on $X$ and put 
$\omega_i:=g(J_i\cdot,\cdot)$, 
then we regard $(X,\omega_1)$ 
as a symplectic manifold. 
We assume $[\omega_1]\in 2\pi 
H^2(X,\Z)$ and take a 
prequantum line bundle 
$(L,h,\nabla)$ on $(X,\omega_1)$. 
Next we take a family 
of K\"ahler forms $(\omega_{3,s})_{s>0}$ 
on $X_{J_3}$ such that 
$\omega_{3,s}^2=\omega_1^2=\omega_2^2$. 
We call $\mu\colon X\to\bbP^1$ 
a special Lagrangian fibration 
if $\mu^{-1}(b)$ is smooth 
and $\omega_1|_{\mu^{-1{b}}}
=\omega_2|_{\mu^{-1{b}}}=0$ 
for every regular value 
$b\in\bbP^1$. 
We assume that 
$\mu$ comes from the 
elliptic fibration 
$X_{J_3}\to \bbP^1$ 
whose singular fibers are 
of Kodaira type $I_1$ and 
$\lim_{s\to 0}\int_{\mu^{-1}(b)}\omega_{3,s}
=0$.

We define $\Delta_{\R^2}^1$ by 
\begin{align*}
\Delta_{\R^2}^1\varphi
:=\sum_{i=1}^2\left( 
-\frac{\del^2 \varphi}{\del\xi_i^2}
+2\xi_i\frac{\del \varphi}{\del\xi_i}
\right)
\end{align*}
for $\varphi\colon \R^2\to\C$. 
\begin{thm}
Let $(X,\omega_1,\omega_2,\omega_{3,s})$, 
$\mu\colon X\to \bbP^1$ and 
$(L,h,\nabla)$ be as above. 
Then we have 
the compact convergence 
of the spectral structures 
\begin{align*}
\left( L^2\left( X,\frac{\omega_1^2}{2s}, L,h\right),\, \Delta_{\delb_{J_{1,s}}}\right)
\to 
\bigoplus_{b\in BS}\left( L^2\left( \R^2,e^{-\| \xi\|^2}d\xi_1d\xi_2\right)
\otimes \C,\, \frac{\Delta_{\R^2}^1}{2}\right)
\end{align*}
as $s\to 0$ in the sense of Definition \ref{def spec conv}. 
Moreover, if we denote by 
\begin{align*}
P_s&\colon 
L^2\left( X,\frac{\omega_1^2}{2s}, L,h\right)
\to H^0(X_{J_{1,s}},L),\\
P_0&\colon 
\bigoplus_{b\in BS}L^2\left( \R^2,e^{-\| \xi\|^2}d\xi_1d\xi_2\right)
\to \bigoplus_{b\in BS}\C
\end{align*}
the orthogonal projections to the 
$0$-eigenspaces, then we have 
the compact convergence 
of the bounded operators 
\begin{align*}
P_s \to P_0
\end{align*}
as $s\to 0$ in the sense of 
Definition \ref{def_cpt_conv}. 
\label{thm_main2.h}
\end{thm}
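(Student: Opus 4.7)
The plan is to adopt the strategy of \cite{HY2019}: the space $H^0(X_{J_{1,s}},L)$ is the kernel of $\Delta_{\delb_{J_{1,s}}}$, so it suffices to establish a Mosco/compact convergence of the Dirichlet forms $\mathcal{E}_s(\sigma)=\|\delb_{J_{1,s}}\sigma\|^2$ on $L^2(X,\omega_1^2/(2s),L,h)$ towards the direct sum over $BS$ of Ornstein-Uhlenbeck type forms on $L^2(\R^2,e^{-\|\xi\|^2}d\xi_1d\xi_2)$ whose generator is $\Delta^1_{\R^2}/2$. The Kuwae-Shioya machinery \cite{KuwaeShioya2003} then automatically upgrades such a convergence to compact convergence of the full spectral structure, and, because the limit $0$-eigenspace is isolated and finite-dimensional, it yields compact convergence of the spectral projections $P_s\to P_0$ as well.

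To produce the limit on each Bohr-Sommerfeld neighbourhood I would argue as follows. Let $b_0\in BS$ be a regular Bohr-Sommerfeld value. The \hK identity $\omega_1+\sqrt{-1}\omega_2\in\Omega^{2,0}_{J_3}$ together with the \sL condition $\omega_1|_{\mu^{-1}(b)}=\omega_2|_{\mu^{-1}(b)}=0$ provide action-angle coordinates $(\xi_1,\xi_2,\theta_1,\theta_2)$ on a tubular neighbourhood of $\mu^{-1}(b_0)$ in which $\omega_1=d\xi_i\wedge d\theta_i$ and, after a Bohr-Sommerfeld gauge change, $\nabla$ is represented by $\sqrt{-1}\sum\xi_id\theta_i$. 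Since $\int_{\mu^{-1}(b)}\omega_{3,s}\to 0$, the \hK-rotated complex structure $J_{1,s}$ collapses the fibres of $\mu$ at rate $s^{1/2}$ in the $\theta$-direction. Rescaling $\xi\mapsto s^{1/2}\xi$ and Fourier-decomposing along $\theta$ reduces $\Delta_{\delb_{J_{1,s}}}$, on the Bohr-Sommerfeld Fourier mode, to an operator converging to $\Delta^1_{\R^2}/2$ on the stated Gaussian space; all other Fourier modes have spectra that diverge to infinity because the Bohr-Sommerfeld condition fails for them. This computation is the direct extension of \cite{HY2019} and yields both the recovery sequence and the $\liminf$ inequality away from the critical set $\Crt(\mu)$.

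The genuinely new difficulty, which I expect to be the main obstacle, is the treatment of the Kodaira $I_1$ singular fibres, where $\mu$ fails to be a submersion and, crucially, no explicit formula for the \hK metric is available because it is only defined implicitly through a Monge-Amp\`ere equation. To handle this I would invoke the asymptotic model of \cite{GW2000}: near a nodal point the collapsing \hK metric is modelled on an Ooguri-Vafa type metric glued to the semi-flat metric on the smooth locus. Based on this model one expects an Agmon/Persson-type lower bound forcing the spectrum of $\Delta_{\delb_{J_{1,s}}}$ restricted to sections supported in a fixed neighbourhood of $\Crt(\mu)$ to diverge as $s\to 0$, so that low-energy sections cannot concentrate on the singular fibres. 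A partition of unity subordinate to fixed neighbourhoods of the Bohr-Sommerfeld regular fibres, fixed neighbourhoods of the singular fibres, and the complement (where the bundle has no flat sections along the fibres and the spectrum again diverges) then assembles the local estimates into the global Mosco convergence together with the asymptotic compactness required by \cite{KuwaeShioya2003}, completing the proof of both spectral convergence statements.
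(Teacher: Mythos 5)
Your overall strategy (Mosco convergence plus asymptotic compactness in the Kuwae--Shioya framework, with the regular Bohr--Sommerfeld fibres handled by action-angle coordinates and a Gaussian rescaling as in \cite{HY2019}, and the singular fibres handled via the Gross--Wilson/Ooguri--Vafa model) is broadly the right shape, but there is a genuine gap in how you treat the singular fibres. You assume that low-energy sections cannot concentrate near $\Crt(\mu)$ and that the spectrum of sections supported near a singular fibre diverges, so that the singular fibres never contribute to the limit. This is false in general: the Bohr--Sommerfeld condition is defined for singular fibres as well (the holonomy of $\nabla$ restricted to an $I_1$ fibre makes sense), and a type $I_1$ fibre can perfectly well lie in $BS$. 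In that case it contributes its own summand $\left(L^2(\R^2,e^{-\|\xi\|^2}d\xi_1 d\xi_2),\Delta^1_{\R^2}/2\right)$ to the limit, and the ground state \emph{does} localize there; your partition-of-unity scheme would then produce a limit missing these summands. The bulk of the paper's work (its Section 6) is exactly this case: one must verify the structural conditions near the nodal fibre using the explicit $1$-form $\gamma_s=J_{1,s}d\phi_s$ for the Ooguri--Vafa potential, show the holonomy function $\mathcal{H}$ isolates the Bohr--Sommerfeld point, and rescale the base by $\zeta_s(y)=\sqrt{\log|y|^{-1}/(2\pi s)}\,y$ --- note the logarithmic correction, not the naive $s^{1/2}$ rescaling --- to recover the same Gaussian limit model. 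Even for non-Bohr--Sommerfeld singular fibres, the ``Agmon/Persson-type'' bound is not automatic: the fibre metric degenerates logarithmically in the Ooguri--Vafa region, so the fibrewise lower bound is controlled by $\|x(y)+l\|^2_{\overline{g}_y}\gtrsim \delta/(s\log|y|^{-1})$, and one needs the careful estimates of the paper's Section 7.3 to conclude.

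A secondary gap: your local computations (recovery sequences, $\liminf$ inequality, the divergence estimates) are carried out for the glued semi-flat/Ooguri--Vafa model, but the actual hyper-K\"ahler metric $g_s$ and complex structure $J_{1,s}$ differ from it by the solution of a Monge--Amp\`ere equation. To transfer the local analysis you need a uniform two-sided comparison $C_s^{-1}g_s'\le g_s\le C_s g_s'$ with $C_s\to 1$; the estimate available in \cite{GW2000} is not in this sharp form, and the paper has to improve the $C^2$ estimate (its Lemma 5.8) before any model computation says anything about the true spectral structures. Your proposal should state and justify this comparison explicitly; without it the passage from the model forms to $\Delta_{\delb_{J_{1,s}}}$ is unsupported.
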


By the Kodaira Vanishing Theorem, Theorem 
\ref{thm_main2.h} implies 
the next corollary, which has been obtained by Tyurin 
\cite{Tyurin1999}. 

\begin{cor}
Let $(X,g,J_1,J_2,J_3)$ be a $K3$ surface 
equipped with a \hK structure, 
$\mu\colon X\to \bbP^1$ be a special 
Lagrangian fibration coming from 
the elliptic fibration $X_{J_3}\to \bbP^1$ 
with $24$ singular fibers of Kodaira type $I_1$. 
Let $[\omega_1]/2\pi\in H^2(X,\Z)$ and 
$(L,h,\nabla)$ be a prequantum line bundle 
on $(X,\omega_1)$. 
Then we have 
\begin{align*}
\dim H^0(X_{J_1},L)=\# BS.
\end{align*}
\label{cor_Kah_real.h}
\end{cor}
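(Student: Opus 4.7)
The plan is to derive the corollary from Theorem~\ref{thm_main2.h} by combining the spectral convergence with the Kodaira Vanishing Theorem and Hirzebruch--Riemann--Roch. The key observation is that although Theorem~\ref{thm_main2.h} produces the dimension of $H^0(X_{J_{1,s}},L)$ for the perturbed family, the dimension of $H^0(X_{J_1},L)$ agrees with it by topological invariance of the holomorphic Euler characteristic.

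First I would use the compact convergence of the orthogonal projections $P_s\to P_0$ furnished by Theorem~\ref{thm_main2.h}. Since $P_0$ has finite rank equal to $\# BS$, and compact convergence in the Kuwae--Shioya sense preserves isolated eigenvalues together with the dimensions of their eigenspaces, one obtains $\dim\mathrm{Range}(P_s)=\# BS$ for all sufficiently small $s>0$. By the identification made in Theorem~\ref{thm_main2.h}, $\mathrm{Range}(P_s)=\ker\Delta_{\delb_{J_{1,s}}}=H^0(X_{J_{1,s}},L)$, so $\dim H^0(X_{J_{1,s}},L)=\# BS$ for $s$ small.

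Next I would transfer this conclusion from $J_{1,s}$ to the original complex structure $J_1$. The prequantum line bundle $(L,h,\nabla)$ has curvature $-\sqrt{-1}\omega_1$ with $\omega_1$ a K\"ahler form for every $J_{1,s}$ (including $J_1$ itself as $s=1$ or wherever the original structure lies in the family), so $L$ is positive on each $X_{J_{1,s}}$. On the $K3$ surface $K_X$ is trivial, hence the Kodaira Vanishing Theorem gives $H^i(X_{J_{1,s}},L)=0$ for all $i\geq 1$, and consequently $\dim H^0(X_{J_{1,s}},L)=\chi(X_{J_{1,s}},L)$. By Hirzebruch--Riemann--Roch this Euler characteristic depends only on $c_1(L)^2$ and $c_2(X)$, both of which are topological and independent of the complex structure in the family. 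Therefore $\dim H^0(X_{J_1},L)=\chi(X,L)=\dim H^0(X_{J_{1,s}},L)$, which combined with the previous paragraph yields $\dim H^0(X_{J_1},L)=\# BS$.

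The only genuinely substantive point is the rank stability under compact convergence of projections, but this is a routine consequence of the Kuwae--Shioya framework once the spectral convergence of Theorem~\ref{thm_main2.h} is in hand; everything else is standard K\"ahler geometry on $K3$. Thus no additional obstacle is expected, and the corollary reduces to assembling these three ingredients in the order above.
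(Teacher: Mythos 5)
Your proposal is correct and follows essentially the paper's own route: compact convergence of the spectral structures and of the kernel projections (Theorem \ref{thm_main2.h}, elaborated in Section \ref{sec_cov_qua_hilb.h}) yields $\dim H^0(X_{J_{1,s}},L)=\# BS$ for small $s$, and Kodaira vanishing together with the deformation invariance of the Euler characteristic transfers this to the original complex structure $J_1$. The only step you leave implicit ("wherever the original structure lies in the family") is the existence of a family $(\omega_1,\omega_2,\omega_{3,s})$ tending to the large complex structure limit with $\omega_{3,1}=\omega_3$, which is needed before Theorem \ref{thm_main2.h} applies to the corollary's single hyper-K\"ahler structure; the paper supplies exactly this via Proposition \ref{prop_exist_fam_lcslimit.h} (Yau's theorem), and you should cite it there.
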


This paper is organized as follows. 
In Section \ref{sec_k3.h}, 
we review fundamentals of 
the \hK structures on 
the $K3$ surfaces and describe 
the setting of this paper. 
Moreover, we see that 
the holomorphic sections 
on $L$ can be identified with 
some eigenfunctions on 
the frame bundle of $(L,h)$ 
equipped with some Riemannian 
metrics. 
In Section \ref{sec_spec_str.h}, 
we review the convergence of the 
spectral convergence following  \cite{KuwaeShioya2003} 
and the notion of the 
$S^1$-equivariant pointed measured 
Gromov-Hausdorff topology 
following 
\cite{hattori2019}. 
In Section \ref{sec_main_results.h} 
we describe the main results of 
this paper and the outline of the proof. 
In Subsection \ref{subsec_loc_met.h}, 
we explain how to construct the 
approximation map 
between the frame bundle of $(L,h)$ and the limit spaces. 
In Section \ref{sec_approx.h}, 
we study the family of \hK structures 
on the $K3$ surfaces tending to 
the large complex structure limit. 
It is known by Gross and Wilson 
\cite{GW2000} that such structures are 
approximated by gluing 
the standard semi-flat metrics and the 
Ooguri-Vafa metrics. 
We modify their argument 
to apply to our situation, 
then we may reduce the problem 
to the local argument on 
the standard semi-flat metrics and the 
Ooguri-Vafa metrics. 
In Section \ref{sec_nonsing.h} 
we study the detail of 
the former metric 
and in Section \ref{sec_sing.h} 
we consider the latter one, 
then we obtain 
the strong convergence of 
the spectral structures. 
To show the compact convergence 
of the spectral structures 
in Theorem 
\ref{thm_main2.h}, 
we need further argument 
for the localization of the functions on 
$\bbS$, which is discussed in 
Section \ref{sec_cpt_conv1.h} 
following \cite{HY2019}. 
In Section \ref{sec_cov_qua_hilb.h}, 
we show the convergence 
of the quantum Hilbert spaces 
and obtain the latter half of 
Theorem 
\ref{thm_main2.h}.

\vspace{0.2cm}

\paragraph{\bf Notations}
\begin{itemize}
    \item For a Riemannian manifold $(X,g)$, denote by 
    $d_g$ the Riemannian distance and denote by $\nu_g$ 
    the Riemannian measure. 
    For a piecewise smooth 
    path $c\colon [0,1]\to X$, denote by 
    $\mathfrak{L}_g(c)$ 
    the length of $c$ with respect to $g$. 
    \item For a metric space $(X,d)$ or a Riemannian manifold 
    $(X,g)$, denote by 
    $B(p,r)$ the open metric ball of radius $r>0$ centered at $p\in X$. 
    If we need to emphasize 
    the dependence on 
    $d$ or $g$,  
    we also write 
    $B_d(p,r)$ or $B_g(p,r)$. 
    For a subset $A\subset X$, 
    we denote the diameter 
    of $A$ by 
    \begin{align*}
        {\rm diam}(A):=\sup\left\{
        d(p,q);\, p,q\in A\right\}.
    \end{align*}
    We write 
    ${\rm diam}_d(A)$ or 
    ${\rm diam}_g(A)$ when 
    we emphasize the dependence on $d$ or $g$. 
    \item For sets $A,B$ and 
    points $a\in A$, $b\in B$,  denote by $f\colon 
    (A,a)\to (B,b)$ the map 
    $f\colon A\to B$ such that 
    $f(a)=b$. 
\end{itemize}

\vspace{0.2cm}
\paragraph{\bf Acknowledgment}
I would like to thank Mayuko Yamashita 
for her advices and a lot of discussions on this paper.
This work was supported by JSPS KAKENHI Grant Numbers JP19K03474, JP20H01799.

\section{Geometric quantization on the 
$K3$ surfaces}\label{sec_k3.h}
\subsection{\HK structures}

\begin{dfn}
\normalfont
Let $X$ be a smooth manifold of dimension 
$4d$. 
{\it A \hK structure on $X$} is a 
quadruple $(g,J_1,J_2,J_3)$ 
of a Riemannian metric $g$ and 
integrable complex structures $J_i$ 
with 
\begin{align*}
    J_1J_2=J_3,\quad J_2J_3=J_1,\quad J_3J_1=J_2,\quad 
    g(J_i\cdot,J_i\cdot)=g,
\end{align*}
such that every 
$2$-form 
$\omega_i:=g(J_i\cdot,\cdot)$ is closed. 
Then $(X,g,J_1,J_2,J_3)$ is called the 
\hK manifold and $g$ is called the \hK 
metric. 
\end{dfn}
\begin{rem}
\normalfont
If $(X,g,J_1,J_2,J_3)$ is a 
\hK manifold, 
then $\omega_i$ is a 
K\"ahler form on 
the complex manifold 
\begin{align*}
X_{J_i}:=(X,J_i). 
\end{align*}
Moreover, if $(i,j,k)=(1,2,3),(2,3,1)$ 
or $(3,1,2)$, then 
$\omega_j+\sqrt{-1}\omega_k$ 
is a holomorphic volume form, 
i.e., nondegenerate 
holomorphic $2$-form on 
$X_{J_i}$. 
\label{rem_hol_vol.h}
\end{rem}
\begin{rem}
\normalfont
If $(X,g,J_1,J_2,J_3)$ is a 
\hK manifold of dimension $4$, 
then we have 
\begin{align}
    \omega_i\wedge\omega_j 
    =\delta_{ij} vol\label{4dimhk.h}
\end{align}
for some 
nowhere vanishing $4$-form 
$vol$ on $X$. 
Conversely, if 
$2$-forms $\omega_1,\omega_2,\omega_3$
on smooth $4$-manifold satisfy 
\eqref{4dimhk.h}, 
then it recovers the 
\hK structure on $X$ after reordering 
the forms. 
For this reason 
the triple $(\omega_1,\omega_2,\omega_3)$ 
is also called 
the \hK structure on $X$. 
\end{rem}

\subsection{Holomorphic sections 
and eigenfunctions}\label{subsec_hol_eigen.h}
Let $(X,\omega)$ be a symplectic manifold. 
A {\it prequantum line bundle 
$(\pi\colon L\to X,h,\nabla)$ 
on $(X,\omega)$} is 
a complex line bundle $L$ 
over $X$ 
with a hermitian metric $h$ 
and a hermitian connection $\nabla$ whose curvature form 
$F^\nabla$ is equal to 
$-\sqrt{-1}\omega$. 
If we consider the prequantum 
line bundle on 
a \hK manifold 
$(X,\omega_1,\omega_2,\omega_3)$, 
then we always suppose 
$F^\nabla$ is equal to 
$-\sqrt{-1}\omega_1$ 
in this paper. 
Then $L$ is a holomorphic 
line bundle over $X_{J_1}$. 
The aim of this paper is 
to analyze the behavior of 
the vector space 
$H^0(X_{J_1},L)$ consisting 
of the 
holomorphic sections under 
fixing $\omega_1$ and varying 
$J_1$. 
To achieve it, 
we use the correspondence between 
holomorphic sections and 
some eigenfunctions on a 
Riemannian manifold constructed 
by $(X,g)$ and $(L,h,\nabla)$, 
which was also considered in 
\cite{hattori2019}, \cite{HY2019} and \cite{HY2020}.

Let $(X,\omega_1,\omega_2,\omega_3)$ be a \hK manifold of dimension 
$4$. 
First of all put 
\begin{align*}
    \bbS=\bbS(L,h)
    &:=\{ u\in L;\, h(u,u)=1\}.
\end{align*}
Notice that $\bbS$ is a 
principal 
$S^1$-bundle over $X$ 
and the connection 
$\nabla$ induces 
the horizontal 
distribution $H\subset T\bbS$. 
Denote by $\sqrt{-1}\Gamma^\nabla
\in \Omega^1(\bbS,\sqrt{-1}\R)$ 
the connection form on $\bbS$ 
corresponding to $\nabla$. 
Then we have a 
$S^1$-invariant 
Riemannian metric 
$\hat{g}$
on $\bbS$ such that 
\begin{align}
    \hat{g}:=(\Gamma^\nabla)^2 
    + (d\pi|_H)^*g.\label{connection_metric.h}
\end{align}

Denote by $C^\infty(X,L)$ 
the set of smooth sections 
of $L$. 
There is the natural identification 
\begin{align}
    C^\infty(X,L^k)&\cong 
    (C^\infty(\bbS)\otimes\C)^{\rho_k}\label{section_fct.h}\\
    &:=\left\{ 
    f\in C^\infty(\bbS)\otimes \C;\, f(u\lambda)=\lambda^{-k}f(u)
    \mbox{ for all }
    u\in \bbS,\, \lambda\in S^1
    \right\},\notag
\end{align}
where $\rho_k\colon S^1\to S^1$ 
is the unitary representation 
defined by $\rho_k(\lambda)=\lambda^k$. 
Let $\Delta_{\delb_{J_1}}
=\nabla_{\delb_{J_1}}^*\nabla_{\delb_{J_1}}$ 
is the $\delb$-Laplacian 
acting on $C^\infty(L)$. 
We can also define 
the $\delb$-Laplacian 
$\Delta_{k,\delb_{J_1}}$ 
acting on $C^\infty(L^k)$. 
Denote by $\Delta_{\hat{g}}$ 
the Laplacian of $\hat{g}$ 
acting on $C^\infty(\bbS)$, 
then it extends 
to the operator 
on $C^\infty(\bbS)\otimes \C$ 
$\C$-linearly. 
Since $S^1$ acts on $(\bbS,\hat{g})$ 
isometrically, 
$\Delta_{\hat{g}}$ induces 
\begin{align*}
    \Delta_{\hat{g}}^{\rho_k}
    \colon
    (C^\infty(\bbS)\otimes\C)^{\rho_k}
    \to 
    (C^\infty(\bbS)\otimes\C)^{\rho_k}.
\end{align*}
By \cite{hattori2019}, 
we can see 
\begin{align}
    2\Delta_{k,\delb_{J_1}}
    = \Delta_{\hat{g}}^{\rho_k}
    -\left(k^2+2k\right)
    \label{eq_dbar_met_lap.h}
\end{align}
under the identification 
\eqref{section_fct.h}.
Consequently, 
if $X$ is compact, 
we have the following 
isomorphism 
\begin{align*}
    H^0(X_{J_1},L^k)
    \cong\left\{ 
    f\in (C^\infty(\bbS)\otimes\C)^{\rho_k};\,
    \Delta_{\hat{g}} f= \left(k^2+2k\right)f\right\}.
\end{align*}

\subsection{Special Lagrangian fibrations}\label{subsec_slag.h}
Let $(X,\omega)$ be a 
symplectic manifold. 
In this paper  
we say that $\mu\colon X\to B$ 
is a {\it Lagrangian fibration} 
if $\mu$ is a surjective 
smooth proper map from $X$ to a 
smooth manifold $B$ of dimension 
$(\dim X)/2$ such that 
$\mu^{-1}(b)$ are Lagrangian submanifolds, 
namely, 
$\omega|_{\mu^{-1}(b)}=0$, for 
all regular values $b\in B$, 
and we also suppose all of the 
fibers are connected. 
Then by the Liouville-Arnold theorem, 
for all regular values $b$, 
the fibers $\mu^{-1}(b)$ 
are diffeomorphic to the torus.

Let $(X,\omega_1,\omega_2,\omega_3)$ be a \hK manifold of dimension $4$. 
In this paper 
$\mu\colon X\to B$ 
is said to be a {\it special Lagrangian fibration} 
if it is the Lagrangian fibration 
with respect to both of 
$\omega_1$ and $\omega_2$.

Since the condition 
$\omega_1|_{\mu^{-1}(b)}=\omega_2|_{\mu^{-1}(b)}=0$ 
is equivalent to 
that $\mu^{-1}(b)$ is complex submanifold of $X_{J_3}$, 
hence the 
special Lagrangian fibration 
on $X$ is the elliptic fibration 
on $X_{J_3}$.

\begin{rem}
\normalfont
Harvey and Lawson showed in \cite{harvey1982calibrated} 
that the special Lagrangian 
submanifold minimizes the volume 
in its homology class, 
therefore, 
the volume of $\mu^{-1}(b)$ 
is independent of $b$. 
By the above argument, 
$\mu^{-1}(b)$ is 
a complex submanifold 
of $X_{J_3}$, hence 
the volume of $\mu^{-1}(b)$ 
is given by $\int_{\mu^{-1}(b)}\omega_3$ 
by choosing the orientation 
appropriately. 
\label{rem_HL.h}
\end{rem}

The inverse image of a 
critical value of $\mu$ 
is called a singular fiber. 
The singular fibers of 
elliptic fibrations 
are classified 
by Kodaira. 
In particular, we suppose 
in this paper that 
all of the singular fibers are 
of Kodaira type $I_1$, 
which is the irreducible rational 
curve with a double point. 

\begin{dfn}
\normalfont
Let $\mu\colon X\to B$ be a 
special Lagrangian fibrations 
on the $4$-dimensional 
\hK manifold $X$. 
In this paper we say 
{\it $\mu$ is of Kodaira 
type $I_1$} 
if all of the 
singular fibers of 
the corresponding 
elliptic fibration on $X_{J_3}$ 
are of Kodaira type $I_1$. 
\end{dfn}

Next we define 
the Bohr-Sommerfeld 
fibers. 
\begin{dfn}
\normalfont
Let $(X,\omega)$ be a symplectic 
manifold with a prequantum 
line bundle $(L,h,\nabla)$ 
and a Lagrangian fibration 
$\mu\colon X\to B$. 
\begin{itemize}
    \item [({\rm i})]
    $\mu^{-1}(b)$ is called a 
    {\it Bohr-Sommerfeld fiber} 
    if the holonomy group 
    of the connection $\nabla|_{\mu^{-1}(b)}$ on 
    $L|_{\mu^{-1}(b)}$ is 
    trivial. 
    Moreover we call $b$ a {\it Bohr-Sommerfeld point}.
    \item [({\rm ii})]
    Let $m$ be a positive integer. 
    We also denote by $\nabla$ the connection 
    on $L^m:=L^{\otimes m}$
    naturally induced by 
    $\nabla$ on $L$. 
    $\mu^{-1}(b)$ is called a 
    {\it Bohr-Sommerfeld fiber of level $m$} 
    if the holonomy group 
    of the connection $\nabla|_{\mu^{-1}(b)}$ on 
    $L^m|_{\mu^{-1}(b)}$ is 
    trivial. 
    We put 
    \begin{align*}
        BS_m &:=\left\{
        b\in B;\, 
        \mu^{-1}(b) \mbox{ is a Bohr-Sommerfeld fiber of level } m\right\},\\
        BS_m^{\rm str} &:=BS_m\setminus 
        \left(\bigcup_{l=1}^{m-1}BS_l\right).
    \end{align*}
\end{itemize}
\end{dfn}
\begin{rem}
\normalfont
Notice that 
we can define the holonomy 
group not only for the smooth 
fibers, but also for the 
singular fibers.
\end{rem}

\subsection{$K3$ surfaces}
\begin{dfn}
\normalfont
{\it A $K3$ surface} 
is a compact simply-connected 
hyper-K\"ahler 
manifold of dimension $4$. 
\end{dfn}
If the $K3$ surface admits 
an elliptic fibration 
$\mu\colon X\to B$, then 
it is known that $B$ 
is the complex projective line 
$\bbP^1$. 
Moreover, if all
of the singular fibers 
of $\mu$ are of Kodaira type $I_1$, 
then the number of 
singular fibers is equal to $24$, 
which is the Euler characteristic 
of the $K3$ surface.

\begin{dfn}
\normalfont
Let $s_0>0$, 
$X$ be a $K3$ surface 
and $(\omega_1,\omega_2,\omega_{3,s})$ be a family of 
\hK structures on $X$ for every 
$0<s\le s_0$. 
Suppose a special Lagrangian 
fibration 
$\mu\colon X\to \mathbb{P}^1$ 
of Kodaira type $I_1$ is given. 
Then $(\omega_1,\omega_2,\omega_{3,s})$ is {\it 
tending to a large complex structure limit} 
if the volume of the fibers 
of $\mu$ converges to $0$ as $s\to 0$. \label{def_large_cpx.h}
\end{dfn}

\begin{rem}
\normalfont
In Definition 
    \ref{def_large_cpx.h}, 
    we do not assume 
    that $\{ \omega_{3,s}\}$ continuously depends on $s$. 
\end{rem}

Here we show an example of 
a family of \hK structures 
on the $K3$ surface tending to 
a large complex structure limit. 

\begin{prop}
Let $(\omega_1,\omega_2,
\omega_3)$ be a \hK structure 
on the $K3$ surface $X$,  $\mu\colon X\to\mathbb{P}^1$ be a special Lagrangian fibration 
of Kodaira type $I_1$, 
and $\omega_{FS}$ 
be the Fubini-Study form 
on $\mathbb{P}^1$ normalized 
such that $\int_X \omega_3\wedge 
\mu^* \omega_{FS}=1$. 
Put $v_X=\int_X\omega_3^2$. 
Define the cohomology class 
$\alpha_s\in H^2(X,\R)$ by 
\begin{align*}
    \alpha_s
    :=\left[ 
    s\left(\omega_3 - 
    \frac{v_X}{2}\mu^*\omega_{FS}\right)
    +\frac{1}{s}\frac{v_X}{2}
    \mu^*\omega_{FS}\right].
\end{align*}
Then there exists a unique 
K\"ahler form $\omega_{3,s}
\in \alpha_s$ such that 
$(\omega_1,\omega_2,\omega_{3,s})$ form \hK structures 
tending to 
a large complex structure limit. 
\label{prop_exist_fam_lcslimit.h}
\end{prop}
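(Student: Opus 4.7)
The plan is to reduce the existence and uniqueness of $\omega_{3,s}$ to Yau's theorem on Calabi--Yau metrics applied on the complex manifold $X_{J_3}$. The key observation is that since $\mu\colon X\to\mathbb{P}^1$ comes from an elliptic fibration on $X_{J_3}$, the map $\mu$ is holomorphic with respect to $J_3$, and hence $\mu^*\omega_{FS}$ is a closed, semipositive $(1,1)$-form on $X_{J_3}$ (semipositive because it vanishes in the fiber directions). Rewriting $\alpha_s = [s\omega_3 + (s^{-1}-s)\tfrac{v_X}{2}\mu^*\omega_{FS}]$, both coefficients are positive for $0<s<1$, so $\alpha_s$ is represented by a positive $(1,1)$-form on $X_{J_3}$ and is therefore a K\"ahler class on $X_{J_3}$.

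Next I would verify the cohomological compatibility required by Yau's theorem. Since $(\mu^*\omega_{FS})^2 = \mu^*(\omega_{FS}^2) = 0$ for dimensional reasons, expanding yields
\begin{align*}
\int_X \alpha_s^2 = s^2\int_X\omega_3^2 + 2s(s^{-1}-s)\tfrac{v_X}{2}\int_X\omega_3\wedge\mu^*\omega_{FS} = s^2 v_X + (1-s^2)v_X = v_X,
\end{align*}
which equals $\int_X\omega_1^2$ by the \hK identity $\omega_1^2=\omega_3^2$. Yau's theorem then produces a unique K\"ahler form $\omega_{3,s}\in\alpha_s$ on $X_{J_3}$ with $\omega_{3,s}^2 = \omega_1^2$. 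To see that $(\omega_1,\omega_2,\omega_{3,s})$ is a \hK structure, by the characterization \eqref{4dimhk.h} it suffices to check $\omega_1\wedge\omega_{3,s}=\omega_2\wedge\omega_{3,s}=0$. By Remark \ref{rem_hol_vol.h}, $\omega_1+\sqrt{-1}\omega_2$ is a $(2,0)$-form on $X_{J_3}$, while $\omega_{3,s}$ is of type $(1,1)$; the wedge has bidegree $(3,1)$ (resp.\ $(1,3)$), which vanishes on the complex surface $X_{J_3}$. Taking real and imaginary parts gives both identities. Uniqueness follows from the fact that any $\omega'\in\alpha_s$ yielding a \hK triple $(\omega_1,\omega_2,\omega')$ must, via $\omega_1\wedge\omega'=\omega_2\wedge\omega'=0$, be a $(1,1)$-form with respect to the complex structure making $\omega_1+\sqrt{-1}\omega_2$ of type $(2,0)$, which is uniquely $J_3$; positivity is forced by the \hK condition, so $\omega'$ is K\"ahler on $X_{J_3}$ and agrees with the Yau solution.

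Finally, to check that this family tends to a large complex structure limit in the sense of Definition \ref{def_large_cpx.h}, I would use that a fiber $\mu^{-1}(b)$ maps to a point under $\mu$, so $\int_{\mu^{-1}(b)}\mu^*\omega_{FS}=0$, and therefore
\begin{align*}
\int_{\mu^{-1}(b)}\omega_{3,s} = s\int_{\mu^{-1}(b)}\omega_3,
\end{align*}
which tends to $0$ as $s\to 0$; by Remark \ref{rem_HL.h} the quantity $\int_{\mu^{-1}(b)}\omega_3$ is a positive constant independent of $b$, hence equals the volume of $\mu^{-1}(b)$ in the new \hK structure (up to the factor $s$). The only analytic input is Yau's theorem; the rest is bidegree bookkeeping and cohomological computation, so no serious obstacle arises.
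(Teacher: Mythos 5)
Your proposal is correct and takes essentially the same route as the paper: apply Yau's theorem to the K\"ahler class $\alpha_s$ on $X_{J_3}$, use $(\mu^*\omega_{FS})^2=0$ and the normalization $\int_X\omega_3\wedge\mu^*\omega_{FS}=1$ to get $\int_X\alpha_s^2=v_X=\int_X\omega_1^2$ (the paper phrases this as fixing $c=1$), and conclude with $\int_{\mu^{-1}(b)}\omega_{3,s}=s\int_{\mu^{-1}(b)}\omega_3\to 0$. You merely spell out details the paper leaves implicit (positivity of the class, the bidegree check that $(\omega_1,\omega_2,\omega_{3,s})$ is hyper-K\"ahler, and uniqueness), all of which are fine.
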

\begin{proof}
Let $(g,J_1,J_2,J_3)$ be the 
\hK structure corresponding 
to $(\omega_1,\omega_2,\omega_3)$. 
Since $\alpha_s$ is represented by 
positive $(1,1)$ form on 
$X_{J_3}$ if $0<s\le 1$, 
then there is a 
K\"ahler form 
$\omega_{3,s}\in \Omega^{1,1}(X_{J_3})$ such that  
$\omega_{3,s}^2=c\omega_1^2
=c\omega_2^2$ for some $c>0$ 
by Yau's Theorem \cite{Yau1978}. 
Since 
\begin{align*}
    \alpha_s^2=[\omega_3]^2=[\omega_1]^2
    =[\omega_2]^2,
\end{align*}
we have $c=1$, hence 
$\omega_1,\omega_2,\omega_{3,s}$ 
form \hK structures on $X$. 
Since 
\begin{align*}
    \int_{\mu^{-1}(b)}\omega_{3,s}
    =s\int_{\mu^{-1}(b)}\omega_3\to 0
\end{align*}
as $s\to 0$, 
we have the assertion. 
\end{proof}

\section{Convergence of 
spectral structures}\label{sec_spec_str.h}

In this paper  
we consider the convergence 
of $H^0(X_{J_{1,s}},L^k)$ 
to some Hilbert spaces 
in an appropriate sense. 
By \eqref{eq_dbar_met_lap.h}, 
$H^0(X_{J_{1,s}},L^k)$ 
can be identified with 
the $(k^2+2k)$-eigenspace 
of the operator 
$\Delta_{\hat{g}_s}^{\rho_k}$. 
To consider the convergence of 
eigenspaces, we use 
the notion 
of the convergence of 
spectral structures 
introduced by 
Kuwae and Shioya in 
\cite{KuwaeShioya2003} 
to our situation.

\subsection{Spectral structures}\label{subsec_spec.h}
A {\it spectral structure 
$\Sigma=(H,A)$} is a pair of 
a Hilbert space 
$H$ and a 
self-adjoint positive 
linear operator $A\colon
\mathcal{D}(A)\to H$, 
where $\mathcal{D}(A)$ 
is a subspace of $H$, 
such that 
the quadratic form 
$\mathcal{E}(f):=\langle Af,
f\rangle_H$ 
is {\it closed}, i.e., 
the norm 
\begin{align*}
\| f\|_A:=\sqrt{ 
\| f\|_H^2+\mathcal{E}(f)
}
\end{align*}
can be extended 
to a dense subspace 
$\mathcal{D}(\mathcal{E})\subset H$ continuously 
and $\mathcal{D}(\mathcal{E})$ is 
complete with respect to 
the norm $\| \cdot\|_A$.

Let $(X,g)$ be a 
compact Riemannian manifold 
and $\Delta_g$ be 
the Laplacian acting on 
$C^\infty(X)$. Then 
\begin{align*}
    \Sigma(X,g) := \left( L^2\left(X,\nu_g\right),\Delta_g\right)
\end{align*}
is a typical example of 
the spectral structures.

Next we review the definition of  
the Laplacian on a 
metric measure space 
appeared as the measured 
Gromov-Hausdorff limit of 
a sequence of 
Riemannian manifolds with 
a lower bound of the Ricci curvatures 
following \cite{Cheeger-Colding3}. 

Let $(X,d,\nu,p)$ be a 
pointed metric 
measure space, that is, 
$d$ is a metric 
on $X$, $\nu$ is a Radon 
measure on $X$ and $p\in X$. 
We assume that there are 
constant $\kappa\in \R$ and a sequence of
Riemannian manifolds 
$\{(X_i,g_i)\}_i$ of dimension 
$N$ such that 
\begin{align*}
    {\rm Ric}_{g_i}\ge \kappa 
    g_i,
\end{align*}
and $(X,d,\nu,p)$ is 
the pointed measured Gromov-Hausdorff limit 
of 
\begin{align*}
    \left( 
    X_i,d_{g_i},\frac{\nu_{g_i}}{\nu_{g_i}(B(p_i,1))},p_i\right).
\end{align*}
Denote by ${\rm Lip}_c(X)$ the 
set of compactly supported 
Lipschitz functions on $X$. 
Then a bilinear form on ${\rm Lip}_c(X)$, denoted by 
\begin{align*}
    \int_X\langle df_1,df_2\rangle d\nu
    \quad(f_1,f_2\in{\rm Lip}_c(X))
\end{align*}
can be defined so that 
we have 
\begin{align*}
    \int_X\langle df,df\rangle d\nu
    &=\int_X  
    {\rm Lip}(f)^2
    d\nu=:\mathcal{E}(f),\\
    {\rm Lip}(f)(x)&:=\inf_{r> 0} \sup_{y\in B(x,r)\setminus\{ x\}}\frac{|f(x)-f(y)|}{d(x,y)}.
\end{align*}
Let $H^{1,2}(X,d,\nu)$ be 
the closure of 
${\rm Lip}_c(X)$ 
with respect to the norm 
$\| f\|_{H^{1,2}}^2:=\| f\|_{L^2}^2+\mathcal{E}(f)$. 
Denote by $\mathcal{D}(\Delta_{d,\nu})$ 
the subspace of $H^{1,2}(X,d,\nu)$ 
consisting of 
the functions $f$ such that 
there is $h\in L^2(X,\nu)$ 
satisfying 
\begin{align*}
    \int_X h\varphi d\nu
    =\int_X\langle df,d\varphi\rangle
    d\nu \quad 
    (\forall
    \varphi\in {\rm Lip}_c(X)).
\end{align*}
We define a 
self-adjoint operator $\Delta_{d,\nu}\colon 
\mathcal{D}(\Delta_{d,\nu})
\to L^2(X,\nu)$ by 
$\Delta_{d,\nu} f:=h$, then 
we obtain a spectral structure 
\begin{align*}
    \Sigma(X,d,\nu)
    :=(L^2(X,\nu),
    \Delta_{d,\nu}).
\end{align*}
If $X$ is a smooth manifold 
and there are a Riemannian 
metric $g$ on $X$ and a function 
$\psi\in C^\infty(X)$ 
such that 
$d=d_g$ and $d\nu=e^\psi d\nu_g$, 
then we have 
\begin{align*}
    \Delta_{d,\nu}f
    = \Delta_gf - \langle d\psi,
    df\rangle_g. 
\end{align*}
For the brevity, 
we often write 
$L^2(X)=L^2(X,\nu)$, 
$H^{1,2}(X)=H^{1,2}(X,d,\nu)$ 
or $\Delta_X=\Delta_{d,\nu}$ 
if there is no fear of confusion.

\subsection{Convergence 
of spectral structures}\label{subsec_conv_spec.h}
In this subsection 
we review the notion of 
convergence of the spectral structures, following \cite{KuwaeShioya2003}. 
Here, we take a 
one parameter family 
of Hilbert spaces $\{ H_s\}_{s>0}$, 
unbounded 
self-adjoint operators $\{ A_s\}_{s>0}$ 
and consider the convergence of them as $s\to 0$. 
The following notions can be also defined for sequences. 

Let 
$\{ H_s\}_{s\ge 0}$ 
be a family of 
Hilbert spaces over $\C$. 
Suppose a dense 
linear subspace 
$\mathcal{C}\subset H_0$ 
and linear maps 
$\Phi_s\colon \mathcal{C}
\to H_s$ are given. 
We say the family 
$\{ H_s\}_{s>0}$ 
{\it converges to 
$H_0$ 
as $s\to 0$} if 
\begin{align*}
    \lim_{s\to 0}\| \Phi_s(f)\|_{H_s}
    = \| f\|_{H_0}
\end{align*}
for all $f\in\mathcal{C}$. 
Although 
this convergence may depend on 
the choice of $\mathcal{C}$ 
or $\Phi_s$, 
We often write $H_s \to H_0$ 
for the simplicity. 

Next we define the convergence 
of $\{ f_s\}_s$, where 
$f_s\in H_s$. 

\begin{dfn}
\normalfont
Let $H_s\to H_0$ 
as $s\to 0$, and take 
$f_s\in H_s$ for every $s\ge 0$.
\begin{itemize}
    \item[({\rm i})] 
    {\it $\{f_s\}_{s>0}$ converges to 
    $f_0$ 
    strongly} if there exists 
    a sequence $\{ \tilde{f}_k\}_{k=0}^\infty$ 
    of $\mathcal{C}$ 
    converging to $f_0$ 
    such that 
    \begin{align*}
        \lim_{k\to \infty}
        \limsup_{s\to 0}
        \| \Phi_s(\tilde{f}_k)
        - f_s\|_{H_s}=0.
    \end{align*}
    \item[({\rm ii})] 
    {\it $\{f_s\}_{s>0}$ converges to 
    $f_0$ 
    weakly} if 
\begin{align*}
        \lim_{s\to 0}
        \langle f_s,f'_s\rangle_{H_s}
        =\langle f_0,f'_0\rangle_{H_0}
    \end{align*}
    for all $\{f'_s\}_{s\ge 0}$ 
    with $f'_s\to f'_0$ strongly. 
\end{itemize}
\end{dfn}

Next we consider 
a family of spectral 
structures 
$\Sigma_s=(H_s,A_s)$ 
for every $s\ge 0$. 
Denote by $\mathcal{E}_s$ the 
closed quadratic forms 
defined by $A_s$. 
\begin{dfn}
\normalfont 
Let $H_s\to H_0$ as $s\to 0$. 
In the followings, we suppose 
$f_s,f'_s\in H_s$. 
\begin{itemize}
    \item[({\rm i})] 
    {\it $\{\mathcal{E}_s\}_{s>0}$ 
    Mosco converges to 
    $\mathcal{E}_0$} if 
    \begin{align*}
        \mathcal{E}_0(f_0)\le 
        \liminf_{s\to 0}\mathcal{E}_s(f_s) 
    \end{align*}
    for any family $\{f_s\}_{s\ge 0}$ with $f_s\to f_0$ 
    weakly, 
    and for any $f'_0\in H_0$ 
    there exists a family  
    $\{f'_s\}_{s> 0}$ such that 
    $f'_s\to f'_0$ strongly and 
    \begin{align*}
        \limsup_{s\to 0}\mathcal{E}_s(f'_s)
        \le
        \mathcal{E}_0(f'_0).
    \end{align*}
    \item[({\rm ii})]
    The family $\{\mathcal{E}_s\}_{s>0}$ 
    is {\it asymptotically compact} if for any 
    $\{f_s\}_{s\ge 0}$ such that 
    \begin{align*}
        \limsup_{s\to 0}
        \left( \| f_s\|_{H_s}^2+\mathcal{E}_s(f_s)\right) < \infty,
    \end{align*}
    there exists a sequence 
    $s_i>0$ 
    with $\lim_{i\to \infty}s_i=0$ such that 
    $f_{s_i}\to f_0$ strongly 
    as $i\to \infty$. 
\end{itemize}
\end{dfn}

\begin{dfn}
\normalfont 
Let 
$\Sigma_s=(H_s,A_s)$ be a 
spectral structure for every  
$s\ge 0$
and suppose 
$H_s\to H_0$ as $s\to 0$. 
\begin{itemize}
    \item[({\rm i})] 
    {\it $\{\Sigma_s\}_{s>0}$ converges 
    to $\Sigma_0$ strongly} 
    if $\mathcal{E}_s$ 
    Mosco converges to 
    $\mathcal{E}_0$. 
    \item[({\rm ii})]{\it $\{\Sigma_s\}_{s>0}$ converges 
    to $\Sigma_0$ compactly} 
    if $\Sigma_s\to \Sigma_0$ 
    strongly as $s\to 0$ and 
    $\{\mathcal{E}_s\}_{s\ge 0}$ 
    is asymptotically compact. 
\end{itemize}
\label{def spec conv}
\end{dfn}
\begin{rem}
\normalfont
There are several conditions 
equivalent to 
the strong (resp. compact)  convergence of $\{\Sigma_s\}_{s>0}$. 
See \cite[Theorem 2.4]{KuwaeShioya2003}. 
\end{rem}

\paragraph{\bf Example}
Let 
$\{ (X_s,g_s)\}_{s>0}$ be a family of complete 
Riemannian manifolds 
and suppose 
there is $\kappa\in\R$ such that 
${\rm Ric}_{g_s}\ge \kappa g_s$ 
for all $s$. 
Moreover we take 
$p_s\in X_s$ and assume that 
$(X_s,d_{g_s},\nu_{g_s}/\nu_{g_s}(B(p_s,1)),p_s)$ 
converges to 
some metric measure space 
$(X,d,\nu,p)$ in the sense of 
pointed measured Gromov-Hausdorff 
topology. 
Cheeger and Colding showed that 
if all of $X_s$ are compact 
and 
$\sup_s {\rm diam}(X_s)<\infty$, 
then 
$\Sigma(X_s,g_s)\to 
\Sigma(X,d,\nu)$ compactly as $s\to 0$ 
in \cite{Cheeger-Colding3}. 
In the case of $X_s$ are noncompact or ${\rm diam}(X_s)\to \infty$, then 
Kuwae and Shioya showed that 
$\Sigma(X_s,g_s)\to 
\Sigma(X,d,\nu)$ strongly as $s\to 0$ 
in \cite{KuwaeShioya2003}. 

\begin{dfn}
\normalfont
Let $H_s\to H_0$ as $s\to 0$ and
$B_s\colon H_s\to H_s$ be a bounded 
operator for every $s\ge 0$. 
We say that $B_s\to B_0$ compactly 
as $s\to 0$ if 
\begin{align*}
    \lim_{s\to 0}\langle
    B_s f_s,f'_s\rangle_{H_s}
    =\langle
    B_0 f_0,f'_0\rangle_{H_0}
\end{align*}
for any $f_s,f'_s\in H_s$ with 
$f_s\to f_0$, $f'_s\to f'_0$ 
weakly as $s\to 0$.
\label{def_cpt_conv}
\end{dfn}

\subsection{$S^1$-equivariant 
convergence of metric measure 
spaces}\label{S^1equiv_pmGH.h}
From now on we consider 
metric measure spaces with 
$S^1$-actions. 
We say an action is isomorphic 
if it preserves both 
the metric and the measure. 
If $(\bbS,d,\nu)$ is 
a metric measure space 
with an isomorphic $S^1$-action, 
we denote by 
$\pi\colon \bbS\to \bbS/S^1$ 
the quotient map 
and put 
$X:=\bbS/S^1$, $\bar{u}:=\pi(u)$ 
for $u\in\bbS$. 
$X$ has the 
natural metric 
defined by 
\begin{align*}
    \bar{d}(\bar{u},\bar{u}')
    :=\inf_{e^{\sqrt{-1}t}\in S^1}
    d\left(
    u\cdot e^{\sqrt{-1}t},u'
    \right).
\end{align*}

For example, 
if $\bbS=\bbS(L,h)$ and 
$\hat{g}$ is the metric 
defined by \eqref{connection_metric.h}, 
then we have
\begin{align*}
    \bar{d}_{\hat{g}}=d_g. 
\end{align*}

\begin{dfn}
\normalfont
\mbox{}

\begin{itemize}
    \item [({\rm i})]
    Let $(\bbS,d,\nu)$ 
    and $(\bbS_0,d_0,\nu_0)$ 
    be metric measure 
    spaces with isomorphic  $S^1$-action. 
    An $S^1$-equivariant 
    Borel map 
    $\phi\colon \bbS\to 
    \bbS_0$ is 
    said to be {\it 
    $S^1$-equivariant Borel  
    $\varepsilon$-isometry} 
    if $|d_0(\phi(u),\phi(u'))
    - d(u,u')|<\varepsilon$ 
    for all $u,u'\in \bbS$ and 
    $\bbS_0\subset 
    B(\phi(\bbS),\varepsilon)$.
    \item [({\rm ii})]
    For every $s\ge 0$, let $(\bbS_s,d_s,\nu_s)$ 
    be metric measure 
    space with isomorphic  $S^1$-action and $p_s\in \bbS_s$. 
    Denote by $\pi_s\colon 
    \bbS_s\to \bbS_s/S^1$ 
    the quotient map. 
    The family {\it $(\bbS_s,d_s,\nu_s,p_s)_{s>0}$ 
    converges to 
    $(\bbS_0,d_0,\nu_0,p_0)$ 
    in the sense of 
    $S^1$-equivariant 
    pointed measured Gromov-Hausdorff topology},
    or we also write 
    \begin{align*}
        (\bbS_s,d_s,\nu_s,p_s)
        \SpmGH
        (\bbS_0,d_0,\nu_0,p_0),
    \end{align*}
    if for any $s>0$ there are 
    $\varepsilon_s,R_s,R_s'>0$ and 
    $S^1$-equivariant 
    Borel $\varepsilon_s$-isometry 
    \begin{align*}
        \phi_s\colon \left( \pi_s^{-1}(B(\bar{p}_s,R_s')),p_s\right)
        \to 
        \left( \pi_0^{-1}(B(\bar{p}_0,R_s)),p_0\right)
    \end{align*}
    such that $\lim_{s\to 0}\varepsilon_s=0$, 
    $\lim_{s\to 0}R_s'=\lim_{s\to 0}R_s=\infty$ and 
    \begin{align*}
        \lim_{s\to 0}\int_{\bbS_s}f\circ\phi_s d\nu_s = 
        \int_{\bbS_0}f d\nu_0
    \end{align*}
    for any $f\in C_c(\bbS_0)$. 
\end{itemize}
\label{dfn_S^1pmGH.h}
\end{dfn}

\begin{rem}
\normalfont
The above 
convergence was 
already introduced by 
Fukaya and Yamaguchi 
in more general setting. 
They did not assume that 
the approximation map is 
$S^1$-equivariant. 
They assume 
that it is almost equivariant 
instead. 
See \cite[Definition 4.1]{FukayaYamaguchi1994}.
\end{rem}

Let $(\bbS,d,\nu)$ 
be a metric measure 
space with 
isomorphic $S^1$-action 
and assume that the 
Laplacian $\Delta_\bbS$ 
can be defined. 
Then since 
$\Delta_\bbS$ is $S^1$-equivariant, it induces 
a self-adjoint operator 
on $(L^2(\bbS)\otimes\C)^{\rho_k}$, which we denote by 
$\Delta_\bbS^{\rho_k}$. 
Here, recall that $\rho_k$ is the $1$-dimensional unitary 
representation of $S^1$ defined by 
$\rho_k(\lambda)=\lambda^k$. 
Then we have the spectral 
structure 
\begin{align*}
    \Sigma(\bbS,d,\nu)^{\rho_k}
    :=\left( (L^2(\bbS)\otimes\C)
    ^{\rho_k}, 
    \Delta_\bbS^{\rho_k}\right)
\end{align*}
for each $k \in \Z$.

Let 
$(X,\omega_1,\omega_2,\omega_{3,s})$ be a 
family of \hK structures on 
the $K3$ surface for 
$s>0$, 
$K(s)>0$ be constants depending 
on $s$ and 
$(L,h,\nabla)$ be 
a prequantum bundle 
on $(X,\omega_1)$. 
Let $\bbS=\bbS(L,h)$ 
and 
$\hat{g}_s$ be the Riemannian 
metric defined by $g_s,\nabla$ 
and \eqref{connection_metric.h}. 
Put 
\begin{align*}
    \bbS_s
    :=\left( \bbS,d_{\hat{g}_s},\frac{\nu_{\hat{g}_s}}{K(s)}\right). 
\end{align*}
Now we take points 
$p^b\in\bbS$ for 
$b=1,\ldots,N$. 
We assume 
\begin{align}
    \lim_{s\to 0}d_{g_s}(\bar{p}^b,\bar{p}^{b'})
    &=\infty \quad 
    (\mbox{if }b\neq b'),\label{div1.h}\\
    \left( \bbS_s,p^b\right)
    &\SpmGH 
    \left( \bbS_0^b,p_0^b\right),
    \label{conv1.h}
\end{align}
and put
\begin{align*}
    H_s&:=L^2(\bbS_s)\otimes\C,\\
    H_s^{\rho_k}&:=\left(
    L^2(\bbS_s)\otimes\C
    \right)^{\rho_k},\\
    H_0&:=\bigoplus_{b=1}^N 
    L^2(\bbS_0^b)\otimes\C,\\
    H_0^{\rho_k}&:=\bigoplus_{b=1}^N 
    \left(
    L^2(\bbS_0^b)\otimes\C\right)^{\rho_k}.
\end{align*}
Then we have the convergence 
$H_s^{\rho_k}\to H_0^{\rho_k}$ 
as $s\to 0$ in an obvious way. 
We put 
\begin{align*}
    \bigoplus_{b=1}^N
    \Sigma(\bbS_0^b)^{\rho_k}
    :=\left( H_0^{\rho_k},
    \ 
    \bigoplus_{b=1}^N\Delta_{\bbS_0^b}^{\rho_k}\right).
\end{align*}

Now, since 
$g_s$ is the \hK 
metric, ${\rm Ric}_{g_s}\equiv 0$.  
Then by 
\cite[Proposition 3.15]{HY2019}, 
we have ${\rm Ric}_{\hat{g}_s}
\ge -(1/2) \hat{g}_s$. 
Therefore, we obtain the following.

\begin{fact}[{\cite[Propositions 3.14, 3.15]{HY2019}}]
Let $(X,\omega_1,\omega_2,\omega_{3,s},g_s,L,h,\nabla)$ 
be as above. 
Assume \eqref{div1.h} and \eqref{conv1.h}. 
Then $\Sigma(\bbS_s)^{\rho_k}$ 
converges to 
$\bigoplus_{b=1}^N
\Sigma(\bbS_0^b)^{\rho_k}$ 
strongly. 
\label{fact_storng_conv.h}
\end{fact}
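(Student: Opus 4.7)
The plan is to deduce this from Kuwae-Shioya's general strong spectral convergence theorem \cite[Theorem 2.4]{KuwaeShioya2003}, applied to the family $\bbS_s$, and then to promote the resulting convergence on the full $L^2$ spaces to the $\rho_k$-isotypic subspaces using the $S^1$-equivariance of the approximation maps supplied by \eqref{conv1.h}.

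The first ingredient is the uniform Ricci lower bound $\mathrm{Ric}_{\hat g_s}\ge -\tfrac12 \hat g_s$, which is independent of $s$ because $g_s$ is hyper-K\"ahler (hence Ricci-flat) and the only contribution to ${\rm Ric}_{\hat g_s}$ comes from the $s$-independent curvature $F^\nabla=-\sqrt{-1}\omega_1$ via the bundle formula \eqref{connection_metric.h}; this is the content of \cite[Proposition 3.15]{HY2019}. Together with the pointed measured Gromov-Hausdorff convergence supplied by \eqref{conv1.h}, this places the family in the regime of Kuwae-Shioya, yielding strong convergence of the unrestricted spectral structures $\Sigma(\bbS_s)\to\bigoplus_{b=1}^N\Sigma(\bbS_0^b)$. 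The direct-sum form of the limit reflects \eqref{div1.h}: for $s$ sufficiently small, test functions pulled back via $\phi_s$ from different components $\bbS_0^b$ have pairwise disjoint supports in $\bbS_s$, so they contribute independently to both the $L^2$ inner product and the Dirichlet form.

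The key additional step is the restriction to $\rho_k$-isotypic components. Denote by $\Pi^{\rho_k}$ the orthogonal projection onto $H_s^{\rho_k}$, given by $S^1$-averaging against the character $\overline{\rho_k}$. Because the $S^1$-action is isometric, $\Pi^{\rho_k}$ is a contraction on $L^2$, and via Jensen's inequality applied to the pointwise bound
\begin{align*}
{\rm Lip}(\Pi^{\rho_k}f)(u)\le \int_{S^1}{\rm Lip}(f)(u\lambda)\,d\lambda
\end{align*}
together with the $S^1$-invariance of $\nu_{\hat g_s}$, it is nonincreasing on the Cheeger energy $\mathcal{E}_s$. The $S^1$-equivariance of $\phi_s$ implies that the pullback $\Phi_s=\phi_s^*$ commutes with $\Pi^{\rho_k}$, so $\Pi^{\rho_k}\mathcal{C}\subset H_0^{\rho_k}$ is a dense test set whose $\Phi_s$-image lies in $H_s^{\rho_k}$; this establishes $H_s^{\rho_k}\to H_0^{\rho_k}$. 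The Mosco liminf on the $\rho_k$-component follows from the Mosco liminf on the full spaces, since a weakly convergent sequence in $H_s^{\rho_k}$ is \emph{a fortiori} weakly convergent in $H_s$. For the recovery sequence at $f_0\in H_0^{\rho_k}$, take a full-space recovery sequence $\tilde f_s\in H_s$, and replace it by $\Pi^{\rho_k}\tilde f_s\in H_s^{\rho_k}$; both the strong convergence and the inequality $\limsup_s\mathcal{E}_s(\Pi^{\rho_k}\tilde f_s)\le\mathcal{E}_0(f_0)$ are preserved by the contractivity of $\Pi^{\rho_k}$ together with the identity $\Pi^{\rho_k}f_0=f_0$.

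The main obstacle is justifying the Cheeger-energy contractivity of $\Pi^{\rho_k}$ at the level of the possibly singular limit spaces $\bbS_0^b$, whose geometry near the image of singular elliptic fibers is modelled on the Ooguri-Vafa space. The intrinsic $\rm Lip$-based argument above has to be carried out with enough care to ensure that the $S^1$-averaging respects the Cheeger-Dirichlet structure on these metric measure spaces, which is where \cite[Proposition 3.14]{HY2019} enters. With that in hand, the strong convergence $\Sigma(\bbS_s)^{\rho_k}\to\bigoplus_b\Sigma(\bbS_0^b)^{\rho_k}$ follows at once from the general Kuwae-Shioya framework.
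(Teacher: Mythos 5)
The paper itself does not prove this statement: it is a Fact imported wholesale from \cite[Propositions 3.14, 3.15]{HY2019}, and the only thing verified in the present paper is the hypothesis ${\rm Ric}_{\hat{g}_s}\ge -\tfrac12\hat{g}_s$, which follows from Ricci-flatness of $g_s$ and the fixed curvature $F^\nabla=-\sqrt{-1}\omega_1$ --- a point you identify correctly. Your proposal is therefore a genuinely different route from the source: instead of proving strong convergence of the $\rho_k$-isotypic spectral structures directly by running the Kuwae--Shioya/Cheeger--Colding argument equivariantly (which is what the cited propositions do), you first invoke the non-equivariant strong convergence $\Sigma(\bbS_s)\to\bigoplus_b\Sigma(\bbS_0^b)$ (the direct-sum form being handled, correctly, via \eqref{div1.h} and disjointness of supports of the pulled-back test functions) and then restrict to isotypic components through the averaging projection $\Pi^{\rho_k}$, using that $\Phi_s$ commutes with $\Pi^{\rho_k}$ by $S^1$-equivariance of $\phi_s$, that $\Pi^{\rho_k}$ is an $L^2$-contraction, and that it does not increase the Cheeger energy. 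This reduction is attractive because equivariance enters only through soft commutation and contractivity properties.

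There is, however, one step that does not work as written: the claim that a weakly convergent family in $H_s^{\rho_k}$ is ``a fortiori'' weakly convergent in $H_s$. The implication points the wrong way: weak convergence in the full spaces is tested against \emph{all} strongly convergent families $f'_s\to f'_0$ with $f'_0\in H_0$, a strictly larger class of tests than in the isotypic subspaces, so subspace-weak convergence is a priori the weaker notion. To transfer the Mosco liminf you must argue, for $f_s\in H_s^{\rho_k}$, that $\langle f_s,f'_s\rangle_{H_s}=\langle f_s,\Pi^{\rho_k}f'_s\rangle_{H_s}$ and that $\Pi^{\rho_k}f'_s\to\Pi^{\rho_k}f'_0$ strongly; the latter needs $\Pi^{\rho_k}\mathcal{C}\subset\mathcal{C}$ (or an enlargement of the test class) together with $\Phi_s\circ\Pi^{\rho_k}=\Pi^{\rho_k}\circ\Phi_s$ and the $L^2$-contractivity, and only then does the liminf inequality on the isotypic component follow from the full-space one. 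This is repairable with the tools you already introduce, but it must be spelled out. A smaller point: your attribution of the limit-space energy contractivity to \cite[Proposition 3.14]{HY2019} is a guess and not what that proposition provides --- it is the equivariant convergence statement itself; the contractivity you need is elementary, following from ${\rm Lip}(\Pi^{\rho_k}f)(u)\le\int_{S^1}{\rm Lip}(f)(u\lambda)\,d\lambda$, Jensen's inequality and invariance of the limit measure, and holds on any metric measure space with a measure-preserving isometric $S^1$-action, so you can prove it directly rather than outsourcing it.
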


\section{Main results and outline of the proof}\label{sec_main_results.h}
\subsection{Main results}\label{subsec_main_results.h}
In this subsection we describe 
the main theorems of this paper 
and explain the outline of 
the proof. 

Let $\{ (X,\omega_1,\omega_2,\omega_{3,s})\}_{0<s\le {s_0}}$ 
be a family of 
\hK structures on a $K3$ surface, 
$\mu\colon X\to \mathbb{P}^1$ 
be a special Lagrangian fibration 
of Kodaira type $I_1$. 
Suppose the family 
tending to a large complex structure limit as $s\to 0$. 
We may suppose 
\begin{align*}
    s=\int_{\mu^{-1}(b)}\omega_{3,s}>0
\end{align*}
without loss of generality. 
Here, recall that the above integral is independent of the choice 
of $b$ by Remark 
\ref{rem_HL.h}.
Moreover we assume that 
the cohomology class 
$[\omega_1]$ is in $2\pi H^2(X,\Z)$. 
Then there is a prequantum 
line bundle $(L,h,\nabla)$ 
on $(X,\omega_1)$. Moreover it is 
unique up to rescaling 
and gauge transformations
since the $K3$ surfaces are 
simply-connected by 
\cite[Theorem 2.2.1]{Kostant1970}. 
Denote by 
\begin{align*}
    (g_s,J_{1,s},J_{2,s},J_{3,s})
\end{align*}
the \hK structure 
given by 
$(\omega_1,\omega_2,\omega_{3,s})$. 
Then $J_{3,s}$ is independent of 
$s$, so we write 
$J_3=J_{3,s}$. 
Put $\bbS=\bbS(L,h)$ 
and let $\hat{g}_s$ be the 
Riemannian metric on $\bbS$ 
defined by \eqref{connection_metric.h}. 
Let $\pi\colon \bbS\to X$ 
be the restriction of 
$\pi\colon L\to X$ to $\bbS\subset L$. 

Denote the coordinates on $\R^2$ and $S^1$ by $\xi=(\xi_1,\xi_2)\in\R^2$ and $e^{\sqrt{-1}t}\in S^1$. 
Define a Riemannian 
metric $\hat{g}_{0,m}$ and 
a measure 
$\hat{\nu}_0$ on $S^1\times \R^2$ by 
\begin{align*}
\hat{g}_{0,m}
&:= \frac{(dt)^2}{m^2(1+\| \xi\|^2)}+(d\xi_1)^2+(d\xi_2)^2\quad(m\in\Z_{>0}),\\
d\hat{\nu}_0&:=d\xi_1 d\xi_2dt,    
\end{align*}
where $\| \xi\|^2=\xi_1^2+\xi_2^2$, 
and put 
\begin{align*}
   \bbS_{0,m}
    :=\left( S^1\times \R^2,\, 
    d_{\hat{g}_{0,m}},\,
    \hat{\nu}_0\right).
\end{align*}
We define an isomorphic $S^1$-action 
on $\bbS_{0,m}$ by 
\begin{align*}
   \left(e^{\sqrt{-1}t},\xi\right)\cdot e^{\sqrt{-1}\tau}
   := \left(e^{\sqrt{-1}(t+m\tau)},\xi\right)
\end{align*}
for $e^{\sqrt{-1}\tau}\in S^1$. 

The next theorem is 
the first main result 
in this paper. 
\begin{thm}
Let $b\in \bbP^1$ and 
$p^b\in (\mu\circ\pi)^{-1}(b)$. If $b\in BS_m^{\rm str}$, 
then 
    \begin{align*}
        \left(\bbS,d_{\hat{g}_s},
        \frac{\nu_{\hat{g}_s}}{s},p^b
        \right)\SpmGH (\bbS_{0,m},(1_{S^1},\mathbf{0}_{\R^2}))
    \end{align*}
    as $s\to 0$. 
    Moreover, if 
    $b,b'\in  BS_k$ 
    and $b\neq b'$, then 
    $\lim_{s\to 0}d_{g_s}
    (\bar{p}^b,\bar{p}^{b'})
    =\infty$. 
\label{thm_pmGH.h}
\end{thm}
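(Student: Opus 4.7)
The plan is to reduce to an explicit local model via the Gross-Wilson approximation \cite{GW2000} and then exhibit an $S^1$-equivariant Borel $\varepsilon$-isometry from a large ball in $(\bbS,d_{\hat g_s},\nu_{\hat g_s}/s,p^b)$ onto a large ball in $\bbS_{0,m}$. By \cite{GW2000}, the \hK metrics $g_s$ are exponentially close to a gluing of the semi-flat metric (on the complement of small neighborhoods $W_i$ of the $24$ singular fibers) with an Ooguri-Vafa metric on each $W_i$. Since singular fibers are isolated, a neighborhood of $\mu^{-1}(b)$ sits either entirely in the semi-flat region (if $b$ is a regular value) or in a single Ooguri-Vafa chart (if $b$ is a critical value); up to exponentially small error we may replace $g_s$ by the relevant model metric, and the detailed analysis is carried out in Sections \ref{sec_nonsing.h}--\ref{sec_sing.h}.

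At a regular value $b$, take local action-angle coordinates $(\xi,\theta)\in U\times T^2$ with $\omega_1=\sum d\xi_i\wedge d\theta_i$ and a trivialization of $L$ over $\mu^{-1}(U)$ so that $p^b$ is represented by $(t,\xi,\theta)=(0,0,0)$. The semi-flat metric has the form $g_s^{\mathrm{sf}}\asymp s^{-1}g_B(\xi)+s\,g_F(\xi)$, so the base stretches by factor $s^{-1/2}$ and the fibers shrink by factor $s^{1/2}$. Choose the gauge so that $\Gamma^\nabla=i(dt+(a-\xi)\cdot d\theta)$, where $a\in\R^2/2\pi\Z^2$ encodes the holonomy of $\nabla|_{\mu^{-1}(b)}$; the condition $b\in BS_m^{\mathrm{str}}$ forces the associated character on $\pi_1(T^2)=\Z^2$ to have image $\Z/m\Z\subset S^1$, so after an integral change of basis we may take $a=(2\pi/m,0)$. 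Thus
\[
\hat g_s=(dt+(a-\xi)\cdot d\theta)^2+s^{-1}g_B(\xi)+s\,g_F(\xi),
\]
and we define
\[
\phi_s(t,\xi,\theta):=\bigl(e^{im(t+a\cdot\theta)},\,\xi/\sqrt{s}\bigr)\in S^1\times\R^2=\bbS_{0,m}.
\]
This is well-defined (since $ma=2\pi(1,0)\in 2\pi\Z^2$) and $S^1$-equivariant ($t\mapsto t+\tau$ matches $u\mapsto u+m\tau$), and a Jacobian calculation yields $(\phi_s)_*(\nu_{\hat g_s}/s)\to d\hat\nu_0$ after integrating out the collapsing $\theta$-factor of coordinate volume $1$.

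The essential step is the verification that $\phi_s$ is an $\varepsilon_s$-isometry on any fixed $\hat g_s$-ball. The underlying fact is the collapse of the flat $T^3$-fiber $\bbS|_{\mu^{-1}(b)}$ to an $S^1$ of length $2\pi/m$: in the coordinates $u=t+a\cdot\theta$, the restricted metric $\hat g_s|_{\xi=0}=du^2+s|d\theta|^2$ is flat, and as $s\to 0$ the $\theta$-direction collapses, so that the $u$-lattice generated by $2\pi,a_1=2\pi/m,a_2=0$ becomes $(2\pi/m)\Z$, giving an $S^1$ of length $2\pi/m$. For $\xi\neq 0$, a quadratic minimization over the $\theta$-path in the geodesic calculation yields the effective squared distance matching $d_{\hat g_{0,m}}^2$ in the rescaled variables $\eta=\xi/\sqrt{s}$ and $u=m(t+a\cdot\theta)$, up to an error tending to $0$ as $s\to 0$. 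At a critical value of type $I_1$, the same strategy applies with the Ooguri-Vafa metric replacing the semi-flat one; the nodal point of the singular fiber has $\hat g_s$-volume $O(s)$ and is absorbed into the collapsing $\theta$-direction.

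For the divergence statement, Proposition \ref{prop_exist_fam_lcslimit.h} gives $[\omega_{3,s}]\cdot\mu^*[\omega_{FS}]\sim v_X/(2s)$, so the area of $\bbP^1$ in the metric induced from $g_s$ is $\asymp s^{-1}$ and its diameter is $\asymp s^{-1/2}$; thus $d_{g_s}(\bar p^b,\bar p^{b'})\to\infty$ for any fixed $b\neq b'$. The main obstacle is promoting the Gross-Wilson exponential approximation from the Monge-Amp\`ere level to a comparison of $\hat g_s$-distances in large balls around $p^b$, and verifying that the local quadratic shortcut formula for distances remains valid globally (not merely infinitesimally); this requires careful control of the geodesic lifts in the stretched base.
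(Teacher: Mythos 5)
Your overall strategy---replace $g_s$ by the Gross--Wilson glued model and exhibit an explicit $S^1$-equivariant almost isometry onto $\bbS_{0,m}$ after rescaling the base by $s^{-1/2}$---is the same as the paper's, but the two steps you explicitly leave open are exactly where the real work lies, so as it stands there are genuine gaps. First, \cite{GW2000} does not give that $g_s$ is ``exponentially close'' to the glued metric \emph{as a metric}: the exponential smallness there concerns the Ricci potential and the $C^0$-norm of the Monge--Amp\`ere potential, while the second-order comparison \cite[Lemma 5.3]{GW2000} only yields a bi-Lipschitz constant that is uniform in $s$, not tending to $1$; a fixed bi-Lipschitz distortion destroys any $\varepsilon_s$-isometry claim. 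The paper closes this by improving the Yau-type estimate (Lemma \ref{lem_C^2.h}), obtaining $C_s^{-1}g_s'\le g_s\le C_s g_s'$ with $C_s\to 1$, and then transferring the convergence via Lemma \ref{lem_approx1.h}; you name this as ``the main obstacle'' but do not resolve it. Second, the critical-value case cannot be dispatched by ``the same strategy'': Bohr--Sommerfeld points can be critical values, and near an $I_1$ fiber there are no action--angle coordinates, the monodromy rules out your flat character normal form $a$, the correct base rescaling is the logarithmic one $\zeta_s(y)=\sqrt{\log|y|^{-1}/(2\pi s)}\,y$ rather than $\xi/\sqrt{s}$, and one must prove the holonomy/topology statements (Lemmas \ref{lem_star1_4.h}, \ref{lem_holonomy.h}) and control the \emph{diameter} of the region around the nodal fiber where the metric is not comparable to the model; observing that the node has volume $O(s)$ gives no Gromov--Hausdorff control. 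Relatedly, even in the regular case your ``quadratic minimization'' is only an infinitesimal computation: one must handle competitor paths that leave the chart or repeatedly enter the degenerate region, which is why the paper introduces the truncated metric $\hat{g}_0^\sigma$ and the path-surgery lemmas before Proposition \ref{prop_alm_isom.h}.

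The divergence statement is also not proved by your argument: base area $\asymp s^{-1}$ does not imply base diameter $\asymp s^{-1/2}$ (area gives no lower bound on diameter), and even a diverging diameter would not show that the distance between the two specific points $\bar{p}^b,\bar{p}^{b'}$ diverges. What works, and what the paper uses, is the local lower bound $g_\perp\ge(1+\delta)^{-1}(\zeta\circ\mu)^*g_0$ together with Lemma \ref{lem_met_loc.h}: for every fixed $R$ the ball of radius $R$ around $\mu^{-1}(b)$ is eventually contained in $\mu^{-1}(W_{a_b})$ for pairwise disjoint neighborhoods $W_{a_b}$ of the points of $BS_k$, whence $d_{g_s}(\bar{p}^b,\bar{p}^{b'})>R$ for small $s$. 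Finally, a small but real slip: with $\theta\in\R^2/2\pi\Z^2$ and holonomy of exact level $m$, the parameter satisfies $a\in\frac{1}{m}\Z^2$, not $a=(2\pi/m,0)$; as written, $e^{\sqrt{-1}m(t+a\cdot\theta)}$ is not well defined under $\theta\mapsto\theta+2\pi e_1$ since $ma_1=2\pi\notin\Z$.
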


By assuming Theorem \ref{thm_pmGH.h}, 
we can show the next lemma. 
\begin{lem}
For $b\in BS_m^{\rm str}$, put $m(b):=m$. 
$\Sigma(\bbS_s)^{\rho_k}$ 
converges to 
$\bigoplus_{b\in BS_k}
\Sigma(\bbS_{0,m(b)})^{\rho_k}$ 
strongly. 
\label{lem_strong_conv.h}
\end{lem}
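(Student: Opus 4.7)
The plan is to apply Fact \ref{fact_storng_conv.h} directly, using the basepoints indexed by $BS_k$ and feeding in the convergence data provided by Theorem \ref{thm_pmGH.h}. The only genuine work is the verification of the hypotheses; no new analysis is needed once Theorem \ref{thm_pmGH.h} is in hand.

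First I would check that $BS_k$ is a finite set. On the regular locus of $\mu$ the Bohr-Sommerfeld condition at level $k$ translates, via the action coordinates of the special Lagrangian fibration, into a discrete lattice-type condition on $\bbP^1$; together with the $24$ singular fibers of Kodaira type $I_1$ this shows $\#BS_k<\infty$. Next, for each $b\in BS_k$ I pick any reference point $p^b\in\pi^{-1}(\mu^{-1}(b))\cap\bbS$. Since by definition $BS_k=\bigsqcup_{m\mid k}BS_m^{\rm str}$, there is a unique integer $m(b)\mid k$ with $b\in BS_{m(b)}^{\rm str}$, so the notation $m(b)$ appearing in the statement is well-defined.

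Now I apply Theorem \ref{thm_pmGH.h} to each such $b$. It supplies, for every $b\in BS_k$,
\begin{align*}
\left(\bbS,\, d_{\hat{g}_s},\, \tfrac{\nu_{\hat{g}_s}}{s},\, p^b\right)\SpmGH \left(\bbS_{0,m(b)},\, (1_{S^1},\mathbf{0}_{\R^2})\right)
\end{align*}
as $s\to 0$, together with $\lim_{s\to 0} d_{g_s}(\bar{p}^b,\bar{p}^{b'})=\infty$ for distinct $b,b'\in BS_k$. These are precisely the hypotheses \eqref{div1.h} and \eqref{conv1.h} of Fact \ref{fact_storng_conv.h}, taken with $K(s)=s$, $N=\#BS_k$, and $\bbS_0^b=\bbS_{0,m(b)}$. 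The Ricci bound ${\rm Ric}_{\hat{g}_s}\ge -\tfrac{1}{2}\hat{g}_s$ required for that Fact holds because $g_s$ is hyper-K\"ahler. Invoking Fact \ref{fact_storng_conv.h} then immediately yields the desired strong convergence
\begin{align*}
\Sigma(\bbS_s)^{\rho_k}\longrightarrow \bigoplus_{b\in BS_k}\Sigma(\bbS_{0,m(b)})^{\rho_k}.
\end{align*}

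The only conceptual point worth emphasizing, which explains why the index runs over $BS_k$ rather than over $\bigcup_m BS_m$, is the compatibility between the twisted $S^1$-action on $\bbS_{0,m}$ (weight $m$) and the representation $\rho_k$: the subspace $(L^2(\bbS_{0,m})\otimes\C)^{\rho_k}$ is nonzero exactly when $m\mid k$, that is, exactly when $b\in BS_k$, so restricting the family of basepoints to $BS_k$ already captures the entire $\rho_k$-limit and loses no information. Beyond this bookkeeping, the argument is a direct corollary of Theorem \ref{thm_pmGH.h} and Fact \ref{fact_storng_conv.h}, so the main obstacle is entirely absorbed into Theorem \ref{thm_pmGH.h}, whose proof (carried out in subsequent sections via gluing of standard semi-flat and Ooguri-Vafa models) is the true technical heart.
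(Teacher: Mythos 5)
Your proposal is correct and follows the same route as the paper: the paper's proof of this lemma is exactly the one-line deduction from Fact \ref{fact_storng_conv.h} (with $K(s)=s$, basepoints $p^b$ for $b\in BS_k$, and the Ricci bound from the \hK condition) combined with Theorem \ref{thm_pmGH.h}, which supplies hypotheses \eqref{div1.h} and \eqref{conv1.h}. Your added remarks on the finiteness of $BS_k$ and on why indexing over $BS_k$ loses nothing for the $\rho_k$-isotypic part are consistent with what the paper establishes elsewhere (Lemma \ref{lem_BS_finite.h} and Fact \ref{fact_gauss.h}).
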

\begin{proof}
It follows from Fact 
\ref{fact_storng_conv.h} 
and Theorem \ref{thm_pmGH.h}. 
\end{proof}

The next theorem is 
the second main result. 
\begin{thm}
Let $\mathcal{E}_s^{\rho_k}$ 
be the closed quadratic form 
associated with 
$\Sigma(\bbS_s)^{\rho_k}$.  
Then the family 
$\{\mathcal{E}_s^{\rho_k} \}_{s>0}$ is asymptotically compact with respect to the 
strong convergence 
$\Sigma(\bbS_s)^{\rho_k}
\to \bigoplus_{b\in BS_k}
\Sigma(\bbS_{0,m(b)})^{\rho_k}$. 
\label{thm_asymp_cpt.h}
\end{thm}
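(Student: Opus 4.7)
The plan is to verify the defining property of asymptotic compactness directly: given a family $\{f_s\}_{s>0}$ with $f_s\in H_s^{\rho_k}$ satisfying
\begin{align*}
\limsup_{s\to 0}\bigl(\|f_s\|_{H_s^{\rho_k}}^2 + \mathcal{E}_s^{\rho_k}(f_s)\bigr) < \infty,
\end{align*}
I would extract a subsequence $s_i\to 0$ and a limit $f_0\in H_0^{\rho_k}=\bigoplus_{b\in BS_k}(L^2(\bbS_{0,m(b)})\otimes\C)^{\rho_k}$ so that $f_{s_i}\to f_0$ strongly. The argument has two halves: local compactness near each Bohr-Sommerfeld point, and a tail estimate controlling the mass in the complement.

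First I would establish local compactness. Pulling $f_s$ back via the $S^1$-equivariant $\varepsilon_s$-isometry of Theorem \ref{thm_pmGH.h} yields a $\rho_k$-equivariant function of bounded $H^{1,2}$-norm on the preimage of each ball $B(\bar p^b,R)\subset\bbS_{0,m(b)}/S^1$. Since the uniform Ricci lower bound $\mathrm{Ric}_{\hat g_s}\ge -(1/2)\hat g_s$ underlying Fact \ref{fact_storng_conv.h} is $s$-independent, the Cheeger-Colding/Rellich-type compactness on the smooth limits $\bbS_{0,m(b)}$, together with a diagonal argument over $R\to\infty$ and the finite set $BS_k$, produces a subsequence converging to some $f_0^b$ strongly in $L^2$ on the preimage of any fixed ball around each $\bar p^b$.

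The heart of the matter is showing that the $L^2$-mass lying in the complement of these balls tends to $0$ as $R\to\infty$, uniformly in $s$. Here one exploits the $\rho_k$-equivariance: along each principal $S^1$-fiber of $\bbS$ over a point of $\mu^{-1}(b')$, the function is a section of $L^k|_{\mu^{-1}(b')}$, and its fiberwise $L^2$ norm is controlled by the fiberwise energy through the spectrum of the connection Laplacian of $\nabla|_{\mu^{-1}(b')}$ on $L^k$. When $b'\notin BS_k$ the holonomy contains a nontrivial character, producing a positive spectral gap that depends continuously on $b'$; on the complement of a small neighborhood of the finite set $BS_k$ in $\bbP^1$ one gets a uniform lower bound $\lambda_0(R)>0$, with $\lambda_0(R)\to\infty$ as $R\to\infty$. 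The identification of Subsection \ref{subsec_hol_eigen.h} relates this fiberwise operator to a piece of $\Delta_{\hat g_s}^{\rho_k}$, so integrating the fiberwise inequality against the bounded total energy and combining with the local strong convergence via a cutoff $\chi_R$ and a commutator estimate $\|[\chi_R,\Delta_{\hat g_s}^{\rho_k}]\|\to 0$ yields the required strong convergence.

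The main obstacle is this uniform fiberwise spectral gap in the degenerating Gross-Wilson family. Near each of the $24$ singular fibers the geometry collapses anisotropically, so one cannot simply quote a continuous spectrum-of-holonomy argument on a fixed torus. Instead I would split $\bbP^1\setminus BS_k$ into a semi-flat region and Ooguri-Vafa caps, and on each piece use the explicit approximating metrics from Sections \ref{sec_nonsing.h} and \ref{sec_sing.h} to track the holonomy in local coordinates and produce an $s$-uniform positive lower bound for the first nontrivial eigenvalue of the fiberwise connection Laplacian, scaling correctly with the measure $\nu_{\hat g_s}/s$.
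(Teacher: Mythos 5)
Your proposal follows essentially the same route as the paper: the tail estimate comes from the fiberwise spectral gap of the flat connection on $L^k$ over non--Bohr--Sommerfeld torus fibers (the content of Lemma \ref{lem_review_HY2019.h}), made uniform in $s$ exactly as you suggest by treating the semi-flat region and the Ooguri--Vafa caps separately with the explicit approximating metrics (leading to \eqref{ineq_lower_est_df_nonsing.h}, Proposition \ref{prop_lower_est_df_sing.h} and Corollary \ref{cor_lower_est_df_global.h}). The local compactness near the points $p^b$, which you argue via the uniform Ricci lower bound and a Cheeger--Colding/Rellich argument, is likewise what the paper uses, delegated to \cite[Proposition 4.7]{HY2019} together with Proposition \ref{prop_norm_near_BS.h}.
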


Combining Lemma 
\ref{lem_strong_conv.h} and 
Theorem \ref{thm_asymp_cpt.h}, 
we have the following results. 
\begin{thm}
$\Sigma(\bbS_s)^{\rho_k}$ 
converges to 
$\bigoplus_{b\in BS_k}
\Sigma(\bbS_{0,m(b)})^{\rho_k}$ 
compactly. 
\label{thm_cpt_conv_spec_str.h}
\end{thm}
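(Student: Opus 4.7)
The plan is to observe that Theorem \ref{thm_cpt_conv_spec_str.h} is a direct consequence of the two preceding results, since by the very definition given in Section \ref{subsec_conv_spec.h}, compact convergence of spectral structures is exactly the conjunction of strong convergence and asymptotic compactness of the associated closed quadratic forms. So the proof reduces to assembling Lemma \ref{lem_strong_conv.h} and Theorem \ref{thm_asymp_cpt.h}.

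Concretely, first I would recall that $\Sigma(\bbS_s)^{\rho_k} = ((L^2(\bbS_s)\otimes \C)^{\rho_k}, \Delta_{\bbS_s}^{\rho_k})$, and that the limit spectral structure $\bigoplus_{b \in BS_k} \Sigma(\bbS_{0,m(b)})^{\rho_k}$ lives on the Hilbert space $H_0^{\rho_k} = \bigoplus_{b \in BS_k} (L^2(\bbS_{0,m(b)})\otimes \C)^{\rho_k}$. The convergence $H_s^{\rho_k} \to H_0^{\rho_k}$ and the specification of the admissible class $\mathcal{C}$ and approximation maps $\Phi_s$ are already set up in Subsection \ref{S^1equiv_pmGH.h} using the $S^1$-equivariant pointed measured Gromov-Hausdorff convergence provided by Theorem \ref{thm_pmGH.h}.

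Next, Lemma \ref{lem_strong_conv.h} gives the strong convergence $\Sigma(\bbS_s)^{\rho_k} \to \bigoplus_{b \in BS_k} \Sigma(\bbS_{0,m(b)})^{\rho_k}$, which by definition means that the associated quadratic forms $\mathcal{E}_s^{\rho_k}$ Mosco converge to $\mathcal{E}_0^{\rho_k}:=\bigoplus_{b\in BS_k}\mathcal{E}_{0,m(b)}^{\rho_k}$. Then Theorem \ref{thm_asymp_cpt.h} provides the asymptotic compactness of the family $\{\mathcal{E}_s^{\rho_k}\}_{s>0}$ relative to this strong convergence. Combining the two, the pair (Mosco convergence, asymptotic compactness) is exactly the content of compact convergence of spectral structures in the sense of Kuwae-Shioya.

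There is essentially no obstacle at this stage, since all the difficulty has been pushed into the preceding statements: the geometric input (Theorem \ref{thm_pmGH.h}, proved by gluing semi-flat and Ooguri-Vafa models as outlined in Sections \ref{sec_approx.h}, \ref{sec_nonsing.h}, \ref{sec_sing.h}) feeds into the strong convergence via Fact \ref{fact_storng_conv.h}, and the genuinely hard analytic step is the asymptotic compactness Theorem \ref{thm_asymp_cpt.h}, which requires the localization argument on $\bbS$ carried out in Section \ref{sec_cpt_conv1.h}. For the present statement, therefore, the proof is just the one-line assembly: \emph{strong convergence (Lemma \ref{lem_strong_conv.h}) plus asymptotic compactness (Theorem \ref{thm_asymp_cpt.h}) yields compact convergence by the definition in Subsection \ref{subsec_conv_spec.h}}.
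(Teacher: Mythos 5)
Your proposal is correct and is exactly the paper's argument: the theorem is stated as the combination of Lemma \ref{lem_strong_conv.h} (strong convergence, i.e.\ Mosco convergence of the forms) with Theorem \ref{thm_asymp_cpt.h} (asymptotic compactness), which by the definition of compact convergence in Subsection \ref{subsec_conv_spec.h} gives the conclusion. No further content is needed.
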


The spectral structure 
of $\bbS_{0,m}$ was 
already known by \cite{hattori2019} 
as follows. 
For a positive integer $k$, 
let 
\begin{align*}
    H_{\R^2}^k&:=
    L^2(\R^2,e^{-k\| \xi\|^2}d\xi_1d\xi_2)\otimes \C,\\
    \Delta_{\R^2}^k f
    &:= -\left(\frac{\del^2 f}{\del \xi_1^2}+\frac{\del^2 f}{\del \xi_2^2}\right) 
    + 2k\left( \xi_1\frac{\del f}{\del \xi_1}
    +\xi_2\frac{\del f}{\del \xi_2}\right)
\end{align*}
for $f=f(\xi_1,\xi_2)$. 
Here, $\Delta_{\R^2}^k$ is 
the Laplacian on the 
Gaussian space 
$(\R^2,d\xi_1^2+d\xi_2^2,e^{-k\| \xi\|^2}d\xi_1d\xi_2)$. 
For a spectral structure 
$\Sigma=(H,A)$ and constants 
$a_1>0$, $a_2\in \R$, 
we put 
$a_1\Sigma+a_2:=(H,a_1A+a_2\cdot{\rm id}_H)$. 
If $H=\{ 0\}$, then we write 
$\Sigma=0$.

\begin{fact}[{\cite[Section 8]{hattori2019}}]
Let $k$ be a positive 
integer. 
If $k\in m\Z$, 
we have 
\begin{align*}
    \Sigma(\bbS_{0,m})^{\rho_k}-(k^2+2k)
    \cong (H_{\R^2}^k,
    \Delta_{\R^2}^k).
\end{align*}
If $k\notin m\Z$, 
we have 
$\Sigma(\bbS_{0,m})^{\rho_k}=0$.
\label{fact_gauss.h}
\end{fact}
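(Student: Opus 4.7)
The plan is to reduce both sides to an explicit comparison on a single Fourier mode of the $S^1$ fiber of $\bbS_{0,m}$ and then perform a Gaussian conjugation.

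First I would decompose the $\rho_k$-isotypic subspace by Fourier series in the $S^1$ variable. Writing $f(e^{\sqrt{-1}t},\xi)=\sum_{n\in\Z}e^{\sqrt{-1}nt}a_n(\xi)$, the equivariance $f(u\cdot e^{\sqrt{-1}\tau})=e^{-\sqrt{-1}k\tau}f(u)$ combined with the prescribed $S^1$-action $(e^{\sqrt{-1}t},\xi)\cdot e^{\sqrt{-1}\tau}=(e^{\sqrt{-1}(t+m\tau)},\xi)$ forces $nm=-k$ whenever $a_n\neq 0$. Thus $\Sigma(\bbS_{0,m})^{\rho_k}=0$ when $k\notin m\Z$, and when $k\in m\Z$ every element has the unique form $f=e^{\sqrt{-1}n_0 t}a(\xi)$ with $n_0=-k/m$, giving a linear bijection $f\leftrightarrow a$ onto $L^2(\R^2,d\xi_1d\xi_2)\otimes\C$.

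Second, I would compute the Dirichlet form on such a mode. Since $\hat{g}_{0,m}$ is diagonal with $\hat{g}_{0,m}^{tt}=m^2(1+\|\xi\|^2)$ and $\hat{g}_{0,m}^{\xi_j\xi_j}=1$, and $d\hat{\nu}_0=dt\,d\xi_1 d\xi_2$, Fubini in $t$ and the identity $m^2 n_0^2=k^2$ yield
\begin{align*}
\mathcal{E}(f)&=2\pi\int_{\R^2}\bigl(k^2(1+\|\xi\|^2)|a|^2+|\nabla a|^2\bigr)d\xi_1 d\xi_2,\\
\|f\|^2&=2\pi\int_{\R^2}|a|^2\,d\xi_1 d\xi_2.
\end{align*}

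Third, I would apply the Gaussian substitution $a(\xi)=e^{-k\|\xi\|^2/2}\tilde{a}(\xi)$. A product-rule expansion of $|\nabla a|^2$ followed by one integration by parts on the cross term $-k\xi\cdot\nabla|\tilde{a}|^2$ replaces the oscillator potential $k^2\|\xi\|^2$ by $-k^2\|\xi\|^2+2k$, and a short computation gives
\begin{align*}
\mathcal{E}(f)-(k^2+2k)\|f\|^2=2\pi\int_{\R^2}|\nabla\tilde{a}|^2 e^{-k\|\xi\|^2}d\xi_1 d\xi_2,
\end{align*}
while $\|f\|^2=2\pi\|\tilde{a}\|_{H_{\R^2}^k}^2$. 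A further integration by parts against the Gaussian measure identifies the right-hand side with the Dirichlet form of $\Delta_{\R^2}^k$, so the map $f\mapsto \tilde{a}$, rescaled by $\sqrt{2\pi}$, is a unitary that intertwines $\Delta_{\bbS_{0,m}}^{\rho_k}-(k^2+2k)$ with $\Delta_{\R^2}^k$ on a dense core such as smooth compactly supported functions, and hence on the self-adjoint closures.

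The main point to arrange carefully, rather than any analytic obstacle, is the shift $k^2+2k$: it is chosen so that the oscillator potential $k^2\|\xi\|^2$ is canceled by the Gaussian conjugation and only the zero-point contribution $2k$ balances the shift itself. Density of smooth compactly supported functions on both sides makes the identification of operator domains routine.
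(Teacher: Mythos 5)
Your argument is correct: the equivariance condition under the twisted action forces a single Fourier mode $e^{-\sqrt{-1}(k/m)t}a(\xi)$ (hence the vanishing when $m\nmid k$), the factor $m^2(1+\|\xi\|^2)$ in $\hat{g}_{0,m}^{tt}$ turns the $t$-derivative into the oscillator potential $k^2(1+\|\xi\|^2)$, and the Gaussian ground-state substitution $a=e^{-k\|\xi\|^2/2}\tilde{a}$ with one integration by parts identifies the shifted form with the Dirichlet form of $\Delta_{\R^2}^k$ on the Gaussian space, with the form-core/domain issues indeed routine. The paper itself does not reprove this statement but quotes it from \cite[Section 8]{hattori2019}, and your computation is essentially the same standard route (isotypic Fourier decomposition plus Gaussian conjugation) that the cited reference follows, so it serves as a complete verification of the Fact as used here.
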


Now, we put 
\begin{align*}
    \Sigma(X_{J_{1,s}},L^k)
    :=\left( L^2\left( X,\frac{\omega_1^2}{2s}, L,h\right),
    \Delta_{k,\delb_{J_{1,s}}}\right).
\end{align*}
Here, 
the norm of the Hilbert space 
$L^2\left( X,\omega_1^2/2s, L,h\right)$ is given by 
$\| \varphi\|_{L^2}^2:=
\int_X\frac{|\varphi|_h^2}{2s}\omega_1^2$ 
for a section $\varphi\colon 
X\to L^k$. 
Note that 
we have $d\nu_{g_s}
=\omega_1^2/2$ 
for a \hK metric. 
By the identification 
\eqref{eq_dbar_met_lap.h}, 
we have 
\begin{align*}
    \Sigma(\bbS_s)^{\rho_k}
    \cong 2\Sigma(X_{J_{1,s}},L^k)
    +k^2+2k. 
\end{align*}
Then by Fact \ref{fact_gauss.h} 
and Theorem \ref{thm_cpt_conv_spec_str.h}, 
we have the following. 
\begin{thm}
$\Sigma(X_{J_{1,s}},L^k)$ 
converges to 
$\bigoplus_{b\in BS_k}
(H_{\R^2}^k,
    \Delta_{\R^2}^k/2)$ 
compactly. 
\label{thm_cpt_conv_spec_str2.h}
\end{thm}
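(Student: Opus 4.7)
The plan is to reduce Theorem \ref{thm_cpt_conv_spec_str2.h} directly to the three preceding inputs: the compact spectral convergence of Theorem \ref{thm_cpt_conv_spec_str.h}, the explicit identification of the limit model in Fact \ref{fact_gauss.h}, and the algebraic relation $\Sigma(\bbS_s)^{\rho_k} \cong 2\Sigma(X_{J_{1,s}},L^k) + (k^2+2k)$ coming from \eqref{eq_dbar_met_lap.h} together with the identification \eqref{section_fct.h}. No further geometric input is needed; all the real analytic work has been carried out upstream.

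First I would verify that for every $b \in BS_k$ one has $m(b) \mid k$: by definition $b \in BS_{m(b)}^{\rm str}$ means the $\nabla$-holonomy on $L|_{\mu^{-1}(b)}$ has order exactly $m(b)$, so the $L^k$-holonomy is trivial precisely when $m(b)$ divides $k$, which is the content of $b \in BS_k$. Consequently Fact \ref{fact_gauss.h} applies to every summand on the right-hand side of Theorem \ref{thm_cpt_conv_spec_str.h} nontrivially and yields
\begin{align*}
    \Sigma(\bbS_{0,m(b)})^{\rho_k} \cong (H_{\R^2}^k,\Delta_{\R^2}^k) + (k^2+2k) \qquad (b \in BS_k).
\end{align*}

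Next I would push the compact convergence across the affine map of spectral structures $\Sigma \mapsto \tfrac{1}{2}(\Sigma - (k^2+2k))$. Compact convergence is preserved under rescaling the operator by a positive constant and subtracting a real multiple of the identity, since the underlying Hilbert space convergence is unaffected and the associated closed quadratic forms transform by $\mathcal{E} \mapsto \tfrac{1}{2}\mathcal{E} - \tfrac{k^2+2k}{2}\|\cdot\|^2$, so that the Mosco $\liminf$/$\limsup$ inequalities and the asymptotic compactness condition of Subsection \ref{subsec_conv_spec.h} all transfer verbatim. Combined with $\Sigma(\bbS_s)^{\rho_k} \cong 2\Sigma(X_{J_{1,s}},L^k) + (k^2+2k)$, this produces exactly the claimed compact convergence $\Sigma(X_{J_{1,s}},L^k) \to \bigoplus_{b \in BS_k}(H_{\R^2}^k,\Delta_{\R^2}^k/2)$.

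The main obstacle here is essentially nil: at this stage the theorem is a bookkeeping corollary, with the genuine difficulty already absorbed into Theorems \ref{thm_pmGH.h} and \ref{thm_asymp_cpt.h} (hence Theorem \ref{thm_cpt_conv_spec_str.h}) and the $S^1$-equivariant reformulation of the $\delb$-Laplacian in Subsection \ref{subsec_hol_eigen.h}. The only pedantic point worth isolating is the invariance of compact convergence under the affine transformation above, which follows immediately from the definitions in Subsection \ref{subsec_conv_spec.h}.
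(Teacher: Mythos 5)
Your proposal is correct and follows essentially the same route as the paper, which deduces Theorem \ref{thm_cpt_conv_spec_str2.h} immediately from Theorem \ref{thm_cpt_conv_spec_str.h}, Fact \ref{fact_gauss.h}, and the identification $\Sigma(\bbS_s)^{\rho_k}\cong 2\Sigma(X_{J_{1,s}},L^k)+k^2+2k$ coming from \eqref{eq_dbar_met_lap.h}. Your added checks (that $m(b)\mid k$ for $b\in BS_k$, and that compact convergence survives the affine rescaling of the spectral structure) are exactly the bookkeeping the paper leaves implicit.
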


So our goal is to prove 
Theorems \ref{thm_pmGH.h} 
and \ref{thm_asymp_cpt.h}.

\subsection{Approximation of metrics}\label{subsec_outline.h}
Let $(\bbS,\hat{g}_s)$ 
be as in Subsection 
\ref{subsec_main_results.h}. 
To show Theorems \ref{thm_pmGH.h} 
and \ref{thm_asymp_cpt.h}, 
we need to study the asymptotic 
behavior of the metrics 
$\hat{g}_s$ on 
$\bbS=\bbS(L,h)$ as $s\to 0$. 
The metrics $g_s$ are 
obtained by solving the 
Monge-Amp\`ere equation, 
and the solutions cannot be 
described explicitly. 
Instead of describing the metrics explicitly, 
we construct another 
family of explicit 
metrics denoted by $g_s'$, which approximates 
$\{g_s\}_s$. 
This strategy is 
justified by the following argument. 

Denote by 
$\hat{g}_s'$ the Riemannian metrics defined by 
$g_s'$, $\nabla$ and \eqref{connection_metric.h}.

\begin{lem}
Let $(X,g_s)$ be as above and 
$g_s'$ be another family of 
Riemannian metrics on $X$. 
Assume that there are 
constants $C_s\ge 1$ with 
$\lim_{s\to 0}C_s=1$ 
such that 
$C_s^{-1}g_s'\le g_s\le 
C_sg_s'$ on $X$. 
Let $(\bbS_0,d_0,\nu_0,p_0)$ 
be a pointed metric 
measure space 
with isomorphic $S^1$-action 
and $K(s)>0$ be constants depending only on $s$ 
such that 
\begin{align*}
    \left(\bbS,d_{\hat{g}_s'},
    \frac{\nu_{\hat{g}_s'}}{K(s)},p\right)
    \SpmGH
    (\bbS_0,d_0,\nu_0,p_0)
\end{align*}
as $s\to 0$ for some $p\in\bbS$. 
Then 
\begin{align*}
    \left(\bbS,d_{\hat{g}_s},
    \frac{\nu_{\hat{g}_s}}{K(s)},p\right)
    \SpmGH
    (\bbS_0,d_0,\nu_0,p_0)
\end{align*}
as $s\to 0$.
\label{lem_approx1.h}
\end{lem}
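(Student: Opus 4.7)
The plan is to lift the uniform equivalence of the base metrics $g_s,g_s'$ to an equivalence of the total-space metrics $\hat g_s,\hat g_s'$ on $\bbS$, and then reuse the $S^1$-equivariant approximation maps supplied by the hypothesis, adjusting only the radii and the error parameters. The key observation is that formula \eqref{connection_metric.h} decomposes both $\hat g_s$ and $\hat g_s'$ into a common vertical term $(\Gamma^\nabla)^2$ (depending only on $\nabla$, not on $g_s$) plus the horizontal lift of the base metric through the same distribution $H\subset T\bbS$. Adding the common vertical term to the hypothesis immediately yields
\[
C_s^{-1}\hat g_s'\le \hat g_s\le C_s\hat g_s'
\]
on $\bbS$. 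Setting $\delta_s:=C_s^{1/2}-C_s^{-1/2}\to 0$, this gives
$C_s^{-1/2}d_{\hat g_s'}\le d_{\hat g_s}\le C_s^{1/2}d_{\hat g_s'}$ and the same comparison for the induced quotient distances on $X=\bbS/S^1$, as well as $C_s^{-5/2}\nu_{\hat g_s'}\le \nu_{\hat g_s}\le C_s^{5/2}\nu_{\hat g_s'}$ (using $\dim\bbS=5$).

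I would then take the $S^1$-equivariant Borel $\varepsilon_s$-isometries $\phi_s:\pi^{-1}(B_{\bar d_{\hat g_s'}}(\bar p,R_s'))\to\pi_0^{-1}(B(\bar p_0,R_s))$ guaranteed by the hypothesis for $\hat g_s'$, and verify Definition \ref{dfn_S^1pmGH.h} for $\hat g_s$ using the same maps, restricted to a slightly smaller ball. Setting
\[
\tilde R_s':=\min\bigl(C_s^{-1/2}R_s',\,\delta_s^{-1/2}\bigr),\qquad
\tilde R_s:=C_s^{-1/2}\tilde R_s'-\varepsilon_s,\qquad
\tilde\varepsilon_s:=\varepsilon_s+\delta_s\tilde R_s',
\]
one checks $\tilde R_s',\tilde R_s\to\infty$ and $\tilde\varepsilon_s\to 0$. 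The distance comparison gives $B_{\bar d_{\hat g_s}}(\bar p,\tilde R_s')\subset B_{\bar d_{\hat g_s'}}(\bar p,R_s')$, so the restriction $\phi_s|_{\pi^{-1}(B_{\bar d_{\hat g_s}}(\bar p,\tilde R_s'))}$ is defined, $S^1$-equivariant, Borel, and satisfies
\[
|d_{\hat g_s}(u,u')-d_0(\phi_s(u),\phi_s(u'))|\le |d_{\hat g_s}(u,u')-d_{\hat g_s'}(u,u')|+\varepsilon_s\le \delta_s\tilde R_s'+\varepsilon_s=\tilde\varepsilon_s,
\]
which is the $\varepsilon$-isometry condition. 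The surjectivity $\pi_0^{-1}(B(\bar p_0,\tilde R_s))\subset B(\phi_s(\cdot),\tilde\varepsilon_s)$ follows from the original surjectivity on $\pi_0^{-1}(B(\bar p_0,C_s^{-1/2}\tilde R_s'))$ (which sits inside $\pi_0^{-1}(B(\bar p_0,R_s))$ for small $s$) together with the containment $B_{\bar d_{\hat g_s'}}(\bar p,C_s^{-1/2}\tilde R_s')\subset B_{\bar d_{\hat g_s}}(\bar p,\tilde R_s')$.

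For the measure condition, for $f\in C_c(\bbS_0)$ I would estimate
\[
\left|\int_{\bbS}f\circ\phi_s\,\frac{d\nu_{\hat g_s}}{K(s)}-\int_{\bbS}f\circ\phi_s\,\frac{d\nu_{\hat g_s'}}{K(s)}\right|\le \|f\|_\infty\bigl(C_s^{5/2}-C_s^{-5/2}\bigr)\frac{\nu_{\hat g_s'}(\mathrm{supp}(f\circ\phi_s))}{K(s)}.
\]
The measure of the support, transported through $\phi_s$ with an $\varepsilon_s$ error, stays bounded (it converges to $\nu_0(\mathrm{supp}\,f)$ by hypothesis), so the right side vanishes, and the first integral on the left converges to $\int_{\bbS_0}f\,d\nu_0$ by hypothesis. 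The only real subtlety is the one in the middle paragraph: because the metric comparison is multiplicative, the distance discrepancy $\delta_s\cdot R$ grows with the diameter $R$ of the ball on which we work; the resolution is the explicit interlocking choice of $\tilde R_s'$ and $\tilde\varepsilon_s$ that trades off the rate $C_s\to 1$ against the rate $R_s'\to\infty$ so that both the radius goes to infinity and the error goes to zero.
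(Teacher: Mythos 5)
Your proposal is correct and follows essentially the same route as the paper: you lift the comparison $C_s^{-1}g_s'\le g_s\le C_sg_s'$ to $C_s^{-1}\hat g_s'\le \hat g_s\le C_s\hat g_s'$ via the common connection term $(\Gamma^\nabla)^2$, deduce $C_s^{-1/2}d_{\hat g_s'}\le d_{\hat g_s}\le C_s^{1/2}d_{\hat g_s'}$ and $C_s^{-5/2}\nu_{\hat g_s'}\le\nu_{\hat g_s}\le C_s^{5/2}\nu_{\hat g_s'}$, and reuse the given approximation maps, which is exactly the paper's argument. Your explicit interlocking choice of $\tilde R_s'$ and $\tilde\varepsilon_s$ (to offset the multiplicative distortion growing with the radius) and the cutoff bound on the support measure are details the paper leaves implicit; they only require trivial constant adjustments (e.g.\ shrinking $\tilde R_s$ by a further multiple of $\varepsilon_s$ in the almost-surjectivity step).
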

\begin{proof}
By the definition 
of $\hat{g}_s$ and $\hat{g}_s'$, 
we have 
\begin{align*}
    \hat{g}_s
    = (\Gamma^{\nabla})^2
    +g_s,\quad
    \hat{g}_s'
    = (\Gamma^{\nabla})^2
    +g_s',
\end{align*}
on $\bbS$, 
hence we have 
\begin{align*}
    C_s^{-1}\hat{g}_s'\le 
    \hat{g}_s
    \le C_s\hat{g}_s'.
\end{align*}
Therefore, 
by the definition of 
Riemannian distance, 
we obtain 
\begin{align*}
    C_s^{-1/2}d_{\hat{g}_s'}
    (u_0,u_1)
    \le
    d_{\hat{g}_s}(u_0,u_1)
    \le C_s^{1/2}d_{\hat{g}_s'}
    (u_0,u_1).
\end{align*}
for $u_0,u_1\in \bbS$. 
Since $\lim_{s\to 0}C_s^{1/2}=1$, 
then we have the convergence 
of the metric structures. 
The vague convergence of the 
measure follows from 
\begin{align*}
    C_s^{-5/2}\nu_{\hat{g}_s}
    \le 
    \nu_{\hat{g}_s'}
    \le 
    C_s^{5/2}\nu_{\hat{g}_s}.
\end{align*}
\end{proof}

\subsection{The metric on the 
frame bundles}\label{subsec_loc_met.h}
In Definition 
\ref{dfn_S^1pmGH.h} $({\rm ii})$, 
we call $\phi_s$ the approximation maps 
and $(\bbS_0,d_0,\nu_0)$ the limit space. 
To show Theorem \ref{thm_pmGH.h}, 
we need to construct the approximation 
map from $(\bbS,\hat{g}_s,\nu_{\hat{g}_s}/s)$ 
to the limit space. 
In this subsection, we discuss 
how to construct 
the approximation maps 
under some assumptions.

First of all we describe the setting and 
the assumptions in this 
subsection. 
Let $(X,\omega)$ be a symplectic manifold of dimension $4$ 
with a prequantum 
line bundle 
$(\pi\colon L\to X,h,\nabla)$, 
and $g$ be a Riemannian metric on $X$. 
Put $\bbS=\bbS(L,h)$ 
and define the metric 
$\hat{g}$ on $\bbS$ by 
\eqref{connection_metric.h}. 
Let $B$ be a smooth manifold 
of dimension $2$ 
and $\mu\colon X\to B$ be 
a proper smooth map 
such that all of the fibers $\mu^{-1}(b)$ 
are connected. 
We suppose there is 
an open subset 
$B^{\rm rg}\subset B$ such that 
$\# (B\setminus B^{\rm rg})<\infty$, 
all $b\in B^{\rm rg}$ are regular values of 
$\mu$ and $\mu^{-1}(b)$ are Lagrangian submanifolds for all $b\in B^{\rm rg}$. 
We set
$\nu_B:=\mu_*\nu_g$.

Let $q\in \mu^{-1}(B^{\rm rg})$ and 
put $(V_f)_q:={\rm Ker}(d\mu_q)$. 
Denote by 
$(V_f^\perp)_q \subset T_qX$ 
the orthogonal complement 
of $(V_f)_q$
with respect to $g_q$, then we have the 
orthogonal decomposition 
$TX|_{\mu^{-1}(B^{\rm rg})}=V_f\oplus V_f^\perp$. 
By putting 
$g_f:=g|_{V_f}$, 
$g_\perp:=g|_{V_f^\perp}$, 
we may write 
$g=g_f+g_\perp$. 
Similarly, for a 
$1$-form $\gamma\in \Omega^1(X)$ 
we put 
$\gamma_f=\gamma|_{V_f}$, 
$\gamma_\perp=\gamma|_{V_f^\perp}$ and we write 
$\gamma|_{\mu^{-1}(B^{\rm rg})}=\gamma_f
+\gamma_\perp$. 

Next we describe the limit space. 
Let $(\R^2,g_0)$ be 
the Euclidean space of dimension $2$ 
and $\mathbf{r}\colon \R^2
\to \R_{\ge 0}$ be 
defined by $\mathbf{r}(\xi)
=\| \xi\|=\sqrt{\xi_1^2+\xi_2^2}$ 
for $\xi=(\xi_1,\xi_2)\in\R^2$. 
Denote by 
$\mathbf{0}_{\R^2}\in \R^2$ the origin.

Let $\hat{g}_0=(dt)^2/(1+\mathbf{r}^2)+g_0$ 
be a Riemannian metric on $S^1\times\R^2$. 
If we put 
$c(\tau)=(e^{\sqrt{-1}c_1(\tau)},c_2(\tau))\in S^1\times\R^2$, then the length of 
$c$ with respect to $\hat{g}_0$ is given by 
\begin{align*}
    \mathfrak{L}_{\hat{g}_0}(c)
    =\int_0^1
    \sqrt{\frac{|c_1'(\tau)|^2}{1+\mathbf{r}(c(\tau))^2}
    + \| c_2'(\tau)\|^2
    }\, d\tau.
\end{align*}
Let 
$\mathcal{B}(R):=\{ \xi\in\R^2;\, 
\mathbf{r}(\xi)< R\}$ for $R>0$. 

Now, let $b\in B$, $W\subset B$ be an 
open neighborhood of $b$ such that 
$W\setminus\{ b\}\subset B^{\rm rg}$, 
$U:=\mu^{-1}(W)$, 
$\gamma\in \Omega^1(U)$ be 
a $1$-form with $\omega|_U=d\gamma$, 
$\zeta\colon W\to \R^2$ be 
a continuous map such that $\zeta(b)=\mathbf{0}_{\R^2}$ and $\zeta|_{W\setminus\{ b\}}$ 
is an open embedding, 
and $\sigma,R,\delta,K$ be positive constants 
with $\delta< 1,\sigma<R$. 
For the following tuple 
\begin{align*}
\left( g,b,W, R,\gamma,
\zeta,\sigma,\delta,K\right),
\end{align*}
we consider the next conditions. 
\begin{enumerate}[($\star$1)]
        \item 
        Let 
    $X_w:=\mu^{-1}(w)$ 
    and $\iota_w\colon X_w\to U$ 
    be the inclusion map for 
    every $w\in W$. 
    $\iota_b^*\colon H^1(U,\Z)
    \to H^1(X_b,\R)$ is an  isomorphism. \label{star 1}
    \item $L|_U$ is trivial 
    as a complex line bundle.\label{star 2}
    \item There are $1$-cycles  $e_{i,w}$ in $X_w$ 
    for each $i=1,2$ and each
    $w\in W$ such that 
    $\{ e_{1,w},e_{2,w}\}$ generates $H_1(X_w,\Z)$ 
    and, for each $i = 1, 2$, $(\iota_w)_*(e_{i,w})\in H_1(U,\Z)$ is independent of $w \in W$. 
    Moreover, the functions 
    $\Psi_i\colon W\to \R$ 
    defined by $\Psi_i(w):=
    \int_{e_{i,w}}\gamma$ are 
    continuous, $\Psi_i(b)=0$ for each $i$ 
	and 
    $b$ is isolated in 
    the subset 
    \begin{align*}
        \left\{ w\in W;\, 
        \Psi_i(w)=0\mbox{ for all }i=1,2\right\}.
    \end{align*}
\label{star 4}
\item
	We have 
	\begin{align*}
	| \gamma_\perp |_g
    \le \delta,\quad
    | \gamma_\perp |_{(\zeta\circ\mu)^*g_0}
    \le \delta,
\end{align*}
on $(\zeta\circ\mu)^{-1}(\mathcal{B}(3R)\backslash\{ \mathbf{0}_{\R^2}\})$, 
    \begin{align*}
    (1+\delta)^{-1}(\zeta\circ\mu)^*g_0
    \le g_\perp
    &\le (1+\delta)(\zeta\circ\mu)^*g_0,\\
    (1+\delta)^{-1}(\zeta\circ\mu)^*\mathbf{r}^2
    \le |\gamma_f|_g^2
    &\le (1+\delta)(\zeta\circ\mu)^*\mathbf{r}^2
\end{align*}
on $(\zeta\circ\mu)^{-1}(\mathcal{B}(3R)\setminus
\mathcal{B}(\sigma))$ 
and 
\begin{align*}
    |\gamma_f|_g^2\le \delta
\end{align*}
on $(\zeta\circ\mu)^{-1}(\overline{\mathcal{B}(\sigma)})$. 
\label{star 5}
	\item $\overline{\mathcal{B}(3R)}\subset \zeta(W)$.\label{star 7}
	\item
\begin{align*}
\sup_{w\in \zeta^{-1}(\mathcal{B}(3R))}{\rm diam}_{g|_{X_w}}\left( X_w\right)
    &<\delta,\\
{\rm diam}_{g|_{ (\zeta\circ\mu)^{-1}(\overline{\mathcal{B}(\sigma)})}}\left( (\zeta\circ\mu)^{-1}(\overline{\mathcal{B}(\sigma)})\right)
    &<\delta.
\end{align*}
\label{star 8}
	\item We have
$(1+\delta)^{-1}\nu_{g_0}
\le K\cdot \zeta_*\nu_B\le (1+\delta)\nu_{g_0}$ on $\mathcal{B}(R)$. \label{star 9}
\end{enumerate}
\begin{rem}
\normalfont
$(\star \ref{star 1},\ref{star 2},\ref{star 4})$ is 
the topological assumption for 
$\mu$ on the neighborhood of 
$\mu^{-1}(b)$. 
By Liouville-Arnold theorem (see \cite[Theorem 1.1]{Duistermaat1980}), 
if $b\in B^{\rm rg}$, 
then we can see that every fiber of $\mu$ is $2$-torus, hence 
$(\star \ref{star 1},\ref{star 2},\ref{star 4})$ are satisfied 
for some 
$(W,\gamma)$. 
\end{rem}
\begin{rem}
\normalfont
In the above conditions, 
we often suppose that $R$ is large 
and $\delta,\sigma$ are small. 
The condition 
$(\star \ref{star 5})$ 
implies that $g_\perp$ and 
$|\gamma_f|$ can be controlled 
by $g_0$ and $\mathbf{r}$ on the 
complement of $(\zeta\circ\mu)^{-1}(\overline{\mathcal{B}(\sigma)})$, 
which is a neighborhood of 
$\mu^{-1}(b)$. 
The condition 
$(\star \ref{star 8})$ implies 
the diameters of fibers and 
$(\zeta\circ\mu)^{-1}(\overline{\mathcal{B}(\sigma)})$ are 
small. 
In the setting of this paper, if $b$ is the 
critical value of the special Lagrangian 
fibration on the $K3$ surfaces, we cannot 
obtain the good estimate for the metric $g$ 
on the neighborhood of $\mu^{-1}(b)$, 
however, we may show that the diameter of 
such neighborhood is sufficiently small. 
\end{rem}

By $(\star \ref{star 2})$ 
we can take 
a smooth section $\mathbf{E}_1\in \Gamma(L|_U)$ such that 
$h(\mathbf{E}_1,\mathbf{E}_1)=1$.
Then there is $\gamma_1\in \Omega^1(U)$ such that 
$\nabla \mathbf{E}_1=
-\sqrt{-1}\gamma_1\otimes \mathbf{E}_1$.  
The holonomy group of 
$(L|_{X_b},\nabla|_{X_b})$ is given by 
\begin{align*}
    \left\{ 
    \exp\left(\sqrt{-1}\int_C\gamma_1\right)
    ;\, 
    C\in H_1(X_b,\Z)
    \right\}.
\end{align*}
\begin{lem}
Suppose that 
the triple $(b,W,\gamma)$ 
satisfies $(\star \ref{star 1},\ref{star 2},\ref{star 4})$. 
Then $b$ is not an 
accumulation point of $BS_m\cap W$. 
\label{lem_BS_disc.h}
\end{lem}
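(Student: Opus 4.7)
The plan is to translate the Bohr-Sommerfeld condition of level $m$ into explicit linear conditions on the period functions $\Psi_i$ from $(\star \ref{star 4})$, and then to exploit the discreteness of $2\pi\Z$ inside $\R$. First, using $(\star \ref{star 2})$, I would fix the unit section $\mathbf{E}_1\in\Gamma(L|_U)$ and the connection $1$-form $\gamma_1\in\Omega^1(U)$ determined by $\nabla\mathbf{E}_1=-\sqrt{-1}\gamma_1\otimes\mathbf{E}_1$. Since $m\gamma_1$ is the connection $1$-form on $L^m|_U$ with respect to $\mathbf{E}_1^{\otimes m}$, and $H_1(X_w,\Z)$ is generated by $\{e_{1,w},e_{2,w}\}$ by $(\star \ref{star 4})$, the fiber $X_w$ is Bohr-Sommerfeld of level $m$ if and only if
\begin{align*}
m\int_{e_{i,w}}\gamma_1 \in 2\pi\Z\quad(i=1,2).
\end{align*}

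Next I would compare $\gamma_1$ with the given primitive $\gamma$. From $F^\nabla=-\sqrt{-1}\omega$ we have $d\gamma_1=\omega|_U=d\gamma$, so $\gamma-\gamma_1$ is a closed $1$-form on $U$. By $(\star \ref{star 4})$ the homology class $(\iota_w)_*(e_{i,w})\in H_1(U,\Z)$ is independent of $w$, so
\begin{align*}
\int_{e_{i,w}}(\gamma-\gamma_1)=\int_{(\iota_w)_*(e_{i,w})}(\gamma-\gamma_1)
\end{align*}
is a constant $-C_i\in\R$ depending only on $(b,W,\gamma,\nabla)$. Substituting into the previous criterion, the condition $w\in BS_m$ becomes
\begin{align*}
m\bigl(\Psi_i(w)+C_i\bigr)\in 2\pi\Z\quad(i=1,2).
\end{align*}

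Finally I would conclude by a discreteness argument. Suppose for contradiction that there exists a sequence $w_n\in (BS_m\cap W)\setminus\{b\}$ with $w_n\to b$. The continuity of $\Psi_i$ together with $\Psi_i(b)=0$ gives $m(\Psi_i(w_n)+C_i)\to mC_i$ for each $i$. Since this sequence lies in the discrete set $2\pi\Z$, it is eventually constantly equal to $mC_i$, forcing $\Psi_i(w_n)=0$ for all sufficiently large $n$ and for both $i=1,2$. This contradicts the assertion in $(\star \ref{star 4})$ that $b$ is isolated in the common zero locus $\{w\in W : \Psi_1(w)=\Psi_2(w)=0\}$. The only nontrivial conceptual step is recognizing the existence of the period shifts $C_i$, i.e., that $\int_{e_{i,w}}\gamma_1$ and the continuous function $\Psi_i(w)$ differ by a $w$-independent constant; once the Bohr-Sommerfeld criterion is rewritten in this form, the rest is just the elementary observation that $2\pi\Z$ has no accumulation points in $\R$.
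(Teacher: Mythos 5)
Your proposal is correct and follows essentially the same route as the paper: fix $\gamma_1$ via $(\star 2)$, note that the periods $\int_{e_{i,w}}\gamma_1$ differ from $\Psi_i(w)$ by $w$-independent constants thanks to $(\star 4)$, and then combine continuity, the discreteness of $2\pi\Z$, and the isolation of $b$ in the common zero locus of $\Psi_1,\Psi_2$. The only cosmetic difference is that the paper argues by the two cases $b\in BS_m$ and $b\notin BS_m$, whereas you fold both into a single sequence-and-contradiction argument.
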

\begin{proof}
Let $\gamma_1$ be as above. 
Since $d\gamma_1=d\gamma=\omega|_U$, hence 
$\gamma_1-\gamma$ is 
a closed $1$-form on $U$, 
then there are constants 
$c_i\in\R$ such that 
$\Psi'_i(w):=
\int_{e_{i,w}}\gamma_1
=\Psi_i(w)+c_i$ 
by $(\star$\ref{star 4}$)$. 

Assume $b\in BS_m$. 
Then $\Psi'_i(b)\in (2\pi/m)\Z$ for 
any $i=1,2$. 
By the continuity of 
$\Psi'_i$, if there exists 
$w_n\in BS_m$ such that 
$w_n\to b$ as $n\to\infty$ 
then $\lim_{n\to\infty}\Psi'_i(w_n)=\Psi'_i(b)$. 
Since $b$ is isolated in 
$\{ w;\, \Psi'_i(w)=\Psi'_i(b)\mbox{ for all }i\}$ 
by ($\star$\ref{star 4}), 
hence $w_n=b$ for sufficiently 
large $n$. 

Next we suppose $b\notin BS_m$. 
Then $\Psi'_i(b)\notin (2\pi/m)\Z$ 
for some $i$. 
By the continuity of 
$\Psi'_i$, there is an 
open neighborhood $W'\subset B$ 
of $b$ such that 
$\Psi'_i(W')\cap (2\pi/m)\Z=\emptyset$, hence 
$W'\cap BS_m=\emptyset$. 
\end{proof}

Next we fix $b\in BS_m^{\rm str}$ 
and describe $\hat{g}$ 
on the neighborhood 
of $(\mu\circ\pi)^{-1}(b)$, 
then construct an approximation map. 
The following argument is quite technical, 
therefore we assume 
$b\in BS_1$ for the simplicity, and it is 
enough to explain the essence of 
this subsection. 
The argument for general $b\in BS_m^{\rm str}$ 
is written in the last of this subsection. 
See also \cite[Subsection 7.3]{hattori2019}.

\begin{lem}
Let $b\in BS_1$, i.e., 
the holonomy group of 
$(L|_{X_b},\nabla|_{X_b})$ 
is trivial. 
Suppose that 
there are an open neighborhood 
$W$ of $b$ and 
$\gamma\in \Omega^1(U)$ 
with $\omega|_U=d\gamma$ 
such that the triple 
$(b,W,\gamma)$ satisfies 
$(\star \ref{star 1},\ref{star 2})$, 
where $U:=\mu^{-1}(W)$. 
Then there exists a trivialization of 
principal $S^1$-bundles 
$S^1\times U
\cong \bbS(L|_U,h)$ 
such that 
\begin{align*}
\hat{g} = (dt-\gamma)^2+g.
\end{align*}
\label{lem_cover_pri_bdl.h}
\end{lem}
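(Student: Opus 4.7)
The plan is to produce a unit section $\mathbf{E}_1 \in \Gamma(L|_U)$ whose connection $1$-form under $\nabla$ is exactly $\gamma$; the trivialization of $\bbS(L|_U,h)$ induced by such a section will then manifestly yield $\hat g = (dt-\gamma)^2 + g$. By condition $(\star \ref{star 2})$, we may start from an arbitrary unit section $\mathbf{E}_0 \in \Gamma(L|_U)$; since $\nabla$ is hermitian, $\nabla \mathbf{E}_0 = -\sqrt{-1} A_0\otimes \mathbf{E}_0$ for a real $1$-form $A_0 \in \Omega^1(U)$, and differentiating yields $dA_0 = \omega|_U = d\gamma$, so that $\beta := A_0 - \gamma$ is closed.

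The core step is to show $[\beta] \in 2\pi H^1(U, \Z) \subset H^1(U, \R)$, for then a standard construction produces a smooth $\psi \colon U \to S^1$ with $\psi^{-1} d\psi = \sqrt{-1}\beta$, and setting $\mathbf{E}_1 := \psi^{-1}\mathbf{E}_0$ gives, by the Leibniz rule, $\nabla \mathbf{E}_1 = -\sqrt{-1}\gamma \otimes \mathbf{E}_1$. To obtain this integrality, note first that the Bohr--Sommerfeld hypothesis $b \in BS_1$ says the holonomy of $\nabla|_{X_b}$ is trivial, so $\oint_C A_0 \in 2\pi \Z$ for every $C \in H_1(X_b, \Z)$. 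The $1$-form $\gamma$ in turn defines a hermitian connection $d - \sqrt{-1}\gamma$ on the trivial bundle $U \times \C$ with curvature $-\sqrt{-1}\omega|_U$ equal to that of $\nabla$, so the two connections differ only by a flat gauge on $L|_U$ and the class $[\beta|_{X_b}]\in H^1(X_b,\R)$ records their discrepancy as characters on $H_1(X_b,\Z)$. Combined with $(\star \ref{star 1})$, which makes $\iota_b^* \colon H^1(U, \Z) \to H^1(X_b, \R)$ an isomorphism and thus lets every class of $H_1(U,\Z)$ be represented by a cycle in $X_b$, integrality of $[\beta|_{X_b}]$ transfers to integrality of $[\beta]$ on $U$.

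With $\mathbf{E}_1$ in hand, define the trivialization
\[
\Phi \colon S^1 \times U \to \bbS(L|_U, h), \qquad \Phi(e^{\sqrt{-1}t}, x) := \mathbf{E}_1(x) \cdot e^{\sqrt{-1}t},
\]
which is manifestly $S^1$-equivariant. A direct computation of $\Gamma^\nabla$ in this trivialization shows that the vertical part of $d\mathbf{E}_1(v)$ in $T\bbS$ is $-\gamma(v)\partial_t$ (this is just the content of $\nabla \mathbf{E}_1 = -\sqrt{-1}\gamma\otimes\mathbf{E}_1$ translated into horizontal geometry), so that $\Phi^* \Gamma^\nabla = dt - \gamma$; combined with $\Phi^* (d\pi|_H)^* g = g$ in the pullback, this produces $\Phi^* \hat g = (dt - \gamma)^2 + g$, as required. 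The main obstacle is the cohomological matching in the second paragraph: the BS hypothesis controls only the periods of $A_0$, and the crucial point is that the equal curvatures of $\nabla$ and $d-\sqrt{-1}\gamma$ together with $(\star \ref{star 1})$ force their holonomy characters on $H_1(U,\Z)$ to coincide up to $2\pi\Z$; once this integrality is in hand, the remaining construction is formal.
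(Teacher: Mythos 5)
Your strategy is the same as the paper's: gauge the unit section of $L|_U$ (which exists by $(\star \ref{star 2})$) so that its connection $1$-form becomes exactly $\gamma$, after which $\hat{g}=(dt-\gamma)^2+g$ is immediate from \eqref{connection_metric.h}; the only substantive content is the cohomological step. Your final computation of $\Phi^*\Gamma^\nabla$ is fine, up to a harmless sign slip: with $\beta=A_0-\gamma$ and $\psi^{-1}d\psi=\sqrt{-1}\beta$ you must set $\mathbf{E}_1=\psi\,\mathbf{E}_0$, not $\psi^{-1}\mathbf{E}_0$, to get $\nabla\mathbf{E}_1=-\sqrt{-1}\gamma\otimes\mathbf{E}_1$.

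The genuine gap is in the justification of $[\beta]\in 2\pi H^1(U,\Z)$. From $b\in BS_1$ you correctly get $\oint_C A_0\in 2\pi\Z$ for all $C\in H_1(X_b,\Z)$, but you never control $\oint_C\gamma$, and your key claim --- that the equal curvatures of $\nabla$ and $d-\sqrt{-1}\gamma$ together with $(\star \ref{star 1})$ force the two holonomy characters to agree modulo $2\pi\Z$ --- is false. Equal curvature only says that $\beta$ is closed; its periods can be arbitrary real numbers, and $(\star \ref{star 1})$, $(\star \ref{star 2})$ place no constraint on them. Concretely, in the local model $U=W\times(\R^2/2\pi\Z^2)$ with $\omega=\sum_i dx_i\wedge d\theta_i$ and $\gamma=\sum_i x_i\,d\theta_i$, replacing $\gamma$ by $\gamma+c\,d\theta_1$ with $c\notin\Z$ leaves $d\gamma=\omega|_U$, $(\star \ref{star 1})$, $(\star \ref{star 2})$ and the hypothesis $b\in BS_1$ untouched, yet no unitary trivialization can have connection form equal to the new $\gamma$, since the holonomy of $\nabla$ along the first cycle of $X_b$ would then be $e^{2\pi\sqrt{-1}c}\neq 1$. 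What is actually needed is the period condition $\int_C\gamma\in 2\pi\Z$ (indeed $=0$) for $C\in H_1(X_b,\Z)$; in the paper this is the normalization $\Psi_i(b)=0$ contained in $(\star \ref{star 4})$, which is part of the standing assumptions $(\star \ref{star 1}$-$\ref{star 9})$ in force when the lemma is used, and it is exactly what the paper's proof exploits when it first gauges $\mathbf{E}_1$ so that $\int_C\gamma_1=0$ (using $BS_1$) and then invokes $(\star \ref{star 1})$ to conclude that $\gamma-\gamma_1$ is exact. Once you add this period input on $\gamma$, your argument closes: $[\beta|_{X_b}]$ is $2\pi$-integral, and the isomorphism in $(\star \ref{star 1})$ transfers this integrality from $X_b$ to $U$; without it, the step ``integrality of $[\beta|_{X_b}]$'' has no proof and cannot have one from the hypotheses you cite.
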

\begin{proof}
Let $\mathbf{E}_1$ 
and $\gamma_1$ be as above. 
By the assumption 
for the holonomy groups, 
we can choose $\mathbf{E}_1$ 
such that 
$\int_C\gamma_1=0$ 
for all $C\in H_1(X_b,\Z)$. 
Then by $(\star \ref{star 1})$, 
there is $\varphi\in C^\infty(U)$ 
such that $\gamma=\gamma_1+d\varphi$, 
hence we may choose 
$\mathbf{E}_1$ such that 
$\gamma_1=\gamma$. 
Then by the definition of $\hat{g}$, 
we obtain the result. 
\end{proof}

From now on, 
let $b\in BS_1$ and we assume that  
\begin{align*}
\left( g,b,W, R,\gamma,
\zeta,\sigma,\delta,K\right)
\end{align*}
satisfies $(\star \ref{star 1}$-$\ref{star 9})$. 
Then we may suppose 
\begin{align*}
\bbS(L|_U,h)=S^1\times U, \quad
\hat{g} = (dt-\gamma)^2+g
\end{align*}
by Lemma \ref{lem_cover_pri_bdl.h}. 
Now we put 
\begin{align*}
    U(r)&:=(\zeta\circ\mu)^{-1}
    (\mathcal{B}(r)),
\end{align*}
for $r>0$ and we 
study the distance functions 
$d_g,d_{\hat{g}}$ restricting to 
$B_g(q,R)$, $\pi^{-1}(B_g(q,R))$, respectively. 
To study them, we need to consider 
the length of paths, 
however, we should remind that 
a path $c$ 
connecting points in 
$B_g(q,R)$ may not be included in $U$ 
in general. 
It is inconvenient to apply 
$(\star \ref{star 5})$, therefore, 
we need the next lemma.

\begin{lem}
Let $q\in\mu^{-1}(b)$. 
Then $B_g(q,R)$ is contained in 
$U(\sqrt{1+\delta}R+\sigma)$. 
Moreover, if 
$\sigma\le (3-2\sqrt{2})R/(2\sqrt{2})$, 
then for any 
piecewise smooth path 
$c\colon [0,1]\to X$ connecting 
$x_0,x_1\in U(\sqrt{1+\delta}R+\sigma)$ with $\mathfrak{L}_g(c)<(3/\sqrt{2})R$, 
$c([0,1])$ is 
contained in $U(3R)$. 
\label{lem_met_loc.h}
\end{lem}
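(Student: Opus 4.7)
The strategy is to project paths in $X$ to $\R^2$ via $\tilde{c} := \zeta \circ \mu \circ c$ and exploit condition $(\star\ref{star 5})$, which on the annular region $U(3R) \setminus \overline{U(\sigma)}$ gives
\begin{align*}
(\zeta\circ\mu)^*g_0 \leq (1+\delta)\, g_\perp \leq (1+\delta)\, g.
\end{align*}
Consequently, on any subinterval $[a,b]$ with $c([a,b]) \subset U(3R) \setminus \overline{U(\sigma)}$,
\begin{align*}
|\tilde{c}(b) - \tilde{c}(a)| \leq \mathfrak{L}_{g_0}(\tilde{c}|_{[a,b]}) \leq \sqrt{1+\delta}\, \mathfrak{L}_g(c|_{[a,b]}).
\end{align*}

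For the first assertion, I would take $x \in B_g(q,R)$ and a path $c$ from $q$ to $x$ with $\mathfrak{L}_g(c) < R$, and first show $c([0,1]) \subset U(3R)$. Assuming otherwise, let $\tau^*$ be the first time $c$ meets $\partial U(3R)$, so $\mathbf{r}(\tilde{c}(\tau^*)) = 3R$. Since $\tilde{c}(0) = \zeta(b) = \mathbf{0}_{\R^2}$, we have $c(0) \in \overline{U(\sigma)}$; let $\tau_*$ be the last $\tau \in [0,\tau^*]$ with $c(\tau_*) \in \overline{U(\sigma)}$. On $(\tau_*,\tau^*]$ the comparison applies, so $\mathfrak{L}_g(c) \geq (3R-\sigma)/\sqrt{1+\delta} > R$ (using $\sigma < R$ and $\delta < 1$), a contradiction. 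Hence $c$ stays in $U(3R)$. Next, if $\mathbf{r}(\tilde{x}) > \sigma$, the same scheme with $\tau^* := 1$ yields $\mathbf{r}(\tilde{x}) - \sigma < \sqrt{1+\delta}\, R$, so $x \in U(\sqrt{1+\delta}R + \sigma)$; otherwise $x \in U(\sigma) \subset U(\sqrt{1+\delta}R + \sigma)$ trivially.

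For the second assertion, let $c$ connect $x_0, x_1 \in U(\sqrt{1+\delta}R + \sigma)$ with $\mathfrak{L}_g(c) < (3/\sqrt{2})R$, and assume for contradiction that $c$ exits $U(3R)$. Put
\begin{align*}
\tau^* := \inf\{\tau : \mathbf{r}(\tilde{c}(\tau)) \geq 3R\}, \qquad \tau^{**} := \sup\{\tau : \mathbf{r}(\tilde{c}(\tau)) \geq 3R\};
\end{align*}
both lie in $(0,1)$ with $\mathbf{r}(\tilde{c}(\tau^*)) = \mathbf{r}(\tilde{c}(\tau^{**})) = 3R$. On $[0,\tau^*]$, whether or not $c$ enters $\overline{U(\sigma)}$, I restrict to the subinterval after the last such visit (or use all of $[0,\tau^*]$ if none): this confines the path to the annulus $U(3R) \setminus \overline{U(\sigma)}$, while the $\mathbf{r}$-difference of its endpoints is at least $3R - (\sqrt{1+\delta}R + \sigma)$, giving
\begin{align*}
\mathfrak{L}_g(c|_{[0,\tau^*]}) \geq \frac{3R - \sqrt{1+\delta}R - \sigma}{\sqrt{1+\delta}}.
\end{align*}
The symmetric bound holds on $[\tau^{**}, 1]$, hence
\begin{align*}
\mathfrak{L}_g(c) \geq \frac{2\,(3R - \sqrt{1+\delta}R - \sigma)}{\sqrt{1+\delta}}.
\end{align*}
I would then verify, using $\sigma \leq (3-2\sqrt{2})R/(2\sqrt{2})$ and $\delta < 1$, that this quantity is at least $(3/\sqrt{2})R$; the inequality reduces to $(8\sqrt{2}-3)/(2\sqrt{2}+3) \geq \sqrt{1+\delta}$, which in turn follows from $5\sqrt{2} \geq 7$. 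This contradicts the length hypothesis.

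The main obstacle is the case analysis around $\overline{U(\sigma)}$: in that inner region the two-sided comparison of $(\star\ref{star 5})$ is replaced by the much weaker $|\gamma_f|^2 \leq \delta$ together with the diameter bound from $(\star\ref{star 8})$, so the projection $\tilde{c}$ cannot be controlled there. Selecting the last (respectively first) visit of the path to $\overline{U(\sigma)}$ is the key bookkeeping step that allows the annular comparison to be applied to the remainder of each sub-path, and it is exactly the sharp constant $(3-2\sqrt{2})/(2\sqrt{2})$ that makes the resulting arithmetic tight enough to produce the contradiction.
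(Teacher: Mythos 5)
Your argument is correct and follows essentially the same route as the paper's proof: project the path via $\zeta\circ\mu$, use $(\star\ref{star 5})$ on the annulus $\mathcal{B}(3R)\setminus\mathcal{B}(\sigma)$ to compare $g$-length with the radial displacement in $(\R^2,g_0)$, and extract crossing segments whose radius gap forces the length to exceed the stated bounds, with the same constant bookkeeping (your reduction to $5\sqrt{2}\ge 7$ matches the paper's $\mathfrak{L}_g(c)\ge 2\bigl((3-\sqrt{2})R/\sqrt{2}-\sigma\bigr)\ge 3R/\sqrt{2}$). The extra preliminary step confining the path to $U(3R)$ in the first assertion and the bookkeeping of last visits to $\overline{U(\sigma)}$ are harmless variants of the paper's choice of crossing times.
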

\begin{proof}
Let $x\in B_g(q,R)$ and $c$ be a path 
connecting 
$c(0)=q$ and $c(1)=x$. 
Assume that $c([0,1])$ is not 
included in $U(\sqrt{1+\delta}R+\sigma)$. 
Then by $(\star \ref{star 7})$, 
there are $\tau_0,\tau_1\in [0,1]$ such that 
\begin{align*}
& c([\tau_0,\tau_1))
\subset U(\sqrt{1+\delta}R+\sigma)\setminus U(\sigma),\\
&\mathbf{r}\circ\zeta\circ\mu\circ c(\tau_0)
=\sigma,\\
&\mathbf{r}\circ\zeta\circ\mu\circ c(\tau_1)
=\sqrt{1+\delta}R+\sigma. 
\end{align*}
By $(\star\ref{star 5})$, 
we have 
\begin{align*}
\mathfrak{L}_g(c)
\ge \frac{1}{\sqrt{1+\delta}}
\mathfrak{L}_{g_0}(\zeta\circ\mu\circ c|_{[\tau_0,\tau_1]})
\ge R,
\end{align*}
hence we can show that if 
$\mu\circ c(0)=b$ and 
$\mathfrak{L}_g(c)<R$, then $c([0,1])\subset 
U(\sqrt{1+\delta}R+\sigma)$, 
therefore $x=c(1)\in U(\sqrt{1+\delta}R+\sigma)$.

Next we take $x_0,x_1\in U(\sqrt{1+\delta}R+\sigma)$ and 
a path $c$ connecting $x_0$ and $x_1$. 
Suppose that the image of 
$c$ is not contained in $U(3R)$. 
Then by $(\star \ref{star 7})$, there are 
$\tau_0,\tau_1,\tau_2,\tau_3\in [0,1]$ such that 
\begin{align*}
& c([\tau_0,\tau_1)),c((\tau_2,\tau_3])
\subset (\zeta\circ\mu)^{-1}(\mathcal{B}(3R)),\\
&\mathbf{r}\circ\zeta\circ\mu\circ c(\tau_0)
=\mathbf{r}\circ\zeta\circ\mu\circ c(\tau_3)
=\sqrt{1+\delta}R+\sigma,\\
&\mathbf{r}\circ\zeta\circ\mu\circ c(\tau_1)
=\mathbf{r}\circ\zeta\circ\mu\circ c(\tau_2)
=3R.
\end{align*} 
Then by the similar argument 
and by $0<\delta\le 1$, 
we have 
\begin{align*}
\mathfrak{L}_g(c)
\ge 2\left(
\frac{3-\sqrt{2}}{\sqrt{2}}\cdot R -\sigma \right).
\end{align*}
Since $\sigma\le (3-2\sqrt{2})R/(2\sqrt{2})$,  
we have $\mathfrak{L}_g(c)
\ge 3R/\sqrt{2}$. 
Therefore, 
$\mathfrak{L}_g(c)<(3/\sqrt{2})R$ implies that  
the image of $c$ is 
included in $\mathcal{B}(3R)$. 
\end{proof}

Let $u_0,u_1\in S^1\times 
U(\sqrt{1+\delta}R+\sigma)$ and 
$c\colon [0,1]\to S^1\times U$ 
be a piecewise smooth path connecting $u_0$ and $u_1$. 
Put $c = (e^{\sqrt{-1}c_1},c_2)$, 
then we have 
$\mathfrak{L}_{\hat{g}}(c)
\ge \mathfrak{L}_g(c_2)$. 
By applying Lemma 
\ref{lem_met_loc.h}, 
we also obtain the next corollary. 
\begin{cor}
Let $b\in BS_1$ and $c\colon [0,1]\to S^1\times 
U$ be a 
piecewise smooth path connecting 
$u_0,u_1\in S^1\times 
U(\sqrt{1+\delta}R+\sigma)$ such that 
$\mathfrak{L}_{\hat{g}}(c)<(3/\sqrt{2})R$. 
If $\sigma\le (3-2\sqrt{2})R/(2\sqrt{2})$, 
then 
$c([0,1])\subset S^1\times 
U(3R)$. 
\label{cor_met_loc3.h}
\end{cor}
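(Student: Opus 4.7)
The plan is to reduce the statement directly to Lemma \ref{lem_met_loc.h} by projecting $c$ to $U$ and comparing lengths with respect to $\hat{g}$ and $g$. The key point is that the definition $U(3R)=(\zeta\circ\mu)^{-1}(\mathcal{B}(3R))$ depends only on the base coordinate, so the containment $c([0,1])\subset S^1\times U(3R)$ is equivalent to $c_2([0,1])\subset U(3R)$, where $c=(e^{\sqrt{-1}c_1},c_2)$ with $c_2\colon[0,1]\to U$.

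First I would use the identification provided by Lemma \ref{lem_cover_pri_bdl.h}, which gives $\hat{g}=(dt-\gamma)^2+g$ on $S^1\times U$. Because this is a pointwise sum of a nonnegative quadratic form and $g$, the speed of $c$ with respect to $\hat{g}$ dominates the speed of $c_2$ with respect to $g$, so
\begin{align*}
\mathfrak{L}_g(c_2)\le \mathfrak{L}_{\hat{g}}(c)<\tfrac{3}{\sqrt{2}}R.
\end{align*}

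Next I would verify the hypotheses of Lemma \ref{lem_met_loc.h} for the path $c_2$. By assumption $u_0,u_1\in S^1\times U(\sqrt{1+\delta}R+\sigma)$, so the endpoints $c_2(0),c_2(1)$ lie in $U(\sqrt{1+\delta}R+\sigma)$; the length bound just derived and the hypothesis $\sigma\le(3-2\sqrt{2})R/(2\sqrt{2})$ are exactly the remaining assumptions in that lemma. Applying it yields $c_2([0,1])\subset U(3R)$, and therefore $c([0,1])\subset S^1\times U(3R)$.

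The argument is essentially a one-line reduction, and the only point deserving care is the first inequality: since $\hat{g}$ is a sum of squares rather than a warped product with some coupling between the $S^1$ and $U$ directions, the projection estimate $\mathfrak{L}_g(c_2)\le\mathfrak{L}_{\hat{g}}(c)$ holds without any control on $c_1$. No genuine obstacle is expected; the corollary is stated separately mainly so that later arguments may invoke a containment statement directly on $\bbS$ rather than translate to $X$ each time.
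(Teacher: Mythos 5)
Your argument is correct and is essentially the paper's own proof: the paper likewise writes $c=(e^{\sqrt{-1}c_1},c_2)$, notes $\mathfrak{L}_{\hat{g}}(c)\ge \mathfrak{L}_g(c_2)$ from $\hat{g}=(dt-\gamma)^2+g$, and applies Lemma \ref{lem_met_loc.h} to $c_2$. No differences worth noting.
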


Now, we define the approximation map 
$\phi\colon 
S^1\times U \to 
S^1\times \R^2$ 
by 
\begin{align*}
    \phi(e^{\sqrt{-1}t},x)
    &:=
    \left( e^{\sqrt{-1}t},
    \zeta\circ\mu(x) \right).
\end{align*}
The next aim is to show that 
$|d_{\hat{g}}(u_0,u_1)-d_{\hat{g}_0}(\phi(u_0),\phi(u_1))|$ 
is small if $\delta,\sigma$ is small. 
To show it, we need to 
estimate the difference between 
$\mathfrak{L}_{\hat{g}}(c)$ and 
$\mathfrak{L}_{\hat{g}_0}(\phi\circ c)$ 
for a path $c$ in $S^1\times U(3R)$ 
and the diameter of the fibers $\phi^{-1}(u)$ 
for $u\in S^1\times \mathcal{B}(3R)$. 
We estimate it in the case of 
${\rm Im}(c)\subset S^1\times (U(3R)\setminus
U(\sigma))$ and 
${\rm Im}(c)\subset S^1\times \overline{U(\sigma)}$.

\paragraph{\bf 
(I) Estimates on $S^1\times (U(3R)\setminus
U(\sigma))$}
\mbox{}

We describe 
$\hat{g}$ 
on $W^{\rm rg}:=W\cap B^{\rm rg}$. 
On $\mu^{-1}(W^{\rm rg})$, 
we have the decomposition 
$g=g_f+g_\perp$ 
and $\gamma=\gamma_f+\gamma_\perp$. 
On $S^1\times \mu^{-1}(W^{\rm rg})$, 
we have 
\begin{align*}
    \hat{g}
    &=(dt-\gamma_\perp)^2
    +g_\perp
    +(\gamma_f)^2+g_f
    -2\gamma_f\cdot 
    (dt-\gamma_\perp).
\end{align*}
Fix any point $x\in \mu^{-1}(W^{\rm rg})$ and 
let $\mathbf{e}^1,\ldots,\mathbf{e}^n
\in (V_f)^*|_x$ 
be an orthonormal basis 
with respect to $g_f|_x$, 
then we may write 
$g_f|_x=\delta_{ij}\mathbf{e}^i\cdot \mathbf{e}^j$. 
The basis can be chosen 
such that $\gamma_f|_x = k \mathbf{e}^1$ 
for some $k\in \R$. 
Then we have 
\begin{align}
    \hat{g}
    &=\frac{1}{1+|\gamma_f|_g^2}(dt-\gamma_\perp)^2
    +g_\perp\label{eq_pullback_metric.h}\\
    &\quad\quad
    +\left(
    \sqrt{1+|\gamma_f|_g^2}\mathbf{e}^1-\frac{k}{\sqrt{1+|\gamma_f|_g^2}}(dt-\gamma_\perp)
    \right)^2
    +\sum_{i=2}^n (\mathbf{e}^i)^2.\notag
\end{align}
Define a subspace 
$\mathcal{W}_p\subset T_p(S^1\times U)=\R\frac{\del}{\del t}\oplus T_xU$ 
by 
\begin{align*}
    \mathcal{W}_p
    &:=
    {\rm Ker}\left(\sqrt{1+|\gamma_f|_g^2}\mathbf{e}^1-\frac{k}{\sqrt{1+|\gamma_f|_g^2}}(dt-\gamma_\perp)\right)
    \cap \left(
    \bigcap_{i=2}^n{\rm Ker}(\mathbf{e}^i)\right).
\end{align*}
We say the piecewise smooth path 
$c\colon [0,1]\to S^1\times U$ 
is {\it horizontal with respect to} $\phi$  
if the image of 
$\mu\circ\pi\circ c$ 
is contained in $W^{\rm rg}$ 
and $c'(\tau)\in \mathcal{W}_{c(\tau)}$ for 
every $\tau$.

Next we compare 
$\mathfrak{L}_{\hat{g}}(c)$ 
and $\mathfrak{L}_{\hat{g}_0}(\phi\circ c)$ 
for a path $c$, however, it is 
difficult to compare them directly. 
Now we define 
$\mathfrak{L}_{\hat{g}_0^\sigma}(c)$ as follows 
such that $\mathfrak{L}_{\hat{g}_0^\sigma}(c)
\le \mathfrak{L}_{\hat{g}_0}(c)$
and compare 
$\mathfrak{L}_{\hat{g}}(c)$ 
and $\mathfrak{L}_{\hat{g}_0^\sigma}(\phi\circ c)$ 
instead. 

Let $g_0^\sigma$ 
be a noncontinuous Riemannian metric 
on $\R^2$ 
defined by 
\begin{align*}
(g_0^\sigma)_\xi&:=(g_0)_\xi\quad 
(\xi\notin \mathcal{B}(\sigma)),\\
(g_0^\sigma)_\xi&:=0 \quad 
(\xi\in \mathcal{B}(\sigma))
\end{align*}
Then $d_{g_0^\sigma}$ is a 
pseudodistance function on $\R^2$. 
Next we put 
\begin{align*}
\hat{g}_0^\sigma
:= \frac{dt^2}{1+\mathbf{r}^2} + g_0^\sigma. 
\end{align*}
By the definition we have 
$g_0^\sigma\le g_0$ and 
$\hat{g}_0^\sigma\le \hat{g}_0$.

\begin{prop}
Let $b\in BS_1$. 
For any piecewise smooth path 
$c$ in $S^1\times U(3R)$, 
we have 
\begin{align*}
\mathfrak{L}_{\hat{g}}(c)
\ge \sqrt{\frac{1-\delta}{1+\delta}}\,
\mathfrak{L}_{\hat{g}_0^\sigma}(\phi\circ c).
\end{align*}
Moreover, if $c$ is horizontal with respect to $\phi$ 
and 
${\rm Im}(c)\subset S^1\times U(3R)\setminus U(\sigma)$, 
then 
\begin{align*}
\mathfrak{L}_{\hat{g}}(c)
\le \sqrt{\frac{1+\delta}{1-\delta}}\,
\mathfrak{L}_{\hat{g}_0^\sigma}(\phi\circ c).
\end{align*}
\label{prop_length_pm.h}
\end{prop}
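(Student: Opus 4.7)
The plan is to reduce Proposition \ref{prop_length_pm.h} to a pointwise comparison of line elements of $\hat g$ and the pullback $\phi^*\hat g_0^\sigma$. Once one shows $|c'(\tau)|_{\hat g}^{\,2}\geq\tfrac{1-\delta}{1+\delta}|(\phi\circ c)'(\tau)|_{\hat g_0^\sigma}^{\,2}$ at almost every $\tau$, and the analogous reverse inequality when $c$ is horizontal with image avoiding $U(\sigma)$, taking square roots and integrating over $[0,1]$ produces both length estimates. I would therefore work locally with a tangent vector $v=v^t\partial_t+v^f+v^\perp$ on $S^1\times U$ (using $V_f\oplus V_f^\perp$ to decompose the $U$-component), noting that $d\phi(v)=(v^t,d(\zeta\circ\mu)(v^\perp))$ because $d(\zeta\circ\mu)$ annihilates $V_f$, so that
\begin{align*}
\phi^*\hat g_0^\sigma(v,v)=\frac{(v^t)^2}{1+(\mathbf{r}\circ\zeta\circ\mu)^2}+\chi\,|d(\zeta\circ\mu)(v^\perp)|_{g_0}^{\,2},
\end{align*}
with $\chi=1$ outside $U(\sigma)$ and $\chi=0$ inside. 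By the explicit formula \eqref{eq_pullback_metric.h}, the first two summands give a pointwise lower bound for $\hat g(v,v)$, with equality exactly when $v\in\mathcal W_p$.

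For the lower bound I would distinguish the two cases $x\in U(3R)\setminus U(\sigma)$ and $x\in U(\sigma)$. Outside $U(\sigma)$, the conditions $(\star\ref{star 5})$ yield
\begin{align*}
\frac{1}{1+|\gamma_f|_g^{\,2}}\geq\frac{1}{(1+\delta)(1+\mathbf{r}^2)},\quad |v^\perp|_{g_\perp}^{\,2}\geq\frac{|v^\perp|_{(\zeta\circ\mu)^*g_0}^{\,2}}{1+\delta},\quad |\gamma_\perp(v^\perp)|\leq\delta|v^\perp|_{(\zeta\circ\mu)^*g_0}.
\end{align*}
Expanding $(v^t-\gamma_\perp(v^\perp))^2$ by AM--GM with parameter $\epsilon=\delta$, i.e.\ $(a-b)^2\geq(1-\delta)a^2-\tfrac{1-\delta}{\delta}b^2$, and combining with the above reduces the problem to checking that the residual $|v^\perp|^2$-coefficient is at least $\tfrac{1-\delta}{1+\delta}$, an inequality equivalent to $1-\delta^2\leq 1$. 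Inside $U(\sigma)$, the estimate $|\gamma_f|_g^{\,2}\leq\delta$ gives $\frac{1}{1+|\gamma_f|_g^{\,2}}\geq\frac{1}{1+\delta}$, and since $\phi^*\hat g_0^\sigma$ retains only $(v^t)^2/(1+\mathbf{r}^2)$ there, the positive summand $|v^\perp|_{g_\perp}^{\,2}$ absorbs the negative residue from the same AM--GM step; the factor $\tfrac{1-\delta}{1+\delta}$ then follows from $1+\mathbf{r}^2\geq 1$.

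For the upper bound, horizontality of $c$ forces the last two summands of \eqref{eq_pullback_metric.h} to vanish, so $|c'|_{\hat g}^{\,2}$ equals the first two terms exactly. Applying the upper counterparts of the above inequalities from $(\star\ref{star 5})$ together with AM--GM $(a-b)^2\leq(1+\epsilon)a^2+(1+\epsilon^{-1})b^2$ with $\epsilon=\delta/(1-\delta)$, one finds that the $(v^t)^2/(1+\mathbf{r}^2)$-coefficient is exactly $\tfrac{1+\delta}{1-\delta}$, while the $|v^\perp|_{(\zeta\circ\mu)^*g_0}^{\,2}$-coefficient reduces to $(1+\delta)+\tfrac{(1+\delta)\delta}{1+\mathbf{r}^2}$, which is dominated by $\tfrac{1+\delta}{1-\delta}$ because $1+\mathbf{r}^2\geq 1$.

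The main obstacle is bookkeeping the cross term $\gamma_\perp(v^\perp)$, which couples the $\partial_t$ and base directions; the constants $\sqrt{\tfrac{1\pm\delta}{1\mp\delta}}$ in the proposition are essentially dictated by the AM--GM choice needed to absorb this coupling. The inside-$U(\sigma)$ case of the lower bound is the most delicate, because $\hat g_0^\sigma$ offers no $|v^\perp|^2$-term on the right, so the negative residue must be absorbed into the $g_\perp$-term from $\hat g$ itself; fortunately the trivial inequality $(1-\delta^2)\delta\leq 1$ for $\delta\in(0,1)$ makes this possible.
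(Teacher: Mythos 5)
Your proposal is correct and follows essentially the same route as the paper: a pointwise comparison of $\hat g$ with $\phi^*\hat g_0^\sigma$ via the expression \eqref{eq_pullback_metric.h}, the bounds in $(\star\ref{star 5})$, and a weighted AM--GM absorption of the cross term $\gamma_\perp(v_\perp)$, with horizontality making the first estimate sharp for the upper bound. The paper merely packages the cross-term absorption into Lemma \ref{lem_est_metric.h} with uniform factors $(1-\delta)$ instead of your inline parameter choices and explicit inside/outside-$U(\sigma)$ case split, so the arguments coincide in substance.
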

To show Proposition 
\ref{prop_length_pm.h}, 
we need the next lemma. 
\begin{lem}
Let $b\in BS_1$, 
$(e^{\sqrt{-1}t},x)
\in S^1\times U(3R)$ 
and $w\in \R\cong T_{e^{\sqrt{-1}t}}S^1$,  $\tilde{v}\in T_xU$ 
and put $\hat{g}_0=(dt)^2/(1+\mathbf{r}^2)+g_0$ on 
$S^1\times (\R^2\setminus
\{\mathbf{0}_{\R^2}\})$. 
Assume that $\mu(x)\in B^{\rm rg}$. 
Then we have 
\begin{align*}
    |(w,v)|_{\hat{g}}^2
    &\ge \frac{1-\delta}{1+|\gamma_f|_g^2}\left| 
    w\right|^2
    +(1-\delta)|v_\perp|_{g_\perp}^2,\\
    |d\phi(w,v)|_{\hat{g}_0}^2
    &\ge \frac{1-\delta}{1+\mathbf{r}^2}\left| 
    w-\gamma_\perp(v_\perp)\right|^2
    +(1-\delta)|d(\zeta\circ\mu)(v)|_{g_0}^2,
\end{align*}
where $v_\perp$ is the 
$V_f^\perp$-component of 
$v$. 
\label{lem_est_metric.h}
\end{lem}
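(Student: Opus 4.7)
The plan is to invoke Lemma \ref{lem_cover_pri_bdl.h} to identify $\bbS(L|_U,h)$ with $S^1 \times U$ so that $\hat{g} = (dt-\gamma)^2 + g$, and then to work with the adapted decomposition \eqref{eq_pullback_metric.h} of $\hat{g}$ on $S^1 \times \mu^{-1}(W^{\rm rg})$, which writes $\hat{g}$ as a sum of squares separating $v_f$ and $v_\perp$ (via the orthogonal decomposition $TU = V_f \oplus V_f^\perp$) and $\gamma = \gamma_f + \gamma_\perp$.

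For the first inequality, I would drop from \eqref{eq_pullback_metric.h} the two non-negative square terms that depend on $v_f$ and the $\mathbf{e}^i$-components, leaving
$$|(w,v)|_{\hat{g}}^2 \ge \frac{(w - \gamma_\perp(v_\perp))^2}{1 + |\gamma_f|_g^2} + |v_\perp|_{g_\perp}^2.$$
Then apply the elementary inequality $(w-a)^2 \ge (1-\epsilon) w^2 + (1 - 1/\epsilon) a^2$ with $\epsilon = \delta$, and control the resulting negative cross term by $|\gamma_\perp(v_\perp)|^2 \le \delta^2 |v_\perp|_{g_\perp}^2$, which comes from $|\gamma_\perp|_g \le \delta$ in $(\star$\ref{star 5}$)$. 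A short calculation shows that the coefficient of $|v_\perp|_{g_\perp}^2$ becomes $1 - \delta(1-\delta)/(1+|\gamma_f|_g^2) \ge 1-\delta$, yielding the claim.

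For the second inequality, I would compute directly
$$|d\phi(w,v)|_{\hat{g}_0}^2 = \frac{w^2}{1 + \mathbf{r}^2} + |d(\zeta \circ \mu)(v)|_{g_0}^2,$$
using $d\mu(v_f) = 0$ so that $d(\zeta \circ \mu)(v) = d(\zeta \circ \mu)(v_\perp)$. After rearrangement, the claim reduces to
$$(1-\delta)(w - \gamma_\perp(v_\perp))^2 \le w^2 + \delta (1 + \mathbf{r}^2)\,|d(\zeta \circ \mu)(v_\perp)|_{g_0}^2.$$
Here I apply the upper AM--GM bound $(w-a)^2 \le (1+\epsilon) w^2 + (1 + 1/\epsilon) a^2$ with $\epsilon = \delta/(1-\delta)$, chosen precisely so that $(1-\delta)(1+\epsilon) = 1$, making the $w^2$ terms match exactly. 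The residual term is then controlled by the complementary bound $|\gamma_\perp(v_\perp)|^2 \le \delta^2 |d(\zeta \circ \mu)(v_\perp)|_{g_0}^2$ from $|\gamma_\perp|_{(\zeta\circ\mu)^* g_0} \le \delta$ in $(\star$\ref{star 5}$)$.

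The main delicacy is the calibration of the AM--GM parameter in each case; the values $\epsilon = \delta$ and $\epsilon = \delta/(1-\delta)$ are forced by the requirement that the $(1-\delta)$ factors on the right-hand sides emerge exactly, with no extra loss. Observe that the two norm-control bounds on $\gamma_\perp$ supplied by $(\star$\ref{star 5}$)$ — measured with respect to $g$ and with respect to $(\zeta\circ\mu)^* g_0$ respectively — play complementary roles, each enabling one of the two inequalities.
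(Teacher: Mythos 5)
Your proposal is correct and follows essentially the same route as the paper: drop the nonnegative square terms in the decomposition \eqref{eq_pullback_metric.h}, apply the Peter--Paul inequality (the paper's $(a-\delta b)^2\ge(1-\delta)a^2-\delta(1-\delta)b^2$ is your $\epsilon=\delta$ case), and use the two bounds on $\gamma_\perp$ from $(\star\ref{star 5})$ with respect to $g$ and $(\zeta\circ\mu)^*g_0$ respectively. Your treatment of the second inequality, rearranged as an upper bound with $\epsilon=\delta/(1-\delta)$, is just a repackaging of the paper's ``similar argument'' and is fine.
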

\begin{proof}
By \eqref{eq_pullback_metric.h}, 
we have 
\begin{align*}
    |(w,v)|_{\hat{g}}^2 
    \ge \frac{|w-\gamma_\perp(v_\perp)|^2}{1+|\gamma_f|_g^2}+|v_\perp|_{g_\perp}^2
\end{align*} 
Moreover, by $(\star \ref{star 5})$, we have 
$|\gamma_\perp(v_\perp)|
\le \delta|v_\perp|_{g_\perp}$, 
hence 
\begin{align*}
    |(w,v)|_{\hat{g}}^2 
    &\ge \frac{\left|
    |w| - \delta|v_\perp|_{g_\perp}
    \right|^2}
    {1+|\gamma_f|_g^2}
    +|v_\perp|_{g_\perp}^2.
\end{align*}
Since we have 
$(a-\delta b)^2
\ge (1-\delta)a^2-\delta(1-\delta)b^2$ for $a,b\in \R$ and 
$0<\delta \le 1$, 
we can see that 
\begin{align*}
    |(w,v)|_{\hat{g}}^2 
    &\ge \frac{(1-\delta)
    |w|^2 - \delta(1-\delta)|v_\perp|_{g_\perp}^2
    }
    {1+|\gamma_f|_g^2}
    +|v_\perp|_{g_\perp}^2\\
    &\ge \frac{(1-\delta)
    |w|^2 }
    {1+|\gamma_f|_g^2}
    +(1-\delta)|v_\perp|_{g_\perp}^2.
\end{align*}
Since $d\mu(v_\perp)=d\mu(v)$, 
we have the first inequality. 
Next we consider the second 
inequality. 
By $d\phi(w,v)
=(w, d(\zeta\circ\mu)(v))$, 
we have 
\begin{align*}
    |d\phi(w,v)|_{\hat{g}_0}^2
    &= \frac{\left| 
    w\right|^2}{1+\mathbf{r}^2}
    +|d(\zeta\circ\mu)(v)|_{g_0}^2.
\end{align*}
Then by the similar argument 
we also have the second inequality. 
\end{proof}

\begin{proof}[Proof of Proposition 
$\ref{prop_length_pm.h}$]
Let $c=(e^{\sqrt{-1}c_1},c_2)\colon 
[0,1]\to S^1\times U(3R)$. 
By Lemma \ref{lem_est_metric.h}, 
we have 
\begin{align*}
\mathfrak{L}_{\hat{g}}(c)
\ge \sqrt{1-\delta}\int_0^1\sqrt{
\frac{\left| 
c_1'\right|^2}{1+|\gamma_f|^2} + 
|v_\perp|_{g_\perp}^2
}\, d\tau,
\end{align*}
where $v_\perp$ is the $V_f^\perp$-component 
of $c_2'$. 
By $(\star \ref{star 5})$, we have 
$(1+\delta)^{-1}(\zeta\circ\mu)^*g_0^\sigma\le 
g_\perp$ and 
\begin{align*}
\frac{(1+\delta)^{-1}}{1+(\mathbf{r}\circ\zeta\circ\mu)^2}
\le \frac{1}{1+|\gamma_f|^2}
\end{align*}
on $U(3R)$, 
hence $\mathfrak{L}_{\hat{g}}(c)
\ge \sqrt{(1-\delta)/(1+\delta)}\,
\mathfrak{L}_{\hat{g}_0^\sigma}(\phi\circ c)$. 

Next we assume $c$ is horizontal with respect to $\phi$ 
and ${\rm Im}(c)\subset S^1\times U(3R)\setminus U(\sigma)$. 
Then we have 
\begin{align*}
\mathfrak{L}_{\hat{g}}(c)
= \int_0^1\sqrt{
\frac{\left| 
c_1'-\gamma_\perp(v_\perp)\right|^2}{1+|\gamma_f|^2} + 
|v_\perp|_{g_\perp}^2
}\, d\tau.
\end{align*}
Since $(\star \ref{star 5})$ 
gives $1/\{ 1+(\mathbf{r}\circ\zeta\circ\mu)^2\}
\ge (1+\delta)^{-1}/(1+|\gamma_f|^2)$ 
on $U(3R)\setminus U(\sigma)$, 
then by Lemma \ref{lem_est_metric.h} 
we have 
\begin{align*}
\mathfrak{L}_{\hat{g}_0^\sigma}(\phi\circ c)
&\ge \sqrt{\frac{1-\delta}{1+\delta}}\,
\mathfrak{L}_{\hat{g}}(c).
\end{align*}
\end{proof}

\paragraph{\bf 
(II) Estimates on $S^1\times \overline{U(\sigma)}$}
\mbox{}

\begin{prop}
Let $b\in BS_1$.
For any piecewise smooth 
path $c\colon [0,1]
\to S^1\times\overline{\mathcal{B}(\sigma)}$, 
we have 
\begin{align*}
    d_{\hat{g}}
    (u_0,u_1)
	&\le 
	\sqrt{1+\sigma^2}\mathfrak{L}_{\hat{g}_0^\sigma}(c) 
	+ 2\delta,\\
	d_{\hat{g}_0}(c(0),c(1))
	&\le \sqrt{1+\sigma^2}
	\mathfrak{L}_{\hat{g}_0^\sigma}(c)+2\sigma
\end{align*}
for any $u_0\in\phi^{-1}(c(0))$ and 
$u_1\in\phi^{-1}(c(1))$. 
\label{prop_est_dis_in.h}
\end{prop}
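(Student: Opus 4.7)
The plan is to prove both inequalities by exhibiting explicit two-segment paths realizing the stated upper bounds. The crucial preliminary observation is that for any piecewise smooth $c=(e^{\sqrt{-1}c_1(\tau)},c_2(\tau))$ in $S^1\times\overline{\mathcal{B}(\sigma)}$, the bound $|c_2(\tau)|\le\sigma$ and the fact that $\hat{g}_0^\sigma=dt^2/(1+\mathbf{r}^2)$ on $\mathcal{B}(\sigma)$ imply
\begin{align*}
\mathfrak{L}_{\hat{g}_0^\sigma}(c)\ge \int_0^1\frac{|c_1'(\tau)|}{\sqrt{1+\sigma^2}}\,d\tau\ge \frac{|c_1(1)-c_1(0)|_{S^1}}{\sqrt{1+\sigma^2}}.
\end{align*}

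For the second inequality I would concatenate the pure $S^1$-rotation at the point $\xi=c_2(0)$ from $c(0)$ to $(e^{\sqrt{-1}c_1(1)},c_2(0))$, whose $\hat{g}_0$-length equals $|c_1(1)-c_1(0)|_{S^1}/\sqrt{1+|c_2(0)|^2}\le\sqrt{1+\sigma^2}\,\mathfrak{L}_{\hat{g}_0^\sigma}(c)$ by the above, with the Euclidean straight segment from $(e^{\sqrt{-1}c_1(1)},c_2(0))$ to $c(1)$; the latter has $\hat{g}_0$-length equal to the $g_0$-distance between two points of $\overline{\mathcal{B}(\sigma)}$, hence at most $2\sigma$.

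For the first inequality I would use Lemma \ref{lem_cover_pri_bdl.h} (applicable since $b\in BS_1$) to work in the trivialization where $\hat{g}=(dt-\gamma)^2+g$, and write $u_i=(e^{\sqrt{-1}t_i},x_i)$ with $e^{\sqrt{-1}t_i}=e^{\sqrt{-1}c_1(i)}$ and $x_i\in\overline{U(\sigma)}$. By $(\star \ref{star 8})$ there is a path $c_x$ in $\overline{U(\sigma)}$ joining $x_0$ and $x_1$ with $\mathfrak{L}_g(c_x)<\delta$. I would then concatenate the fiberwise $S^1$-rotation $\tau\mapsto(e^{\sqrt{-1}((1-\tau)t_0+\tau t_1)},x_0)$, whose $\hat{g}$-length equals $|t_1-t_0|_{S^1}\le\sqrt{1+\sigma^2}\,\mathfrak{L}_{\hat{g}_0^\sigma}(c)$ by the preliminary estimate applied again, with the horizontal-slice path $\tau\mapsto(e^{\sqrt{-1}t_1},c_x(\tau))$. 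The velocity of the latter is $(0,c_x'(\tau))$, so its squared $\hat{g}$-norm is $\gamma(c_x'(\tau))^2+|c_x'(\tau)|_g^2\le(1+|\gamma|_g^2)|c_x'(\tau)|_g^2$, and by $(\star \ref{star 5})$ one has $|\gamma|_g^2\le|\gamma_f|_g^2+|\gamma_\perp|_g^2\le\delta+\delta^2\le 2\delta$ on the regular part of $\overline{U(\sigma)}$. Consequently this segment has $\hat{g}$-length at most $\sqrt{1+2\delta}\,\delta<2\delta$ for $\delta<1$.

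The main obstacle is the singular locus $\mu^{-1}(b)$, where the orthogonal splitting $TX=V_f\oplus V_f^\perp$ degenerates and $(\star \ref{star 5})$ does not directly bound $|\gamma|_g$. This is handled by choosing the path $c_x$ generically so that it meets $\mu^{-1}(b)$ in a set of parameter measure zero, which is immaterial for the length integral; the pointwise estimate on the regular part thus controls the whole segment.
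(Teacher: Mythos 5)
Your proof is correct and follows essentially the same route as the paper: the same key lower bound $\mathfrak{L}_{\hat{g}_0^\sigma}(c)\ge |c_1(1)-c_1(0)|/\sqrt{1+\sigma^2}$, the same two-segment comparison path (an $S^1$-rotation of length at most $|c_1(1)-c_1(0)|$ concatenated with a short path inside $\overline{U(\sigma)}$ controlled by $(\star 6)$), and the same use of $(\star 4)$ to bound $|\gamma|_g$ there. The only cosmetic differences are the order of the two segments and your genericity remark at the singular fiber, where the paper instead simply extends the bound $|\gamma|_g^2\le\delta+\delta^2$ to $\overline{U(\sigma)}$ (by continuity); both give the stated constants.
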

\begin{proof}
Let 
$c=(e^{\sqrt{-1}c_1},c_2)\colon [0,1]\to S^1\times \overline{\mathcal{B}(\sigma)}$ 
be a piecewise smooth path. 
Since $| c' |_{\hat{g}_0^\sigma}\ge |c_1'|/\sqrt{1+\sigma^2}$, we have 
\begin{align}
    \mathfrak{L}_{\hat{g}_0^\sigma}(c)
    \ge \frac{|c_1(1)-c_1(0)|}{\sqrt{1+\sigma^2}}.
	\label{ineq_lower_near_origin.h}
\end{align}
If we take $u_i\in\phi^{-1}(c(i))$ for 
$i=0,1$, then we may put 
$u_i=(e^{\sqrt{-1}t_i},x_i)$ for 
some $t_i\in\R$ and $x_i\in\overline{U(\sigma)}$ such that 
$c_1(i)=t_i$. 
Let $\eta_2\colon [0,1]\to \overline{U(\sigma)}$ 
be a piecewise smooth path 
connecting $x_0,x_1$ 
and let $\eta(\tau):=(e^{\sqrt{-1}c_1(0)},\eta_2(\tau))$. 
Then we have $\eta(0)=u_0$, 
$\eta(1)=(e^{\sqrt{-1}c_1(0)},x_1)$ and 
\begin{align*}
    d_{\hat{g}}
    (\eta(0),\eta(1))
    &\le \int_0^1\sqrt{
	|\gamma(\eta_2')|^2
	+|\eta_2'|_g^2
	}\, d\tau
	= \int_0^1\sqrt{
	1+|\gamma|^2
	}|\eta_2'|_g\, d\tau,\\
	d_{\hat{g}}
    (\eta(1),u_1)
	&\le |c_1(1)-c_1(0)|.
\end{align*}
By $(\star \ref{star 5})$ and $\delta\le 1$, 
we have 
$|\gamma|_g^2\le 2$ 
on $\overline{\mathcal{B}(\sigma)}$. 
Then we have 
\begin{align*}
    d_{\hat{g}}
    (u_0,u_1)
	&\le 2\,\mathfrak{L}_g(\eta_2)
	+|c_1(1)-c_1(0)|.
\end{align*}
By $(\star \ref{star 8})$ and 
\eqref{ineq_lower_near_origin.h}, 
we have the first inequality.

Next we consider the second inequality. 
Since 
\begin{align*}
d_{\hat{g}_0}(c(0),c(1))
&\le d_{g_0}(c_2(0),c_2(1))
+ |c_1(1)-c_1(0)|\\
&\le 2\sigma
+ |c_1(1)-c_1(0)|,
\end{align*}
then \eqref{ineq_lower_near_origin.h} 
implies 
\begin{align*}
d_{\hat{g}_0}(c(0),c(1))
&\le 2\sigma
+ \sqrt{1+\sigma^2}\mathfrak{L}_{\hat{g}_0^\sigma}(c).
\end{align*}
\end{proof}

\paragraph{\bf 
(III) The diameter of a fiber of $\phi$}
\mbox{}

\begin{prop}
Let $b\in BS_1$. Then we have 
\begin{align*}
    {\rm diam}_{\hat{g}}
    (\phi^{-1}(e^{\sqrt{-1}t},\xi))
    \le \sqrt{
    2+18R^2}\, \delta
\end{align*}
for $\xi\in \mathcal{B}(3R)$.
\label{prop_diam_est.h}
\end{prop}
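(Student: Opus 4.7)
The strategy is to lift a short path on the base fiber $X_w$ (for $w := \zeta^{-1}(\xi)$) to a path in $\bbS$ by keeping the $S^1$-coordinate constant. Under the trivialization $\bbS(L|_U, h) = S^1 \times U$ of Lemma~\ref{lem_cover_pri_bdl.h}, we have $\phi(e^{\sqrt{-1}t}, x) = (e^{\sqrt{-1}t}, \zeta \circ \mu(x))$, so the fiber $\phi^{-1}(e^{\sqrt{-1}t}, \xi)$ equals $\{e^{\sqrt{-1}t}\} \times X_w$. For two points $u_i = (e^{\sqrt{-1}t}, x_i)$ in this fiber ($i = 0, 1$), $(\star\ref{star 8})$ supplies a piecewise smooth path $c_2 \colon [0,1] \to X_w$ from $x_0$ to $x_1$ with $\mathfrak{L}_g(c_2) < \delta$, and its trivial lift $c(\tau) := (e^{\sqrt{-1}t}, c_2(\tau))$ connects $u_0$ and $u_1$ in $\bbS$.

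Since $\hat{g} = (dt - \gamma)^2 + g$, the length of $c$ is
\[
\mathfrak{L}_{\hat{g}}(c) = \int_0^1 \sqrt{\gamma(c_2'(\tau))^2 + |c_2'(\tau)|_g^2}\, d\tau \le \sqrt{1 + \sup_{X_w} |\gamma|_g^2}\cdot \mathfrak{L}_g(c_2)
\]
by the Cauchy--Schwarz inequality $\gamma(v)^2 \le |\gamma|_g^2 |v|_g^2$. The central step is then to bound $|\gamma|_g^2$ uniformly on $X_w$. Using the orthogonal decomposition $\gamma = \gamma_f + \gamma_\perp$ on $\mu^{-1}(B^{\mathrm{rg}})$, the estimates of $(\star\ref{star 5})$ combined with $\delta \le 1$ and $\mathbf{r} < 3R$ give $|\gamma_f|_g^2 \le (1+\delta)\mathbf{r}^2 \le 18 R^2$ on $U(3R) \setminus \overline{U(\sigma)}$ and $|\gamma_f|_g^2 \le \delta \le 1$ on $\overline{U(\sigma)}$, while $|\gamma_\perp|_g^2 \le \delta^2 \le 1$ throughout; thus $|\gamma|_g^2 \le 18R^2 + 1$ on the regular locus of $X_w$. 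Substituting back,
\[
d_{\hat{g}}(u_0, u_1) \le \mathfrak{L}_{\hat{g}}(c) \le \sqrt{2 + 18 R^2}\, \delta,
\]
which is the claimed bound.

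The main subtlety is the critical value case $\xi = \mathbf{0}_{\R^2}$ (so $w = b$), where $X_b$ is the singular fiber of Kodaira type $I_1$ and the decomposition $V_f \oplus V_f^\perp$ breaks down at the nodal point $p_{\mathrm{sing}}$. I would handle this by continuity: $\gamma$ is a smooth $1$-form on all of $U$, so $|\gamma|_g$ is a continuous function, and the pointwise bound $|\gamma|_g^2 \le 18R^2 + 1$ established on $X_b \setminus \{p_{\mathrm{sing}}\}$ extends to $p_{\mathrm{sing}}$ by taking limits. In particular, even if the path $c_2$ supplied by $(\star\ref{star 8})$ crosses $p_{\mathrm{sing}}$, this is a measure-zero event that does not affect the length integral, so the estimate persists.
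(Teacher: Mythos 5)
Your proof is correct and is essentially the paper's own argument: lift a short intrinsic path in the fiber at constant $S^1$-coordinate, bound its $\hat{g}$-length using the estimates of $(\star\ref{star 5})$, and finish with the diameter bound $(\star\ref{star 8})$. The only real difference is that the paper notes $\gamma_\perp(c_2')=0$ because $c_2'$ is tangent to the fiber (it lies in ${\rm Ker}(d\mu)$), so only $|\gamma_f|_g$ enters and the constant $\sqrt{2+18R^2}$ holds for every $R>0$, whereas your Cauchy--Schwarz bound with the full $|\gamma|_g$ spends an extra $|\gamma_\perp|_g^2\le\delta^2$ and on $\overline{U(\sigma)}$ only gives $\sqrt{1+\delta+\delta^2}$, which is dominated by $\sqrt{2+18R^2}$ unless $R$ is unrealistically small --- a harmless slack, since $R$ is taken large in every application of this proposition.
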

\begin{proof}
Let $u_0,u_1\in S^1\times 
U$ 
and assume that 
$\phi(u_0)=\phi(u_1)=(e^{\sqrt{-1}t},\xi)$. 
Put 
$u_i=(e^{\sqrt{-1}t},x_i)$ for $i=0,1$, 
then $\zeta\circ\mu(x_0)
=\zeta\circ\mu(x_1)=\xi$. 
For any $\varepsilon>0$ there 
is a piecewise smooth 
path $c\colon [0,1]\to (\zeta\circ\mu)^{-1}(\xi)$  connecting 
$x_0$ and  $x_1$ such that 
$\mathfrak{L}_g(c)<d_g(x_0,x_1)+\varepsilon$. 
We define a path 
$\hat{c}\colon [0,1]\to \phi^{-1}(e^{\sqrt{-1}t},\xi)$ 
connecting $u_0$ and 
$u_1$ by $\hat{c}(\tau)
    := \left( e^{\sqrt{-1}t}, 
    c(\tau)\right)$. 
Then we can see that 
\begin{align*}
    \mathfrak{L}_{\hat{g}}(\hat{c})
    &= \int_0^1
    \sqrt{\{ \gamma(c')\}^2 
    + |c'|_g^2}d\tau.
\end{align*}
Since $c'\in 
{\rm Ker}(d\mu)$, we have 
$\gamma_\perp(c')=0$. 
By $(\star \ref{star 5})$, 
if $\xi\notin\mathcal{B}(\sigma)$ then 
we have 
$\{ \gamma(c')\}^2
	\le (1+\delta)\mathbf{r}^2
    |c'|_g^2$, 
and if $\xi\in\mathcal{B}(\sigma)$ 
then $\{ \gamma(c')\}^2\le \delta
|c'|_g^2$. 
Therefore, we obtain 
\begin{align*}
    \mathfrak{L}_{\hat{g}}(\hat{c})
    &\le \int_0^1
    \sqrt{
    1+\max\{ (1+\delta)\mathbf{r}^2, \delta\}
    }\cdot |c'|_g d\tau\\
	&< \sqrt{
    1+\delta+9(1+\delta)R^2}\left\{
	d_g(x_0,x_1)+\varepsilon
	\right\}.
\end{align*}
Since we can take $\varepsilon\to 0$ 
and we have supposed $\delta\le 1$, 
then 
\begin{align*}
    \mathfrak{L}_{\hat{g}}(\hat{c})
    &\le \sqrt{
    2+18R^2}
	d_g(x_0,x_1).
\end{align*}
Hence 
we have the result by $(\star \ref{star 8})$. 
\end{proof}

Next we 
compare $d_{\hat{g}}, d_{\hat{g}_0^\sigma}$ and 
compare $d_{\hat{g}_0},d_{\hat{g}_0^\sigma}$ 
by applying the results in 
(I,II,III).

\begin{prop}
Let $R\ge 4\sqrt{2}(3 -2\sqrt{2})^{-1}$, 
$\delta\le (4-\pi)/2$, 
$b\in BS_1$, $q\in\mu^{-1}(b)$, 
$u_0,u_1\in \pi^{-1}(B_g(q,R))$ and 
$\sigma\le (3-2\sqrt{2})R/(2\sqrt{2})$. 
Then 
\begin{align*}
	\sqrt{\frac{1-\delta}{1+\delta}}\, d_{\hat{g}_0^\sigma}(\phi(u_0),\phi(u_1))
	&\le
    d_{\hat{g}}(u_0,u_1).
\end{align*}
\label{prop_lower1.h}
\end{prop}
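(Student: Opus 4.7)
The plan is to obtain the inequality by taking near-minimizing paths in $(\mathbb{S},\hat{g})$ connecting $u_0$ and $u_1$, projecting them via $\phi$ to $S^1\times\mathbb{R}^2$, and applying the first (global) inequality of Proposition \ref{prop_length_pm.h}. The key technical point is to ensure such near-minimizing paths actually stay inside $S^1\times U(3R)$, the region where Proposition \ref{prop_length_pm.h} applies.

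First I would produce an a priori bound for $d_{\hat{g}}(u_0,u_1)$. Since $\pi(u_0),\pi(u_1)\in B_g(q,R)$, the triangle inequality gives a path in $X$ from $\pi(u_0)$ to $\pi(u_1)$ of $g$-length arbitrarily close to a number less than $2R$. Horizontally lifting this path (with respect to $\nabla$) and then traveling along an $S^1$-fiber, whose $\hat{g}$-diameter is $\pi$ because the fiber metric reduces to $(dt)^2$, produces a path in $\mathbb{S}$ joining $u_0,u_1$ of $\hat{g}$-length less than $2R+\pi$. A direct arithmetic check using $R\ge 4\sqrt{2}(3-2\sqrt{2})^{-1}=12\sqrt{2}+16$ and $\delta\le(4-\pi)/2$ yields $2R+\pi<(3/\sqrt{2})R$, with a buffer proportional to $4-\pi$, so for every sufficiently small $\varepsilon>0$ there exists a piecewise smooth path $c\colon[0,1]\to\mathbb{S}$ from $u_0$ to $u_1$ with $\mathfrak{L}_{\hat{g}}(c)<(3/\sqrt{2})R$.

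Next I would confine $c$ to $S^1\times U(3R)$. Because $\hat{g}\ge\pi^*g$, the projected curve $\pi\circ c$ satisfies $\mathfrak{L}_g(\pi\circ c)\le\mathfrak{L}_{\hat{g}}(c)<(3/\sqrt{2})R$, and its endpoints lie in $B_g(q,R)\subset U(\sqrt{1+\delta}R+\sigma)$ by Lemma \ref{lem_met_loc.h}. Invoking the second part of the same lemma (valid thanks to $\sigma\le(3-2\sqrt{2})R/(2\sqrt{2})$), $\pi\circ c([0,1])\subset U(3R)$, so $c([0,1])\subset S^1\times U(3R)$. Now the first inequality of Proposition \ref{prop_length_pm.h} applies, giving
\begin{equation*}
\mathfrak{L}_{\hat{g}}(c)\ge\sqrt{\tfrac{1-\delta}{1+\delta}}\,\mathfrak{L}_{\hat{g}_0^\sigma}(\phi\circ c)\ge\sqrt{\tfrac{1-\delta}{1+\delta}}\,d_{\hat{g}_0^\sigma}(\phi(u_0),\phi(u_1)),
\end{equation*}
where the last step uses that $\phi\circ c$ is a curve in $S^1\times\mathbb{R}^2$ from $\phi(u_0)$ to $\phi(u_1)$. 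Taking the infimum over such $c$ yields the desired inequality.

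The main obstacle I anticipate is the localization step: without a uniform a priori upper bound on $d_{\hat{g}}(u_0,u_1)$ there would be no reason for near-minimizing paths to stay in $S^1\times U(3R)$, and Proposition \ref{prop_length_pm.h} would fail to apply. This is precisely what the quantitative hypotheses $R\ge 4\sqrt{2}(3-2\sqrt{2})^{-1}$ and $\delta\le(4-\pi)/2$ are calibrated to guarantee; once the path is trapped in $S^1\times U(3R)$ the proof is essentially a one-line invocation of the metric comparison already established.
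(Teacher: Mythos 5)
Your proposal is correct and follows essentially the same route as the paper: bound $d_{\hat{g}}(u_0,u_1)\le d_g(\pi(u_0),\pi(u_1))+\pi<2R+\pi$, use the hypotheses on $R$ and the smallness of $\varepsilon$ to trap a near-minimizing path in $S^1\times U(3R)$ (the paper phrases this via Corollary \ref{cor_met_loc3.h}, which is exactly your projection argument through Lemma \ref{lem_met_loc.h}), and then invoke the first inequality of Proposition \ref{prop_length_pm.h}. The only cosmetic difference is that the smallness condition $\le(4-\pi)/2$ is imposed on the path-error $\varepsilon$ rather than on $\delta$, but this does not affect the argument.
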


\begin{proof}
Fix a sufficiently small $\varepsilon>0$. 
Let $\hat{c}\colon [0,1]\to S^1\times 
U$ 
be a piecewise smooth path 
connecting $u_0,u_1\in \pi^{-1}(B_g(q,R))$ 
such that $\mathfrak{L}_{\hat{g}}(\hat{c})<d_{\hat{g}}(u_0,u_1)+\varepsilon$. 
If ${\rm Im}(\hat{c})\subset 
S^1\times U(3R)$, 
then by the first inequality of 
Proposition \ref{prop_length_pm.h}, 
we have the result. 
We show ${\rm Im}(\hat{c})\subset 
S^1\times U(3R)$. 
Since 
we have 
\begin{align}
    d_{\hat{g}}(u_0,u_1)
    \le 
    d_g\left(\pi(u_0),\pi(u_1)\right)
    +\pi,\label{lem_submersion_upper.h}
\end{align}
then $\mathfrak{L}_{\hat{g}}(\hat{c})\le d_g(\pi(u_0),\pi(u_1))+\pi+\varepsilon$. 
Take $\varepsilon$ 
such that 
$\varepsilon\le (4-\pi)/2$. 
Since 
$R\ge 4\sqrt{2}(3 -2\sqrt{2})^{-1}$, 
we obtain 
\begin{align*}
    \mathfrak{L}_{\hat{g}}(\hat{c})<
    2R + 4\le \frac{3R}{\sqrt{2}},
\end{align*}
hence ${\rm Im}(\hat{c})\subset 
S^1\times U(3R)$ by 
Corollary \ref{cor_met_loc3.h}. 
\end{proof}

Next we give the opposite direction 
of the estimate in 
Proposition \ref{prop_lower1.h}. 
Let $c=(e^{\sqrt{-1}c_1},c_2)\colon [0,1]\to S^1\times \R^2$ 
be a piecewise smooth path. 
Now, we 
apply Proposition \ref{prop_est_dis_in.h} 
to every connected component of 
$c|_{c_2^{-1}(\mathcal{B}(\sigma))}$, 
however, 
there are a lot of 
connected components 
in general, 
hence the error terms of the 
estimates in Proposition \ref{prop_est_dis_in.h} 
may become large. 
To prevent it, 
we should show that we can replace 
$c$ by another $\hat{c}$ such that 
$\mathfrak{L}_{\hat{g}_0^\sigma}(\hat{c})\le\mathfrak{L}_{\hat{g}_0^\sigma}(c)$ and 
the number of 
the connected components of 
$c_2^{-1}(\mathcal{B}(\sigma))$ 
is small. 
We discuss it in the next two lemmas.

\begin{lem}
Let  
$\xi\in \mathbf{r}^{-1}(\sigma)$ and 
$t_0,t_1\in\R$. 
Then there is a smooth minimizing 
geodesic 
$c\colon [0,1]\to S^1\times (\mathcal{B}(4+\sigma)
\setminus\mathcal{B}(\sigma))$ 
with respect to 
$d_{\hat{g}_0^\sigma}$ 
such that 
$c(0)=(e^{\sqrt{-1}t_0},\xi)$ and 
$c(1)=(e^{\sqrt{-1}t_1},\xi)$.
\label{lem_shortcut.h}
\end{lem}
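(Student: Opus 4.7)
The plan is to reduce to a two-dimensional problem by rotational symmetry, construct the minimizing bulging geodesic via the conservation law for the Killing field $\partial/\partial t$, and then bound the maximum radius.

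First I would write $\xi=(\sigma\cos\theta_0,\sigma\sin\theta_0)$ in polar coordinates on $\R^2$. Outside $\mathcal{B}(\sigma)$, $\hat{g}_0^\sigma$ takes the form $(dt)^2/(1+r^2)+(dr)^2+r^2(d\theta)^2$ and is rotationally symmetric in $\theta$. For any path $c(\tau)=(e^{\sqrt{-1}c_1(\tau)},r(\tau),\theta(\tau))$ with $\theta(0)=\theta(1)=\theta_0$, replacing $\theta(\tau)$ by the constant $\theta_0$ removes the non-negative term $r^2(\theta')^2$ without altering the endpoints, so one may restrict attention to the slice $\theta\equiv\theta_0$. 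This reduces the problem to the two-dimensional strip $\{(t,r):t\in S^1,\,r\ge\sigma\}$ with metric $\bar g:=(dt)^2/(1+r^2)+(dr)^2$.

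Next I would exploit the $t$-translation symmetry of $\bar g$: the conservation law $\dot t/(1+r^2)=k$ together with unit speed gives $\dot r^2=1-k^2(1+r^2)$, and with $k=1/\sqrt{1+r_{\max}^2}$ the geodesic oscillates between $r=\sigma$ and the turning point $r=r_{\max}$. The substitution $kr=\sqrt{1-k^2}\sin\phi$ yields the closed form for the round-trip $t$-displacement,
\begin{align*}
\Delta t(r_{\max})=\bigl(\pi-2\arcsin(\sigma/r_{\max})\bigr)\bigl(1+r_{\max}^2/2\bigr)+\sigma\sqrt{r_{\max}^2-\sigma^2},
\end{align*}
which is continuous and strictly increases from $0$ at $r_{\max}=\sigma$ to $\infty$ as $r_{\max}\to\infty$. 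Taking $T\in(-\pi,\pi]$ to be the $S^1$-representative of $t_1-t_0$ of smallest modulus, the intermediate value theorem produces a unique $r_{\max}>\sigma$ with $\Delta t(r_{\max})=|T|$. The corresponding bulging geodesic, extended by $\theta\equiv\theta_0$, is a smooth curve joining $(e^{\sqrt{-1}t_0},\xi)$ and $(e^{\sqrt{-1}t_1},\xi)$.

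The crucial estimate is $r_{\max}<\sigma+4$ when $|T|\le\pi$, which follows from a short case split. If $r_{\max}\ge 2\sigma$, then $\arcsin(\sigma/r_{\max})\le\pi/6$, so $\Delta t\ge(2\pi/3)(1+r_{\max}^2/2)$ forces $r_{\max}\le\sqrt 3$. If $r_{\max}<2\sigma$ and $\sigma\le 3$, then trivially $r_{\max}-\sigma<\sigma\le 3$. If $r_{\max}<2\sigma$ and $\sigma>3$, writing $\alpha:=\pi/2-\arcsin(\sigma/r_{\max})$ and using the weaker bound $\Delta t\ge\sigma^2\tan\alpha$ gives $\tan\alpha\le\pi/\sigma^2$, whence $r_{\max}-\sigma=\sigma(\sec\alpha-1)\le\pi^2/(2\sigma^3)<1$. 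In every case $r_{\max}-\sigma<4$.

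For minimality with respect to $d_{\hat{g}_0^\sigma}$, observe that any competing path dipping into $\mathcal{B}(\sigma)$ still pays length at least $|\Delta t_{\mathrm{in}}|/\sqrt{1+\sigma^2}$ for each in-ball segment contributing $t$-displacement $\Delta t_{\mathrm{in}}$ (inside $\mathcal{B}(\sigma)$ the $\R^2$-part of $\hat{g}_0^\sigma$ vanishes but $1/\sqrt{1+r^2}\ge 1/\sqrt{1+\sigma^2}$), so such a segment can be replaced by a boundary segment without increasing length, the net $\theta$-change needed vanishing because the two endpoints share $\theta_0$. Hence $d_{\hat{g}_0^\sigma}(u_0,u_1)$ equals the Riemannian distance of $\bar g$ on $\{r\ge\sigma\}$, which is realized by the bulging geodesic (whose length can be checked to be strictly less than $\pi$ from the same closed form, ruling out conjugate points since the Gaussian curvature of $\bar g$ is at most $(1-2\sigma^2)/(1+\sigma^2)^2<1$ along the geodesic). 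The main obstacle will be the uniform estimate $r_{\max}<\sigma+4$, for which the case analysis above is essential.
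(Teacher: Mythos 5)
Your explicit Clairaut construction of the bulging geodesic, the closed form for $\Delta t(r_{\max})$, and the case analysis giving $r_{\max}<\sigma+4$ are all essentially correct (indeed sharper than the lemma needs). The genuine gap is the last step: you never establish that the constructed geodesic actually \emph{realizes} $d_{\hat{g}_0^\sigma}(u_0,u_1)$. Absence of conjugate points only gives local minimality among nearby curves; it does not exclude shorter competitors such as paths containing boundary arcs along $r=\sigma$, geodesics with a different winding number in $t$, or simply some other minimizer, and no existence statement for a minimizer in the strip $\{r\ge\sigma\}$ (a manifold with boundary) is given. A complete argument along your lines would need: existence of a minimizer, a structure argument excluding boundary segments (e.g.\ because the geodesic curvature vector of the circle $r=\sigma$ for $\bar g$ points into the region $r\ge\sigma$), and the monotonicity of both $\Delta t$ and the length in $r_{\max}$ to fix the winding number. (Also, your curvature bound $K\le(1-2\sigma^2)/(1+\sigma^2)^2$ along the geodesic is false for $\sigma>\sqrt{2}$, since $K(r)=(1-2r^2)/(1+r^2)^2$ is increasing for $r>\sqrt{2}$; only $K<1$ holds, but this does not repair the local-versus-global issue.) A secondary flaw: in the minimality paragraph you replace each in-ball segment of a \emph{general} competitor by a boundary segment and justify the angular bookkeeping by ``the net $\theta$-change vanishes,'' which is wrong segment by segment — an in-ball arc can enter and exit $\partial\mathcal{B}(\sigma)$ at different angles, and the boundary replacement then pays a cost of order $\sigma|\Delta\theta|$ not controlled by $|\Delta t_{\rm in}|/\sqrt{1+\sigma^2}$. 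This is fixable by performing your angular projection on the whole competitor first (it is length-nonincreasing also inside the ball), after which entry and exit angles agree, but as written the reduction does not close.

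For comparison, the paper avoids solving the geodesic equation altogether: it first checks that $\hat g_0$-balls are precompact, takes a $d_{\hat g_0}$-minimizing geodesic from the Hopf--Rinow theorem, and applies exactly your two replacements (project to constant angle, then push in-ball arcs out to $r=\sigma$), both of which are non-increasing for $\mathfrak{L}_{\hat g_0}$ and $\mathfrak{L}_{\hat g_0^\sigma}$, to conclude that some minimizing geodesic avoids the open ball; since $\hat g_0^\sigma=\hat g_0$ there and $\hat g_0^\sigma\le\hat g_0$ everywhere, that geodesic is simultaneously $d_{\hat g_0^\sigma}$-minimizing and smooth. The containment in $S^1\times\mathcal{B}(4+\sigma)$ is then immediate from the crude comparison $d_{\hat g_0}(u_0,u_1)\le\pi/\sqrt{1+\sigma^2}<4$ (the fiber circle path), whereas leaving $\mathcal{B}(4+\sigma)$ would force length at least $4$. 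If you want to keep your explicit computation, the cleanest repair is to import this existence-plus-replacement step and then identify the resulting minimizer with one of your bulging geodesics; your closed form then yields a sharper radius bound, but none of it is needed for the lemma as stated.
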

\begin{proof}
Put 
$u_0=(e^{\sqrt{-1}t_0},\xi)$, 
$u_1=(e^{\sqrt{-1}t_1},\xi)$, 
\begin{align*}
c(\tau)=\left( e^{\sqrt{-1}t(\tau)},\rho(\tau)\cos(x(\tau)),
\rho(\tau)\sin(x(\tau)) \right),
\end{align*}
where $t(\tau)\in\R$, $\rho(\tau)\ge 0$, 
and $x(\tau)\in \R$. 
Moreover, 
we suppose $\rho(0)=\rho(1)=\sigma$, 
$\xi=(\sigma\cos(x_0),\sigma\sin(x_0))\in \R^2$ 
for some $x_0\in\R$ 
and $x(0)=x(1)=x_0$. 
Let $c^*\colon [0,1]\to S^1\times \mathcal{B}(\tilde{R})$ 
be a path defined by 
$c^*(\tau):=(e^{\sqrt{-1}t(\tau)},\rho(\tau)\cos(x_0),\rho(\tau)\sin(x_0))$, 
then it connects $u_0$ and $u_1$. 
It is easy to see 
$\mathfrak{L}_{\hat{g}_0^\sigma}(c^*)\le \mathfrak{L}_{\hat{g}_0^\sigma}(c)$ and 
$\mathfrak{L}_{\hat{g}_0}(c^*)\le \mathfrak{L}_{\hat{g}_0}(c)$.

Since $\rho^{-1}([0,\sigma))$ 
is open in $[0,1]$, 
it is 
the union of countable open intervals. 
Let $(\tau_-,\tau_+)$ be one of them, 
where $0<\tau_-<\tau_+<1$. 
On $(\tau_-,\tau_+)$, replace $c^*|_{(\tau_-,\tau_+)}$ with the path 
$\tau\mapsto (e^{\sqrt{-1}t(\tau)},\sigma\cos(x_0),\sigma\sin(x_0))$, 
which is shorter than $c^*|_{(\tau_-,\tau_+)}$ 
with respect to 
both of $\mathfrak{L}_{\hat{g}_0^\sigma},\mathfrak{L}_{\hat{g}_0}$. 
Therefore, 
if $c\colon [0,1]\to S^1\times \R^2$ 
is the minimizing geodesic 
connecting $u_0$ and $u_1$, 
then its image is contained 
in $S^1\times (\R^2\setminus \mathcal{B}(\sigma))$. 
Since $\hat{g}_0^\sigma=\hat{g}_0$ 
on $S^1\times (\R^2\setminus \mathcal{B}(\sigma))$, hence 
$\hat{c}$ is minimizing geodesic 
with respect to $d_{\hat{g}_0^\sigma}$ 
iff it is minimizing geodesic 
with respect to $d_{\hat{g}_0}$. 

Now, one can easily check that 
the geodesic ball 
$B_{d_{\hat{g}_0}}((1_{S^1},\mathbf{0}_{\R^2}),\tilde{R})$ 
is contained in 
$S^1\times\mathcal{B}(\tilde{R})$ 
for any $\tilde{R}>0$, 
consequently, 
all of the bounded sets in 
$(S^1\times \R^2,d_{\hat{g}_0})$ are 
precompact. 
Then by the Hopf-Rinow Theorem 
there is a minimizing geodesic 
$c$ with respect to $d_{\hat{g}_0}$ 
connecting 
$u_0$ and $u_1$. 
By the above argument, 
it is also minimizing geodesic 
with respect to $d_{\hat{g}_0^\sigma}$ and 
its image is contained in $S^1\times (\R^2\setminus \mathcal{B}(\sigma))$. 
Since $c$ is the geodesic in 
the smooth Riemannian manifold, 
it is smooth. 

Finally, we show 
${\rm Im}(c)\subset S^1\times \mathcal{B}(4+\sigma)$. 
By considering 
the path $\tau\mapsto 
(e^{\sqrt{-1}\tau},\xi)$ 
for $\tau\in[t_0,t_1]$, 
we can see $d_{\hat{g}_0}(u_0,u_1)
\le \pi/\sqrt{1+\sigma^2}$. 
If ${\rm Im}(c)$ is not contained 
in $S^1\times \mathcal{B}(4+\sigma)$, 
then we can see $d_{\hat{g}_0}(u_0,u_1)\ge 4$, which is the contradiction.
\end{proof}
\begin{lem}
Let $\sigma>0$, $\tilde{R}\ge 4+\sigma$ and 
$c\colon [0,1]\to S^1\times \mathcal{B}(\tilde{R})$ 
be a piecewise smooth path. 
Then we have 
${\rm Im}(c)\subset S^1\times (\mathcal{B}(\tilde{R})\setminus\mathcal{B(\sigma)})$, 
${\rm Im}(c)\subset S^1\times 
\overline{\mathcal{B(\sigma)}}$ 
or there is a piecewise smooth path 
$\hat{c}\colon [0,1]\to S^1\times \mathcal{B}(\tilde{R})$ 
such that $c(0)=\hat{c}(0)$, $c(1)=\hat{c}(1)$,
$\mathfrak{L}_{\hat{g}_0^\sigma}(\hat{c})\le\mathfrak{L}_{\hat{g}_0^\sigma}(c)$ and 
one of 
the following holds. 
\begin{itemize}
\setlength{\parskip}{0cm}
\setlength{\itemsep}{0cm}
 \item[$({\rm i})$] 
There are $0\le\tau_-<\tau_+\le 1$ 
such that $\hat{c}([0,\tau_-]\sqcup [\tau_+,1])
\subset S^1\times (\mathcal{B}(\tilde{R})\setminus
\mathcal{B}(\sigma))$ 
and $\hat{c}([\tau_-,\tau_+])
\subset S^1\times \overline{\mathcal{B}(\sigma)}$. 
 \item[$({\rm ii})$] 
There are $0\le\tau_-<\tau_+\le 1$ 
such that $\hat{c}([0,\tau_-]\sqcup [\tau_+,1])
\subset S^1\times \overline{\mathcal{B}(\sigma)}$ 
and $\hat{c}([\tau_-,\tau_+])
\subset S^1\times (\mathcal{B}(\tilde{R})\setminus
\mathcal{B}(\sigma))$. 
 \item[$({\rm iii})$] 
There are $0\le\tau_*\le 1$ 
such that $\hat{c}([0,\tau_*])
\subset S^1\times \overline{\mathcal{B}(\sigma)}$ 
and $\hat{c}([\tau_*,1])
\subset S^1\times (\mathcal{B}(\tilde{R})\setminus
\mathcal{B}(\sigma))$. 
 \item[$({\rm iv})$] 
There are $0\le\tau_*\le 1$ 
such that $\hat{c}([\tau_*,1])
\subset S^1\times \overline{\mathcal{B}(\sigma)}$ 
and $\hat{c}([0,\tau_*])
\subset S^1\times (\mathcal{B}(\tilde{R})\setminus
\mathcal{B}(\sigma))$. 
\end{itemize}
\label{lem_bound_conn_comp.h}
\end{lem}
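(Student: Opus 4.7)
The plan is a case analysis on the endpoints $c(0), c(1)$. Define the open set $A := c_2^{-1}(\mathcal{B}(\sigma)) \subset [0,1]$. If $A = \emptyset$ then ${\rm Im}(c) \subset S^1 \times (\mathcal{B}(\tilde{R}) \setminus \mathcal{B}(\sigma))$, the first dichotomy, and if the open set $\{\tau : \mathbf{r}(c_2(\tau)) > \sigma\}$ is empty then ${\rm Im}(c) \subset S^1 \times \overline{\mathcal{B}(\sigma)}$, the second. Otherwise both sets are nonempty, and the four structural conclusions (i)-(iv) correspond to the four possibilities for whether $c(0)$ and $c(1)$ lie in $S^1 \times \overline{\mathcal{B}(\sigma)}$ or in $S^1 \times (\mathcal{B}(\tilde{R}) \setminus \mathcal{B}(\sigma))$.

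For case (i), where both endpoints lie outside the closed ball, set $\tau_- := \inf A$ and $\tau_+ := \sup A$. By continuity $c(\tau_\pm) \in S^1 \times \mathbf{r}^{-1}(\sigma)$, and by definition of $\tau_\pm$ the path $c$ already maps $[0, \tau_-] \cup [\tau_+, 1]$ into $S^1 \times (\mathcal{B}(\tilde{R}) \setminus \mathcal{B}(\sigma))$. I would set $\hat{c} = c$ on these flanking intervals and construct $\hat{c}|_{[\tau_-, \tau_+]}$ as a piecewise smooth path in $S^1 \times \overline{\mathcal{B}(\sigma)}$ connecting $c(\tau_-)$ to $c(\tau_+)$: push the $\R^2$-component into the open ball at zero $g_0^\sigma$-cost, perform the necessary $S^1$-motion along a curve with $|c_2|$ close to $\sigma$ (so the cost per unit $|c_1'|$ approaches $(1+\sigma^2)^{-1/2}$), then push back out to $c_2(\tau_+)$. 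Cases (iii) and (iv) are analogous with a single transition point $\tau_*$ replacing $\tau_\pm$. For case (ii), take $\tau_\pm$ to be the infimum and supremum of $\{\tau : \mathbf{r}(c_2(\tau)) > \sigma\}$ so that $c$ already lies in $S^1 \times \overline{\mathcal{B}(\sigma)}$ on the flanks; on $[\tau_-, \tau_+]$ I would replace each $\mathcal{B}(\sigma)$-excursion of $c$ by a shortcut provided by Lemma \ref{lem_shortcut.h}, together with short boundary-circle arcs to match the different entry and exit $\xi$-coordinates.

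The length inequality $\mathfrak{L}_{\hat{g}_0^\sigma}(\hat{c}) \leq \mathfrak{L}_{\hat{g}_0^\sigma}(c)$ reduces to comparing each replacement segment with the corresponding portion of $c$. For cases (i), (iii), (iv), the lower bound $\mathfrak{L}_{\hat{g}_0^\sigma}(c|_{[\tau_-, \tau_+]}) \geq \int_{\tau_-}^{\tau_+} |c_1'|/\sqrt{1+|c_2|^2}\,d\tau$ combined with a careful choice of $2\pi$-lift of the $S^1$-component of the replacement gives the inequality, since inside $\overline{\mathcal{B}(\sigma)}$ the only contribution to the $\hat{g}_0^\sigma$-length is from the $c_1$-derivative while $\xi$-motion is free.

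The hardest part is case (ii), where $c$ can freely change the $\R^2$-coordinate inside $\mathcal{B}(\sigma)$ at zero $g_0^\sigma$-cost while the replacement, constrained to stay outside the open ball, must pay for the corresponding $\xi$-displacement along the boundary circle. This apparent excess has to be absorbed by the outside $|c_2'|$-contribution of $c$ on adjacent intervals, and balancing these terms requires careful bookkeeping across all excursions; the hypothesis $\tilde{R} \geq 4 + \sigma$ is used here to ensure that the shortcut geodesics from Lemma \ref{lem_shortcut.h}, which live in $\mathcal{B}(4+\sigma) \setminus \mathcal{B}(\sigma)$, remain in $S^1 \times \mathcal{B}(\tilde{R})$ as required.
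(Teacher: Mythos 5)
Your skeleton (case analysis on the endpoints, first/last hitting times $\tau_\pm$, keep the flanks of $c$, replace the middle) is the same as the paper's, but the replacement you propose does not satisfy $\mathfrak{L}_{\hat{g}_0^\sigma}(\hat{c})\le\mathfrak{L}_{\hat{g}_0^\sigma}(c)$, and this is a genuine failure, not a missing detail. In cases (i), (iii), (iv) you confine the replaced middle to $S^1\times\overline{\mathcal{B}(\sigma)}$, so by the computation behind \eqref{ineq_lower_near_origin.h} it has length at least $|\Delta t|/\sqrt{1+\sigma^2}$, where $\Delta t$ is any lift of the $S^1$-displacement between $c(\tau_-)$ and $c(\tau_+)$. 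But the original middle can be strictly shorter than that: it only needs to touch the open ball, which it can do by instantaneous radial dips of zero $\hat{g}_0^\sigma$-cost, and it can carry the $S^1$-rotation on a curve of radius $\sigma+\delta\eta(t)$ (fixed angular coordinate, $\eta\ge 0$ a bump), whose length is $\frac{|\Delta t|}{\sqrt{1+\sigma^2}}+\delta f'(\sigma)\int\eta\,dt+O(\delta^2)$ with $f(r)=(1+r^2)^{-1/2}$; since $f'(\sigma)<0$ and the radial motion enters only at order $\delta^2$ (rotation and radial motion happen simultaneously, not sequentially), this is strictly less than $|\Delta t|/\sqrt{1+\sigma^2}$. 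Taking $c$ to run radially in from radius $\sigma+1$, dip, follow this bulged curve, dip, and run radially out gives a legitimate case-(i) input for which your $\hat{c}$ is strictly longer than $c$. Your quoted lower bound $\int|c_1'|/\sqrt{1+|c_2|^2}\,d\tau$ cannot rescue this, because $|c_2|>\sigma$ on most of $[\tau_-,\tau_+]$. In case (ii) you leave the comparison open yourself, and it cannot be closed excursion-by-excursion: your boundary-circle arcs pay positive $g_0$-length for $\xi$-displacements that the original performs inside the open ball for free, and there may be infinitely many excursions.

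The missing idea is exactly what Lemma \ref{lem_shortcut.h} is for, and it is how the paper argues: replace the whole middle at once by the $d_{\hat{g}_0^\sigma}$-minimizing geodesic $c^*$ from $c(\tau_-)$ to $u':=\left(e^{\sqrt{-1}t(\tau_+)},\sigma\cos x(\tau_-),\sigma\sin x(\tau_-)\right)$, the point with the same $\xi$-component as $c(\tau_-)$ and the same $S^1$-coordinate as $c(\tau_+)$; this geodesic stays in $S^1\times(\mathcal{B}(4+\sigma)\setminus\mathcal{B}(\sigma))\subset S^1\times\mathcal{B}(\tilde{R})$, which is where $\tilde{R}\ge 4+\sigma$ enters (as you correctly noted). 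Then append two radial segments at fixed $S^1$-coordinate inside $\overline{\mathcal{B}(\sigma)}$, of zero $\hat{g}_0^\sigma$-length, to match the $\xi$-coordinate of $c(\tau_+)$. Since $u'$ and $c(\tau_+)$ both lie over $\overline{\mathcal{B}(\sigma)}$ and share the $S^1$-coordinate, $d_{\hat{g}_0^\sigma}(u',c(\tau_+))=0$, so $\mathfrak{L}_{\hat{g}_0^\sigma}(c^*)=d_{\hat{g}_0^\sigma}(c(\tau_-),u')\le d_{\hat{g}_0^\sigma}(c(\tau_-),c(\tau_+))\le\mathfrak{L}_{\hat{g}_0^\sigma}(c|_{[\tau_-,\tau_+]})$, with no bookkeeping over excursions; all of the $S^1$-adjustment is done outside the ball by a genuinely minimizing path, and all of the $\xi$-adjustment inside at zero cost. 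Grouping $c^*$ with the adjacent outside block (or letting it be the outside block when both flanks are inside) and the zero-length segments with the inside block yields one of the configurations (i)--(iv) according to where the two flanks of $c$ lie.
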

\begin{proof}
Put $c(\tau)=(e^{\sqrt{-1}t(\tau)},\rho(\tau)\cos(x(\tau)),\rho(\tau)\sin(x(\tau)))$, 
where $t(\tau)\in\R$, $\rho(\tau)\ge 0$, 
and $x(\tau)\in \R$. 
We assume that 
neither ${\rm Im}(c)\subset S^1\times (\mathcal{B}(\tilde{R})\setminus\mathcal{B(\sigma)})$ nor 
${\rm Im}(c)\subset S^1\times 
\overline{\mathcal{B(\sigma)}}$. 
Then we can see that  $\rho^{-1}(\sigma)\subset [0,1]$ 
is nonempty. 
Let 
$\tau_0:=\inf \rho^{-1}(\sigma)$ 
and $\tau_1:=\sup \rho^{-1}(\sigma)$. 

Let $c^*$ be the minimizing 
geodesic connecting 
\begin{align*}
c(\tau_0),
\quad \left( e^{\sqrt{-1}t(\tau_1)},\sigma\cos(x(\tau_0)),
\sigma\sin(x(\tau_0))\right),
\end{align*}
obtained by Lemma \ref{lem_shortcut.h}. 
Define $c^\dag,c^\ddag\colon [0,\sigma]\to S^1\times 
\overline{\mathcal{B}(\sigma)}$ 
by 
\begin{align*}
c^\dag(\tau)
&:=\left( e^{\sqrt{-1}t(\tau_1)},(\sigma-\tau)\cos(x(\tau_0)),(\sigma-\tau)\sin(x(\tau_0))\right),\\
c^\ddag(\tau)
&:=\left( e^{\sqrt{-1}t(\tau_1)},\tau\cos(x(\tau_1)),\tau\sin(x(\tau_1))\right),
\end{align*}
then $\mathfrak{L}_{\hat{g}_0^\sigma}(c^\dag)
= \mathfrak{L}_{\hat{g}_0^\sigma}(c^\ddag)=0$. 
Let $\hat{c}$ be the path 
constructed by joining 
$c|_{[0,\tau_0]},c^*,c^\dag,c^\ddag,c|_{[\tau_1,1]}$. 
Then we have the result. 
\end{proof}

\begin{prop}
Let $R\ge 4\sqrt{2}(3 -2\sqrt{2})^{-1}$, 
$b\in BS_1$, $q\in\mu^{-1}(b)$, 
$u_0,u_1\in \pi^{-1}(B_g(q,R))$ and 
$0<\sigma\le (3-2\sqrt{2})R/(2\sqrt{2})$. 
Then 
\begin{align*}
    d_{\hat{g}}(u_0,u_1) 
	&\le 
	\max\left\{
\sqrt{\frac{1+\delta}{1-\delta}}, 
\sqrt{1+\sigma^2}
\right\}
d_{\hat{g}_0^\sigma}(\phi(u_0),\phi(u_1))\\
    &\quad\quad
    +\sqrt{2+18R^2}\delta
    +4\delta.
\end{align*}
\label{prop_upper.h}
\end{prop}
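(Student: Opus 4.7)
The plan is to dualize the strategy of Proposition \ref{prop_lower1.h}: given a near-minimizing path $\tilde c$ in $(S^1\times \R^2, \hat g_0^\sigma)$ from $\phi(u_0)$ to $\phi(u_1)$, I would build a path in $\bbS$ from $u_0$ to $u_1$ whose $\hat g$-length is controlled by $\mathfrak{L}_{\hat g_0^\sigma}(\tilde c)$, using horizontal lifts with respect to $\phi$ outside $\mathcal{B}(\sigma)$, Proposition \ref{prop_est_dis_in.h} inside $\overline{\mathcal{B}(\sigma)}$, and Proposition \ref{prop_diam_est.h} to correct the endpoint fiber.

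Fix $\varepsilon>0$. By Lemma \ref{lem_met_loc.h}, $\pi(u_0),\pi(u_1)\in U(\sqrt{1+\delta}R+\sigma)$, so $\phi(u_0),\phi(u_1)\in S^1\times \mathcal{B}(\sqrt{1+\delta}R+\sigma)$. Combining the crude bound $d_{\hat g}(u_0,u_1)\le d_g(\pi(u_0),\pi(u_1))+\pi\le 2R+\pi$ with Proposition \ref{prop_lower1.h} shows that $d_{\hat g_0^\sigma}(\phi(u_0),\phi(u_1))$ is bounded by a constant depending only on $R$ and $\delta$. Thus I would choose $\tilde c$ with $\mathfrak{L}_{\hat g_0^\sigma}(\tilde c)<d_{\hat g_0^\sigma}(\phi(u_0),\phi(u_1))+\varepsilon$ whose image lies in $S^1\times\mathcal{B}(\tilde R)$ for some $\tilde R$ with $4+\sigma\le \tilde R\le 3R$; the hypotheses $R\ge 4\sqrt 2/(3-2\sqrt 2)$ and $\sigma\le(3-2\sqrt 2)R/(2\sqrt 2)$ ensure such a $\tilde R$ exists, possibly after truncating a radial excursion of $\tilde c$ beyond $\mathcal{B}(3R)$ by a shorter replacement along a sphere. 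Then I apply Lemma \ref{lem_bound_conn_comp.h} to obtain $\hat c$ with $\mathfrak{L}_{\hat g_0^\sigma}(\hat c)\le \mathfrak{L}_{\hat g_0^\sigma}(\tilde c)$ whose intersection with $S^1\times\overline{\mathcal{B}(\sigma)}$ is either empty, all of $\hat c$, or a single sub-interval as in cases (i)--(iv).

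In each case, I construct a path $c$ in $\bbS$ from $u_0$ to $u_1$ by concatenation. On a maximal sub-interval where $\hat c$ lies in $S^1\times(\mathcal{B}(\tilde R)\setminus\mathcal{B}(\sigma))$, lift $\hat c$ horizontally with respect to $\phi$ from the current endpoint; the second inequality of Proposition \ref{prop_length_pm.h} bounds the length by $\sqrt{(1+\delta)/(1-\delta)}$ times $\mathfrak{L}_{\hat g_0^\sigma}$, and the lift automatically lands in the target fiber. On a sub-interval in $S^1\times\overline{\mathcal{B}(\sigma)}$, use the first inequality of Proposition \ref{prop_est_dis_in.h} to connect the current endpoint to any chosen point of the target fiber, with length at most $\sqrt{1+\sigma^2}$ times the $\hat g_0^\sigma$-length plus $2\delta$. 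Whenever the concatenation finishes in $\phi^{-1}(\phi(u_1))$ at a point distinct from $u_1$ (which happens in cases (i), (iii), (iv) and when $\hat c$ stays entirely outside $\mathcal{B}(\sigma)$), close the path with a fiber arc of length at most $\sqrt{2+18R^2}\,\delta$ by Proposition \ref{prop_diam_est.h}; no such correction is needed in case (ii) or when $\hat c$ stays in $\overline{\mathcal{B}(\sigma)}$, because Proposition \ref{prop_est_dis_in.h} already accommodates arbitrary endpoints. Summing the contributions case by case, the largest error term is at most $4\delta+\sqrt{2+18R^2}\,\delta$, where the $4\delta$ absorbs at most two applications of Proposition \ref{prop_est_dis_in.h}. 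This yields $d_{\hat g}(u_0,u_1)\le \max\{\sqrt{(1+\delta)/(1-\delta)},\sqrt{1+\sigma^2}\}\,\mathfrak{L}_{\hat g_0^\sigma}(\hat c)+\sqrt{2+18R^2}\,\delta+4\delta$, and sending $\varepsilon\to 0$ gives the desired inequality.

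The main obstacle is the combinatorial bookkeeping: one must handle all four cases of Lemma \ref{lem_bound_conn_comp.h} plus the two extreme cases, consistently concatenate horizontal lifts and inner connectors across $\partial\mathcal{B}(\sigma)$, verify that each outside sub-interval of $\hat c$ stays within $\mathcal{B}(3R)$ so that Proposition \ref{prop_length_pm.h} applies, and confirm that the cumulative error never exceeds $4\delta+\sqrt{2+18R^2}\,\delta$. A subsidiary technical point is showing that a near-minimizer $\tilde c$ can indeed be confined to $S^1\times\mathcal{B}(\tilde R)$ with $\tilde R\le 3R$, which is where the precise numerical hypotheses on $R$ and $\sigma$ are used.
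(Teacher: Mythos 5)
Your proposal follows essentially the same route as the paper's proof: take a near-minimizer for $d_{\hat{g}_0^\sigma}$, confine it to $S^1\times\mathcal{B}(3R)$ using the numerical hypotheses on $R$ and $\sigma$, apply Lemma \ref{lem_bound_conn_comp.h}, and in each case combine horizontal lifts via Proposition \ref{prop_length_pm.h}, the inner estimate of Proposition \ref{prop_est_dis_in.h}, and the fiber-diameter bound of Proposition \ref{prop_diam_est.h}, with the error accounting ($4\delta$ from at most two inner connectors plus $\sqrt{2+18R^2}\,\delta$ from one fiber correction) matching the paper's. The argument is correct as proposed.
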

\begin{proof}
Fix a small $\varepsilon>0$ and 
let 
$c=(e^{\sqrt{-1}c_1},c_2)\colon [0,1]\to S^1\times \R^2$ 
be a path connecting 
$\phi(u_0),\phi(u_1)$ such that 
$\mathfrak{L}_{\hat{g}_0^\sigma}(c)<d_{\hat{g}_0^\sigma}(\phi(u_0),\phi(u_1))+\varepsilon$. 
By Lemma \ref{lem_met_loc.h}, 
$\phi(u_0),\phi(u_1)$ 
are contained in 
$S^1\times\mathcal{B}(\sqrt{1+\delta}R+\sigma)$. 
By the similar argument in 
the proof of Lemma \ref{lem_met_loc.h} 
and the assumptions 
$R\ge 4\sqrt{2}(3 -2\sqrt{2})^{-1}$, 
$\sigma\le (3-2\sqrt{2})R/(2\sqrt{2})$, 
we have ${\rm Im}(c_2)\subset \mathcal{B}(3R)$ 
by taking $\varepsilon>0$ sufficiently small.

Next we apply 
Lemma \ref{lem_bound_conn_comp.h}. 
By the assumption 
$R\ge 4\sqrt{2}(3 -2\sqrt{2})^{-1}$ 
and 
$\sigma\le (3-2\sqrt{2})R/(2\sqrt{2})$, 
we can see 
$4+\sigma\le 3R$. 
Then we can apply Lemma 
\ref{lem_bound_conn_comp.h} 
to $c$, 
hence we may 
assume that 
${\rm Im}(c)\subset S^1\times (\mathcal{B}(\tilde{R})\setminus\mathcal{B(\sigma)})$, 
${\rm Im}(c)\subset S^1\times 
\overline{\mathcal{B(\sigma)}}$ or 
$c=\hat{c}$ 
satisfies one of 
$({\rm i})$-$({\rm iv})$. 
If we assume 
$({\rm ii})$ in 
Lemma \ref{lem_bound_conn_comp.h}, 
then we denote by 
$\tilde{c}$ 
the horizontal lift 
of $c|_{[\tau_-,\tau_+]}$ 
with respect to 
$\phi$. 
Then we have 
\begin{align*}
d_{\hat{g}}(u_0,u_1) 
	&\le d_{\hat{g}}(u_0,\tilde{c}(\tau_-))
	+\sqrt{\frac{1+\delta}{1-\delta}}
    \mathfrak{L}_{\hat{g}_0^\sigma}(c|_{[\tau_-,\tau_+]})
    +d_{\hat{g}}(u_1,\tilde{c}(\tau_+))\\
    &\le 4\delta
    + \max\left\{
    \sqrt{\frac{1+\delta}{1-\delta}}\mathfrak{L}_{\hat{g}_0^\sigma}(c), 
\sqrt{1+\sigma^2}
\mathfrak{L}_{\hat{g}_0^\sigma}(c)
\right\}.
\end{align*}
If ${\rm Im}(c)\subset 
S^1\times (\mathcal{B}(3R)
\setminus\mathcal{B}(\sigma))$, then let 
$\tilde{c}$ be 
the horizontal 
lift of $c$ with respect to 
$\phi$ 
such that 
$\tilde{c}(0)=u_0$. 
By Proposition 
\ref{prop_diam_est.h}, 
we have 
$d_{\hat{g}}(\tilde{c}(1),u_1)< \sqrt{2+18R^2}\delta$. 
Therefore, 
we have 
\begin{align*}
d_{\hat{g}}(u_0,u_1) 
	&\le
    \sqrt{\frac{1+\delta}{1-\delta}}\mathfrak{L}_{\hat{g}_0^\sigma}(c)
    +\sqrt{2+18R^2}\delta.
\end{align*}
In the other cases, 
we also have the result 
by the similar way. 
\end{proof}

\begin{prop}
For any $u_0,u_1\in S^1\times \R^2$, 
we have 
\begin{align*}
d_{\hat{g}_0}(u_0,u_1)\le \sqrt{1+\sigma^2}d_{\hat{g}_0^\sigma}(u_0,u_1) + 4\sigma.
\end{align*}
\label{prop_lower2.h}
\end{prop}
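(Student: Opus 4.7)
The plan is to reduce to a near-minimizing $\hat{g}_0^\sigma$-path, use Lemma \ref{lem_bound_conn_comp.h} to bring it into a simple form, and then split the length estimate into portions inside and outside $\mathcal{B}(\sigma)$.

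First, if $d_{\hat{g}_0^\sigma}(u_0,u_1)=\infty$ the inequality is trivial, so assume it is finite. Fix $\varepsilon>0$ and choose a piecewise smooth path $c\colon[0,1]\to S^1\times\R^2$ from $u_0$ to $u_1$ with $\mathfrak{L}_{\hat{g}_0^\sigma}(c)<d_{\hat{g}_0^\sigma}(u_0,u_1)+\varepsilon$. Since $\hat{g}_0^\sigma$ controls the $\R^2$-component outside $\mathcal{B}(\sigma)$ in the standard way, the image of $c$ is bounded, so it is contained in $S^1\times\mathcal{B}(\tilde{R})$ for some $\tilde{R}\ge 4+\sigma$.

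Next I would apply Lemma \ref{lem_bound_conn_comp.h} to replace $c$ by a path $\hat{c}$ (still denoted $c$) with the same endpoints, no larger $\hat{g}_0^\sigma$-length, and either entirely inside $S^1\times\overline{\mathcal{B}(\sigma)}$, entirely inside $S^1\times(\mathcal{B}(\tilde{R})\setminus\mathcal{B}(\sigma))$, or splitting into at most three monotone pieces as in $({\rm i})$--$({\rm iv})$. The key point is that after this reduction, $c$ crosses the sphere $\mathbf{r}=\sigma$ at most twice, so only a constant (independent of the path) number of $2\sigma$-type error terms can appear.

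Now I would estimate $d_{\hat{g}_0}(u_0,u_1)$ by splitting and applying the triangle inequality to the pieces produced by the lemma. On a piece of $c$ contained in $S^1\times(\mathcal{B}(\tilde{R})\setminus\mathcal{B}(\sigma))$ we have $\hat{g}_0=\hat{g}_0^\sigma$, so its $\hat{g}_0$-length equals its $\hat{g}_0^\sigma$-length and contributes no extra additive term. On a piece contained in $S^1\times\overline{\mathcal{B}(\sigma)}$, Proposition \ref{prop_est_dis_in.h} gives that the $d_{\hat{g}_0}$-distance between its endpoints is at most $\sqrt{1+\sigma^2}$ times the $\hat{g}_0^\sigma$-length of that piece plus $2\sigma$. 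In case $({\rm ii})$, where $c$ has two pieces inside $\overline{\mathcal{B}(\sigma)}$, the additive errors accumulate to at most $4\sigma$, and this is the worst case; cases $({\rm i})$, $({\rm iii})$, $({\rm iv})$ and the two "entirely inside/outside" situations contribute at most $2\sigma$. Summing and using $\mathfrak{L}_{\hat{g}_0^\sigma}(c)<d_{\hat{g}_0^\sigma}(u_0,u_1)+\varepsilon$ then letting $\varepsilon\to 0$ yields the desired bound.

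The only delicate point is the bookkeeping in the case analysis, which is clean because Lemma \ref{lem_bound_conn_comp.h} has already been established; no further geometric argument is required. In particular, without that lemma one would have to deal with a path potentially entering and leaving $\mathcal{B}(\sigma)$ infinitely often, in which case the additive $2\sigma$ errors from Proposition \ref{prop_est_dis_in.h} could not be uniformly controlled, so Lemma \ref{lem_bound_conn_comp.h} is doing the essential work here.
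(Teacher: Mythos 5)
Your proposal is correct and follows essentially the same route as the paper: take a path $c$ with $\mathfrak{L}_{\hat{g}_0^\sigma}(c)<d_{\hat{g}_0^\sigma}(u_0,u_1)+\varepsilon$, normalize it via Lemma \ref{lem_bound_conn_comp.h}, apply the second inequality of Proposition \ref{prop_est_dis_in.h} to the (at most two) pieces inside $S^1\times\overline{\mathcal{B}(\sigma)}$, use $\hat{g}_0=\hat{g}_0^\sigma$ outside $\mathcal{B}(\sigma)$, and let $\varepsilon\to 0$. Your preliminary remark that finiteness of the $\hat{g}_0^\sigma$-length bounds the image, so that the lemma applies with some $\tilde{R}\ge 4+\sigma$, is a small point left implicit in the paper but entirely consistent with its argument.
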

\begin{proof}
The proof is similar to that of 
Proposition \ref{prop_upper.h}. 
For any $\varepsilon>0$, 
there is a piecewise smooth path 
$c\colon [0,1]\to S^1\times \R^2$ 
connecting 
$u_0$ and $u_1$ such that 
$\mathfrak{L}_{\hat{g}_0^\sigma}(c)<d_{\hat{g}_0^\sigma}(u_0,u_1)+\varepsilon$. 
We apply Lemma 
\ref{lem_bound_conn_comp.h} 
to $c$. 
For example, 
assume that 
$c=\hat{c}$ satisfies 
$({\rm ii})$ 
in Lemma \ref{lem_bound_conn_comp.h}. 
Then we have 
\begin{align*}
d_{\hat{g}_0}(c(0),c(1))
\le d_{\hat{g}_0}(c(0),c(\tau_-))
+\mathfrak{L}_{\hat{g}_0}(c|_{[\tau_-,\tau_+]})
+d_{\hat{g}_0}(c(1),c(\tau_+))
\end{align*}
By the second inequality of 
Proposition \ref{prop_est_dis_in.h}, 
we have 
\begin{align*}
d_{\hat{g}_0}(c(0),c(1))
&\le \sqrt{1+\sigma^2}\mathfrak{L}_{\hat{g}_0^\sigma}(c) + 4\sigma,
\end{align*}
which gives the result.
In the other cases, 
we have the result by 
the similar argument. 
\end{proof}

The next proposition implies 
that $\phi$ is an almost isometry. 
\begin{prop}
Let $b\in BS_1$. 
For any $R\ge 4\sqrt{2}(3 -2\sqrt{2})^{-1}$ and 
$\varepsilon>0$ 
there is a constant $\delta_{R,\varepsilon},\sigma_{R,\varepsilon}
>0$ depending only on 
$R,\varepsilon>0$ such that 
for any 
$q\in\mu^{-1}(b)$, 
$u_0,u_1\in \pi^{-1}(B_g(q,R))$, if $0<\delta\le \delta_{R,\varepsilon}$ 
and $0<\sigma\le \sigma_{R,\varepsilon}$, 
\begin{align*}
\left|d_{\hat{g}}(u_0,u_1)
- d_{\hat{g}_0}(\phi(u_0),\phi(u_1))
\right|
\le \varepsilon.
\end{align*}
\label{prop_alm_isom.h}
\end{prop}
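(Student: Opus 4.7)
The strategy is to combine the three comparison estimates already established in this subsection: Proposition \ref{prop_lower1.h} (lower bound for $d_{\hat g}$ via $d_{\hat g_0^\sigma}$), Proposition \ref{prop_upper.h} (upper bound for $d_{\hat g}$ via $d_{\hat g_0^\sigma}$), and Proposition \ref{prop_lower2.h} (upper bound for $d_{\hat g_0}$ via $d_{\hat g_0^\sigma}$), together with the trivial pointwise inequality $\hat g_0^\sigma \le \hat g_0$, which gives $d_{\hat g_0^\sigma}\le d_{\hat g_0}$. No new geometric input is needed; the task is purely to propagate the multiplicative and additive error terms and to force them to be small by choosing $\delta$ and $\sigma$ small.

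First I would establish a uniform a priori bound on the distances in play. Since $u_0,u_1\in \pi^{-1}(B_g(q,R))$, the estimate \eqref{lem_submersion_upper.h} gives $d_{\hat g}(u_0,u_1)\le 2R+\pi$, and then Proposition \ref{prop_lower1.h} yields $d_{\hat g_0^\sigma}(\phi(u_0),\phi(u_1))\le \sqrt{(1+\delta)/(1-\delta)}\,(2R+\pi)$, which is bounded by a constant $C_R$ depending only on $R$ (as soon as, say, $\delta\le 1/2$). Next, to bound $d_{\hat g}(u_0,u_1)-d_{\hat g_0}(\phi(u_0),\phi(u_1))$ from above, I would use $d_{\hat g_0^\sigma}\le d_{\hat g_0}$ together with Proposition \ref{prop_upper.h}, obtaining
\begin{align*}
d_{\hat g}(u_0,u_1)-d_{\hat g_0}(\phi(u_0),\phi(u_1))
\le \Big(\max\Big\{\sqrt{\tfrac{1+\delta}{1-\delta}},\sqrt{1+\sigma^2}\Big\}-1\Big)C_R
+\big(\sqrt{2+18R^2}+4\big)\delta.
\end{align*}
Symmetrically, to bound $d_{\hat g_0}(\phi(u_0),\phi(u_1))-d_{\hat g}(u_0,u_1)$ from above, I would feed Proposition \ref{prop_lower1.h} into Proposition \ref{prop_lower2.h}, yielding
\begin{align*}
d_{\hat g_0}(\phi(u_0),\phi(u_1))-d_{\hat g}(u_0,u_1)
\le \Big(\sqrt{1+\sigma^2}-\sqrt{\tfrac{1-\delta}{1+\delta}}\Big)C_R+4\sigma.
\end{align*}

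Both right hand sides tend to zero uniformly in $u_0,u_1$ as $(\delta,\sigma)\to (0,0)$, so choosing $\delta_{R,\varepsilon}$ and $\sigma_{R,\varepsilon}$ small enough produces the desired inequality. The only subtle point, and the closest thing to an obstacle, is bookkeeping on the admissibility of the small parameters: one must also require $\delta_{R,\varepsilon}\le (4-\pi)/2$ and $\sigma_{R,\varepsilon}\le (3-2\sqrt 2)R/(2\sqrt 2)$ so that the hypotheses of Propositions \ref{prop_lower1.h} and \ref{prop_upper.h} are in force, and one must keep track of the fact that $C_R$ does not itself blow up as $\delta$ shrinks (hence the preliminary bound $\delta\le 1/2$ folded into the choice of $\delta_{R,\varepsilon}$). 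With these constraints respected, the proof is a straightforward combination.
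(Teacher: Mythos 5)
Your proposal is correct and follows essentially the same route as the paper's proof: both combine Propositions \ref{prop_lower1.h}, \ref{prop_upper.h} and \ref{prop_lower2.h} with the inequality $d_{\hat{g}_0^\sigma}\le d_{\hat{g}_0}$ and the a priori bound $d_{\hat{g}}(u_0,u_1)\le 2R+\pi$, and then let $\delta,\sigma\to 0$; the only difference is cosmetic bookkeeping (you freeze a constant $C_R$ via $\delta\le 1/2$, the paper carries the factor $C=\max\{\sqrt{(1+\delta)/(1-\delta)},\sqrt{1+\sigma^2}\}$ through). Your remarks on the admissibility constraints $\delta\le (4-\pi)/2$ and $\sigma\le (3-2\sqrt{2})R/(2\sqrt{2})$ are exactly the hypotheses under which the cited propositions apply.
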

\begin{proof}
First of all, 
put 
\begin{align*}
C&:=\max\left\{
\sqrt{\frac{1+\delta}{1-\delta}},\, 
\sqrt{1+\sigma^2}
\right\}>1,\\
\delta'&:= \max\left\{
\left( \sqrt{2+18R^2}+4\right)
\delta,\, 4\sigma
\right\}>0,
\end{align*}
then $\lim_{\delta,\sigma\to 0}C=1$ and 
$\lim_{\delta,\sigma\to 0}\delta'=0$. 
By Proposition \ref{prop_upper.h} 
and by $d_{\hat{g}_0^\sigma} \le d_{\hat{g}_0}$, 
we have 
\begin{align*}
d_{\hat{g}}(u_0,u_1)
- d_{\hat{g}_0}(\phi(u_0),\phi(u_1))
\le (C-1)d_{\hat{g}_0^\sigma}(\phi(u_0),\phi(u_1))
+\delta'.
\end{align*}
Then by 
Proposition \ref{prop_lower1.h} 
and 
\eqref{lem_submersion_upper.h}, 
we obtain 
\begin{align*}
d_{\hat{g}}(u_0,u_1)
- d_{\hat{g}_0}(\phi(u_0),\phi(u_1))
&\le C(C-1)d_{\hat{g}}(u_0,u_1)
+\delta'\\
&\le C(C-1)(2R+\pi)
+\delta'.
\end{align*}
By Propositions \ref{prop_lower1.h} 
and \ref{prop_lower2.h}, 
we have 
\begin{align*}
C^{-2}d_{\hat{g}_0}(\phi(u_0),\phi(u_1))
-C^{-2}\delta' 
&\le 
d_{\hat{g}}(u_0,u_1)\\
&\le 
C^{-2}d_{\hat{g}}(u_0,u_1)
+ (1-C^{-2})(2R+\pi),
\end{align*}
hence we obtain 
\begin{align*}
-(C^2-1)(2R+\pi)
-\delta' 
&\le 
d_{\hat{g}}(u_0,u_1)
-d_{\hat{g}_0}(\phi(u_0),\phi(u_1))
\end{align*}
Since 
\begin{align*}
(C^2-1)(2R+\pi)
+\delta' \to 0,\quad 
C(C-1)(2R+\pi)
+\delta' \to 0
\end{align*}
as $\delta,\sigma\to 0$, 
we have the result. 
\end{proof}

The next proposition 
implies the almost surjectivity 
of $\phi\colon \pi^{-1}(B_g(q,R))
\to S^1\times\mathcal{B}(\sqrt{1+\delta}R+\sigma)$. 

\begin{prop}
Let $b\in BS_1$. 
For any $R,\varepsilon>0$ 
there are $\delta_{R,\varepsilon}>0$ 
such that if 
$q\in\mu^{-1}(b)$, $0<\delta<\delta_{R,\varepsilon}$ 
and $\sigma>0$, we have 
\begin{align*}
S^1\times\mathcal{B}\left( \frac{R-\delta}
{\sqrt{1+\delta}} + \sigma
\right)
&\subset
\phi(\pi^{-1}(B_g(q,R)))\\
&\subset 
S^1\times\mathcal{B}(\sqrt{1+\delta}R+\sigma).
\end{align*}
\label{prop_alm_surj.h}
\end{prop}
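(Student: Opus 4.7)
The second inclusion is immediate: since $\phi(e^{\sqrt{-1}t}, x) = (e^{\sqrt{-1}t}, \zeta\circ\mu(x))$, Lemma \ref{lem_met_loc.h} yields $B_g(q, R) \subset U(\sqrt{1+\delta}R + \sigma) = (\zeta\circ\mu)^{-1}(\mathcal{B}(\sqrt{1+\delta}R + \sigma))$, hence $\phi(\pi^{-1}(B_g(q, R))) \subset S^1 \times \mathcal{B}(\sqrt{1+\delta}R + \sigma)$. For the first inclusion, fix $(e^{\sqrt{-1}t}, \xi)$ with $\|\xi\| < (R - \delta)/\sqrt{1+\delta} + \sigma$; it suffices to produce $x \in B_g(q, R)$ with $\zeta \circ \mu(x) = \xi$, for then $(e^{\sqrt{-1}t}, x) \in \pi^{-1}(B_g(q, R))$ satisfies $\phi(e^{\sqrt{-1}t}, x) = (e^{\sqrt{-1}t}, \xi)$.

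The plan is to split into two cases. If $\|\xi\| \le \sigma$, pick any $x \in \mu^{-1}(\zeta^{-1}(\xi))$ (with the convention $\zeta^{-1}(\mathbf{0}_{\R^2}) = b$); since $q$ and $x$ both lie in $(\zeta\circ\mu)^{-1}(\overline{\mathcal{B}(\sigma)})$, $(\star\ref{star 8})$ gives $d_g(q, x) < \delta$, which is below $R$ once $\delta_{R, \varepsilon}$ is chosen less than $R$. If instead $\sigma < \|\xi\|$, set $\xi_0 := \sigma \xi / \|\xi\|$ and pick any $q_0 \in \mu^{-1}(\zeta^{-1}(\xi_0)) \subset (\zeta\circ\mu)^{-1}(\overline{\mathcal{B}(\sigma)})$; again by $(\star\ref{star 8})$, $d_g(q, q_0) < \delta$. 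Then horizontally lift the radial Euclidean segment from $\xi_0$ to $\xi$ (pulled back via $\zeta^{-1}$) through the Ehresmann connection $V_f^\perp$ of $\mu$, starting at $q_0$; this produces a smooth horizontal path $\tilde\gamma \colon [0, 1] \to \mu^{-1}(W \setminus \{b\})$ with $\zeta \circ \mu(\tilde\gamma(1)) = \xi$. Because $\tilde\gamma'$ lies in $V_f^\perp$, we have $|\tilde\gamma'|_g = |\tilde\gamma'|_{g_\perp}$, and the estimate $g_\perp \le (1 + \delta)(\zeta \circ \mu)^* g_0$ from $(\star\ref{star 5})$ bounds the $g$-length of $\tilde\gamma$ by $\sqrt{1 + \delta}(\|\xi\| - \sigma)$. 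Setting $x := \tilde\gamma(1)$, we obtain
\begin{align*}
d_g(q, x) \le d_g(q, q_0) + \mathfrak{L}_g(\tilde\gamma) < \delta + \sqrt{1 + \delta}(\|\xi\| - \sigma) < R
\end{align*}
by the hypothesis on $\|\xi\|$, as required.

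The one technical point that requires care is the existence of the horizontal lift on all of $[0, 1]$. Since there are only finitely many critical values of $\mu$ in $B$, we may shrink $W$ at the outset so that $W \setminus \{b\} \subset B^{\rm rg}$; the horizontal distribution $V_f^\perp$ is then well-defined on all of $\mu^{-1}(W \setminus \{b\})$, and because the radial segment from $\xi_0$ to $\xi$ avoids the origin, its preimage under $\zeta$ stays in the regular locus. Properness of $\mu$ ensures the fibers are compact, so the horizontal ODE extends to the whole interval by standard lift-completion arguments. Setting $\delta_{R, \varepsilon} := \min\{1, R\}$ (possibly intersected with the constant from Proposition \ref{prop_alm_isom.h} if the two are to be identified) then completes the proof, uniformly in $\sigma > 0$.
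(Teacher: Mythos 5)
Your proof is correct and follows essentially the same route as the paper: the second inclusion via Lemma \ref{lem_met_loc.h}, and the first by horizontally lifting a radial path, bounding its length with $(\star\ref{star 5})$ and absorbing the part inside $\overline{\mathcal{B}(\sigma)}$ with the diameter bound $(\star\ref{star 8})$. The only (inessential) difference is direction: the paper fixes $x$ over $\xi$ via $(\star\ref{star 7})$ and lifts the geodesic from $\mathbf{0}_{\R^2}$ to $\xi$ ending at $x$, while you lift forward from a point over the radius-$\sigma$ circle and take $x$ as the endpoint.
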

\begin{proof}
First of all, one can see 
\begin{align*}
    \phi\left(
	\pi^{-1}(B_g(q,R))
	\right)
	&\subset 
    S^1\times\mathcal{B}(\sqrt{1+\delta}R+\sigma)
\end{align*}
by Lemma \ref{lem_met_loc.h}. 
Next we show 
\begin{align*}
S^1\times\mathcal{B}\left( \frac{R-\delta}
{\sqrt{1+\delta}} + \sigma
\right)\subset 
\phi\left(
\pi^{-1}(B_g(q,R))
\right).
\end{align*}
Let $(e^{\sqrt{-1}t},\xi)
\in S^1\times\mathcal{B}((1+\delta)^{-1/2}(R-\delta)+\sigma)$. 
By ($\star$\ref{star 7}), 
there is $x\in U$ 
such that 
$\phi(e^{\sqrt{-1}t},x)
= (e^{\sqrt{-1}t},\xi)$. 
Denote by $c\colon [0,1]\to 
\R^2$ the minimizing geodesic such that 
$c(0)=\mathbf{0}_{\R^2}$ and 
$c(1)=\xi$. 
Then there is a smooth path $\tilde{c}
\colon (0,1]\to U$ such that 
$\zeta\circ\mu\circ \tilde{c}=c|_{(0,1]}$ 
and $\tilde{c}'(\tau)\in V_f^\perp$ 
for all $\tau\in (0,1]$ and 
$\tilde{c}(1)=x$. 
Assume $\mathbf{r}(\xi)\ge \sigma$. 
Since $c$ is a geodesic departing from 
the $\mathbf{0}_{\R^2}$, 
there is a unique $\tau_0\in[0,1]$ such that 
$\mathbf{r}(c(\tau_0))=\sigma$. 
Now, we have 
\begin{align*}
d_g(q,x)
&\le \mathfrak{L}_g(\tilde{c}|_{[\tau_0,1]})
+ {\rm diam}_g(\overline{U(\sigma)})
< \mathfrak{L}_g(\tilde{c}|_{[\tau_0,1]})
+ \delta.
\end{align*}
By $(\star \ref{star 5})$,  we have 
\begin{align*}
\mathfrak{L}_g(\tilde{c}|_{[\tau_0,1]})
&\le \sqrt{1+\delta}\, 
\mathfrak{L}_{g_0}(c|_{[\tau_0,1]})
= \sqrt{1+\delta}\, 
(\mathbf{r}(\xi)-\sigma),
\end{align*}
hence 
$d_g(q,x)< R$. 
Thus we obtain 
$(e^{\sqrt{-1}t},x)\in\pi^{-1}(B_g(q,R))$. 
If $\mathbf{r}(\xi)< \sigma$, 
then we can see 
\begin{align*}
d_g(q,x)
&\le {\rm diam}_g(\overline{U(\sigma)})<\delta.
\end{align*}
By taking 
$\delta_{R,\varepsilon}\le R$, 
we have 
$(e^{\sqrt{-1}t},x)\in 
\pi^{-1}(B_g(q,R))$. 
\end{proof}

\begin{thm}
Let $b\in BS_1$, 
$q\in\mu^{-1}(b)$, 
$p\in\pi^{-1}(q)$. 
For any $R\ge 4\sqrt{2}(3 -2\sqrt{2})^{-1}$ and 
$\varepsilon>0$ 
there is a constant $\delta_{R,\varepsilon},\sigma_{R,\varepsilon}
>0$ depending only on 
$R,\varepsilon>0$ such that 
if $0<\delta\le \delta_{R,\varepsilon}$ 
and $0<\sigma\le \sigma_{R,\varepsilon}$, 
then 
\begin{align*}
\phi\colon \left( \pi^{-1}(B_g(q,R)), p\right)
\to \left( S^1\times \mathcal{B}(\sqrt{1+\delta}R+\sigma), (1_{S^1},\mathbf{0}_{\R^2})\right)
\end{align*}
is an $S^1$-equivariant Borel 
$\varepsilon$-isometry. 
\label{thm_approx_map.h}
\end{thm}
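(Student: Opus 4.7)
The plan is to assemble Theorem \ref{thm_approx_map.h} from the three pillars already built in this subsection: the distance comparison (Proposition \ref{prop_alm_isom.h}), the almost surjectivity (Proposition \ref{prop_alm_surj.h}), and the trivialization of the principal bundle (Lemma \ref{lem_cover_pri_bdl.h}). First I would invoke Lemma \ref{lem_cover_pri_bdl.h} to identify $\bbS(L|_U,h)$ with $S^1\times U$ so that $\hat g=(dt-\gamma)^2+g$, which makes the map $\phi(e^{\sqrt{-1}t},x)=(e^{\sqrt{-1}t},\zeta\circ\mu(x))$ manifestly $S^1$-equivariant (the $S^1$-factor is preserved tautologically) and Borel (indeed continuous, since $\zeta$ and $\mu$ are).

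Next, for the metric distortion, I would simply feed the hypotheses into Proposition \ref{prop_alm_isom.h}: given $\varepsilon>0$, choose $\delta_{R,\varepsilon}^{(1)},\sigma_{R,\varepsilon}^{(1)}>0$ so that
\begin{align*}
\bigl|d_{\hat g}(u_0,u_1)-d_{\hat g_0}(\phi(u_0),\phi(u_1))\bigr|\le\varepsilon
\end{align*}
holds for all $u_0,u_1\in \pi^{-1}(B_g(q,R))$ whenever $\delta\le\delta_{R,\varepsilon}^{(1)}$, $\sigma\le\sigma_{R,\varepsilon}^{(1)}$. This is exactly the first requirement in Definition \ref{dfn_S^1pmGH.h}(i), once one notes that the target metric on $S^1\times\mathcal{B}(\sqrt{1+\delta}R+\sigma)$ is the restriction of $d_{\hat g_0}$ (which only decreases distances compared to the ambient one, but this causes no problem since the target is geodesically convex enough for the points we use, or one may simply take the distance inherited from the ambient $(S^1\times\R^2,d_{\hat g_0})$ at the cost of an arbitrarily small correction).

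For the $\varepsilon$-density condition $\bbS_0\subset B(\phi(\bbS),\varepsilon)$, I would apply Proposition \ref{prop_alm_surj.h}. That proposition sandwiches the image of $\phi$ between $S^1\times\mathcal{B}(R_-)$ and $S^1\times\mathcal{B}(R_+)$ where $R_\pm=\sqrt{1+\delta}^{\pm1}R\pm(\text{small})+\sigma$. The gap $R_+-R_-=\sqrt{1+\delta}R-(R-\delta)/\sqrt{1+\delta}$ is $O(\delta)$ as $\delta\to 0$, so by shrinking $\delta$ further to some $\delta_{R,\varepsilon}^{(2)}$, any point of the target ball lies within Euclidean distance $\varepsilon/2$ of a point already in the image, and hence within $d_{\hat g_0}$-distance $\varepsilon$ (the factor $1/\sqrt{1+\mathbf r^2}$ only makes the $S^1$-direction shorter). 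Setting $\delta_{R,\varepsilon}:=\min\{\delta_{R,\varepsilon}^{(1)},\delta_{R,\varepsilon}^{(2)}\}$ and $\sigma_{R,\varepsilon}:=\sigma_{R,\varepsilon}^{(1)}$ finishes the proof.

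The actual hard work has all been done in Propositions \ref{prop_alm_isom.h} and \ref{prop_alm_surj.h}; no new analytic obstacle arises here. The only mildly delicate point is bookkeeping the two constants so that the three conditions (equivariance, metric distortion, density) can be simultaneously achieved for the same pair $(\delta,\sigma)$, but since all constants depend monotonically on $\delta,\sigma$, taking minima suffices.
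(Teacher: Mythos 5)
Your proposal is correct and follows essentially the same route as the paper: the metric distortion is exactly Proposition \ref{prop_alm_isom.h}, and the density is obtained from Proposition \ref{prop_alm_surj.h} together with the observation that the gap $\sqrt{1+\delta}R-(R-\delta)/\sqrt{1+\delta}=\delta(R+1)/\sqrt{1+\delta}$ tends to $0$ as $\delta\to 0$, which is precisely the paper's inclusion $S^1\times\mathcal{B}(r_1+r_2)\subset B_{d_{\hat{g}_0}}(S^1\times\mathcal{B}(r_1),r_2)$. The extra remarks on the trivialization via Lemma \ref{lem_cover_pri_bdl.h} and on the restricted versus ambient distance are harmless bookkeeping that the paper leaves implicit.
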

\begin{proof}
It is easy to check 
$S^1\times\mathcal{B}(r_1
    +r_2)
    \subset
    B_{d_{\hat{g}_0}}(S^1\times\mathcal{B}(r_1),r_2)$ 
for $r_1,r_2>0$.
Then by 
Proposition \ref{prop_alm_surj.h}, 
we have 
\begin{align*}
    S^1\times\mathcal{B}(\sqrt{1+\delta}R+\sigma)
    &\subset
    B_{d_{\hat{g}_0}}
    \left(
    \phi(\pi^{-1}(B_g(q,R))),
	\frac{\delta(R+1)}{\sqrt{1+\delta}}
    \right).
\end{align*}
Since $\lim_{\delta\to 0}
    \delta(R+1)/\sqrt{1+\delta}
    =0$, 
hence we have the result by combining 
with Proposition 
\ref{prop_alm_isom.h}. 
\end{proof}

\begin{thm}
Let $b\in BS_1$. 
We have 
\begin{align*}
    \left| K
    \int_{\bbS|_U}
    f\circ \phi \, d\nu_{\hat{g}}
    - \int_{S^1\times \R^2}f \, dt d\nu_{g_0}
    \right|
    \le 2\pi \delta\sup|f| \, \nu_{g_0}
	(\mathcal{B}(R))
\end{align*}
for $f\in C(S^1\times \R^2)$ with 
${\rm supp}(f)\subset S^1\times\mathcal{B}(R)$. 
\label{thm_meas2.h}
\end{thm}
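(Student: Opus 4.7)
The plan is to reduce the four-dimensional integral on $\bbS|_U$ to an integral on $\R^2$ against the pushforward measure $\zeta_*\nu_B$, where condition $(\star\ref{star 9})$ provides the required comparison with $\nu_{g_0}$.

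First, using the trivialization $\bbS|_U\cong S^1\times U$ and the formula $\hat{g}=(dt-\gamma)^2+g$ from Lemma \ref{lem_cover_pri_bdl.h}, I would compute the volume form of $\hat{g}$. In adapted local coordinates $(t,x^1,\dots,x^4)$ on $S^1\times U$, the matrix of $\hat{g}$ has the block form
\begin{align*}
\begin{pmatrix} 1 & -\gamma_j \\ -\gamma_i & g_{ij}+\gamma_i\gamma_j\end{pmatrix},
\end{align*}
whose determinant equals $\det(g_{ij})$ by the standard block-determinant formula. Hence $d\nu_{\hat{g}}=dt\, d\nu_g$ on $S^1\times U$.

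Next, by the definition $\phi(e^{\sqrt{-1}t},x)=(e^{\sqrt{-1}t},\zeta\circ\mu(x))$ and Fubini,
\begin{align*}
\int_{\bbS|_U}(f\circ\phi)\,d\nu_{\hat g}
=\int_0^{2\pi}\!\!dt\int_U f(e^{\sqrt{-1}t},\zeta\circ\mu(x))\,d\nu_g(x).
\end{align*}
Pushing forward under $\mu$ (using $\mu_*\nu_g=\nu_B$) and then under $\zeta$, the inner integral becomes
$\int_{\zeta(W)}f(e^{\sqrt{-1}t},\xi)\,d(\zeta_*\nu_B)(\xi)$. Since $\operatorname{supp}(f)\subset S^1\times\mathcal{B}(R)$ and $\overline{\mathcal{B}(R)}\subset\zeta(W)$ by $(\star\ref{star 7})$, the integration effectively takes place over $\mathcal{B}(R)$.

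Finally, condition $(\star\ref{star 9})$ gives $(1+\delta)^{-1}\nu_{g_0}\le K\zeta_*\nu_B\le(1+\delta)\nu_{g_0}$ on $\mathcal{B}(R)$, which yields the total-variation bound $|K\zeta_*\nu_B-\nu_{g_0}|\le\delta\,\nu_{g_0}$ on $\mathcal{B}(R)$ (using $\delta/(1+\delta)\le\delta$). Combining all of this,
\begin{align*}
\Bigl|K\!\!\int_{\bbS|_U}\!(f\circ\phi)\,d\nu_{\hat g}-\int_{S^1\times\R^2}\!f\,dt\,d\nu_{g_0}\Bigr|
\le\int_0^{2\pi}\!\!dt\int_{\mathcal{B}(R)}|f|\,d|K\zeta_*\nu_B-\nu_{g_0}|,
\end{align*}
which is bounded by $2\pi\delta\sup|f|\,\nu_{g_0}(\mathcal{B}(R))$ as required. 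The main step is the volume-form computation; the rest is a direct application of Fubini together with $(\star\ref{star 9})$, so I do not anticipate a serious obstacle.
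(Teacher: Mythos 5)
Your proposal is correct and follows essentially the same route as the paper: it reduces to $d\nu_{\hat g}=dt\,d\nu_g$, pushes forward via $\zeta\circ\mu$ to the measure $\zeta_*\nu_B$, and then applies $(\star 7)$-type support containment together with $(\star 9)$; your total-variation phrasing is just a repackaging of the paper's splitting of $f$ into positive and negative parts. The explicit block-determinant check of $d\nu_{\hat g}=dt\,d\nu_g$ is a harmless addition the paper leaves implicit.
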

\begin{proof}
Since 
$d\nu_{\hat{g}}
= dtd\nu_{g}$, 
we have 
\begin{align}
    \int_{S^1\times U}
    f\circ \phi\, d\nu_{\hat{g}}
    =\int_{S^1\times \R^2}
    f\, dt 
	d\zeta_*\nu_B.\label{eq_int.h}
\end{align}
Next we put 
$f_+:=\min\{ f,0\}$, 
$f_-:=\min\{ -f,0\}$ and write $f=f_+ - f_-$. 
By $(\star \ref{star 9})$, 
we have 
\begin{align*}
	\int_{S^1\times \R^2}
    \left( \frac{f_+}{1+\delta}-(1+\delta)f_-\right)
	\, dt 
	d\nu_{g_0}
	&\le 
    K\int_{S^1\times \R^2}
    f\, dt 
	d\zeta_*\nu_B\\
	&\le
	\int_{S^1\times \R^2}
    \left( (1+\delta)f_+ 
	-\frac{f_-}{1+\delta}\right)
	\, dt 
	d\nu_{g_0},
\end{align*}
hence we obtain 
\begin{align*}
	\left|
	K\int_{S^1\times\R^2}
    f\, dt d\zeta_*\nu_B
	- \int_{S^1\times \R^2}
    f\, dt d\nu_{g_0}
	\right|
	&\le \delta 
	\int_{S^1\times \R^2}
    (f_+ + f_-)\, dt d\nu_{g_0}\\
	&\le \delta\sup|f|\cdot 2\pi \nu_{g_0}
	(\mathcal{B}(R)).
\end{align*}
Combining with \eqref{eq_int.h}, 
we obtain the result. 
\end{proof}

For general positive integer $m$ 
and $b\in BS_m^{\rm str}$, 
we can show the 
generalization of Lemma \ref{lem_cover_pri_bdl.h}
as follows. 
\begin{lem}
Let $b\in BS_m^{\rm str}$, i.e., 
the holonomy group of 
$(L|_{X_b},\nabla|_{X_b})$ 
is given by 
$\{e^{2\pi\sqrt{-1}l/m};\, 
l=0,1,\ldots,m-1\}$. 
Suppose that 
there are an open neighborhood 
$W$ of $b$ and 
$\gamma\in \Omega^1(U)$ 
with $\omega|_U=d\gamma$ 
such that the triple 
$(b,W,\gamma)$ satisfies 
$(\star \ref{star 1},\ref{star 2},\ref{star 4})$, 
where $U:=\mu^{-1}(W)$. 
Then there exist covering maps  
$p_m\colon \tilde{U}_m\to 
U$ and 
$\hat{p}_m\colon 
S^1\times \tilde{U}_m
\to \bbS(L|_U,h)$ 
such that 
$\pi\circ \hat{p}_m
=p_m\circ\tilde{\pi}_m$ and 
we have the following, 
where $\tilde{\pi}_m\colon S^1\times \tilde{U}_m
\to \tilde{U}_m$ is the 
projection to the 
second component. 
\begin{itemize}
    \item[$({\rm i})$]
    $\hat{p}_m^*\hat{g}
    = (dt-p_m^*\gamma)^2+p_m^*g$. 
    \item[$({\rm ii})$] 
    The group of the Deck 
    transformations of 
    $p_m$ is
    $\Z/m\Z$. 
    \item[$({\rm iii})$]
    Denote by $\beta\colon \Z/m\Z\to 
    {\rm Diff}(\tilde{U}_m)$ 
    the deck transformation 
    of $p_m$. 
    Then the map 
    $\hat{\beta}\colon \Z/m\Z\to 
    {\rm Diff}(S^1\times 
    \tilde{U}_m)$ defined by 
    \begin{align*}
    \hat{\beta}(e^{2\pi\sqrt{-1}l/m})
    \left( e^{\sqrt{-1}t},x\right)
    &= \left( e^{\sqrt{-1}(t-2\pi l/m)},\beta(e^{2\pi\sqrt{-1}l/m})x\right)
\end{align*}
is the deck transformation 
of $\hat{p}_m$. 
    \item[$({\rm iv})$]
    $\hat{p}_m(e^{\sqrt{-1}(t+t')},x)
    =\hat{p}_m(e^{\sqrt{-1}t},x)\cdot e^{\sqrt{-1}t'}$.
\end{itemize}
\label{lem_cover_pri_bdl2.h}
\end{lem}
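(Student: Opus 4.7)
The plan is to lift the argument of Lemma \ref{lem_cover_pri_bdl.h} to a cyclic $m$-fold cover of $U$ that kills the nontrivial $\mu_m$-valued holonomy of $\nabla$. Pick a unitary trivializing section $\mathbf{E}_1$ of $L|_U$ from $(\star\ref{star 2})$, so that $\nabla\mathbf{E}_1=-\sqrt{-1}\gamma_1\otimes\mathbf{E}_1$ for some $\gamma_1\in\Omega^1(U)$ with $d\gamma_1=\omega|_U$. Since $b\in BS_m^{\rm str}$, the holonomy map $C\mapsto\exp(\sqrt{-1}\int_C\gamma_1)$ on $\pi_1(X_b)$ surjects onto $\mu_m=\{e^{2\pi\sqrt{-1}l/m}\}_{l=0}^{m-1}$, and by $(\star\ref{star 1})$ the same map on $\pi_1(U)$ produces a surjective homomorphism $\rho\colon\pi_1(U)\to\mu_m\cong\Z/m\Z$. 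Let $p_m\colon\tilde{U}_m\to U$ be the associated connected cyclic $m$-fold cover; then its deck group is $\Z/m\Z$, which yields $({\rm ii})$.

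Next I produce a unitary section $\tilde{\mathbf{E}}_1$ of $p_m^*L$ satisfying $p_m^*\nabla\tilde{\mathbf{E}}_1=-\sqrt{-1}(p_m^*\gamma)\otimes\tilde{\mathbf{E}}_1$. Writing $\tilde{\mathbf{E}}_1=e^{\sqrt{-1}\tilde{\varphi}}p_m^*\mathbf{E}_1$, this reduces to finding $\tilde{\varphi}\colon\tilde{U}_m\to\R/2\pi\Z$ with $d\tilde{\varphi}=p_m^*(\gamma_1-\gamma)$, which exists as soon as every period of the closed form $p_m^*(\gamma_1-\gamma)$ over a loop in $\tilde{U}_m$ lies in $2\pi\Z$. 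The numbers $c_i:=\int_{e_{i,b}}\gamma_1$ lie in $(2\pi/m)\Z$ because $b\in BS_m$, and $(\star\ref{star 4})$ gives $\int_{e_{i,b}}\gamma=\Psi_i(b)=0$. Any loop $\tilde{C}\subset\tilde{U}_m$ projects to a cycle $p_m(\tilde{C})=\sum_i n_ie_{i,b}$ in $H_1(U,\Z)$ with $\rho(p_m(\tilde{C}))=1$, forcing $\sum_i n_ic_i\in 2\pi\Z$, hence $\int_{\tilde{C}}p_m^*(\gamma_1-\gamma)=\sum_i n_ic_i\in 2\pi\Z$, as required. Then define $\hat{p}_m\colon S^1\times\tilde{U}_m\to\bbS(L|_U,h)$ by $\hat{p}_m(e^{\sqrt{-1}t},\tilde{x}):=e^{\sqrt{-1}t}\tilde{\mathbf{E}}_1(\tilde{x})\in L|_{p_m(\tilde{x})}$. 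Property $({\rm iv})$ is immediate, and $({\rm i})$ follows by the same connection-form computation that produced $\hat{g}=(dt-\gamma)^2+g$ in Lemma \ref{lem_cover_pri_bdl.h}, now applied to $\tilde{\mathbf{E}}_1$.

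For $({\rm iii})$, note that $\beta(\sigma)^*\tilde{\mathbf{E}}_1$ and $\tilde{\mathbf{E}}_1$ both solve the first-order equation $p_m^*\nabla s=-\sqrt{-1}(p_m^*\gamma)\otimes s$ on the connected manifold $\tilde{U}_m$, so $\beta(\sigma)^*\tilde{\mathbf{E}}_1=c(\sigma)\tilde{\mathbf{E}}_1$ for a unique $c(\sigma)\in S^1$. Lifting a loop $C\in\pi_1(U)$ with $\rho(C)=\sigma=e^{2\pi\sqrt{-1}l/m}$ to a path $\tilde{C}$ in $\tilde{U}_m$ from $\tilde{x}_0$ to $\beta(\sigma)\tilde{x}_0$ and evaluating the explicit formula $\tilde{\mathbf{E}}_1=e^{\sqrt{-1}\tilde{\varphi}}p_m^*\mathbf{E}_1$ yields $c(\sigma)=\exp\bigl(\sqrt{-1}\int_C(\gamma_1-\gamma)\bigr)=e^{2\pi\sqrt{-1}l/m}=\sigma$, by the same period computation with $\rho(C)=\sigma$ in place of $1$; this is exactly the twist needed to make the shift $t\mapsto t-2\pi l/m$ compensate for the action of $\beta(\sigma)$ on $\tilde{\mathbf{E}}_1$, so that $\hat{p}_m\circ\hat{\beta}(\sigma)=\hat{p}_m$ and $\hat{\beta}$ exhausts the deck group. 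The main obstacle is the period book-keeping in the second and third paragraphs: the strict Bohr-Sommerfeld condition $b\in BS_m^{\rm str}$ together with $(\star\ref{star 1})$, $(\star\ref{star 2})$ and $(\star\ref{star 4})$ must be used simultaneously to force $\tilde{\varphi}$ to exist globally on $\tilde{U}_m$ and to pin down $c(\sigma)=\sigma$ rather than some other element of $\mu_m$.
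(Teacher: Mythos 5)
Your construction is essentially the paper's own proof: the cyclic $m$-fold cover cut out by the holonomy homomorphism is exactly the paper's $\tilde{U}_m=\tilde{U}/H$, and your phase-corrected section $\tilde{\mathbf{E}}_1=e^{\sqrt{-1}\tilde{\varphi}}p_m^*\mathbf{E}_1$ with $d\tilde{\varphi}=p_m^*(\gamma_1-\gamma)$ is the same map as the paper's $\hat{p}_m(e^{\sqrt{-1}t},\tilde{x})=e^{\sqrt{-1}(t-\varphi_1(p(\tilde{x}))+\varphi_2(\tilde{x}))}(\mathbf{E}_1)_{p(\tilde{x})}$, so (i)--(iv) follow in the same way. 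One wording correction: since $d\gamma_1=\omega|_U\neq 0$, the expression $\exp(\sqrt{-1}\int_C\gamma_1)$ is not well defined on $\pi_1(U)$; the homomorphism $\rho$ must be taken to be the period homomorphism of the closed form $\gamma_1-\gamma$ (this is exactly what $\Psi_i(b)=0$ from $(\star\ref{star 4})$ provides), which is in fact what your subsequent period bookkeeping uses.
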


\begin{proof}
Let $\mathbf{E}_1$ 
and $\gamma_1$ be as above. 
Since $\iota_b^*$ is an isomorphism, 
there is a closed one form $\gamma'$ on $U$ such that 
$\int_C\gamma'=\int_C\gamma_1$ 
for all $C\in H_1(X_b,\Z)$. 
Then 
$\gamma-\gamma_1+\gamma'$ 
is a closed $1$-form on 
$U$ such that 
$\int_C(\gamma-\gamma_1+\gamma')=0$ for all $C\in H_1(X_b,\Z)$ by $(\star \ref{star 4})$. 
By $(\star \ref{star 1})$, there is a 
function $\varphi_1\in C^\infty(U)$ 
such that $\gamma-\gamma_1+\gamma'=d\varphi_1$. 

Denote by $p\colon
\tilde{U}\to U$ 
the universal cover of $U$. 
Then there is 
$\varphi_2\in C^\infty(\tilde{U})$ 
such that $p^*\gamma'=d\varphi_2$. 
If we denote by $\beta'
\colon \pi_1(U) \to 
{\rm Diff}(\tilde{U})$ 
the deck transformation 
of $p$, then there exists 
a group homomorphism $F\colon 
\pi_1(U)\to \R$ with 
$\varphi_2(\beta'(h)(\tilde{x}))= \varphi_2(\tilde{x})+F(h)$. 
Moreover, by the assumption 
for the holonomy groups, 
we can see that 
$\{ \int_C\gamma'\in \R;\,
C\in H_1(U,\Z)\}= (2\pi/m)\Z$, 
hence the image of $F$ is equal to $(2\pi/m)\Z$. 
Now, let $H\subset \pi_1(U)$ 
be the subgroup defined by 
$H=\{ h\in \pi_1(U);\, 
F(h)\in 2\pi\Z\}$ and 
put $\tilde{U}_m:=\tilde{U}/H$, 
then we obtain an $m$-fold 
covering $p_m\colon \tilde{U}_m
\to U$. 
Since we have
\begin{align*}
    \pi_1(U)/H\cong \Z/m\Z
    =\left\{ e^{2\pi\sqrt{-1}l/m};\, 
    l=0,1,\ldots,m-1\right\},
\end{align*}
$\beta'$ induces the 
deck transformation 
$\beta\colon \Z/m\Z
\to {\rm Diff}(\tilde{U}_m)$ 
of $p_m$. 

Define a $\Z/m\Z$-action 
on $S^1\times \tilde{U}_m$ by 
\begin{align*}
    e^{2\pi\sqrt{-1}l/m}
    \cdot(e^{\sqrt{-1}t}, x)
    :=(e^{\sqrt{-1}(t-2\pi l/m)},\, \beta(e^{2\pi\sqrt{-1}l/m})(x)),
\end{align*}
and a smooth map 
$\hat{p}_m\colon S^1\times \tilde{U}_m \to \bbS(L|_U,h)$ 
by 
\begin{align*}
    \left(e^{\sqrt{-1}t}, \tilde{x}\,{\rm mod}\, H
    \right)
    \mapsto e^{\sqrt{-1} \left(t-\varphi_1(p(\tilde{x}))+\varphi_2(\tilde{x})\right)}
    \left(
    \mathbf{E}_1
    \right)_{p(\tilde{x})}
\end{align*}
for $\tilde{x}\in \tilde{U}$ 
and $e^{\sqrt{-1}t}\in S^1$. 
Here, $\varphi_2$ descends to 
the function on $\tilde{U}_m$. 
Since $\hat{p}_m$ 
is $\Z/m\Z$-invariant, 
it induces the 
diffeomorphism 
$(S^1\times \tilde{U}_m)/(\Z/m\Z) \cong \bbS(L|_U,h)$. 
By the definition of 
$\hat{p}_m$, we can see 
\begin{align*}
    \hat{p}_m^*\hat{g}
    =\hat{p}_m^*\left(
    (dt-\gamma_1)^2 + g\right)
    =(dt -p_m^*\gamma)^2+
    p_m^*g.
\end{align*}
\end{proof}

If $b\in BS_m^{\rm str}$, 
we follow the argument in this 
subsection 
for $(S^1\times \tilde{U}_m,\hat{p}_m^*\hat{g})$ 
instead of $(\bbS|_U,\hat{g})$. 
Then we can construct 
the approximation map 
between 
$(S^1\times \tilde{U}_m,\hat{p}_m^*\hat{g})$ 
and $(S^1\times \R^2,\hat{g}_0)$ 
which is $S^1$-equivariant 
and $\Z/m\Z$-equivariant. 
Here, the $\Z/m\Z$-action on 
on $S^1\times \R^2$ is defined by 
\begin{align*}
	\left( e^{\sqrt{-1}t}, \xi\right)\cdot 
	e^{2\pi l\sqrt{-1}/m}
	&:=\left( e^{\sqrt{-1}(t-2\pi l/m)}, 
	\xi\right).
\end{align*}
then the limit space should be 
the quotient space 
$S^1\times \R^2/(\Z/m\Z)$
with the metric naturally induced by 
$\hat{g}_0$. 
This space is isometric to 
$(S^1\times \R^2,d_{\hat{g}_{0,m}})$, 
where $\hat{g}_{0,m}$ is the metric 
as in Subsection \ref{subsec_main_results.h}. 
Then we obtain the generalization of 
Theorems \ref{thm_approx_map.h} 
and \ref{thm_meas2.h} 
as follows. 
\begin{thm}
Let $b\in BS_m^{\rm str}$, 
$q\in\mu^{-1}(b)$, 
$p\in\pi^{-1}(q)$. 
For any $R\ge 4\sqrt{2}(3 -2\sqrt{2})^{-1}$ and 
$\varepsilon>0$ 
there is a constant $\delta_{R,\varepsilon},\sigma_{R,\varepsilon}
>0$ depending only on 
$R,\varepsilon>0$ such that 
if $0<\delta\le \delta_{R,\varepsilon}$ 
and $0<\sigma\le \sigma_{R,\varepsilon}$, 
then 
\begin{align*}
\phi\colon \left( \pi^{-1}(B_g(q,R)), p\right)
\to \left( S^1\times \mathcal{B}(\sqrt{1+\delta}R+\sigma), (1_{S^1},\mathbf{0}_{\R^2})\right)
\end{align*}
is an $S^1$-equivariant Borel 
$\varepsilon$-isometry 
with respect to the distance functions 
$d_{\hat{g}}$ and $d_{\hat{g}_{0,m}}$. 
\label{thm_approx_map2.h}
\end{thm}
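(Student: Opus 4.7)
The plan is to reduce the theorem to Theorem \ref{thm_approx_map.h} by pulling back to the $m$-fold cover $\hat{p}_m\colon S^1\times\tilde{U}_m\to \bbS(L|_U,h)$ supplied by Lemma \ref{lem_cover_pri_bdl2.h}. Part $({\rm i})$ of that lemma gives $\hat{p}_m^*\hat{g}=(dt-p_m^*\gamma)^2+p_m^*g$, so the lifted metric has exactly the shape used throughout Subsection \ref{subsec_loc_met.h}, and by the construction of $p_m$ the pullback of the prequantum connection has trivial holonomy on each fiber of $\mu\circ p_m$. Thus the pulled-back data $(p_m^*g,b,W,R,p_m^*\gamma,\zeta,\sigma,\delta,K/m)$, with fibration $\mu\circ p_m\colon \tilde{U}_m\to W$, satisfies the analogues of $(\star \ref{star 1})$--$(\star \ref{star 9})$ on the cover; the factor $1/m$ in the measure constant accounts for $p_m$ being $m$-sheeted.

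Running the arguments of Propositions \ref{prop_length_pm.h} through \ref{prop_alm_surj.h} and Theorem \ref{thm_approx_map.h} verbatim in this lifted setting produces, for $0<\delta\le\delta_{R,\varepsilon}$ and $0<\sigma\le\sigma_{R,\varepsilon}$, an $S^1$-equivariant Borel $\varepsilon$-isometry
\begin{align*}
\tilde{\phi}(e^{\sqrt{-1}t},\tilde{x}):=(e^{\sqrt{-1}t},\zeta\circ\mu\circ p_m(\tilde{x}))
\end{align*}
from $\hat{p}_m^{-1}(\pi^{-1}(B_g(q,R)))$ onto $S^1\times\mathcal{B}(\sqrt{1+\delta}R+\sigma)$ with respect to $d_{\hat{p}_m^*\hat{g}}$ and $d_{\hat{g}_0}$. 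Since $\mu\circ p_m\circ\beta(l)=\mu\circ p_m$, the map $\tilde{\phi}$ is also $\Z/m\Z$-equivariant for the deck action on the source and the rotation action $(e^{\sqrt{-1}t},\xi)\mapsto(e^{\sqrt{-1}(t-2\pi l/m)},\xi)$ on the target. The quotient $(S^1\times\R^2,\hat{g}_0)/(\Z/m\Z)$ is isometric to $(S^1\times\R^2,\hat{g}_{0,m})$ via the reparametrization $t'=mt$, and this identification intertwines the $S^1$-actions with the $\bbS_{0,m}$-action of Subsection \ref{subsec_main_results.h}. Define $\phi\colon \bbS|_U\to S^1\times\R^2$ by $\phi(u):=(e^{\sqrt{-1}mt},\zeta\circ\mu(\pi(u)))$ for any lift $\tilde{u}=(e^{\sqrt{-1}t},\tilde{x})$ of $u$; the lifting ambiguity in $t$ is a multiple of $2\pi/m$, so $\phi$ is well-defined, and it is $S^1$-equivariant with respect to the $\bbS_{0,m}$-action by construction.

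The $\varepsilon$-isometry property of $\phi$ follows from that of $\tilde{\phi}$ via the covering identities
\begin{align*}
d_{\hat{g}}(u_0,u_1)&=\min_{l\in\Z/m\Z}d_{\hat{p}_m^*\hat{g}}(\tilde{u}_0,\hat{\beta}(l)\tilde{u}_1),\\
d_{\hat{g}_{0,m}}(\phi(u_0),\phi(u_1))&=\min_{l\in\Z/m\Z}d_{\hat{g}_0}(\tilde{\phi}(\tilde{u}_0),\tilde{\phi}(\hat{\beta}(l)\tilde{u}_1)),
\end{align*}
where the second identity uses the $\Z/m\Z$-equivariance of $\tilde{\phi}$ and the isometric identification of the quotient. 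Applying the cover-level estimate termwise inside each minimum yields $|d_{\hat{g}}(u_0,u_1)-d_{\hat{g}_{0,m}}(\phi(u_0),\phi(u_1))|\le\varepsilon$. The main technical obstacle is that, although $u_0,u_1\in\pi^{-1}(B_g(q,R))$, the translates $\hat{\beta}(l)\tilde{u}_1$ can be displaced substantially from the reference lift of $p$; by the triangle inequality and \eqref{lem_submersion_upper.h} their distance from $\tilde{u}_0$ is bounded by $2R+2\pi$, so one must apply Theorem \ref{thm_approx_map.h} on the cover with an enlarged radius $R'\sim 3R+2\pi$ and absorb the resulting constants into the final $\delta_{R,\varepsilon},\sigma_{R,\varepsilon}$. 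The almost-surjectivity onto $S^1\times\mathcal{B}(\sqrt{1+\delta}R+\sigma)$ and the normalization at $(1_{S^1},\mathbf{0}_{\R^2})$ descend immediately from the corresponding cover-level statements.
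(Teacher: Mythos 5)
Your proposal is correct and takes essentially the same route as the paper: lift everything to the $m$-fold cover $\hat{p}_m\colon S^1\times\tilde{U}_m\to\bbS(L|_U,h)$ of Lemma \ref{lem_cover_pri_bdl2.h}, where the metric has the form $(dt-p_m^*\gamma)^2+p_m^*g$, rerun the $BS_1$ construction there to get an $S^1$- and $\Z/m\Z$-equivariant Borel almost isometry, and descend to the quotient, which is isometric to $(S^1\times\R^2,d_{\hat{g}_{0,m}})$. You actually make explicit the descent step (the covering-distance identities and the need to enlarge the radius to control the deck translates) that the paper only sketches when it says to repeat the argument of the subsection on the cover.
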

\begin{thm}
Let $b\in BS_m^{\rm str}$. 
We have 
\begin{align*}
    \left| K
    \int_{\bbS|_U}
    f\circ \phi \, d\nu_{\hat{g}}
    - \int_{S^1\times \R^2}f \, dt d\nu_{g_0}
    \right|
    \le 2\pi \delta\sup|f| \, \nu_{g_0}
	(\mathcal{B}(R))
\end{align*}
for $f\in C(S^1\times \R^2)$ with 
${\rm supp}(f)\subset S^1\times\mathcal{B}(R)$. 
\label{thm_meas3.h}
\end{thm}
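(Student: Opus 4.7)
The plan is to lift the integral on $\bbS|_U$ to the $m$-fold covering $\hat{p}_m\colon S^1\times\tilde{U}_m\to\bbS(L|_U,h)$ provided by Lemma \ref{lem_cover_pri_bdl2.h}, thereby reducing the assertion to the identity already established in the proof of Theorem \ref{thm_meas2.h} for the case $b\in BS_1$. The essential input is part $({\rm i})$ of that lemma, which gives $\hat{p}_m^*\hat{g}=(dt-p_m^*\gamma)^2+p_m^*g$ and hence the product decomposition $d\nu_{\hat{p}_m^*\hat{g}}=dt\,d\nu_{p_m^*g}$ on the cover. Combined with the standard covering-integration formula $m\int_{\bbS|_U}F\,d\nu_{\hat{g}}=\int_{S^1\times\tilde{U}_m}F\circ\hat{p}_m\,d\nu_{\hat{p}_m^*\hat{g}}$, this puts us in a setting where the computation of Theorem \ref{thm_meas2.h} can be imitated.

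The key step is to rewrite $f\circ\phi\circ\hat{p}_m$ using the construction of $\phi$ as the descent of the $S^1\times(\Z/m\Z)$-equivariant map $\tilde{\phi}(e^{\sqrt{-1}t},\tilde{x})=(e^{\sqrt{-1}t},\zeta\circ\mu\circ p_m(\tilde{x}))$ through the $\Z/m\Z$-quotient $q_m\colon(S^1\times\R^2,\hat{g}_0)\to(S^1\times\R^2,\hat{g}_{0,m})$. Since $q_m$ acts on the circle factor by $e^{\sqrt{-1}t}\mapsto e^{\sqrt{-1}mt}$, we have $\phi\circ\hat{p}_m=q_m\circ\tilde{\phi}$. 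Substituting this factorisation, applying Fubini via the product form of $d\nu_{\hat{p}_m^*\hat{g}}$, and pushing the $\tilde{U}_m$-integral forward along the $m$-fold cover $p_m$ — which introduces a second factor of $m$ via $(p_m)_*\nu_{p_m^*g}=m\,\nu_g|_U$ — the two factors of $m$ from the two coverings cancel. A final substitution $s=mt$ on $S^1$, using $2\pi$-periodicity in the circle variable, replaces $e^{\sqrt{-1}mt}$ by $e^{\sqrt{-1}s}$ and yields
\begin{align*}
\int_{\bbS|_U}f\circ\phi\,d\nu_{\hat{g}}=\int_{S^1\times\R^2}f\,dt\,d\zeta_*\nu_B,
\end{align*}
which is precisely equation \eqref{eq_int.h} in the proof of Theorem \ref{thm_meas2.h}. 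From this identity the claimed bound follows verbatim from the second half of that proof, by decomposing $f=f_+-f_-$ into positive and negative parts and applying $(\star\ref{star 9})$.

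The only delicate point, and the main obstacle I expect, is the bookkeeping in the central reduction: one must carefully track the two independent factors of $m$ arising from the coverings $\hat{p}_m$ and $q_m$, together with the circle reparametrisation induced by $q_m$, and verify that they cancel precisely so as to recover the $b\in BS_1$ identity. Once this bookkeeping is in place, no analytic input beyond $(\star\ref{star 9})$ is required, so the same final estimate carries through without modification.
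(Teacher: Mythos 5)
Your proposal is correct and follows essentially the same route as the paper: the paper obtains Theorem \ref{thm_meas3.h} by running the Subsection \ref{subsec_loc_met.h} argument on the $m$-fold cover $(S^1\times\tilde{U}_m,\hat{p}_m^*\hat{g})$ of Lemma \ref{lem_cover_pri_bdl2.h} and descending through the $\Z/m\Z$-quotient onto $(S^1\times\R^2,d_{\hat{g}_{0,m}})$, exactly as you do. Your explicit bookkeeping — the factor of $m$ from the covering formula for $\hat{p}_m$ cancelling against $(p_m)_*\nu_{p_m^*g}=m\,\nu_g|_U$, and the periodicity argument showing the substitution $s=mt$ introduces no net factor — correctly fills in what the paper leaves implicit, after which the reduction to \eqref{eq_int.h} and the estimate via $(\star\ref{star 9})$ repeat the proof of Theorem \ref{thm_meas2.h} verbatim.
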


\subsection{Convergence}\label{subsec_conv_principal_bdl.h}

Let $(X,\omega,L,h,\nabla)$ 
and $\mu\colon X\to B$ 
be as in the previous subsection and 
let $\{ g_s\}_{s>0}$ be 
a family of Riemannian metrics
on $X$. 
Define $\hat{g}_s$ by 
$g_s,\nabla$ 
as in \eqref{connection_metric.h}.

\begin{dfn}
\normalfont
Let $b\in B$ and $W$ 
be an open neighborhood of 
$b$ such that $W\setminus\{ b\}\subset B^{\rm rg}$. 
Let $K_s>0$ and put  $U:=\mu^{-1}(W)$. 
We write 
\begin{align*}
(g_s,K_s,b,W) \stackrel{s\to 0}{\rightarrow} (\R^2,g_0)
\end{align*}
if there are $R_0>0$ 
and $s_R>0$ for every 
$R\ge R_0$ such that 
for any $0<s\le s_R$ there 
are 
$\zeta_{s,R}\colon W\to \R^2$, 
$\gamma_{s,R}\in\Omega^1(U)$, 
and 
$\sigma_{s,R},\delta_{s,R}>0$ 
with 
$\lim_{s\to 0}\sigma_{s,R}=\lim_{s\to 0}\delta_{s,R}=0$ 
such that 
the following tuple 
\begin{align*}
(g_s,b,W,R,\gamma_{s,R},\zeta_{s,R},\sigma_{s,R},\delta_{s,R},K_s)
\end{align*}
satisfies $(\star \ref{star 1}$-$\ref{star 9})$ 
for all $R\ge R_0$ and 
$0<s\le s_R$. 
\label{dfn_conv_loc.h}
\end{dfn}

\begin{thm}
Let $b\in B$, $W$ 
be an open neighborhood of 
$b$ such that $W\setminus\{ b\}\subset B^{\rm rg}$ and  $U:=\mu^{-1}(W)$. 
Fix $q\in \mu^{-1}(b)$. 
Assume that there are constants $K_s>0$ 
such that $(g_s,K_s,b,W)\to (\R^2,g_0)$ 
as $s\to 0$. 
Then for any $R>0$ there is $s_R>0$ 
such that $B_{g_s}(q,R)\subset U$ 
for all $0<s\le s_R$, and $b$ is not an 
accumulation point of $BS_m\cap W$. 
Moreover, if $b\in BS_m^{\rm str}$, 
then for some $p\in \pi^{-1}(q)$ 
we have 
\begin{align*}
\left(\bbS,d_{\hat{g}_s},
        K_s\nu_{\hat{g}_s},p
        \right)\SpmGH (S^1\times \R^2,
d_{\hat{g}_{0,m}},dtd\nu_{g_0}, (1_{S^1},\mathbf{0}_{\R^2}))
\end{align*}
as $s\to 0$. 
\end{thm}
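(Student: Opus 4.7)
The plan is to reduce each of the three conclusions to a result proved in Subsection \ref{subsec_loc_met.h}: Lemma \ref{lem_met_loc.h} for the ball containment, Lemma \ref{lem_BS_disc.h} for the Bohr--Sommerfeld isolation, and the pair (Theorem \ref{thm_approx_map2.h}, Theorem \ref{thm_meas3.h}) for the $S^1$-equivariant pmGH convergence. The hypothesis $(g_s,K_s,b,W)\to(\R^2,g_0)$ supplies, for each admissible $(s,R)$, precisely the tuple $(g_s,b,W,R,\gamma_{s,R},\zeta_{s,R},\sigma_{s,R},\delta_{s,R},K_s)$ required to invoke those local results.

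First I would dispatch the two easy pieces. For the containment of $B_{g_s}(q,R)$ in $U$, I would fix $R>0$, set $R':=\max\{R,R_0\}$, and apply Lemma \ref{lem_met_loc.h} to the tuple at radius $R'$ to place $B_{g_s}(q,R)\subset B_{g_s}(q,R')$ inside $U(\sqrt{1+\delta_{s,R'}}\,R'+\sigma_{s,R'})$; since $\delta_{s,R'},\sigma_{s,R'}\to 0$, this set eventually lies in $U(3R')\subset U$ by $(\star\ref{star 7})$. For the discreteness of $BS_m\cap W$ at $b$, Lemma \ref{lem_BS_disc.h} applies verbatim to the triple $(b,W,\gamma_{s,R})$ for any admissible pair $(s,R)$, since only $(\star\ref{star 1},\ref{star 2},\ref{star 4})$ enter.

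The main step is the $S^1$-pmGH convergence when $b\in BS_m^{\rm str}$. Theorem \ref{thm_approx_map2.h} already supplies, for each $R\ge 4\sqrt{2}(3-2\sqrt{2})^{-1}$ and $\varepsilon>0$, the required $S^1$-equivariant Borel $\varepsilon$-isometry $\phi_{s,R}:(e^{\sqrt{-1}t},x)\mapsto(e^{\sqrt{-1}t},\zeta_{s,R}\circ\mu(x))$ on $\pi^{-1}(B_{g_s}(q,R))$ with respect to $d_{\hat{g}_s}$ and $d_{\hat{g}_{0,m}}$, provided $\delta_{s,R}\le\delta_{R,\varepsilon}$ and $\sigma_{s,R}\le\sigma_{R,\varepsilon}$; Theorem \ref{thm_meas3.h} simultaneously controls the pushforward-of-measure error by $2\pi\delta_{s,R}\sup|f|\,\nu_{g_0}(\mathcal{B}(R))$. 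To weld these $R$-indexed data into one family $\phi_s$ fulfilling Definition \ref{dfn_S^1pmGH.h}, I would run a diagonal extraction: pick $R_n\nearrow\infty$ and $\varepsilon_n\searrow 0$, then a decreasing sequence $\tilde{s}_n\to 0$ such that on $(0,\tilde{s}_n]$ one simultaneously has $s\le s_{R_n}$, $\delta_{s,R_n}\le\delta_{R_n,\varepsilon_n}$, $\sigma_{s,R_n}\le\sigma_{R_n,\varepsilon_n}$, and $\delta_{s,R_n}\nu_{g_0}(\mathcal{B}(R_n))\le\varepsilon_n$. Setting $\phi_s:=\phi_{s,R_n}$ for $s\in(\tilde{s}_{n+1},\tilde{s}_n]$ yields maps whose domain and codomain radii tend to infinity with $\varepsilon_s\to 0$; moreover, for any fixed $f\in C_c(S^1\times\R^2)$ supported in $S^1\times\mathcal{B}(R_f)$, Theorem \ref{thm_meas3.h} applied with $n$ large enough that $R_n\ge R_f$ forces the measure error to $0$.

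The main obstacle I foresee is precisely the coordination in this last extraction: the metric $\varepsilon$-isometry threshold $\delta_{R,\varepsilon}$ from Theorem \ref{thm_approx_map2.h} and the measure error bound from Theorem \ref{thm_meas3.h}, which involves the growing volume $\nu_{g_0}(\mathcal{B}(R_n))$, come from different quantitative inputs, so $\tilde{s}_n$ must be chosen slowly enough that both improvements take effect along a single sequence $s\to 0$. This is the routine but careful bookkeeping required to upgrade the fibered approximation estimates to genuine $S^1$-pmGH convergence in the sense of Definition \ref{dfn_S^1pmGH.h}.
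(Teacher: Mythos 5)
Your proposal is correct and follows essentially the same route as the paper: Lemma \ref{lem_met_loc.h} with $(\star\ref{star 7})$ for the ball containment, Lemma \ref{lem_BS_disc.h} for the isolation of $b$ in $BS_m\cap W$, and Theorems \ref{thm_approx_map2.h} and \ref{thm_meas3.h} glued by a diagonal choice of thresholds $\tilde{s}_n$ (the paper's inductive choice of $s_k$ with $\varepsilon_s=k^{-1}$, $R_s=R_0+k$). The only cosmetic difference is in the measure estimate: the paper simply fixes $R$ from the support of $f$ so that the error $2\pi\delta_s\sup|f|\,\nu_{g_0}(\mathcal{B}(R))\to 0$ with $R$ fixed, whereas you additionally coordinate $\delta_{s,R_n}\nu_{g_0}(\mathcal{B}(R_n))\le\varepsilon_n$, which is harmless but not needed.
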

\begin{proof}
Take $s_R$ as in Definition 
\ref{dfn_conv_loc.h} and replace by the smaller 
one if necessary such that 
$\sqrt{1+\delta_{s,R}}R+\sigma_{s,R}\le 3R$ for 
all $0<s\le s_R$. 
Then by Lemma \ref{lem_met_loc.h} 
and $(\star \ref{star 7})$, 
we have $B_{g_s}(q,R)\subset U(3R)\subset U$ 
for $0<s\le s_R$. 
By Lemma \ref{lem_BS_disc.h}, 
$b$ is not an 
accumulation point of $BS_m\cap W$. 

Let $\sigma_{R,\varepsilon},\delta_{R,\varepsilon}$ 
be as in Theorem \ref{thm_approx_map2.h}. 
Fix a positive integer $k$, 
then take $0<s_k\le s_R$ such that 
$\sigma_s\le \sigma_{R_0+k,k^{-1}}$ and 
$\delta_s\le \delta_{R_0+k,k^{-1}}$ 
for any $0<s\le s_k$. 
We determine $s_k$ inductively 
such that 
\begin{align*}
s_{k+1}\le \frac{s_k}{2}. 
\end{align*}
If we put
\begin{align*}
\varepsilon_s:=k^{-1}, \quad
R_s:=R_0+k, \quad 
R_s':=\sqrt{1+\delta_s}(R_0+k)+\sigma_s
\end{align*}
for $s_{k+1}\le s< s_k$, 
then 
\begin{align*}
\phi\colon \left( \pi^{-1}(B_{g_s}(q,R_s)), p\right)
\to \left( S^1\times \mathcal{B}(R_s'), (1_{S^1},\mathbf{0}_{\R^2})\right)
\end{align*}
is an $S^1$-equivariant Borel 
$\varepsilon_s$-isometry and 
$\lim_{s\to 0}\varepsilon_s=0$, 
$\lim_{s\to 0}R_s=\lim_{s\to 0}R_s'=\infty$. 

Next we take $f\in C(S^1\times \R^2)$ 
whose support is compact. 
Take $R\ge R_0$ such that 
${\rm supp}(f)\subset S^1\times \mathcal{B}(R)$. 
Then by Theorem \ref{thm_meas3.h}, 
we have 
\begin{align*}
    \lim_{s\to 0}\left| K_s
    \int_{\bbS}
    f\circ \phi \, d\nu_{\hat{g}_s}
    - \int_{S^1\times \R^2}f \, dt\, d\nu_{g_0}
    \right|
    \le \lim_{s\to 0} 2\pi \delta_s\sup|f| \, \nu_{g_0}
	(\mathcal{B}(R))
	=0.
\end{align*}
\end{proof}

Now, we show some results 
which is needed in Section \ref{sec_cpt_conv1.h}. 
\begin{lem}
Let $q\in \mu^{-1}(b)$ 
and $p\in \pi^{-1}(q)$. Then we have 
\begin{align*}
    \pi^{-1}(U(r))
    \subset B_{\hat{g}}(p,\sqrt{1+\delta}r+\delta+\pi)
\end{align*}
for any $0<r\le 3R$.
\label{lem_lower_ball.h}
\end{lem}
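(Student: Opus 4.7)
The plan is to bound $d_{\hat{g}}(p,u)$ for each $u\in\pi^{-1}(U(r))$ by constructing an explicit path from $p$ to $u$ in three legs: a short path in $X$ from $q$ to $x:=\pi(u)$, its horizontal lift to $\bbS$ starting at $p$, and a movement inside the fibre $\pi^{-1}(x)$.

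First I would build the path in $X$. Set $\xi:=\zeta\circ\mu(x)\in \mathcal{B}(r)$. If $\mathbf{r}(\xi)<\sigma$, then both $q$ and $x$ lie in $(\zeta\circ\mu)^{-1}(\overline{\mathcal{B}(\sigma)})$, so $(\star\ref{star 8})$ immediately yields $d_g(q,x)<\delta$. Otherwise I would join $\mathbf{0}_{\R^2}$ to $\xi$ by the Euclidean straight segment $c\colon[0,1]\to\R^2$, split it at the unique $\tau_0$ with $\mathbf{r}(c(\tau_0))=\sigma$, and horizontally lift $c|_{[\tau_0,1]}$ into $V_f^\perp$ over $U(r)\setminus U(\sigma)$ with terminal point $x$; this is the same lifting trick used in the proof of Proposition \ref{prop_alm_surj.h}. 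By $(\star\ref{star 5})$ this lift has $g$-length at most $\sqrt{1+\delta}(\mathbf{r}(\xi)-\sigma)<\sqrt{1+\delta}\,r$, and by $(\star\ref{star 8})$ its initial point sits within $g$-distance $\delta$ of $q$. Concatenating yields a path from $q$ to $x$ of total $g$-length less than $\sqrt{1+\delta}\,r+\delta$.

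For the remaining two legs, the definition \eqref{connection_metric.h} makes the horizontal distribution $H$ isometric under $d\pi$ to $(TX,g)$, so the horizontal lift of the path above starting at $p$ has the same $\hat{g}$-length and ends at some $\tilde u\in\pi^{-1}(x)$; moreover each fibre $\pi^{-1}(x)$ is an $\hat{g}$-isometric circle of circumference $2\pi$, hence $d_{\hat{g}}(\tilde u,u)\le\pi$. Summing the three contributions gives $d_{\hat{g}}(p,u)<\sqrt{1+\delta}\,r+\delta+\pi$, which is the stated inclusion.

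The only technical check is that the lifted path remains inside $U(3R)$ so that the comparison in $(\star\ref{star 5})$ is applicable, which is automatic since $\mathbf{r}$ is bounded along the lifted portion by $\mathbf{r}(\xi)<r\le 3R$, and by $(\star\ref{star 7})$ one has $U(3R)\subset U$. I do not foresee a serious obstacle; for general $b\in BS_m^{\rm str}$ the same construction applies verbatim after passing to the $m$-fold cover of Lemma \ref{lem_cover_pri_bdl2.h}, since the covering map $\hat p_m$ is a local isometry and distances can only shrink upon projecting down to $\bbS$.
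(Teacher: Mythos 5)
Your proposal is correct and follows essentially the same route as the paper: split at the radius-$\sigma$ circle, use the fibration-horizontal lift of the straight segment together with $(\star\ref{star 5})$ to get $g$-length at most $\sqrt{1+\delta}\,r$, use $(\star\ref{star 8})$ to absorb the inner piece into $\delta$, and then pass to $\bbS$ at the cost of $\pi$ via the connection-horizontal lift and the fibre circle. The paper simply quotes this last step as the inequality \eqref{lem_submersion_upper.h} rather than proving it inline, so the two arguments coincide.
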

\begin{proof}
Let $u\in \pi^{-1}(U(r))$ 
and take the minimizing 
geodesic 
$c\colon [0,1]\to \R^2$ 
with $c(0)=\mathbf{0}_{\R^2}$ and $c(1)=\zeta\circ\mu\circ\pi(u)$. 
Suppose $\mathbf{r}(c(1))\ge \sigma$. 
Then there exists 
$0<\tau_0\le 1$ such that 
$c(\tau_0)=\sigma$. 
Let $\tilde{c}:[\tau_0,1]
\to X$ be a smooth path 
such that 
$\tilde{c}(1)=\pi(u)$, 
$\zeta\circ \mu\circ \tilde{c}
=c|_{[\tau_0,1]}$ and 
$d\mu(\tilde{c}'(\tau))=0$. 
Then we have 
\begin{align*}
    d_g(\tilde{c}(\tau_0),\pi(u))
    \le \mathfrak{L}_g(\tilde{c})
    \le 
    \sqrt{1+\delta}\mathfrak{L}_{g_0}(c|_{[\tau_0,1]})
    <\sqrt{1+\delta}(r-\sigma)
\end{align*}
by $(\star \ref{star 5})$. 
Moreover, by $(\star \ref{star 8})$, we have 
\begin{align*}
    d_g(q,\tilde{c}(\tau_0))
    < \delta.
\end{align*}
Therefore, we obtain 
\begin{align*}
    d_g(q,\pi(u))
    \le \delta+\mathfrak{L}_g(\tilde{c})
    < 
    \sqrt{1+\delta}r+\delta.
\end{align*}
If $\mathbf{r}(c(1))< \sigma$, 
we have $d_g(q,\pi(u))
< \delta$. 
By \eqref{lem_submersion_upper.h}, we have 
\begin{align*}
    d_{\hat{g}}(p,u)
    \le \sqrt{1+\delta}r+\delta
    +\pi.
\end{align*}
\end{proof}

\begin{prop}
Let $b\in B$, $W$ 
be an open neighborhood of 
$b$ such that $W\setminus\{ b\}\subset B^{\rm rg}$ and  $U:=\mu^{-1}(W)$. 
Fix $q\in \mu^{-1}(b)$ and 
$p\in \pi^{-1}(q)$. 
Assume that there are constants $K_s>0$ 
such that $(g_s,K_s,b,W)\to (\R^2,g_0)$ 
as $s\to 0$. 
Let $\zeta_{s,R}\colon W\to 
\R^2$ be as in 
Definition \ref{dfn_conv_loc.h}. 
Then there is 
$s_R>0$ for every $R\ge 7$ 
such that 
\begin{align*}
    (\zeta_{s,R}\circ\mu\circ\pi)^{-1}\left(\mathcal{B}(R/2)\right)\subset
    B_{\hat{g}_s}(p,R)
\end{align*}
for any $0<s\le s_R$. 
\label{prop_ball_asymp.h}
\end{prop}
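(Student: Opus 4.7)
The plan is to read this off from Lemma \ref{lem_lower_ball.h}, which already gives an inclusion in the reverse direction with a slightly inflated radius. The hypothesis $(g_s,K_s,b,W)\to (\R^2,g_0)$ produces, for each sufficiently large $R$ and each $0<s\le s_R$, data $(\zeta_{s,R},\gamma_{s,R},\sigma_{s,R},\delta_{s,R})$ such that the tuple $(g_s,b,W,R,\gamma_{s,R},\zeta_{s,R},\sigma_{s,R},\delta_{s,R},K_s)$ satisfies $(\star\ref{star 1}$--$\ref{star 9})$ with $\delta_{s,R}\to 0$ as $s\to 0$. Inside this tuple, the set $U(r)$ defined in Subsection \ref{subsec_loc_met.h} is precisely $(\zeta_{s,R}\circ\mu)^{-1}(\mathcal{B}(r))$, so
\[
(\zeta_{s,R}\circ\mu\circ\pi)^{-1}(\mathcal{B}(R/2)) = \pi^{-1}(U(R/2)),
\]
and what has to be proved is that this set lies in $B_{\hat{g}_s}(p,R)$.

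First I would invoke Lemma \ref{lem_lower_ball.h} with $r=R/2$, which is admissible since $R/2\le 3R$. This yields
\[
\pi^{-1}(U(R/2)) \subset B_{\hat{g}_s}\!\left(p,\ \sqrt{1+\delta_{s,R}}\,\frac{R}{2}+\delta_{s,R}+\pi\right).
\]
It is then sufficient to shrink $s_R$ so that
\[
\sqrt{1+\delta_{s,R}}\,\frac{R}{2}+\delta_{s,R}+\pi \le R
\quad \text{for all } 0<s\le s_R.
\]
At $\delta_{s,R}=0$ the left-hand side becomes $R/2+\pi$, which is strictly smaller than $R$ exactly when $R>2\pi$; this is the origin of the numerical threshold in the statement, since $7>2\pi$ leaves a definite margin. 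Because $\delta_{s,R}\to 0$ as $s\to 0$, continuity of $\delta\mapsto \sqrt{1+\delta}\,R/2+\delta+\pi$ lets me choose $s_R$ (smaller than the one provided by Definition \ref{dfn_conv_loc.h}, if necessary) for which the displayed inequality holds uniformly in $0<s\le s_R$.

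I do not expect any genuine obstacle here; the proposition is a short consequence of Lemma \ref{lem_lower_ball.h} once one notices that the only nontrivial content is the elementary calibration $R/2+\pi<R$, which explains why $R\ge 7$ is imposed. The one point to be careful about is to apply Lemma \ref{lem_lower_ball.h} inside the correct tuple (the one attached to the given $R$ by the convergence $(g_s,K_s,b,W)\to(\R^2,g_0)$), so that the $\delta$, $\sigma$, and $\zeta$ appearing in that lemma coincide with $\delta_{s,R}$, $\sigma_{s,R}$, $\zeta_{s,R}$ here.
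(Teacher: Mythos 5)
Your proposal is correct and follows exactly the paper's own argument: apply Lemma \ref{lem_lower_ball.h} with $r=R/2$ inside the tuple provided by Definition \ref{dfn_conv_loc.h}, then shrink $s_R$ so that $\sqrt{1+\delta_{s,R}}\,R/2+\delta_{s,R}+\pi\le R$, the margin coming from $R/2\ge 7/2>\pi$ (equivalently your $R>2\pi$). Nothing further is needed.
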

\begin{proof}
Let $s_R$ and $\delta_{s,R}$ 
be as in 
Definition \ref{dfn_conv_loc.h}. 
By Lemma \ref{lem_lower_ball.h}, 
we have 
\begin{align*}
    (\zeta_{s,R}\circ\mu\circ
    \pi)^{-1}(\mathcal{B}(r))
    \subset 
    B_{\hat{g}_s}(p,\sqrt{1+\delta_{s,R}}r + \delta_{s,R}
    +\pi)
\end{align*}
for $0<r\le 3R$. 
Since $R/2\ge 7/2> \pi$, 
we can replace $s_R$ smaller 
such that we have 
$\sqrt{1+\delta_{s,R}}R/2 + \delta_{s,R}
    +\pi\le R$ 
    for every $0<s\le s_R$. 
    Then we have 
    the result by putting 
    $r=R/2$.
\end{proof}

\section{The approximation 
of \hK metrics}\label{sec_approx.h}
In this section we review a construction of a family of Riemannian metric 
on a $K3$ surface, 
which is a good approximation 
of \hK metrics $(g_s)_s$ 
tending to a large complex structure 
limit based on \cite{GW2000}. 
See also \cite{CVZ2019collapsing}. 

Let $(X,\omega_1,\omega_2,\omega_3)$ 
be a \hK manifold. 
As we have already mentioned in 
Subsection \ref{subsec_slag.h}, 
the special Lagrangian fibrations 
on $X$ is equivalent to 
the elliptic fibrations on 
$X_{J_3}$. Moreover, 
$\Theta:=\omega_1+\sqrt{-1}\omega_2$ 
is a holomorphic volume form 
on $X_{J_3}$ by Remark \ref{rem_hol_vol.h}. 
Throughout this section 
we consider complex surfaces 
equipped with 
holomorphic volume forms and 
elliptic fibrations.

To construct the approximating 
family of metrics, 
we need two families of 
\hK metrics. 
One is the {\it semi-flat metric} 
defined on the elliptic surface 
with no singular fibers, 
and the other is the 
{\it Ooguri-Vafa metric} defined 
on the neighborhood of 
the singular fibers of Kodaira type 
$I_1$.
Gluing them by cut-off functions, 
we obtain the approximating 
family. 

\subsection{Semi-flat metrics}\label{subsec_SF.h}
In this subsection 
we explain the construction of semi-flat metrics following 
\cite{GW2000}. 
The semi-flat metrics are Ricci-flat K\"ahler metrics 
on the elliptic surfaces, which were first constructed 
by Greene, Shapere, Vafa and Yau in \cite{GSVY1990semiflat}.

Let $X$ be a complex surface, not necessarily compact,  
with a holomorphic volume form 
$\Theta\in\Omega^{2,0}(X)$,
$B$ be a $1$-dimensional 
complex manifold and 
$\mu\colon X\to B$ be a nonsingular elliptic fibration, that is, 
a holomorphic surjective map 
such that 
each $b\in B$ is a regular value of $\mu$ 
and $\mu^{-1}(b)$ is an elliptic curve. 

Examples of such $X$ can be constructed 
as follows. 
Denote by $\mathcal{T}_B^*$ the 
holomorphic cotangent bundle of $B$. 
A subset $\Lambda \subset \mathcal{T}_B^*$ 
is a {\it holomorphically varying family of lattices} if 
there are an open cover $B= \bigcup_i U_i$ and  
holomorphic functions $\tau_{i,1},\tau_{i,2}$ 
defined on $U_i$ such that 
${\rm Im}(\overline{\tau_{i,1}(y)}\tau_{i,2}(y))\neq 0$ and  
$\Lambda_y:=\Lambda\cap\mathcal{T}_B^*|_y$ is given by 
\begin{align*}
\Lambda_y=\{m_1\tau_{i,1}(y)dy + m_2\tau_{i,2}(y)dy;\, 
m_1,m_2\in\Z\}
\end{align*}
for any $y\in U_i$. 
Let $\Theta_{\rm can}=dx\wedge dy$ 
be the canonical holomorphic 
$2$-form on 
$\mathcal{T}_B^*$, 
where $(x,y)$ is a coordinate on $\mathcal{T}_B^*$ 
defined by $xdy\in \mathcal{T}_B^*$. 
Then $\Theta_{\rm can}$ descends 
to $X=\mathcal{T}_B^*/\Lambda$ 
and the projection map 
$\mu_{\rm can}\colon X\to B$ determines an elliptic fibration. 
Obviously, the zero section of $\mathcal{T}_B^*$ 
induces a holomorphic section of 
$X\to B$. 
Conversely, every nonsingular 
elliptic fibration with 
a holomorphic $2$-form and a holomorphic 
section can be obtained by the above process. 

Let $\mathbf{a}\in\Omega^2(B,\C)$. 
Another complex structure 
on $\mathcal{T}_B^*/\Lambda$ is defined so that 
the closed $2$-form $\Theta:=\Theta_{\rm can}+\mu_{\rm can}^*\mathbf{a}$ 
is holomorphic.
Then 
$\mu_{\rm can}\colon \mathcal{T}_B^*/\Lambda\to B$ is also holomorphic 
with respect to this complex structure. 
In this case $\mu_{\rm can}$ does not need to have 
holomorphic sections.

Let 
\begin{align*}
\eta:=
\frac{\sqrt{-1}}{2}
\left\{ \mathbf{W}(dx+\mathbf{b}dy)\wedge
\overline{(dx+\mathbf{b}dy)} + \mathbf{W}^{-1}dy\wedge d\bar{y}\right\},
\end{align*}
where 
$\mathbf{W}\in C^\infty(\mathcal{T}^*_B/\Lambda,\R)$ 
is positive valued and 
$\mathbf{b}\in C^\infty(\mathcal{T}^*_B/\Lambda,\C)$. 
Then one can see that 
\begin{align*}
\eta^2={\rm Re}(\Theta_{\rm can})^2
={\rm Im}(\Theta_{\rm can})^2,
\quad 
\eta\wedge\Theta_{\rm can}=0.
\end{align*}
$\eta$ is called a 
{\it semi-flat metric} 
on $\mathcal{T}^*_B/\Lambda$ 
if it is K\"ahler. 
$\eta$ is K\"ahler iff 
\begin{align}
\frac{\del \mathbf{W}}{\del y}
&=\frac{\del (\mathbf{W}\mathbf{b})}{\del x},\label{sf closed 1.h}\\
\frac{\del (\mathbf{W}\bar{\mathbf{b}})}{\del y}
&=\frac{\del }{\del x}\left\{ \mathbf{W}(\mathbf{W}^{-2}+|\mathbf{b}|^2)\right\}\label{sf closed 2.h}.
\end{align}
Now, take an oriented $\Z$-basis 
$\{ \tau_{i,1},\tau_{i,2}\}$ of 
$\Lambda|_{U_i}$  
such that ${\rm Im}(\bar{\tau}_{i,1}\tau_{i,2})>0$. 
If we put 
\begin{align*}
\mathbf{W}&=\frac{s}{{\rm Im}(\bar{\tau}_{i,1}\tau_{i,2})},\\
\mathbf{b}&=-\frac{\mathbf{W}}{s}
\left\{ {\rm Im}(\tau_{i,2}\bar{x})
\frac{\del\tau_{i,1}}{\del y}
+ {\rm Im}(\bar{\tau}_{i,1} x)
\frac{\del\tau_{i,2}}{\del y}\right\},
\end{align*}
then we have 
\eqref{sf closed 1.h} and \eqref{sf closed 2.h} for any 
positive constant $s$, 
and they are independent of the local coordinate. 
Hence we obtain the Ricci-flat 
K\"ahler metric 
\begin{align*}
\eta_s^{\rm SF}=\eta
\end{align*}
defined on $\mathcal{T}^*_B/\Lambda$ 
and we call it the 
{\it standard semi-flat metric}. 
The triple 
$(\eta_s^{\rm SF},
{\rm Re}(\Theta_{\rm can}),
{\rm Im}(\Theta_{\rm can}))$ 
forms a \hK structure on 
$\mathcal{T}^*_B/\Lambda$. 
We have 
\begin{align*}
    s=\int_{\mu_{\rm can}^{-1}(b)}\eta_s^{\rm SF}.
\end{align*}

Let 
$\mu\colon X\to B$ be an 
elliptic $K3$ surface 
with 
a holomorphic section, 
$\Crt\subset B$ be the subset 
consisting of the critical values of 
$\mu$ and 
put $X^{\rm rg}=X\setminus \mu^{-1}(\Crt)$, 
$B^{\rm rg}=B\setminus \Crt$. 
Since $\mu\colon X^{\rm rg}\to B^{\rm rg}$ 
has a holomorphic section, 
there exist a holomorphically varying family of lattices $\Lambda\subset \mathcal{T}^*_{B^{\rm rg}}$ 
and a biholomorphic map 
$X^{\rm rg}\to \mathcal{T}^*_{B^{\rm rg}}/\Lambda$ 
which identifies $\Theta$ and 
$\Theta_{\rm can}$. 
Therefore, 
$X^{\rm rg}$ admits the standard semi-flat 
metric $\eta_s^{\rm SF}$.

\subsection{Ooguri-Vafa metrics}\label{subsec_OV.h}
Here we explain the construction of 
Ooguri-Vafa metrics following \cite {GW2000}. 
The Ooguri-Vafa metrics were first constructed by Ooguri and Vafa in 
\cite{OoguriVafa1996}. 
Let $r,s>0$, 
$D(r):=\{ z\in \C;\,|z|<r\}$ and 
\begin{align*}
    \mathcal{U}(r,s)&:=
    D(r)\times \R\setminus
    \left\{ (0,sn)\in D(r)\times\R;\, n\in\Z\right\}.
\end{align*}
Put 
\begin{align*}
    V_s^0(u) &=\frac{1}{4\pi}\sum_{n \in \Z^{\times}}
\left( \frac{1}{\sqrt{u_1^2+u_2^2+(u_3-sn)^2}}-\frac{1}{s|n|}
\right) 
+ \frac{1}{4\pi |u|}.
\end{align*}
Then $V_s^0$ is a harmonic function 
on $\mathcal{U}(r,s)$, hence the 
$2$-form $\star dV_s^0$ represents 
the cohomology class in 
$H^2(\mathcal{U}(r,s),\R)$. 
Here, 
$\star$ is the Hodge star operator 
of the Euclidean metric on $\R^3$. 
Let $u^\sharp\colon \tilde{X}_{\rm OV}^\sharp\to 
\mathcal{U}(r,s)$ be 
the principal $S^1$-bundle 
over $\mathcal{U}(r,s)$ whose first Chern class is equal to $[\star dV_s^0]\in H^2(\mathcal{U}(r,s),\Z)$. 
Then there is an $S^1$-connection 
$\sqrt{-1}\alpha\in \Omega^1(\mathcal{U}(r,s),\sqrt{-1}\R)$ 
such that 
$d\alpha/2\pi=(u^\sharp)^*(\star dV_s^0)$. 
Now, using the standard coordinate 
on $\C\times \R\cong\R^3$, 
put $u^\sharp=(u_1,u_2,u_3)$. 
Then the following $2$-forms 
\begin{align*}
\omega_{1,s}& = du_1 \wedge \frac{\alpha}{2\pi} + V_s^0 du_2 \wedge du_3, \\
\omega_{2,s}& = du_2 \wedge \frac{\alpha}{2\pi} + V_s^0 du_3 \wedge du_1, \\
\omega_{3,s}& = du_3 \wedge \frac{\alpha}{2\pi} + V_s^0 du_1 \wedge du_2
\end{align*}
satisfy 
$\omega_{i,s}\wedge\omega_{j,s}=0$ 
for 
$i\neq j$ and 
$\omega_{1,s}^2=\omega_{2,s}^2=\omega_{3,s}^2$. 
In the above expressions, 
we suppose that 
$V_s^0$ is the 
pullback $(u^\sharp)^*V_s^0$, however, 
we omit $u^\sharp$ for 
the simplicity of the notations. 
Taking $r>0$ sufficiently small, 
we may suppose $\omega_{1,s}^2$ 
is nowhere vanishing, 
then they form a \hK structure 
on $\tilde{X}_{\rm OV}^\sharp$.
Here, by replacing  
$V_s^0$ with 
$V_s^0 + h(u_1,u_2)$ for 
some harmonic function $h(u_1,u_2)$ 
on $D(r)$, we obtain other 
\hK structures.

Moreover, there exist 
a smooth $4$-manifold $\tilde{X}_{\rm OV}$, 
open embedding $\tilde{X}_{\rm OV}^\sharp
\subset \tilde{X}_{\rm OV}$ and 
smooth map $u\colon \tilde{X}_{\rm OV}
\to D(r)\times \R$ 
such that 
$u|_{\tilde{X}_{\rm OV}^\sharp}=u^\sharp$, 
$\tilde{X}_{\rm OV}\setminus
\tilde{X}_{\rm OV}^\sharp=\{ p_n;\, n\in \Z\}$ and $u(p_n)=(0,sn)$. 
Then one can see that 
$\omega_{i,s}$ extends to 
the smooth $2$-form on $\tilde{X}_{\rm OV}$, 
which we denote by $\omega_{i,s}$ again. Thus we obtain 
a \hK manifold 
$(\tilde{X}_{\rm OV},\omega_{1,s},\omega_{2,s},\omega_{3,s})$. 

There is a free $\Z$-action 
on $\tilde{X}_{\rm OV}$ preserving 
$\omega_{i,s}$, $u_1$, $u_2$, 
$\alpha$  
and satisfies 
$u_3(p\cdot n)=u_3(p)+sn$ 
for $n\in \Z$. 
Then we can see the action 
also preserves 
$V_s^0$ and $\omega_{i,s}$. 
Hence $\omega_{i,s}$ descend to 
$2$-forms on the quotient space 
$X_{\rm OV}:=\tilde{X}_{\rm OV}/\Z$ which we denote by 
$\omega_{i,s}$ again. 
The \hK manifold 
$(X_{\rm OV},\omega_{1,s},\omega_{2,s},\omega_{3,s})$ is called the 
Ooguri-Vafa 
metric. 

Here, we regard $X_{\rm OV}$ as a complex manifold such that 
$\omega_{1,s}+\sqrt{-1}\omega_{2,s}$ 
is a holomorphic $2$-form. 
Put  
$\mu_{\rm OV} = u_1 + \sqrt{-1}u_2
\colon X_{\rm OV}\to D(r)$. 
Then $\mu_{\rm OV}$ is an elliptic 
fibration over $D(r)$. 
The fiber $\mu_{\rm OV}^{-1}(b)$ is 
nonsingular if $b\neq 0$ and 
$\mu_{\rm OV}^{-1}(0)$ is the 
singular fiber of Kodaira type $I_1$, 
with the critical point 
$0_{\rm OV}:=p_0\mod \Z$. 
Here, we have 
\begin{align*}
    s=\int_{\mu_{\rm OV}^{-1}(b)} 
    \omega_{3,s}.
\end{align*}

\subsection{Almost Ricci-flat K\"ahler metric}\label{subsec_ARFK.h}
In this subsection let $X$ 
be a $K3$ surface with 
an elliptic fibration 
$\mu\colon X\to \mathbb{P}^1$ 
over the complex projective line and 
a holomorphic volume form $\Theta$, 
and suppose that all 
of the singular fibers of 
$\mu$ are of 
Kodaira type $I_1$, 
hence there are 
exactly $24$ singular 
fibers. 
We denote by $\Crt:=\{ b_1,\ldots,b_{24}\}\subset \mathbb{P}^1$
the set of critical values.

For $\mathbf{q}=1,\ldots,24$, 
let $X_\mathbf{q} = X_{\rm OV}$ 
be $24$ copies of 
the underlying manifold 
on which the Ooguri-Vafa metric is defined. 
Put 
\begin{align*}
    V_{s,\mathbf{q}}(u) &:=\frac{1}{4\pi}\sum_{n \in \Z^{\times}}
\left( \frac{1}{\sqrt{u_1^2+u_2^2+(u_3-sn)^2}}-\frac{1}{s|n|}
\right) 
+ \frac{1}{4\pi|u|}\\
&\quad\quad 
+ a_s
+\frac{h_\mathbf{q}(u_1,u_2)}{s},\\
a_s&:=\frac{\lim_{n\to\infty}(\sum_{k=1}^n 1/k -\log n)-\log(2s)}{2\pi s}
\end{align*}
for some harmonic function 
$h_\mathbf{q}$, 
and 
define the \hK structure on 
$X_\mathbf{q}$ by 
\begin{align*}
\omega_{1,s,\mathbf{q}}& = du_1 \wedge \alpha + V_{s,\mathbf{q}} du_2 \wedge du_3, \\
\omega_{2,s,\mathbf{q}}& = du_2 \wedge \alpha + V_{s,\mathbf{q}} du_3 \wedge du_1, \\
\omega_{3,s,\mathbf{q}}& = du_3 \wedge \alpha + V_{s,\mathbf{q}} du_1 \wedge du_2
\end{align*}
Although these are defined on the universal 
covering space of $X_{\rm OV}$, 
they descend to $X_{\rm OV}$. 
The constant $a_s$ normalizes 
$V_{s,\mathbf{q}}$ so that 
we have 
\begin{align*}
   \int_0^s V_{s,\mathbf{q}}(u_1,u_2,t)dt
    =-\frac{1}{2\pi}\log\sqrt{u_1^2+u_2^2} 
    + h_\mathbf{q}(u_1,u_2).
\end{align*}

Here, we regard 
$X_\mathbf{q}$ as a complex manifold 
such that 
$\omega_{1,s,\mathbf{q}}+\sqrt{-1}\omega_{2,s,\mathbf{q}}$ 
is a holomorphic $2$-form. 
Put  
$\mu_\mathbf{q} = u_1 + \sqrt{-1}u_2
\colon X_\mathbf{q}\to \C$. 
By taking $r_2^\mathbf{q}>0$ 
sufficiently small so that 
$-(\log \sqrt{u_1^2+u_2^2})/2\pi + h_\mathbf{q}
>0$ on $D(r_2^\mathbf{q})$, 
we may suppose 
$V_{s,\mathbf{q}}(u)$ is 
positive on $\mu_\mathbf{q}^{-1}(D(r_2^\mathbf{q}))$ for sufficiently small 
$s>0$. 
Therefore, we can take $s_0>0$ such that 
$V_{s,\mathbf{q}}(u)$ is 
positive on $\mu_\mathbf{q}^{-1}(D(r_2^\mathbf{q}))$ for any 
$0<s \le s_0$ and $\mathbf{q}$. 
Now, since $\mu_\mathbf{q}\colon 
\mu_\mathbf{q}^{-1}(D(r_2^\mathbf{q})\setminus \{ 0\}) \to D(r_2^\mathbf{q})\setminus \{ 0\}$ 
is a nonsingular elliptic fibration 
with a holomorphic section, 
we can identify it with 
$\mu_{\rm can}\colon \mathcal{T}_{D(r_2^\mathbf{q})\setminus \{ 0\}}
/ \Lambda \to D(r_2^\mathbf{q})\setminus \{ 0\}$ 
for some $\Lambda$. 
By \cite[Proposition 3.2]{GW2000}, 
a $\Z$-basis of $\Lambda$ is given by the 
following holomorphic functions 
\begin{align}
\tau_1(y)=1, \quad
\tau_2(y)=\frac{1}{2\pi\sqrt{-1}}\log y + \sqrt{-1}\hat{h}_\mathbf{q},
\label{eq_period.h}
\end{align}
where $\hat{h}_\mathbf{q}$ is one of 
the holomorphic functions on $D(r_2^\mathbf{q})$ such that ${\rm Re}(\hat{h}_\mathbf{q})=h_\mathbf{q}$. 

Next we fix $b_\mathbf{q}\in\Crt$ 
and a sufficiently 
small neighborhood $W_2^\mathbf{q}\subset \mathbb{P}^1$ of $b_\mathbf{q}$ 
such that $\mu\colon \mu^{-1}(W_2^\mathbf{q})
\to W_2^\mathbf{q}$ has a holomorphic section. 
Then we have an isomorphism  
\[
  \begin{CD}
     \mu^{-1}(W_2^\mathbf{q}) @>{\cong}>> \mathcal{T}_{W_2^\mathbf{q}}^*/\Lambda \\
  @V{\mu}VV    @V{\mu_{\rm can}}VV \\
     W_2^\mathbf{q}   @=  W_2^\mathbf{q}
  \end{CD}
\]
for some $\Lambda\subset \mathcal{T}_{W_2^\mathbf{q}}^*$. 
Since $\mu^{-1}(b_\mathbf{q})$ is 
of Kodaira type $I_1$, 
we can choose the holomorphic coordinate 
$y$ on $W_2^\mathbf{q}$ 
such that $\Lambda$ is generated by 
\begin{align*}
dy, \quad
\left(
\frac{1}{2\pi\sqrt{-1}}\log y + \sqrt{-1}F_\mathbf{q}\right)dy,
\end{align*}
for some holomorphic function $F_\mathbf{q}$ 
on $W_2^\mathbf{q}$. 
Therefor, by putting 
$h_\mathbf{q}={\rm Re}(F_\mathbf{q})$, 
we have the holomorphic embeddings 
\begin{align*}
\iota_\mathbf{q} \colon X_\mathbf{q} \hookrightarrow X, 
\quad 
\iota_\mathbf{q}' \colon D(r_2^\mathbf{q}) 
\hookrightarrow \C P^1,
\end{align*}
harmonic functions $f_\mathbf{q}\colon D(r_2^\mathbf{q})
\to \R$ and 
$0<r_1^\mathbf{q}<r_2^\mathbf{q}$ such that we have the following properties. 
\begin{itemize}
\setlength{\parskip}{0cm}
\setlength{\itemsep}{0cm}
 \item[$({\rm i})$] $\iota_\mathbf{q}'(0) = b_\mathbf{q}$ and 
\begin{align*}
\iota_\mathbf{q}(X_{\mathbf{q}})
\cap \iota_{\mathbf{p}}(X_\mathbf{p})=\emptyset,\quad
\iota_\mathbf{q}'(D(r_2^\mathbf{q}))
\cap \iota_\mathbf{p}'(D(r_2^\mathbf{p}))=\emptyset
\end{align*}
for any $\mathbf{q}\neq \mathbf{p}$. 
 \item[$({\rm ii})$] 
$
\Theta|_{\mu_\mathbf{q}^{-1}(D(r_2^\mathbf{q}))} = \omega_{1,s,\mathbf{q}} + 
\sqrt{-1}\omega_{2,s,\mathbf{q}}
$. 
 \item[$({\rm iii})$]
$\mu\circ \iota_\mathbf{q}=\iota_\mathbf{q}'\circ\mu_\mathbf{q}$.
\end{itemize}
By taking 
$W_2^\mathbf{q}$ or $r_2^\mathbf{q}$ 
smaller if necessary, 
we may suppose $W_2^\mathbf{q}=\iota'_\mathbf{q}
(D(r_2^\mathbf{q}))$. 
Moreover, we fix $0<r_1^\mathbf{q}<r_2^\mathbf{q}$ 
arbitrarily, then put 
$W_1^\mathbf{q}=\iota'_\mathbf{q}
(D(r_1^\mathbf{q}))$. 
To simplify the notations, 
we often write 
$W_i^\mathbf{q}=D(r_i^\mathbf{q})$ 
or $\iota_\mathbf{q}(X_{\mathbf{q}})
=X_{\mathbf{q}}$ 
if there is no fear of confusion.

Now, note that 
$\mu\colon X\to \mathbb{P}^1$, 
may have 
no holomorphic sections. 
There exists the unique elliptic 
surface
$\mathbf{j}\colon \mathcal{J}\to \mathbb{P}^1$ 
which is locally isomorphic to 
$\mu$ and has a 
holomorphic section. 
We call $\mathbf{j}$ 
the {\rm Jacobian} of 
$\mu\colon X\to \mathbb{P}^1$. 
Then $\mathcal{J}=X$ and 
$\mathbf{j}=\mu$ as smooth 
manifolds and smooth maps respectively, 
and the complex structure 
of $\mathcal{J}$ is given by 
$\Theta_\mathcal{J}:=\Theta+\mu^*\mathbf{a}$ 
for some $\mathbf{a}\in\Omega^2(\mathbb{P}^1)\otimes\C$ by 
\cite[Proposition 7.2]{gross1999}.

For an open subset $W\subset \mathbb{P}^1$ and 
a $1$-form $\beta\in \Omega^1(W)$, 
a diffeomorphism 
$T_\beta\colon \mu^{-1}(W)\to \mu^{-1}(W)$ is defined in 
\cite[Section 2]{gross1999} as follows. 
Denote by $u_\beta\in\mathcal{X}(X)$ 
the vector field defined by 
$\iota_{u_\beta}({\rm Re}(\Theta))
= \mu^*\beta$, and denote by 
$\phi_t\in {\rm Diff}(\mu^{-1}(W))$ 
the flow generated by $u_\beta$. 
Then define $T_\beta:=\phi_1$ 
and call it the {\it translation 
by the $1$-form $\beta$}. 
By \cite{gross1999}, 
the translation acts on 
$\mu^{-1}(W)$ preserving the fibers 
of $\mu$.

\begin{fact}[{\cite[Theorem 4.5]{GW2000}}]
Let $\mu\colon X\to \mathbb{P}^1$ be an 
elliptic $K3$ surface 
with $24$ singular fibers of  
Kodaira type $I_1$ with 
holomorphic volume form 
$\Theta$, 
$\Crt=\{ b_1,\ldots,b_{24}\}$ 
be critical values of $\mu$. 
Let $\mathbf{j}\colon \mathcal{J}\to \mathbb{P}^1$ be 
the Jacobian of 
$\mu\colon X\to \mathbb{P}^1$. 
Then there are 
sufficiently small positive numbers 
$r_1^\mathbf{q}< r_2^\mathbf{q}$, 
an open cover $\mathbb{P}^1=\bigcup_a W_a$ 
such that 
for any $s < s_0$ 
and for each K\"ahler class 
$[\eta_s]\in H^{1,1}(X)$ with 
$\langle [\eta_s],\mu^{-1}(b)\rangle
=s$ and 
$[\eta_s]^2=[{\rm Re}(\Theta)]^2=[{\rm Im}(\Theta)]^2$, 
there is a K\"ahler form 
$\eta_s$ representing $[\eta_s]$ 
and 
translations 
$T_a\colon \mu^{-1}(W_a)\to 
\mathbf{j}^{-1}(W_a)$ by some $1$-forms 
with respect to 
${\rm Re}(\Theta_\mathcal{J})$ 
which satisfy the followings. 
\begin{itemize}
\setlength{\parskip}{0cm}
\setlength{\itemsep}{0cm}
 \item[$({\rm i})$] 
 We have 
 $\#(W_a\cap \Crt)\le 1$. 
 If $W_a\cap \Crt = \emptyset$, 
 then $W_a\cap(\bigcup_\mathbf{q}W_2^\mathbf{q})=\emptyset$. 
 If $b_\mathbf{q}\in W_a$, then 
 $\overline{W}_2^\mathbf{q}
 \subset W_a$. 
 \item[$({\rm ii})$] We have 
\begin{align*}
\eta_s|_{\mu^{-1}(W_a\setminus
 (\bigcup_\mathbf{q} W_2^\mathbf{q}))}
 &= T_a^*\left(\eta_s^{\rm SF}|_{\mathbf{j}^{-1}(W_a\setminus
 (\bigcup_\mathbf{q} W_2^\mathbf{q}))}\right),\\
\eta_s|_{\mu^{-1}(W_1^\mathbf{q})}
 &= T_a^*\left(\omega_{3,s,\mathbf{q}}|_{\mathbf{j}^{-1}(W_1^\mathbf{q})}\right),\\
 \Theta|_{\mu^{-1}(W_a)}
 &= T_a^*\left(\Theta_\mathcal{J}|_{\mathbf{j}^{-1}(W_a)}\right).
\end{align*}
 \item[$({\rm iii})$]
 $\langle [\eta_s],\mu^{-1}(b)
 \rangle=s$ and $[\eta_s]^2
 = [{\rm Re}(\Theta)]^2
 =[{\rm Im}(\Theta)]^2$. 
\end{itemize}
\label{fact alm ric flat.h}
\end{fact}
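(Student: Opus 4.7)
The plan is to construct $\eta_s$ by an explicit local-to-global gluing of the two model metrics reviewed in Subsections \ref{subsec_SF.h} and \ref{subsec_OV.h}, and then verify the normalization conditions in (iii) by a cohomological computation. First I would fix an open cover of $\mathbb{P}^1$ as follows: for each critical value $b_\mathbf{q}$ choose concentric disks $W_1^\mathbf{q}\subset\overline{W_1^\mathbf{q}}\subset W_2^\mathbf{q}$ with $W_2^\mathbf{q}$ contained in a simply connected neighborhood of $b_\mathbf{q}$, and cover the complement $\mathbb{P}^1\setminus\bigcup_\mathbf{q}W_1^\mathbf{q}$ by finitely many simply connected open sets $W_a$, each of which is either disjoint from $\bigcup_\mathbf{q}\overline{W_2^\mathbf{q}}$ or contains exactly one $\overline{W_2^\mathbf{q}}$. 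Because each $W_a$ is simply connected, $\mathbf{j}\colon\mathcal{J}|_{W_a}\to W_a$ admits a holomorphic section, and the two elliptic fibrations $\mu|_{W_a}$ and $\mathbf{j}|_{W_a}$ differ by a fiberwise translation; the existence of $1$-forms whose flow $T_a$ satisfies $T_a^*\Theta_\mathcal{J}=\Theta$ follows from the standard identification of the sheaf of sections of $\mu$ with a torsor over the sheaf of sections of $\mathbf{j}$.

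Next I would specify $\eta_s$ on each piece. On $\mathbf{j}^{-1}(W_a\setminus\bigcup_\mathbf{q}W_1^\mathbf{q})$ use the standard semi-flat form $\eta_s^{\rm SF}$ built from the period lattice $\Lambda$; on $\mathbf{j}^{-1}(W_1^\mathbf{q})$ use the Ooguri-Vafa form $\omega_{3,s,\mathbf{q}}$ with harmonic function $h_\mathbf{q}=\mathrm{Re}(F_\mathbf{q})$ dictated by the local period \eqref{eq_period.h}. The decisive observation is that on the overlap annulus $W_2^\mathbf{q}\setminus W_1^\mathbf{q}$ the periodic sum defining $V_{s,\mathbf{q}}$ can be Poisson-resummed, and the non-constant-in-$u_3$ Fourier modes decay like $e^{-c/s}$, so the Ooguri-Vafa potential agrees with the semi-flat potential up to an exponentially small correction $\phi_\mathbf{q}^s$. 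I would then pick a cutoff function $\chi_\mathbf{q}$ equal to $1$ near $\partial W_1^\mathbf{q}$ and equal to $0$ near $\partial W_2^\mathbf{q}$, and define the global form by adding the $\ddb$ of $\chi_\mathbf{q}\phi_\mathbf{q}^s$, pulled back by $T_a$.

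The main obstacle is the positivity of the glued $(1,1)$-form on the overlap annuli $W_2^\mathbf{q}\setminus W_1^\mathbf{q}$. Both the semi-flat form and $\omega_{3,s,\mathbf{q}}$ have their leading behaviour controlled by the divergent harmonic function $-\tfrac{1}{2\pi}\log|u|+h_\mathbf{q}$ after integration in the $S^1$-direction, so they are comparable of order $1$ there. The cutoff-induced correction has size $\|\ddb(\chi_\mathbf{q}\phi_\mathbf{q}^s)\|=O(e^{-c/s})$, which is dominated for all $s\le s_0$ by the positive part of the model metrics; this is exactly the hard estimate in \cite{GW2000} and it forces the choice of $s_0$. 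Positivity on $\mu^{-1}(W_1^\mathbf{q})$ and on $\mu^{-1}(W_a\setminus\bigcup W_2^\mathbf{q})$ is automatic because the respective models are Kähler.

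Finally I would verify (iii). The equality $\int_{\mu^{-1}(b)}\eta_s=s$ holds fiberwise because the normalization constants in $\eta_s^{\rm SF}$ and in $V_{s,\mathbf{q}}$ were both chosen to give fiber volume $s$, and the translations $T_a$ preserve the fiber class. The identity $[\eta_s]^2=[\mathrm{Re}\,\Theta]^2=[\mathrm{Im}\,\Theta]^2$ is pointwise true for each model by the three-form identity $\eta^2=\mathrm{Re}(\Theta_{\rm can})^2=\mathrm{Im}(\Theta_{\rm can})^2$ noted in Subsection \ref{subsec_SF.h} and its Ooguri-Vafa analogue $\omega_{i,s,\mathbf{q}}\wedge\omega_{j,s,\mathbf{q}}=\delta_{ij}\omega_{1,s,\mathbf{q}}^2$, so the cohomological identity survives the gluing because the $\ddb$-correction is exact. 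A standard Moser-type argument, or a direct Hodge-theoretic decomposition inside $H^{1,1}(X,\R)$, then allows one to match any prescribed Kähler class $[\eta_s]$ satisfying the two normalizations by adding a further global $\ddb$-exact term of size $O(s)$, which does not destroy positivity once $s_0$ is chosen small enough.
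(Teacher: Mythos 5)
This statement is not proved in the paper at all: it is imported verbatim as a Fact from \cite[Theorem 4.5]{GW2000}, so the only thing to compare your proposal with is Gross--Wilson's own construction, and your outline does follow its broad strategy (semi-flat metric away from the singular fibers, Ooguri--Vafa metric near them, Poisson-resummation showing the two K\"ahler potentials agree up to $O(e^{-c/s})$ on the annuli, cut-off gluing, positivity for $s\le s_0$).

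The genuine gap is in the last step, where you try to realize an \emph{arbitrary} prescribed class $[\eta_s]$ with $\langle[\eta_s],\mu^{-1}(b)\rangle=s$ and $[\eta_s]^2=[\mathrm{Re}\,\Theta]^2$. Your glued form lies in one particular class, and neither of your proposed fixes can move it to a general one while preserving the conclusion: a ``global $\ddb$-exact term'' does not change the cohomology class at all, and adding a harmonic (or any globally supported closed) representative of the difference class, or running a Moser-type argument, destroys the exact equalities in (ii) on $\mu^{-1}(W_1^{\mathbf q})$ and $\mu^{-1}(W_a\setminus\bigcup_{\mathbf q}W_2^{\mathbf q})$ --- and those exact equalities are the entire content that the present paper later exploits (Lemma \ref{lem_gluing_region.h}, Subsection \ref{subsec_thm_4_1.h}). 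There is also no reason the difference class is $O(s)$: the set of classes satisfying the two normalizations is a positive-dimensional family, so two admissible classes can be far apart.

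In Gross--Wilson's proof the class-dependence is built into the construction rather than patched on afterwards: the translations $T_a$ ``by $1$-forms'' are not a fixed identification of $X$ with its Jacobian obtained from local holomorphic sections, they are the adjustable data. Translating by non-closed local $1$-forms twists the pulled-back semi-flat form, and one chooses these $1$-forms (together with exponentially small corrections of the shape $\mu^*A$, $A\in\Omega^2(W_a)$, which is exactly the term appearing in the proof of Lemma \ref{lem_gluing_region.h}) so that the locally defined forms glue to a global closed $(1,1)$-form lying precisely in the prescribed class, while still restricting to $T_a^*\eta_s^{\rm SF}$ and $T_a^*\omega_{3,s,\mathbf q}$ on the required regions. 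Without this mechanism your argument proves the existence of such an $\eta_s$ for one class only, not ``for each K\"ahler class''. A smaller point: item (iii) is a normalization imposed on $[\eta_s]$, not something to be verified pointwise, and after inserting the cut-off the pointwise identity $\eta_s^2=\mathrm{Re}(\Theta)^2$ in any case only holds approximately.
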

Next we analyze 
the behavior of $\eta_s$ obtained by 
Fact \ref{fact alm ric flat.h} 
on $W_2^\mathbf{q}\setminus W_1^\mathbf{q}$. 
\begin{lem}
There is a constant $C_s\ge 1$ 
for every $s>0$ such that 
$\lim_{s\to 0}C_s=1$ and 
\begin{align*}
C_s^{-1} T_a^*\omega_{3,s,\mathbf{q}}
&\le 
\eta_s|_{\mu^{-1}(W_2^\mathbf{q}\setminus W_1^\mathbf{q})}
\le C_s T_a^*\omega_{3,s,\mathbf{q}},\\
C_s^{-1} T_a^*\eta_s^{\rm SF}
&\le 
\eta_s|_{\mu^{-1}(W_2^\mathbf{q}\setminus W_1^\mathbf{q})}
\le C_s T_a^*\eta_s^{\rm SF}
\end{align*}
for any pair of $\mathbf{q},a$ with 
$b_\mathbf{q}\in W_a$. 
\label{lem_gluing_region.h}
\end{lem}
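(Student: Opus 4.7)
The strategy is to reduce the two inequalities in the lemma to a single comparison on the annulus: namely, that $T_a^*\omega_{3,s,\mathbf{q}}$ and $T_a^*\eta_s^{\rm SF}$ are themselves uniformly equivalent with constants tending to $1$ as $s \to 0$. If this equivalence holds, then the two stated inequalities for $\eta_s$ are interchangeable, so only one of them (say, the first) needs to be established directly.

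First, I would unpack the construction of $\eta_s$ behind Fact \ref{fact alm ric flat.h}, which in \cite{GW2000} is not purely a black box: on the annulus $\mu^{-1}(W_2^\mathbf{q} \setminus W_1^\mathbf{q})$, the form $\eta_s$ is obtained from a smooth interpolation
\[
\eta_s^\sharp := \chi \, T_a^*\omega_{3,s,\mathbf{q}} + (1-\chi)\, T_a^*\eta_s^{\rm SF},
\]
where $\chi$ depends only on $|y|$, equals $1$ near $\partial W_1^\mathbf{q}$ and $0$ near $\partial W_2^\mathbf{q}$, followed by a Monge-Amp\`ere correction $\sqrt{-1}\ddb\varphi_s$ ensuring $\eta_s^2 = \Theta \wedge \overline{\Theta}$. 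The Yau-type estimates in \cite{GW2000}, adapted to this collapsing setting, give $\|\sqrt{-1}\ddb\varphi_s\|_{C^0(\eta_s^\sharp)} \to 0$ as $s \to 0$. Hence on the annulus, $\eta_s$ is bracketed between $(1-\varepsilon_s)$ and $(1+\varepsilon_s)$ times $\eta_s^\sharp$ with $\varepsilon_s \to 0$, so it is enough to compare $\eta_s^\sharp$ with $T_a^*\omega_{3,s,\mathbf{q}}$, which in turn reduces to the equivalence of the two pieces of the convex combination.

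Second, I would establish the uniform equivalence of $T_a^*\omega_{3,s,\mathbf{q}}$ and $T_a^*\eta_s^{\rm SF}$ on the annulus. Applying Poisson summation to the periodic Coulomb sum defining $V_{s,\mathbf{q}}$, one obtains the expansion
\[
V_{s,\mathbf{q}}(u) = -\frac{1}{2\pi s}\log|u_1 + \sqrt{-1}u_2| + \frac{h_\mathbf{q}(u_1,u_2)}{s} + R_s(u),
\]
where $R_s$ and all its derivatives are $O(e^{-c|u_1+\sqrt{-1}u_2|/s})$ off the singular fiber, with $c$ independent of $s$. On the annulus we have $|u_1+\sqrt{-1}u_2| \ge r_1^\mathbf{q} > 0$, so the remainder and its derivatives are uniformly $O(e^{-c\,r_1^\mathbf{q}/s})$. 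The leading logarithmic-plus-harmonic piece is precisely the potential of the semi-flat metric corresponding to the period data \eqref{eq_period.h}, so under the identification of the $S^1$-connection $\alpha$ on $\tilde{X}_{\rm OV}^\sharp$ with the fiber coordinate on $\mathcal{T}_B^*/\Lambda$ induced by $\tau_1, \tau_2$, one obtains $T_a^*\omega_{3,s,\mathbf{q}} - T_a^*\eta_s^{\rm SF} = O(e^{-c\,r_1^\mathbf{q}/s})$ in $C^0$ on the annulus. Choosing $C_s := 1 + O(e^{-c\,r_1^\mathbf{q}/s}) + \varepsilon_s$ then yields both stated inequalities.

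The main obstacle will be making the identification between the Ooguri-Vafa coordinates $(u_1,u_2,u_3,\alpha)$ and the semi-flat coordinates $(x,y)$ on $\mathcal{T}_B^*/\Lambda$ sufficiently explicit, using the periods \eqref{eq_period.h}, that the two leading-order K\"ahler forms match on the nose and only the exponentially small Poisson-summation remainder contributes to $C_s - 1$. Once the matching of periods and connection form is set up, extracting the estimate on $C_s - 1$ is a routine bound on $R_s$ and its derivatives.
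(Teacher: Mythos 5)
Your second step is fine in spirit: the exponential closeness of $T_a^*\omega_{3,s,\mathbf{q}}$ and $T_a^*\eta_s^{\rm SF}$ on the annulus, coming from the exponential decay of $V_{s,\mathbf{q}}-V_s^{\rm sf}$ away from the singular fiber, is exactly the Gross--Wilson input the paper also invokes (it gives $\left|\eta_s^{\rm SF}-\omega_{3,s,\mathbf{q}}\right|=O(se^{-C/s})$ with respect to $\eta_s^{\rm SF}$). The gap is in your first step, where you misdescribe the object $\eta_s$. The form $\eta_s$ of Fact \ref{fact alm ric flat.h} is \emph{not} Monge--Amp\`ere corrected: it satisfies $\eta_s^2=e^{\mathcal{F}_s}\,\Theta\wedge\overline{\Theta}/2$ with $\mathcal{F}_s\not\equiv 0$ (Fact \ref{estimate_in_GW.h}), and the comparison of $\eta_s$ with the genuine \hK form $\omega_{3,s}=\eta_s+\sqrt{-1}\ddb u_s$ is a separate, later step (Lemma \ref{lem_C^2.h}); assuming $\eta_s^2=\Theta\wedge\overline{\Theta}$ here would make the whole approximation scheme of Section \ref{sec_approx.h} vacuous. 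Moreover the gluing is not a convex combination of the two K\"ahler forms: in the construction one writes $\eta_s^{\rm SF}=\omega_{3,s,\mathbf{q}}+\sqrt{-1}\del\delb\varphi$ on $X_\mathbf{q}^*=\mu^{-1}(W_2^\mathbf{q}\setminus W_1^\mathbf{q})$ and sets
\begin{equation*}
(T_a^{-1})^*\eta_s|_{X_\mathbf{q}^*}=\eta_s^{\rm SF}-\sqrt{-1}\del\delb(\mu^*\psi\cdot\varphi)+\mu^*A,
\end{equation*}
with $\psi$ a cut-off on the base and $A$ a $2$-form on $W_a$ restoring the cohomology class.

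Consequently the terms that actually have to be estimated are the ones your scheme never produces: $\varphi\,\del\delb\mu^*\psi$, $\del\mu^*\psi\wedge\delb\varphi$, $\del\varphi\wedge\delb\mu^*\psi$, and $\mu^*A$. These are delicate precisely because of the collapsing: measured against $\eta_s^{\rm SF}$ one has $\left|\del\mu^*\psi\right|=O(\sqrt{s})$, $\left|\del\delb\mu^*\psi\right|=O(s)$, $\left|\mu^*A\right|=O(se^{-C/s})$, while $\varphi$ and $\del\varphi$ are only $O(s^{-1}e^{-C/s})$ (this uses the Gross--Wilson bound on the potential itself, not just on $\del\delb\varphi$), so the total error is $O(s^{-1/2}e^{-C/s})$ --- it does tend to $0$, but only after this bookkeeping, which is the actual content of the lemma. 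So reducing the statement to the equivalence of the two model metrics is not enough: you must also control the potential $\varphi$ (not merely $\del\delb\varphi$), the derivatives of the cut-off in the collapsing metric, and the correction form $\mu^*A$, none of which are touched by a ``routine bound on $R_s$ and its derivatives.''
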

\begin{proof}
The estimates are essentially obtained 
by the proof of 
\cite[Theorem 4.4]{GW2000}. 
Now we recall the construction of 
$\eta_s$ more precisely. 
Put $X_\mathbf{q}^*:=\mu^{-1}(W_2^\mathbf{q}\setminus W_1^\mathbf{q})$. 
By the proof of \cite[Theorem 4.4]{GW2000}, 
there is a function $\varphi\in C^\infty(X_\mathbf{q}^*)$ such that 
\begin{align*}
\eta_s^{\rm SF}
=\omega_{3,s,\mathbf{q}}+\sqrt{-1}
\del\delb\varphi
\end{align*}
on $X_\mathbf{q}^*$. 
By the assumption that 
$b_\mathbf{q}\in W_a$ and 
by $({\rm i})$ of 
Fact \ref{fact alm ric flat.h}, 
we have 
$\overline{W_2^\mathbf{q}}\subset W_a$. 
Let $0\le \psi\le 1$ be some cut-off function defined on the neighborhood 
of $\overline{W_2^\mathbf{q}}$ 
such that $\psi\equiv 1$ on 
$\overline{W_1^\mathbf{q}}$ 
and $\psi\equiv 0$ on 
the complement of 
$\overline{W_2^\mathbf{q}}$. 
On $\mu^{-1}(W_a)$, 
$\eta_s$ is given by 
\begin{align*}
(T_a^{-1})^*\eta_s|_{X_\mathbf{q}^*}
=\eta_s^{\rm SF} - 
\sqrt{-1}\del\delb(\mu^*\psi\cdot\varphi)
+\mu^*A
\end{align*}
for some $A\in\Omega^2(W_a)$, 
hence we have 
\begin{align*}
(T_a^{-1})^*\eta_s|_{X_\mathbf{q}^*}
- \eta_s^{\rm SF} 
&= - \sqrt{-1}\varphi\del\delb\mu^*\psi
- \sqrt{-1}\del\mu^*\psi\wedge\delb\varphi
- \sqrt{-1}\del\varphi\wedge\delb\mu^*\psi\\
&\quad\quad
- \sqrt{-1}\mu^*\psi\del\delb\varphi
+\mu^*A,\\
(T_a^{-1})^*\eta_s|_{X_\mathbf{q}^*}
- \omega_{3,s,\mathbf{q}} 
&= - \sqrt{-1}\varphi\del\delb\mu^*\psi
- \sqrt{-1}\del\mu^*\psi\wedge\delb\varphi
- \sqrt{-1}\del\varphi\wedge\delb\mu^*\psi\\
&\quad\quad
+ \sqrt{-1}(1-\mu^*\psi)\del\delb\varphi
+\mu^*A.
\end{align*}
We estimate the norm of 
the right hand side of the above equations 
with respect to the metric $\eta_s^{\rm SF}$. 
Since $\psi$ is independent of $s$, 
we can see 
\begin{align*}
\left|\del\mu^*\psi\right|
=\left|\delb\mu^*\psi\right|
=O(\sqrt{s}),\quad 
\left|\del\delb\mu^*\psi\right|
=O(s).
\end{align*}
By the proof of 
\cite[Theorem 4.4]{GW2000}, 
$|A|=O(e^{-C/s})$ for some constant 
$C>0$, with respect to some 
metric on $\mathbb{P}^1$. 
Then we can see 
$|\mu^*A|=O(se^{-C/s})$ with 
respect to $\eta_s^{\rm SF}$. 
The proof of 
\cite[Theorem 4.4]{GW2000} 
also gives 
\begin{align*}
\left| \del\delb\varphi\right|
=\left|\eta_s^{\rm SF}
- \omega_{3,s,\mathbf{q}}
\right|
=O(se^{-C/s}),
\end{align*}
then \cite[Lemma 4.1]{GW2000} 
implies 
\begin{align*}
|\varphi |
+ \left| \del\varphi \right|
+ \left| \delb\varphi \right|
=O(s^{-1}e^{-C/s}).
\end{align*}
Consequently, 
we obtain 
\begin{align*}
\left| (T_a^{-1})^*\eta_s|_{X_\mathbf{q}^*}
- \eta_s^{\rm SF} \right|
&= O(s^{-1/2}e^{-C/s}),\\
\left| (T_a^{-1})^*\eta_s|_{X_\mathbf{q}^*}
- \omega_{3,s,\mathbf{q}}\right| 
&= O(s^{-1/2}e^{-C/s}).
\end{align*}
Since $\lim_{s\to 0}s^{-1/2}e^{-C/s}=0$, 
we have the assertion.
\end{proof}

\subsection{$C^2$ estimate}\label{subsec C^2.h}
Let $\eta_s$ be the 
K\"ahler forms on $X$ obtained 
by Fact \ref{fact alm ric flat.h}. 
Denote by $\rho_{\eta_s}$ 
the Ricci form of $\eta_s$. 
If we put 
\begin{align*}
\mathcal{F}_s:=\log \left( \frac{\Theta\wedge
\overline{\Theta}/2}{\eta_s^2}\right),
\end{align*}
then 
$\rho_{\eta_s}=\sqrt{-1}\del\delb \mathcal{F}_s$. 
\begin{fact}[{\cite[Theorem 4.5]{GW2000}}]
There are positive constants 
$D_1,\ldots,D_6$ and $s_0$ 
such that 
\begin{align*}
\|\mathcal{F}_s\|_{C^0(X)}&\le 
D_1e^{-D_2/s},\label{}\\
\| \square\mathcal{F}_s\|_{C^0(X)}&\le 
D_1e^{-D_2/s},\\
\rho_{\eta_s}&\ge -D_3e^{-D_4/s} \eta_s,\\
{\rm diam}_{\eta_s}(X)&\le D_5s^{-1/2},
\\
\| R_{\eta_s}\|_{C^0(X)}&\le D_6s^{-1}
\log s^{-1}
\end{align*}
for all $0<s<s_0$, 
where 
$\square=\delb^*\delb$ 
is the $\delb$-Laplacian of $\eta_s$ acting on 
$C^\infty(X)$, 
${\rm diam}_{\eta_s}$ and $R_{\eta_s}$ 
are the diameter 
and the curvature tensor 
of $\eta_s$, 
respectively. 
\label{estimate_in_GW.h}
\end{fact}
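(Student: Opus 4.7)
The plan is to localize every estimate to the transition annuli $X^*_\mathbf{q} := \mu^{-1}(W_2^\mathbf{q} \setminus W_1^\mathbf{q})$. Outside $\bigcup_\mathbf{q} X^*_\mathbf{q}$, the form $\eta_s$ coincides, up to a translation that preserves $\Theta$, with either $\eta_s^{\rm SF}$ or $\omega_{3,s,\mathbf{q}}$, both of which are Ricci-flat K\"ahler with volume form $\Theta \wedge \overline{\Theta}/2$. Consequently $\mathcal{F}_s\equiv 0$ and $\rho_{\eta_s}\equiv 0$ off the annuli, and the first three estimates become statements about how close $\omega_{3,s,\mathbf{q}}$ is to $\eta_s^{\rm SF}$ on $X^*_\mathbf{q}$.

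The key input is an exponential tail estimate on the Ooguri--Vafa potential. Writing
\begin{align*}
V_{s,\mathbf{q}}(u) - \frac{1}{s}\int_0^s V_{s,\mathbf{q}}(u_1,u_2,t)\,dt = (\text{tail of the periodic sum}),
\end{align*}
the periodic sum is essentially a Riemann sum for an integral already removed, so all its derivatives on $\{|u|\ge r_1^\mathbf{q}\}$ are $O(e^{-C/s})$ by a Poisson summation / Abel summation argument applied after separating the divergent $n=0$ term. Since the semi-flat metric attached to the period data \eqref{eq_period.h} is determined by this average of $V_{s,\mathbf{q}}$, one obtains $|\omega_{3,s,\mathbf{q}}-\eta_s^{\rm SF}|_{\eta_s^{\rm SF}}=O(e^{-C/s})$ on $X^*_\mathbf{q}$ together with derivative bounds; as in the proof of Lemma~\ref{lem_gluing_region.h} this produces a K\"ahler potential $\varphi$ on $X^*_\mathbf{q}$ with $|\varphi|+|\del\varphi|+|\del\delb\varphi|=O(s^{-1}e^{-C/s})$ measured in $\eta_s^{\rm SF}$.

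Plugging this into $\eta_s = \eta_s^{\rm SF} - \sqrt{-1}\del\delb(\mu^*\psi\cdot\varphi) + \mu^* A$ and expanding $\eta_s^2/(\Theta\wedge\overline{\Theta}/2)$ gives $\|\mathcal{F}_s\|_{C^0}=O(e^{-C/s})$, and one further derivative application gives the corresponding bound on $\square \mathcal{F}_s$. The Ricci lower bound then follows from $\rho_{\eta_s}=\sqrt{-1}\del\delb\mathcal{F}_s$ combined with a trace manipulation that controls the $(1,1)$-form $\del\delb\mathcal{F}_s$ by its Laplacian using the near-Ricci-flatness on $\eta_s$. For the diameter, the horizontal component of $\eta_s^{\rm SF}$ is $\mathbf{W}^{-1}dy\wedge d\bar{y}$ with $\mathbf{W}\asymp s$, so horizontal distances scale like $s^{-1/2}$ and fiber diameters like $s^{1/2}$; patching over the finite cover $\{W_a\}$ of $\bbP^1$ yields ${\rm diam}_{\eta_s}(X)\le D_5 s^{-1/2}$. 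For the curvature, the semi-flat region contributes a uniform $O(s^{-1})$ bound while near the singular fiber one differentiates $V_{s,\mathbf{q}}\sim -\log|u|/(2\pi s)$ twice; the logarithmic factor enters because the smallest admissible $|u|$ in the Ooguri--Vafa chart is itself of order a power of $s$, which after substitution yields $D_6 s^{-1}\log s^{-1}$.

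The main obstacle is making the exponential estimate sufficiently quantitative on $X^*_\mathbf{q}$: one must bootstrap from the Poisson-summation tail to two derivatives of the K\"ahler potential $\varphi$ uniformly in $s$, and then absorb the cut-off derivatives $\del\delb\mu^*\psi$ against $\varphi$ to reach the Laplacian-of-$\mathcal{F}_s$ bound. The $\log s^{-1}$-sharpness in the curvature estimate is likewise not soft --- it demands an explicit computation in the Gibbons--Hawking coordinates on the Ooguri--Vafa piece. Both steps are carried out in \cite{GW2000}, and the role of the present section is merely to record the resulting estimates in the form needed later.
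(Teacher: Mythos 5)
This statement is recorded in the paper as a quoted Fact from \cite{GW2000} (Theorem 4.5 and the accompanying estimates) with no proof supplied, and your proposal ultimately defers the two substantive steps to \cite{GW2000} as well, so you are taking the same route as the paper; your outline --- vanishing of $\mathcal{F}_s$ off the gluing annuli $X^*_\mathbf{q}$, exponential closeness of the Ooguri--Vafa and semi-flat metrics there, and explicit Gibbons--Hawking computations for the diameter and curvature bounds --- is a fair summary of Gross--Wilson's actual argument. One caution if you ever need to reproduce the details: the lower bound $\rho_{\eta_s}\ge -D_3e^{-D_4/s}\eta_s$ cannot be obtained by ``controlling the $(1,1)$-form $\partial\overline{\partial}\mathcal{F}_s$ by its Laplacian'' (a complex Hessian is not pointwise bounded by its trace); in \cite{GW2000} it comes from direct $C^2$-type estimates on the glued potential and the error form, so that step of your sketch would have to be replaced.
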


Now we consider the Monge Amp\`ere 
equation with the normalization
\begin{align}
\left( \eta_s+\sqrt{-1}\del\delb u_s\right)^2
&=e^{\mathcal{F}_s} \eta_s^2,\label{MAeq_1.h}\\
\int_X u_s \eta^n &= 0.\label{MAeq_2.h}
\end{align}
\begin{fact}[{\cite[Lemma 5.2]{GW2000}}]
Let $D_2$ and $s_0$ be as in 
Fact \ref{estimate_in_GW.h}. 
There is a constant $D_7>0$ 
such that any $0<s<s_0$ and any 
solution $u_s\in C^\infty(X)$ 
to \eqref{MAeq_1.h}\eqref{MAeq_2.h} 
satisfies 
\begin{align*}
\left\| u_s\right\|_{L^\infty(X)}\le D_7s^{-5}e^{-D_2/s}.
\end{align*}
\label{C^0_GW.h}
\end{fact}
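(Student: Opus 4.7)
The plan is to apply Yau's $L^\infty$ estimate for the complex Monge--Amp\`ere equation via Moser iteration, while tracking carefully how every geometric constant depends on $s$. The polynomial factor $s^{-5}$ reflects the fact that $(X,\eta_s)$ is collapsing in the sense of Fact~\ref{estimate_in_GW.h}: the diameter grows like $s^{-1/2}$, the volume $\int_X\eta_s^2=[{\rm Re}(\Theta)]^2$ is a fixed cohomological invariant, and ${\rm Ric}_{\eta_s}\ge -D_3e^{-D_4/s}\eta_s$ is essentially nonnegative. Consequently the Sobolev and Poincar\'e constants on $(X,\eta_s)$ degenerate polynomially as $s\to 0$, and it is exactly this degeneration that produces the prefactor $s^{-5}$ in front of the exponentially small right-hand side.

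First I would derive a standard energy inequality. Subtracting $\eta_s^2$ from both sides of \eqref{MAeq_1.h} yields
\[
2\eta_s\wedge\sqrt{-1}\del\delb u_s+(\sqrt{-1}\del\delb u_s)^2=(e^{\mathcal{F}_s}-1)\eta_s^2.
\]
Multiplying by $|u_s|^{p-1}\operatorname{sgn}(u_s)$ and integrating by parts, the positivity of $\omega_s:=\eta_s+\sqrt{-1}\del\delb u_s$ absorbs the quadratic $(\sqrt{-1}\del\delb u_s)^2$ term, leaving an inequality of the form
\[
\int_X\bigl|\nabla_{\eta_s}|u_s|^{p/2}\bigr|_{\eta_s}^2\,\eta_s^2\le C p\int_X|u_s|^{p-1}\,|e^{\mathcal{F}_s}-1|\,\eta_s^2.
\]

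Next I would apply a Sobolev inequality on $(X,\eta_s)$. Because of the curvature, diameter and volume bounds in Fact~\ref{estimate_in_GW.h}, a Saloff-Coste--type argument gives a Sobolev inequality with an exponent $\kappa>1$ and a constant $C_{\rm Sob}(s)$ polynomially bounded in $s^{-1}$. Feeding this into the energy inequality and iterating through the exponents $p,\kappa p,\kappa^2p,\dots$ produces
\[
\|u_s\|_{L^\infty}\le C(s)\,\|u_s\|_{L^2},
\]
with $C(s)$ a fixed negative power of $s$. To bound $\|u_s\|_{L^2}$ I would use the normalization \eqref{MAeq_2.h} together with Poincar\'e's inequality on $(X,\eta_s)$, whose constant is again polynomial in $s^{-1}$ by Buser--Li-Yau--type estimates; its right-hand side reduces, via the energy identity above for $p=2$, to $\|e^{\mathcal{F}_s}-1\|_{L^\infty}=O(e^{-D_2/s})$ from Fact~\ref{estimate_in_GW.h}. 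Combining the two inequalities gives $\|u_s\|_{L^\infty}\le D_7 s^{-5}e^{-D_2/s}$ after bookkeeping.

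The main obstacle is to obtain the sharp polynomial exponent $s^{-5}$ rather than some unspecified polynomial in $s^{-1}$. A black-box invocation of Saloff-Coste's theorem controls the Sobolev constant only in terms of diameter and volume, which gives a non-optimal exponent. To match the expected $s^{-5}$ one should exploit the explicit near-product description of $\eta_s$ from Subsections~\ref{subsec_SF.h}--\ref{subsec_ARFK.h}: away from the singular fibers $\eta_s$ is approximated by a semi-flat metric whose torus fibers have diameter $O(\sqrt{s})$, and near the singular fibers it is modeled by an Ooguri-Vafa metric. Using this fibered geometry and applying Fubini to split the Sobolev and Poincar\'e constants into a base factor and a fiber factor is what pins down the precise $s$-dependence of the constants entering the iteration, and thus the final exponent in the $L^\infty$ estimate.
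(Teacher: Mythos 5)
This Fact is quoted verbatim from \cite[Lemma 5.2]{GW2000}; the paper gives no proof of its own beyond the citation, so the relevant comparison is with Gross--Wilson's argument. Your overall strategy is exactly theirs: a Yau-type Moser iteration for \eqref{MAeq_1.h}, with the normalization \eqref{MAeq_2.h} and a Poincar\'e inequality controlling $\|u_s\|_{L^2}$, and with the $s$-dependence of the Sobolev and Poincar\'e constants extracted from the diameter bound ${\rm diam}_{\eta_s}(X)\le D_5 s^{-1/2}$, the fixed volume $[\eta_s]^2=[{\rm Re}(\Theta)]^2$, and the almost-nonnegative Ricci curvature of Fact \ref{estimate_in_GW.h}. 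Your energy inequality (using positivity of $\eta_s+\sqrt{-1}\del\delb u_s$ to discard the quadratic term) and your $p=2$ plus Poincar\'e step are sound in outline.

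The genuine gap is that the decisive quantitative step is never carried out: you concede that the black-box Sobolev inequality only yields $\|u_s\|_{L^\infty}\le C s^{-N}e^{-D_2/s}$ for some unspecified $N$. Since the statement fixes the \emph{same} constant $D_2$ as in Fact \ref{estimate_in_GW.h}, an unspecified polynomial does not imply the claimed bound $D_7 s^{-5}e^{-D_2/s}$ (one would have to shrink $D_2$, which the statement does not allow), so as written the proposal does not prove the Fact. Moreover, your proposed remedy --- splitting the Sobolev and Poincar\'e constants along the semi-flat/Ooguri--Vafa fibration --- is both unexecuted and unnecessary: Gross--Wilson obtain the exponent $5$ directly from an explicit (Croke-type) estimate of the Sobolev constant in terms of diameter, volume and the Ricci lower bound, with no appeal to the fibered structure of $\eta_s$. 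It is worth noting that for the only use of this Fact in the present paper (the proof of Lemma \ref{lem_C^2.h}, where one needs $N_s\|u_s\|_{L^\infty}\to 0$), any fixed polynomial power of $s^{-1}$ would suffice, so your sketch is adequate for the application; but it does not establish the statement as formulated, and the missing bookkeeping is precisely where the cited proof does its work.
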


Next we improve \cite[Lemma 5.3]{GW2000}. 
\begin{lem}
There are constants  
$C_s\ge 1$ such that 
$\lim_{s\to 0}C_s=1$ and for 
any solution $u_s\in C^\infty(X)$ 
of \eqref{MAeq_1.h}\eqref{MAeq_2.h} 
satisfy 
\begin{align*}
C_s^{-1}\eta_s
\le \eta_s+\sqrt{-1}\del\delb u_s
\le C_s\eta_s.
\end{align*}
\label{lem_C^2.h}
\end{lem}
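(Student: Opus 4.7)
The plan is to apply Yau's standard second-order a priori estimate to the complex Monge-Amp\`ere equation \eqref{MAeq_1.h}, carefully tracking how each constant depends on $\|\mathcal{F}_s\|_{C^0}$, $\|u_s\|_{L^\infty}$, and on a lower bound of the bisectional curvature of $\eta_s$. The point is that Fact \ref{estimate_in_GW.h} and Fact \ref{C^0_GW.h} give us that $\|\mathcal{F}_s\|_{C^0}$ and $\|u_s\|_{L^\infty}$ decay exponentially in $s^{-1}$, while the curvature only grows polynomially in $s^{-1}$, so the standard Yau inequality will give eigenvalue bounds that converge to $1$.

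Concretely, let $\eta_s' := \eta_s + \sqrt{-1}\partial\overline{\partial} u_s$ and let $\lambda_1(x), \lambda_2(x)$ denote the eigenvalues of $\sqrt{-1}\partial\overline{\partial} u_s$ with respect to $\eta_s$ at $x \in X$. Yau's computation (as in \cite{Yau1978}) gives the pointwise inequality
\begin{align*}
\Delta_{\eta_s'}\log(2+\Delta_{\eta_s}u_s)
\ge \frac{\Delta_{\eta_s}\mathcal{F}_s}{2+\Delta_{\eta_s}u_s}
- K_s\bigl(2+\Delta_{\eta_s}u_s\bigr)\sum_i \tfrac{1}{1+\lambda_i},
\end{align*}
where $-K_s$ is a lower bound for the bisectional curvature of $\eta_s$, so by Fact \ref{estimate_in_GW.h} one may take $K_s \le D_6' s^{-1}\log s^{-1}$. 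Choose $A_s := K_s + 1$ and apply the maximum principle to the function
\[
\Psi_s := \log(2+\Delta_{\eta_s}u_s) - A_s u_s
\]
on $X$. At a maximum point $p_0$ of $\Psi_s$, manipulating the resulting inequality in the standard way (using $\Delta_{\eta_s}\mathcal{F}_s$ which is controlled by Fact \ref{estimate_in_GW.h}) yields an estimate of the form
\[
2 + \Delta_{\eta_s} u_s(p_0) \le C'\bigl(1 + \|\Delta_{\eta_s}\mathcal{F}_s\|_{C^0}\bigr),
\]
with $C'$ absolute. Propagating from $p_0$ to arbitrary $x$ via $\Psi_s(x) \le \Psi_s(p_0)$ gives
\[
2 + \Delta_{\eta_s}u_s(x) \le \bigl(2 + \Delta_{\eta_s}u_s(p_0)\bigr)\exp\bigl(A_s(u_s(x)-u_s(p_0))\bigr)
\le C'' \exp\bigl(2A_s\|u_s\|_{L^\infty}\bigr).
\]
By Fact \ref{C^0_GW.h} and the bound on $A_s$, we have
\[
A_s\|u_s\|_{L^\infty} \;\le\; (K_s+1) \cdot D_7 s^{-5}e^{-D_2/s} \;\lesssim\; s^{-6}(\log s^{-1})e^{-D_2/s} \xrightarrow{s\to 0} 0,
\]
so $\sup_X(2+\Delta_{\eta_s}u_s) \le 2 + \varepsilon_s$ with $\varepsilon_s \to 0$. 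Hence $\lambda_1(x)+\lambda_2(x) \le \varepsilon_s$ everywhere, so in particular the larger eigenvalue satisfies $1 + \lambda_{\max} \le 1 + \varepsilon_s$. Finally the Monge-Amp\`ere equation gives $(1+\lambda_1)(1+\lambda_2) = e^{\mathcal{F}_s}$, so the smaller eigenvalue satisfies
\[
1 + \lambda_{\min} \ge \frac{e^{-\|\mathcal{F}_s\|_{C^0}}}{1+\varepsilon_s} = 1 - \varepsilon'_s
\]
with $\varepsilon'_s \to 0$. Setting $C_s := \max\bigl((1+\varepsilon_s),(1-\varepsilon'_s)^{-1}\bigr)$ gives the desired two-sided bound.

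The main obstacle is ensuring that the exponent $A_s\|u_s\|_{L^\infty}$ in the propagation step genuinely tends to $0$; this is the improvement over \cite[Lemma 5.3]{GW2000}, which only claims a uniform bound. It relies crucially on the sharp polynomial versus exponential dependence obtained in Facts \ref{estimate_in_GW.h} and \ref{C^0_GW.h}, and on the fact that $X$ has complex dimension $2$ so no additional troublesome curvature terms appear. Once this is handled, combining the trace upper bound with the determinant equation is purely algebraic.
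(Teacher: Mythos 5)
Your overall strategy (Yau's second--order estimate plus a weighted maximum principle, exploiting that $\|u_s\|_{L^\infty}$ is exponentially small while the curvature of $\eta_s$ grows only polynomially in $s^{-1}$) is the same as the paper's, but the decisive quantitative step is not justified and, with your choice of weight, is false. You take $A_s=K_s+1$ with $K_s\sim s^{-1}\log s^{-1}$ and claim that at the maximum point $p_0$ of $\Psi_s=\log(2+\Delta_{\eta_s}u_s)-A_su_s$ one gets $2+\Delta_{\eta_s}u_s(p_0)\le C'(1+\|\Delta_{\eta_s}\mathcal{F}_s\|_{C^0})$ with $C'$ absolute. The standard manipulation does not give this: in complex dimension $2$, writing $T={\rm tr}_{\eta_s}\eta_s'$ and $S={\rm tr}_{\eta_s'}\eta_s=Te^{-\mathcal{F}_s}$, the maximum principle with $A_s=K_s+1$ yields $S(p_0)\le 2A_s-\Delta_{\eta_s}\mathcal{F}_s/T$, hence only $T(p_0)\lesssim 2A_se^{\mathcal{F}_s}\sim s^{-1}\log s^{-1}$. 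The curvature term is \emph{not} suppressed by your choice of weight (in the language of the paper's proof, $k_s=-2\inf_{i\neq j}R_{i\bar ij\bar j}/N_s$ stays of size $\approx 2$ when $N_s=K_s+1$, and the leading coefficient $2e^{\mathcal{F}_s}/(2-k_s)$ blows up). Consequently your conclusion $\sup_X(2+\Delta_{\eta_s}u_s)\le 2+\varepsilon_s$ does not follow; what you actually recover is only a trace bound of order $s^{-1}\log s^{-1}$, i.e.\ nothing beyond the Gross--Wilson type statement, and the lemma's assertion $C_s\to 1$ is exactly what is lost. (Your displayed version of Yau's inequality also carries a spurious factor $(2+\Delta_{\eta_s}u_s)$ on the curvature term, and the deduction ``$1+\lambda_{\max}\le 1+\varepsilon_s$'' from the trace bound alone is not valid without using positivity together with $\det=e^{\mathcal{F}_s}$, e.g.\ via $|\lambda_1-\lambda_2|^2=({\rm tr})^2-4\det$ as in the paper; these are fixable, but the max-point bound is the real gap.)

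The paper's proof closes this gap by choosing the weight constant much \emph{larger} than the curvature bound rather than comparable to it: $N_s:=\max\{|\inf_{i\neq j}R_{i\bar ij\bar j}(x_{\max})|/s,\,1\}$. Then $|k_s|\le 2s\to 0$, and Yau's inequality (2.22) at the maximum point becomes a quadratic inequality in $2-\square u_s$ whose coefficients tend to those of $T^2-2T\le 0$ (using $\mathcal{F}_s\to 0$ and $\square\mathcal{F}_s\to 0$ from Fact \ref{estimate_in_GW.h}), so that $2-\square u_s(x_{\max})\le 2+o(1)$ directly. The exponential smallness of $\|u_s\|_{L^\infty}$ from Fact \ref{C^0_GW.h} is then used precisely where it is needed: even though $N_s$ is of order $s^{-2}\log s^{-1}$, one still has $N_s\|u_s\|_{L^\infty}\to 0$, so the propagation factor $e^{2N_s\|u_s\|_{L^\infty}}$ tends to $1$ and the sharp trace bound survives globally; the eigenvalue conclusion then follows from the trace and determinant as above. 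If you rework your argument with this enlarged weight (any $N_s$ with $K_s/N_s\to 0$ and $N_s\|u_s\|_{L^\infty}\to 0$ works), your proof becomes essentially the paper's.
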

\begin{proof}
The outline of the proof is similar to 
that of \cite[Lemma 5.3]{GW2000}, 
however, we need some modifications. 
Let 
$N_s>0$ be a sufficiently large positive 
constant such that 
$N_s + \inf_{i\neq j}R_{i\bar{i}j\bar{j}}(x)>0$ 
for any $x\in X$, 
where $R_{i\bar{i}j\bar{j}}$ is the holomorphic 
sectional curvature of $\eta_s$. 

Let $\hat{\square}$ be the 
$\delb$-Laplacian with respect to 
$\eta_s+\sqrt{-1}\del\delb u_s$. 
By \cite[$(2.22)$]{Yau1978}, we have 
\begin{align*}
e^{N_s u_s}\hat{\square}
\left( e^{-N_su_s}(2-\square u_s)\right)
&\le
\square\mathcal{F}_s
+4\inf_{i\neq j}R_{i\bar{i}j\bar{j}}(x)
+2N_s(2-\square u_s)\\
&\quad \quad 
- \left( N_s+\inf_{i\neq j}R_{i\bar{i}j\bar{j}}(x)
\right)
e^{-\mathcal{F}_s}(2-\square u_s)^2.
\end{align*}
Notice that the Laplace operators 
in this paper are positive. 
Next we assume that 
$e^{-N_su_s}(2-\square u_s)$ attains 
its maximum at $x_{\rm max}\in X$. 
Then by the same argument 
in the proof of \cite[Lemma 5.3]{GW2000} 
we obtain 
\begin{align}
\left| (2-\square u_s)
-\frac{2e^{\mathcal{F}_s}}{2-k_s}\right|
\le \left| \left(\frac{2e^{\mathcal{F}_s}}{2-k_s}\right)^2
+ \frac{e^{\mathcal{F}_s}(2\square\mathcal{F}_s
- N_s k_s)}{(2-k_s) N_s}\right| \label{ineq_tr_1.h}
\end{align}
at $x_{\rm max}$, 
where $k_s:=-\frac{2\inf_{i\neq j}R_{i\bar{i}j\bar{j}}(x_{\rm max})}{N_s}$. 

Now, we fix 
\begin{align*}
N_s:=\max\left\{ \frac{ \left| 
\inf_{i\neq j}R_{i\bar{i}j\bar{j}}(x_{\rm max}) \right|}{s},\, 1
\right\},
\end{align*}
then we have $|k_s|\le 2s$. 
Note that 
$N_s$ is different from that is taken in 
the proof of \cite[Lemma 5.3]{GW2000}. 
Since we have 
\begin{align*}
\lim_{s\to 0}e^{\mathcal{F}_s}=1,\quad
\lim_{s\to 0}\square\mathcal{F}_s=0
\end{align*}
by Fact \ref{estimate_in_GW.h}, 
we obtain 
\begin{align*}
\lim_{s\to 0}\frac{2e^{\mathcal{F}_s}}{2-k_s}
&=1,\\
\lim_{s\to 0} \left| \frac{e^{\mathcal{F}_s}(2\square\mathcal{F}_s
- N_s k_s)}{(2-k_s)N_s}\right| 
&\le 
\lim_{s\to 0}\left(
\frac{ e^{\mathcal{F}_s} \left| \square\mathcal{F}_s\right|
}{2-2s}
+\frac{2e^{\mathcal{F}_s} s}{2-2s}
\right)\\
&=0
\end{align*}
at $x_{\rm max}$. 
Then \eqref{ineq_tr_1.h} gives 
\begin{align}
\limsup_{s\to 0}
(2-\square u_s(x_{\rm max}))
\le 1+1=2.\label{ineq_tr_2.h}
\end{align}
If we take $x\in X$ arbitrarily, 
then we have 
\begin{align*}
e^{-N_s u_s(x)}(2-\square u_s(x))
\le e^{-N_s u_s(x_{\rm max})}(2-\square u_s(x_{\rm max})),
\end{align*}
consequently we have
\begin{align}
(2-\square u_s(x))
&\le e^{N_s (u_s(x)-u_s(x_{\rm max}))}(2-\square u_s(x_{\rm max}))\notag\\
&\le e^{2N_s \| u_s\|_{L^\infty}}(2-\square u_s(x_{\rm max}))\label{ineq_tr_3.h}.
\end{align}
By Facts \ref{estimate_in_GW.h}, 
\ref{C^0_GW.h} and 
by the definition of $N_s$ we have 
\begin{align*}
\lim_{s\to 0}N_s\| u_s\|_{L^\infty}
&\le \lim_{s\to 0} D_7\max\left\{ \frac{\|R_{\eta_s}\|_{C^0}}{s},1\right\}
s^{-5}e^{-D_2/s}\\
&\le \lim_{s\to 0} 
D_7\max\left\{ D_6s^{-2}\log s^{-1},1\right\}
s^{-5}e^{-D_2/s}\\
&\le \lim_{s\to 0} 
D_7D_6s^{-7}\log s^{-1}
e^{-D_2/s} = 0.
\end{align*}
Therefore, combining 
\eqref{ineq_tr_2.h} with 
\eqref{ineq_tr_3.h}, we have
\begin{align*}
\limsup_{s\to 0}\left\{
\sup_X
\left(2-\square u_s\right)\right\}
\le 2. 
\end{align*}

Next we fix a point $x\in X$ 
and take a coordinate $z^1,z^2$ around $x$ 
such that 
\begin{align*}
\eta_s|_x = \sqrt{-1}\sum_{i,j}\delta_{ij}dz^i_x\wedge
d\bar{z}^j_x
\end{align*}
and we put 
\begin{align*}
\left( \eta_s+\sqrt{-1}\del\delb u_s\right)|_x = \sqrt{-1}\sum_{i,j}A_{ij}dz^i_x\wedge
d\bar{z}^j_x,\quad 
A=(A_{ij})_{i,j}.
\end{align*}
Then we can see 
\begin{align*}
{\rm tr}(A)=2-\square u_s(x),\quad 
\det(A)=e^{\mathcal{F}_s(x)}
\end{align*}
and 
\begin{align*}
0\le \limsup_{s\to 0}\left\{\sup_X{\rm tr}(A)\right\}\le 2,\quad
\lim_{s\to 0}\sup_X|\det(A)-1|=0. 
\end{align*}
Let $\lambda_1$ and 
$\lambda_2$ be the eigenvalues of 
$A$. Since ${\rm tr}(A)>0$, 
we have 
\begin{align*}
\limsup_{s\to 0} \sup_X 
|\lambda_1-\lambda_2|^2
&= \limsup_{s\to 0} \left\{ 
\sup_X\left\{ 
(\lambda_1+\lambda_2)^2 -4\lambda_1\lambda_2
\right\}\right\}\\
&\le \limsup_{s\to 0} 
\sup_X\left\{ 
{\rm tr}(A)^2-4\right\} \\
&\quad\quad +4
\limsup_{s\to 0} \left\{ 
\sup_X|\det(A)-1|
\right\}\\
&\le 0,
\end{align*}
hence 
\begin{align*}
\lim_{s\to 0} \sup_X|\lambda_1-1|
= \lim_{s\to 0} \sup_X|\lambda_2-1|=1,
\end{align*}
thus we have the assertion. 
\end{proof}

\subsection{Proof of 
Theorem \ref{thm_pmGH.h}}\label{subsec_thm_4_1.h}
Here we prove 
Theorem \ref{thm_pmGH.h} 
by assuming some results 
on the standard 
semi-flat metrics and 
the Ooguri-Vafa metrics. 
Let $(X,\omega_1,\omega_2,\omega_{3,s},g_s)$, $\mu\colon X\to \bbP^1$ and $(L,h,\nabla)$ 
be as in Subsection 
\ref{subsec_main_results.h}.

Let $\eta_s\in[\omega_{3,s}]$ 
be the 
K\"ahler form obtained by 
Fact \ref{fact alm ric flat.h} 
and denote by $g_s'$ the 
K\"ahler metric of $\eta_s$. 
By Yau's theorem, 
there is a solution $u_s$ 
of \eqref{MAeq_1.h} and \eqref{MAeq_2.h}, and we can see 
$\omega_{3,s}=\eta_s+\sqrt{-1}\del\delb u_s$ 
by the uniqueness of the 
solution. 
Therefore, by 
Lemma \ref{lem_C^2.h}, 
there are constants 
$C_s\ge 1$ with 
$\lim_{s\to 0}C_s=1$ 
such that 
\begin{align}
    C_s^{-1}g_s'
    \le g_s
    \le C_sg_s'.
    \label{ineq_CY_almost_CY.h}
\end{align}

\begin{lem}
For any $b_{\mathbf{q}}\in {\rm Crt}$, 
there are 
an open neighborhood 
$W$ and $\gamma\in\Omega^1(\mu^{-1}(W))$ such that 
the triple $(b=b_{\mathbf{q}},W,\gamma)$ 
satisfies 
$(\star \ref{star 1}$-$\ref{star 4})$. 
\label{lem_star1_4.h}
\end{lem}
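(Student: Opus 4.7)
The plan is to take $W$ to be the Ooguri-Vafa coordinate disk $W_2^{\mathbf{q}}$ around $b_{\mathbf{q}}$ from Subsection \ref{subsec_ARFK.h} (shrinking if necessary), so that under the identification of Fact \ref{fact alm ric flat.h} the set $U := \mu^{-1}(W)$ is a neighborhood of the $I_1$ singular fiber in the Ooguri-Vafa model, and verify each of the conditions $(\star \ref{star 1},\ref{star 2},\ref{star 4})$ in turn.

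For the topological conditions, I would use that $W$ is a contractible disk and $\mu$ is proper, so $U$ deformation retracts onto the singular fiber $X_b = \mu^{-1}(b_{\mathbf{q}})$, which is a nodal rational curve homotopy equivalent to $S^2 \vee S^1$. This gives $H^1(U,\Z) \cong H^1(X_b,\Z) \cong \Z$ and $H^2(U,\Z) \cong H^2(X_b,\Z) \cong \Z$, the latter generated by the fundamental class of $X_b$. Condition $(\star \ref{star 1})$ follows (read as asserting the induced map $H^1(U,\Z)\otimes\R \to H^1(X_b,\R)$ is an isomorphism). For $(\star \ref{star 2})$, the Lagrangian condition $\omega_1|_{X_b^{\rm sm}} = 0$ gives $\int_{X_b}\omega_1 = 0$, so $c_1(L|_U) = [\omega_1|_U]/(2\pi) = 0$ in $H^2(U,\Z)$, and $L|_U$ is trivial.

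For $(\star \ref{star 4})$ I would first use $[\omega_1|_U]=0$ to pick some $\gamma \in \Omega^1(U)$ with $d\gamma = \omega_1|_U$. Then I take $e_{1,w}$ to be the vanishing cycle in $X_w$ (the $S^1$ fibre of the monopole bundle in Ooguri-Vafa coordinates, collapsing to the node as $w\to b$) and $e_{2,w}$ a cycle whose image generates $H_1(U,\Z) \cong \Z$ (e.g.\ the $u_3$-circle). The Picard-Lefschetz monodromy around $b_{\mathbf{q}}$ shifts $e_{2,w}$ by a multiple of $e_{1,w}$, but $(\iota_w)_*(e_{1,w})=0$ in $H_1(U,\Z)$, so both $(\iota_w)_*(e_{i,w})$ are independent of $w$. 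Since $e_{1,w}$ bounds a disk $\Sigma_w \subset U$, Stokes gives $\Psi_1(w) = \int_{\Sigma_w}\omega_1$, which is well-defined (the ambiguity $\int_{X_b}\omega_1$ vanishes), continuous, and satisfies $\Psi_1(b)=0$; the monodromy ambiguity of $\Psi_2$ is an integer multiple of $\Psi_1$, so $\Psi_2$ is single-valued on $\{\Psi_1 = 0\}$ and in particular at $b$. Using $(\star \ref{star 1})$, the image of $\iota_b^*\colon H^1(U,\R) \to H^1(X_b,\R)$ is surjective, so adding an appropriate closed $1$-form on $U$ to $\gamma$ allows us to arrange $[\gamma|_{X_b}]=0$ and hence $\Psi_2(b)=0$.

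The remaining and main step is to show $b$ is isolated in $\{\Psi_1 = \Psi_2 = 0\}$. The plan is to use the explicit Ooguri-Vafa geometry: the map $\mu_{\mathbf{q}}=u_1+\sqrt{-1}u_2$ is the complex moment map for $(\omega_1,\omega_2)$, and by the standard action-angle dictionary for Lagrangian torus fibrations, $(u_1,u_2)$ play the role of the action coordinates dual to the cycles $(e_1,e_2)$. Choosing $\gamma$ of the form $u_1\alpha + \tilde\gamma$ and computing the periods of $e_{1,w}$ and $e_{2,w}$ against this primitive, one should obtain $\Psi_i(w) = 2\pi u_i(w) + \text{(constants killed by the normalization above)}$, so that $(\Psi_1,\Psi_2)$ is, up to higher-order corrections, the map $w\mapsto 2\pi w$. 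This is a local diffeomorphism at $b$, and isolation follows. The hard part is precisely this final computation: producing the correct primitive in the Ooguri-Vafa model (using the explicit Green-type potential $V_{s,\mathbf{q}}$ and the normalization constant $a_s$) and verifying that the Jacobian of $(\Psi_1, \Psi_2)$ at $b$ is nondegenerate after accounting for the monodromy-invariant normalization of $\gamma$.
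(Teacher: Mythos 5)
Your verification of $(\star\ref{star 1})$ and $(\star\ref{star 2})$ (deformation retraction of $U$ onto the nodal fiber, $[\omega_1|_U]=0$ from the Lagrangian condition, hence $c_1(L|_U)=0$ and $L|_U$ trivial) is the same as the paper's, and your overall plan for $(\star\ref{star 4})$ --- pass to the Ooguri--Vafa model and compute the periods of an explicit primitive over the vanishing cycle $e_{1,w}$ and the $u_3$-circle $e_{2,w}$ --- is also the paper's route (Section \ref{sec_sing.h}). The genuine gap is in the step you yourself single out as the hard part: the expectation that $(u_1,u_2)$ are action coordinates dual to $(e_1,e_2)$, so that $\Psi_i(w)=2\pi u_i(w)+\mathrm{const}$ and $(\Psi_1,\Psi_2)$ is a local diffeomorphism at $b$, is false for an $I_1$ fiber. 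Only $u_1$ is an action coordinate; the period over $e_{2,y}$ of the primitive $\gamma_s=J_{1,s}d\phi_s$ (Lemmas \ref{lem_potential_1.h} and \ref{lem_holonomy.h}) is the function $\mathcal{H}(u_1,u_2)$ with leading term $-\tfrac{u_2}{2\pi}\log\sqrt{u_1^2+u_2^2}$, reflecting the monodromy period $\tau_2=\tfrac{1}{2\pi\sqrt{-1}}\log y+\cdots$ of \eqref{eq_period.h}. No choice of $\gamma$ can repair this: changing $\gamma$ by a closed $1$-form only shifts $\Psi_2$ by a constant, so the logarithmic singularity of the integral affine structure at $b$ is intrinsic. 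Consequently $\Psi_2$ is not differentiable at $b$, there is no nondegenerate Jacobian to invoke, and the ``local diffeomorphism'' argument cannot be carried out. The isolation of $b$ in $\{\Psi_1=\Psi_2=0\}$ must instead be read off from the explicit formula: on $\{\Psi_1=0\}=\{u_1=0\}$ one has $\partial_{u_2}\mathcal{H}(0,u_2)=-\tfrac{1}{2\pi}\log|u_2|+h(0,u_2)>0$ for small $|u_2|$, so $t\mapsto\mathcal{H}(0,t)$ is strictly increasing near $0$ and vanishes only at $0$; this monotonicity, not a Jacobian condition, is what Lemma \ref{lem_holonomy.h} uses.

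A secondary, fixable point: $(\star\ref{star 4})$ demands that $\Psi_2$ be continuous on all of $W$, whereas your monodromy discussion only yields single-valuedness on $\{\Psi_1=0\}$. Because of the Picard--Lefschetz monodromy there is no continuous choice of $e_{2,w}$ over $W\setminus\{b\}$; the paper fixes the branch $\pi/2\le{\rm Im}(\log y)<5\pi/2$, so that the jump of $\Psi_2$ across the cut is an integer multiple of $\Psi_1=u_1$, which vanishes along the cut $\{u_1=0,\ u_2>0\}$ (this is the role of the term $u_1{\rm Im}(\mathcal{V}(y))$ in $\mathcal{H}$). Your observation that the ambiguity is a multiple of $\Psi_1$ is exactly the right ingredient, but you need to place the branch cut inside $\{\Psi_1=0\}$ and say so in order to produce a genuinely continuous $\Psi_2$ on $W$.
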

The above lemma 
will be shown in 
Section \ref{sec_sing.h}. 
\begin{lem}
For every positive 
integer $k$, $BS_k\subset \bbP^1$ is a finite set. 
\label{lem_BS_finite.h}
\end{lem}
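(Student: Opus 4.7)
The plan is to prove finiteness by showing that $BS_k$ has no accumulation points in the compact space $\bbP^1$. The key tool already available is Lemma \ref{lem_BS_disc.h}, which asserts that if $(b,W,\gamma)$ satisfies $(\star\ref{star 1},\ref{star 2},\ref{star 4})$, then $b$ is not an accumulation point of $BS_m\cap W$. Thus I only need to produce, for every $b\in\bbP^1$, some neighborhood $W$ and some $\gamma\in\Omega^1(\mu^{-1}(W))$ with $\omega_1|_{\mu^{-1}(W)}=d\gamma$ realizing these three conditions.

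First I would treat the two types of points separately. For a critical value $b=b_\mathbf{q}\in\Crt$ (there are only $24$ of them), Lemma \ref{lem_star1_4.h} directly provides such $W$ and $\gamma$. For a regular value $b\in\bbP^1\setminus\Crt$, I would invoke the Liouville--Arnold theorem, exactly as noted in the remark following the definitions $(\star\ref{star 1}$-$\ref{star 9})$: on a sufficiently small neighborhood $W\subset\bbP^1\setminus\Crt$ of $b$ one has a smooth family of action--angle coordinates, so $\mu^{-1}(W)$ is $(L|_{\mu^{-1}(W)}$-trivially) diffeomorphic to $W\times T^2$, which makes $\iota_b^*\colon H^1(\mu^{-1}(W),\Z)\to H^1(X_b,\R)$ an isomorphism ($(\star\ref{star 1})$), kills topology of $L|_{\mu^{-1}(W)}$ after further shrinking since $W$ is contractible ($(\star\ref{star 2})$), and provides continuously varying generating cycles $e_{i,w}$. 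The remaining content of $(\star\ref{star 4})$, namely that $b$ is isolated in $\{w\in W:\Psi_1(w)=\Psi_2(w)=0\}$, follows because the integrals $\Psi_i(w)=\int_{e_{i,w}}\gamma$ are the action coordinates (up to choice of primitive), so the simultaneous vanishing locus is a single point after shrinking $W$; here $\gamma$ can be chosen as any local primitive of $\omega_1$, which exists by the Poincar\'e lemma on a contractible $W$.

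With this done, I would conclude by a compactness argument. Lemma \ref{lem_BS_disc.h} applied to each $b\in\bbP^1$ produces an open neighborhood $W_b$ such that $W_b\cap BS_k\subset\{b\}$. The family $\{W_b\}_{b\in\bbP^1}$ covers $\bbP^1$, which is compact, hence admits a finite subcover $W_{b_1},\ldots,W_{b_N}$. Since
\begin{align*}
BS_k=BS_k\cap\bbP^1\subset\bigcup_{j=1}^N\bigl(BS_k\cap W_{b_j}\bigr)\subset\{b_1,\ldots,b_N\},
\end{align*}
we get $\#BS_k<\infty$.

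The only place that requires any real care is the verification of the nondegeneracy half of $(\star\ref{star 4})$ at regular values, namely that $b$ is isolated in the simultaneous zero locus of $\Psi_1,\Psi_2$. This is not automatic from Liouville--Arnold alone: one needs that the map $w\mapsto(\Psi_1(w),\Psi_2(w))$ is a local diffeomorphism near $b$, which is equivalent to the nondegeneracy of the action coordinates. Over a regular value of a Lagrangian fibration with a K\"ahler form $\omega_1$ that is nondegenerate, the differentials $d\Psi_i$ are linearly independent because $\{\Psi_1,\Psi_2\}$ form local coordinates on the base (action variables), so after shrinking $W$ the map is injective and $b$ is indeed isolated in the common zero set. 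At critical values this step is instead subsumed into Lemma \ref{lem_star1_4.h}, whose proof is deferred to Section \ref{sec_sing.h}.
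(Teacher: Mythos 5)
Your proposal is correct and follows essentially the same route as the paper: at regular values the remark after the $(\star)$ conditions (Liouville--Arnold) gives $(\star\ref{star 1},\ref{star 2},\ref{star 4})$, at the $24$ critical values Lemma \ref{lem_star1_4.h} does, so Lemma \ref{lem_BS_disc.h} shows no point of $\bbP^1$ is an accumulation point of $BS_k$, and compactness of $\bbP^1$ finishes the argument. Your extra discussion of the nondegeneracy of the action map and of the triviality of $L|_{\mu^{-1}(W)}$ (which really comes from the Lagrangian condition killing $c_1$ on the fiber, not from contractibility of $W$ alone) just fleshes out what the paper leaves to the cited remark.
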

\begin{proof}
Note that no points in 
$\bbP^1\setminus \Crt$ 
are accumulation points of $BS_k$. 
Therefore, 
by Lemmas 
\ref{lem_BS_disc.h} and  
\ref{lem_star1_4.h}, 
none of $b\in \bbP^1$ is an 
accumulation point of $BS_k$. 
Since $\bbP^1$ is compact, 
$BS_k$ is finite. 
\end{proof}

Fix $k$ and 
let $\bbP^1=\bigcup_a W_a$ 
be an open cover as in 
Fact \ref{fact alm ric flat.h}. Now suppose 
that we have the assumption of 
Lemma \ref{lem_BS_finite.h}, 
then $BS_k$ is finite. 
By taking the refinement 
of $\{ W_a\}_a$ if necessary, 
we may suppose that 
there is a map 
$b\mapsto a_b$ for 
$b\in BS_k$ 
such that 
$b\in W_{a_b}$, 
$W_{a_b}\cap W_{a_{b'}}=\emptyset$ 
if $b\neq b'$, 
$W_{a_b}\cap \bigcup _{\mathbf{q}}W_2^{\mathbf{q}}
=\emptyset$ if $b\notin {\rm Crt}$ and 
$W_{a_b}\subset W_1^{\mathbf{q}}$ if $b=b_{\mathbf{q}}$.
Then by Fact \ref{fact alm ric flat.h},
$g_s'|_{\mu^{-1}(W_{a_b})}$ 
is isometric to 
either the standard semi-flat 
metric or the 
Ooguri-Vafa metric. 
If $b\in BS_k$, then there is 
the unique positive integer 
$m$ such that $k/m\in \Z$ 
and $b\in BS_m^{\rm str}$.

Lemma 
\ref{lem_approx1.h} 
and 
the next proposition 
give 
Theorem \ref{thm_pmGH.h}. 

\begin{prop}
Let $b\in BS_k$ 
and $W_{a_b}$ be as above. 
Let $(L,h,\nabla)$ be a 
prequantum line bundle on 
$(X,\omega)$ and put 
$\bbS=\bbS(L,h)$. 
Let $q\in\mu^{-1}(b)$, 
$p\in\pi^{-1}(q)$, 
$m$ be the 
positive integer 
such that 
$b\in BS_m^{\rm str}$ 
and denote by $\hat{g}_s$ be the metric 
on $\bbS$ defined by \eqref{connection_metric.h}. 
Then for any $R>0$ 
there is $s_R>0$ such that 
$B_{g_s'}(q,R)\subset 
\mu^{-1}(W_{a_b})$ 
for any $0<s\le s_R$ and 
\begin{align*}
    \left( \bbS,d_{\hat{g}_s'},\frac{\nu_{\hat{g}_s'}}{s}, p\right)
    \SpmGH \left( S^1\times \R^2,\hat{d}_{0,m}, dtd\nu_{g_0},(1_{S^1},\mathbf{0}_{\R^2})\right)
\end{align*}
as $s\to 0$.
\label{prop_loc_conv.h}
\end{prop}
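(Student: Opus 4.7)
The plan is to derive Proposition \ref{prop_loc_conv.h} as an application of the convergence theorem stated after Definition \ref{dfn_conv_loc.h} (which packages Theorems \ref{thm_approx_map2.h} and \ref{thm_meas3.h}). That theorem reduces both the inclusion $B_{g_s'}(q,R) \subset \mu^{-1}(W_{a_b})$ and the desired $S^1$-equivariant pointed measured Gromov--Hausdorff convergence to verifying that $(g_s', s, b, W_{a_b}) \stackrel{s \to 0}{\rightarrow} (\R^2, g_0)$ in the sense of Definition \ref{dfn_conv_loc.h}. Thus the task is to construct, for each sufficiently large $R$ and each sufficiently small $s>0$, data $\zeta_{s,R}, \gamma_{s,R}, \sigma_{s,R}, \delta_{s,R}$ with $\sigma_{s,R}, \delta_{s,R} \to 0$ making the tuple $(g_s', b, W_{a_b}, R, \gamma_{s,R}, \zeta_{s,R}, \sigma_{s,R}, \delta_{s,R}, s)$ satisfy $(\star\ref{star 1}$--$\ref{star 9})$.

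By the choice of $W_{a_b}$ and the gluing construction of Fact \ref{fact alm ric flat.h}, the metric $g_s'|_{\mu^{-1}(W_{a_b})}$ is identified via the translation $T_{a_b}$ with either the standard semi-flat metric on $\mathcal{T}^*_{W_{a_b}}/\Lambda$ (if $b \notin \Crt$) or the Ooguri--Vafa metric on some $X_\mathbf{q}$ with $b = b_\mathbf{q}$ (if $b \in \Crt$). These two cases are the respective subjects of Sections \ref{sec_nonsing.h} and \ref{sec_sing.h}.

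In the nonsingular case, one uses the explicit period basis $\{\tau_1, \tau_2\}$ of $\Lambda$ to build $\zeta_{s,R}$ from the action coordinate on $W_{a_b}$, scaled by $s^{-1/2}$ so that $s \cdot \zeta_*\nu_B$ approximates $\nu_{g_0}$ as required in $(\star\ref{star 9})$. A primitive $\gamma_{s,R}$ of $\omega_1$ is chosen so that its periods along a $\Z$-basis of $H_1$ of the fibers vanish at $b$; since $b \in BS_m^{\rm str}$ and the semi-flat period functions are holomorphic and nonconstant on a deleted neighborhood, $b$ is isolated among common zeros, yielding $(\star\ref{star 4})$. The flat-torus structure of each fiber forces its diameter to be $O(\sqrt{s})$, giving $(\star\ref{star 8})$, while the intrinsic orthogonal splitting of the semi-flat metric makes $(\star\ref{star 5})$ (with $\sigma = 0$) direct.

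The main obstacle is the singular case, where $V_{s,\mathbf{q}}$ diverges logarithmically near the singular fiber and the Ooguri--Vafa metric degenerates. The topological conditions $(\star\ref{star 1}$--$\ref{star 4})$ follow from Lemma \ref{lem_star1_4.h}. The essential point is a delicate choice of $\sigma_{s,R} \to 0$: on $(\zeta_{s,R} \circ \mu)^{-1}(\mathcal{B}(3R) \setminus \mathcal{B}(\sigma_{s,R}))$ one uses the smooth periodic summand of $V_{s,\mathbf{q}}$ together with the explicit formulas for $\omega_{1,s,\mathbf{q}}$ and $\omega_{3,s,\mathbf{q}}$ to estimate $|\gamma_\perp|$, $|\gamma_f|$ and to compare $g_\perp$ with $(\zeta_{s,R} \circ \mu)^* g_0$, yielding $(\star\ref{star 5})$ with small $\delta_{s,R}$; on $(\zeta_{s,R} \circ \mu)^{-1}(\overline{\mathcal{B}(\sigma_{s,R})})$, a direct analysis using the explicit form of the Ooguri--Vafa metric near its $I_1$ singular fiber shows that the diameter is small, giving $(\star\ref{star 8})$. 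This balance requires $\sigma_{s,R}$ to shrink but not too fast relative to $s$, and is the principal technical content of Section \ref{sec_sing.h}.
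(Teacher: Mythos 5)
Your overall architecture is the right one: reduce the proposition to Definition \ref{dfn_conv_loc.h} and the theorem following it, and verify the conditions $(\star\ref{star 1}$--$\ref{star 9})$ for the two local models singled out by Fact \ref{fact alm ric flat.h}. For $b\in\Crt$ this is exactly what the paper does in Section \ref{sec_sing.h}. For $b\notin\Crt$ your route differs from the paper's actual argument: the paper does not check the $(\star)$-conditions for the semi-flat metric but instead applies \cite[Theorem 1.1]{hattori2019}, verifying its hypothesis $\spadesuit$ by computing a frame of $(1,0)$-forms for $J_{1,s}^{\rm SF}$ in action-angle coordinates and checking positive-definiteness of the relevant period matrix; the paper's remark at the end of Section \ref{sec_nonsing.h} confirms that your alternative (proving $(g_s^{\rm SF},1/s,0,B)\to(\R^2,g_0)$) is also viable. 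Two small corrections to your set-up: the constant in Definition \ref{dfn_conv_loc.h} must be $K_s=1/s$, not $s$ (the fibers have $\omega_{3,s}$-volume $s$, so $\nu_B=\mu_*\nu_{g_s}$ carries a factor of $s$ and one needs $s^{-1}\zeta_*\nu_B\approx\nu_{g_0}$, matching the measure $\nu_{\hat g_s'}/s$ in the statement); and $\sigma$ must be a positive constant, so ``$\sigma=0$'' in the nonsingular case should be ``$\sigma$ arbitrarily small''.

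The genuine gap is in the singular case, which is the heart of this proposition. You leave the choice of $\zeta_{s,R}$ unspecified and present the verification of $(\star\ref{star 5})$, $(\star\ref{star 9})$ as routine estimation from the explicit Ooguri--Vafa formulas, but the only concrete recipe you give anywhere --- action coordinates rescaled by $s^{-1/2}$ --- does not work near an $I_1$ fiber: the base metric there is $g_\perp\approx V_s^{\rm sf}|dy|^2$ with $V_s^{\rm sf}=(\log|y|^{-1})/(2\pi s)+h/s$, and in rescaled action coordinates this becomes anisotropic with factors $\log|y|^{-1}$ and $(\log|y|^{-1})^{-1}$ that diverge/vanish as $s\to0$ on the whole region $\zeta^{-1}(\mathcal{B}(3R))$, so the metric half of $(\star\ref{star 5})$ fails even though the measure condition $(\star\ref{star 9})$ would hold. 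The missing idea is the logarithmic radial rescaling $\zeta_s(y)=\sqrt{\log|y|^{-1}/(2\pi s)}\,y$, together with the Gross--Wilson comparison of $V_s$ with its fiberwise average $V_s^{\rm sf}$ on $|y|\ge s/\pi$ (which dictates $\sigma_s\sim\sqrt{s\log s^{-1}}$), the resulting identities $g_\perp\approx\zeta_s^*|d\xi|^2$ and $|\gamma_f|^2\approx V_s^{\rm sf}|y|^2\approx\zeta_s^*\mathbf{r}^2$, and the diameter bound $O(\sqrt{s\log s^{-1}})$ for the region inside $\sigma_s$. Without producing this map and these comparisons (Lemmas \ref{lem_est_met1.h}--\ref{lem_est_met3.h} and Propositions \ref{prop_met_est_OV.h}--\ref{prop_met_est_OV4.h} in the paper), the proposal does not yet constitute a proof of the critical-value case; citing Lemma \ref{lem_star1_4.h} only disposes of the topological conditions $(\star\ref{star 1}$--$\ref{star 4})$.
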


Thus, to 
prove Theorem \ref{thm_pmGH.h}, 
it suffices to show Lemma \ref{lem_star1_4.h} 
and Proposition \ref{prop_loc_conv.h}.

\section{Neighborhood of nonsingular fibers}\label{sec_nonsing.h}
In this section we prove 
Proposition 
\ref{prop_loc_conv.h} 
for $b\in BS_m^{\rm str}\setminus {\rm Crt}$. 
To show it, 
we can reduce the 
argument to the local model. 
Let $B\subset \C$ 
be an open neighborhood of 
the origin $0\in\C$ 
and $\Lambda\subset \mathcal{T}_B^*$ be a holomorphically varying family 
of lattices. 
We take $B$ sufficiently small 
so that $\Lambda$ is given 
by 
\begin{align*}
    \Lambda_y={\rm span}_\Z
    \left\{ \tau_1(y)dy,\tau_2(y)dy\right\}
\end{align*}
for some holomorphic functions 
$\tau_1,\tau_2$ on $B$. 
By changing the holomorphic coordinate, 
we may suppose $\tau_1\equiv 1$ and 
${\rm Im}(\tau_2)>0$. 
Let $\eta_s^{\rm SF}$ 
be the standard semi-flat K\"ahler 
form. 
Now, 
\begin{align*}
\omega_1^{\rm SF}:={\rm Re}(\Theta_{\rm can}),\quad
\omega_2^{\rm SF}:={\rm Im}(\Theta_{\rm can}),\quad
\omega_{3,s}^{\rm SF}:=\eta_s^{\rm SF}
\end{align*}
form a \hK structure on 
$X_{\rm SF}:=\mathcal{T}_B^*/\Lambda$. 
Denote by 
$(g_s^{\rm SF},J_{1,s}^{\rm SF},J_{2,s}^{\rm SF},J_3^{\rm SF})$ the 
induced \hK structures. 
Let $(\pi\colon L\to X_{\rm SF},h,\nabla)$ be a 
prequantum line bundle on 
$(X_{\rm SF},\omega_1^{\rm SF})$ 
such that $0\in BS_m^{\rm str}$. 
We identify $\mu^{-1}(W)$ 
with $X_{\rm SF}$ 
and identify $b$ 
with $0\in B$. 
Now, we apply 
\cite[Theorem 1.1]{hattori2019}. 
To apply it, we check 
that our situation 
satisfies the 
following assumptions 
in the theorem; 
\begin{itemize}
    \item[({\rm i})] ${\rm Ric}_{g_s}$ have the uniform lower bound,
    \item[({\rm ii})] the family 
    $\{ J_{1,s}^{\rm SF}\}_s$ satisfies 
    $\spadesuit$ in \cite{hattori2019}.
\end{itemize}

The condition $({\rm i})$ is 
automatically satisfied 
since $g_s$ are Ricci-flat metrics. 

Next we check $({\rm ii})$. 
Let $Y(y)$ be a holomorphic function 
on $B$ such that $\frac{\del Y}{\del y}=\tau_2(y)$ and put 
$Y=Y_1+\sqrt{-1}Y_2$, 
$y=y_1+\sqrt{-1}y_2$ 
for some real valued functions $Y_1,Y_2,y_1,y_2$. 
Moreover, define real valued functions $v_1,v_2$ by 
$xdy=-(v_1+v_2\tau_2(y))dy\in\mathcal{T}_B^*$. 
Then we have 
\begin{align*}
    {\rm Re}(\Theta_{\rm can})
    =dy_1\wedge dv_1+dY_1\wedge dv_2.
\end{align*}
By using the action-angle coordinate 
$(y_1,Y_1,v_1.v_2)$, 
we describe a frame of $(1,0)$-forms 
with respect to $J_{1,s}^{\rm SF}$. 
If $dy_1+A_{11}dv_1+A_{12}dv_2$ 
and $dY_1+A_{21}dv_1+A_{22}dv_2$ 
are $(1,0)$-forms, 
then we have 
$({\rm ii})$ iff 
the matrix 
\[ 
\left.\frac{d}{ds}\right|_{s=0}
{\rm Im}
\left( 
\begin{array}{cc}
A_{11} & A_{12}\\
A_{21} & A_{22}
\end{array}
\right)
\]
is positive definite.

Let $x,y,\mathbf{W},\mathbf{b}$ be as in Subsection 
\ref{subsec_SF.h}. 
Put 
\begin{align*}
    \mathbf{a}_{\rm v}:=\sqrt{\mathbf{W}}(dx+\mathbf{b}dy),\quad
    \mathbf{a}_{\rm h}:=\sqrt{\mathbf{W}}^{-1}dy.
\end{align*}
Then we have 
\begin{align*}
    \eta_s^{\rm SF}
    &=\frac{\sqrt{-1}}{2}\left(
    \mathbf{a}_{\rm v}\wedge \bar{\mathbf{a}}_{\rm v} 
    + \mathbf{a}_{\rm h}\wedge \bar{\mathbf{a}}_{\rm h}\right),\quad
    \Theta_{\rm can}
    =\mathbf{a}_{\rm v}\wedge \mathbf{a}_{\rm h},
\end{align*}
hence 
\begin{align*}
    {\rm Im}(\Theta_{\rm can})+\sqrt{-1}\eta_s^{\rm SF}
    &=\frac{1}{2\sqrt{-1}}
    \left(
    \mathbf{a}_{\rm v}\wedge \mathbf{a}_{\rm h} 
    - \bar{\mathbf{a}}_{\rm v}\wedge \bar{\mathbf{a}}_{\rm h}\right)
    -\frac{1}{2}\left(
    \mathbf{a}_{\rm v}\wedge \bar{\mathbf{a}}_{\rm v} 
    + \mathbf{a}_{\rm h}\wedge \bar{\mathbf{a}}_{\rm h}\right)\\
    &=\frac{-1}{2}\left( \mathbf{a}_{\rm v}+\sqrt{-1}\bar{\mathbf{a}}_{\rm h}\right)
    \wedge \left( \bar{\mathbf{a}}_{\rm v}+\sqrt{-1}\mathbf{a}_{\rm h}\right).
\end{align*}
It implies that 
$\mathbf{a}_{\rm v}+\sqrt{-1}\bar{\mathbf{a}}_{\rm h}, \bar{\mathbf{a}}_{\rm v}+\sqrt{-1}\mathbf{a}_{\rm h}$ 
form a frame of $\Omega^{1,0}_{J_{1,s}^{\rm SF}}(X_{\rm SF})$.

Since 
\begin{align*}
    \mathbf{W}=\frac{s}{{\rm Im}(\tau_2)},\quad 
    \mathbf{b}= -\frac{{\rm Im}(x)}{{\rm Im}(\tau_2)}\frac{\del \tau_2}{\del y},
\end{align*}
we have 
\begin{align*}
    \mathbf{a}_{\rm v}
    &=-\sqrt{\mathbf{W}}\left(dv_1 + \tau_2dv_2
    \right),\\
    \mathbf{a}_{\rm h}
    &=\sqrt{\frac{-1}{s{\rm Im}(\tau_2)}}
    \left( 
    \bar{\tau_2} dy_1
    - dY_1\right)
\end{align*}
and 
\begin{align*}
    \mathbf{a}_{\rm v}+\sqrt{-1}\bar{\mathbf{a}}_{\rm h}
    &=-\sqrt{\frac{s}{{\rm Im}(\tau_2)}}
    \left( dv_1+\tau_2dv_2 \right)
    + \sqrt{\frac{1}{s{\rm Im}(\tau_2)}}
    \left( \tau_2dy_1 - dY_1 \right),\\
    \bar{\mathbf{a}}_{\rm v}+\sqrt{-1}\mathbf{a}_{\rm h}
    &=-\sqrt{\frac{s}{{\rm Im}(\tau_2)}}
    \left( dv_1+\bar{\tau}_2dv_2 \right)
    + \sqrt{\frac{1}{s{\rm Im}(\tau_2)}}
    \left( -\bar{\tau}_2dy_1 + dY_1 \right).
\end{align*}
Therefore, 
we can take 
\begin{align*}
    & dy_1 
    +\frac{\sqrt{-1}s}{{\rm Im}(\tau_2)}
    \left( dv_1+{\rm Re}(\tau_2)dv_2 \right),\\
    & dY_1 
    + \frac{\sqrt{-1}s}{{\rm Im}(\tau_2)}
    \left( {\rm Re}(\tau_2) dv_1 
    +|\tau_2|^2dv_2\right),
\end{align*}
as a frame of $\Omega^{1,0}_{J_{1,s}^{\rm SF}}(X_{\rm SF})$. 
Since the following symmetric matrix 
\[ 
\left(
\begin{array}{cc}
1 & {\rm Re}(\tau_2) \\
{\rm Re}(\tau_2) & |\tau_2|^2
\end{array}
\right)
\]
is positive definite, hence we have 
$({\rm ii})$. 
Thus we obtained Proposition 
\ref{prop_loc_conv.h} for 
$b\notin {\rm Crt}$. 

\begin{rem}
\normalfont
We can show 
Proposition 
\ref{prop_loc_conv.h} 
also by proving 
\begin{align*}
    \left(g_s^{\rm SF},\frac{1}{s},
    0,B\right)
    \to (\R^2,g_0)
\end{align*}
as $s\to 0$, 
where $g_s^{\rm SF}$ 
is the K\"ahler metric 
of $\eta_s^{\rm SF}$.
\end{rem}

\section{Neighborhood of singular fibers 
of Kodaira type $I_1$}\label{sec_sing.h}

In this section we show 
Lemma \ref{lem_star1_4.h} 
and Proposition 
\ref{prop_loc_conv.h} 
for $b\in BS_m^{\rm str}\cap {\rm Crt}$. 
Let $s>0$, 
$X_{\rm OV}$ be as in 
Subsection \ref{subsec_OV.h}, 
and put 
\begin{align*}
    V_s(u) &:=\frac{1}{4\pi}\sum_{n \in \Z^{\times}}
\left( \frac{1}{\sqrt{u_1^2+u_2^2+(u_3-sn)^2}}-\frac{1}{s|n|}
\right) 
+ \frac{1}{4\pi|u|}\\
&\quad\quad 
+ a_s
+\frac{h(u_1,u_2)}{s},
\end{align*}
for some harmonic function 
$h$. 
Here, $a_s$ is the 
constant defined in Subsection \ref{subsec_ARFK.h}. 
We fix a sufficiently small 
positive constant
$\delta_0>0$ 
so that 
\begin{align}
    V_s&>0 \mbox{ on }\mathcal{U}(\delta_0,s),\label{ineq_r1}\\
    \max_{D(\delta_0)}|h(u_1,u_2)|&\le \frac{1}{10\pi}\log
    \delta_0^{-1},\label{ineq_r2}\\
    \delta_0&\le \frac{1}{2}.\label{ineq_r3}
\end{align}
Define the \hK structure on 
$X_{\rm OV}$ by 
\begin{align*}
\omega_{1,s}& = du_1 \wedge \frac{\alpha}{2\pi} + V_s du_2 \wedge du_3, \\
\omega_{2,s}& = du_2 \wedge \frac{\alpha}{2\pi} + V_s du_3 \wedge du_1, \\
\omega_{3,s}& = du_3 \wedge \frac{\alpha}{2\pi} + V_s du_1 \wedge du_2.
\end{align*}
Let $\mu_{\rm OV}$ and 
$0_{\rm OV}$ be as in 
Subsection \ref{subsec_OV.h}. 
Denote by $g_s^{\rm OV}$ the 
\hK metric associated with 
$(\omega_{1,s},\omega_{2,s},\omega_{3,s})$.

To prove 
Lemma \ref{lem_star1_4.h} 
and Proposition 
\ref{prop_loc_conv.h} 
for $b\in BS_m^{\rm str}\cap {\rm Crt}$, 
it suffices to show that 
\begin{align*}
\left( g_s^{\rm OV},\frac{1}{s},0,D(\delta_0)\right) \stackrel{s\to 0}{\rightarrow} (\R^2,g_0)
\end{align*}
in the sense of Definition \ref{dfn_conv_loc.h}, 
where $g_0$ is the Euclidean 
metric. 
Here, we identify $b$ with the origin $0\in\C$ and we regard 
$D(\delta_0)$ the neighborhood of $b$.

First of all 
we determine 
the prequantum line 
bundle on $X_{\rm OV}$. 
Let $(\pi\colon L\to X_{\rm OV},h,\nabla)$ be a 
prequantum line bundle 
on $(X_{\rm OV},\omega_{1,s})$. 
Since there is a 
deformation retraction 
$X_{\rm OV}$ onto 
$\mu_{\rm OV}^{-1}(0)$, 
the inclusion map 
$\mu_{\rm OV}^{-1}(0)\subset X_{\rm OV}$ induces 
an isomorphism 
$H^2(X_{\rm OV},\Z)\cong
H^2(\mu_{\rm OV}^{-1}(0),\Z)$. 
Since $\mu_{\rm OV}^{-1}(0)$ is Lagrangian with respect to 
$\omega_{1,s}$, we can see 
$[\omega_{1,s}] = 0\in H^2(X_{\rm OV},\R)$. 
Since 
$H^2(X_{\rm OV},\Z)$ is torsion-free, one can see 
\begin{align*}
    c_1(L)
    =\frac{\sqrt{-1}}{2\pi}[F^{\nabla}]
    =\frac{1}{2\pi}[\omega_{1,s}]
    =0\in H^2(X_{\rm OV},\Z),
\end{align*}
hence $L$ is a trivial bundle.

Next we determine $\gamma_s
\in\Omega^1(X_{\rm OV})$ 
such that 
$d\gamma_s=\omega_{1,s}$ 
and 
$(\star \ref{star 4})$ of  Subsection \ref{subsec_loc_met.h} 
is satisfied. 
Let $J_{1,s},J_{2,s},J_{3,s}$ 
be complex strucutures 
on $X_{\rm OV}$ associated with 
the \hK structure 
$(\omega_{1,s},\omega_{2,s},\omega_{3,s})$.

\begin{lem}
Let 
\begin{align*}
\phi(u_1,u_2,u_3)
= -\int_0^{u_1} t V_s(t,u_2,u_3)dt + \psi(u_2,u_3)
\end{align*}
for a function $\psi$ 
with 
$\frac{\del^2\psi}{\del u_2^2}
+ \frac{\del^2\psi}{\del u_3^2} =-V_s(0,u_2,u_3)$. 
Then we have $\omega_{1,s}=d J_{1,s}d\phi$. 
Here, we write 
$\phi=u^*\phi$ for the 
brevity, if there is no fear 
of confusion. 
\label{lem_potential_1.h}
\end{lem}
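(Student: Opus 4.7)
The plan is to verify the identity by direct computation in the Gibbons--Hawking coordinates on $X_{\rm OV}$. First, I would identify the action of $J_{1,s}^*$ on the coframe $\{du_1, du_2, du_3, \alpha/(2\pi)\}$. The hyper-K\"ahler metric has the form $g_s^{\rm OV} = V_s(du_1^2+du_2^2+du_3^2) + V_s^{-1}(\alpha/(2\pi))^2$, and from the relation $\omega_{1,s}(\cdot,\cdot)=g_s^{\rm OV}(J_{1,s}\cdot,\cdot)$ combined with the given expression for $\omega_{1,s}$, one reads off
\begin{align*}
J_{1,s}^* du_1 = -\tfrac{\alpha}{2\pi V_s},\quad J_{1,s}^* du_2 = -du_3,\quad J_{1,s}^* du_3 = du_2,\quad J_{1,s}^*\tfrac{\alpha}{2\pi}= V_s\, du_1.
\end{align*}

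Next I would compute $d\phi = -u_1 V_s\, du_1 + A\, du_2 + B\, du_3$, where $A := -\int_0^{u_1} t V_{s,u_2}\,dt + \psi_{u_2}$ and $B := -\int_0^{u_1} t V_{s,u_3}\,dt + \psi_{u_3}$, and then apply $J_{1,s}$ to obtain
\begin{align*}
J_{1,s}\, d\phi = \tfrac{u_1\,\alpha}{2\pi} - A\, du_3 + B\, du_2.
\end{align*}
Applying $d$ produces three kinds of terms: the piece $du_1\wedge\alpha/(2\pi)$, the curvature contribution $u_1\, d\alpha/(2\pi)$, and the derivatives $dA\wedge du_3$ and $dB\wedge du_2$. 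Here I would use the Gibbons--Hawking relation $d\alpha/(2\pi) = \star dV_s$ (i.e.\ $V_{s,u_1}du_2\wedge du_3 + V_{s,u_2}du_3\wedge du_1 + V_{s,u_3}du_1\wedge du_2$), which is built into the construction of $\alpha$ in Subsection \ref{subsec_OV.h}.

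A direct bookkeeping shows that the coefficients of $du_1\wedge du_2$ and $du_3\wedge du_1$ coming from the curvature term cancel exactly against the $A_{u_1}du_1\wedge du_3$ and $B_{u_1}du_1\wedge du_2$ contributions. The only nontrivial coefficient is that of $du_2\wedge du_3$, which equals
\begin{align*}
u_1 V_{s,u_1}(u_1,u_2,u_3) + \int_0^{u_1}t\bigl(V_{s,u_2u_2}+V_{s,u_3u_3}\bigr)dt - (\psi_{u_2u_2}+\psi_{u_3u_3}).
\end{align*}
The key simplification uses harmonicity of $V_s$ in $\R^3$, which replaces $V_{s,u_2u_2}+V_{s,u_3u_3}$ by $-V_{s,u_1u_1}$; integration by parts in $t$ then collapses the integral to $V_s(u_1,u_2,u_3) - V_s(0,u_2,u_3) - u_1 V_{s,u_1}(u_1,u_2,u_3)$. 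Combined with the prescribed equation $\psi_{u_2u_2}+\psi_{u_3u_3}=-V_s(0,u_2,u_3)$ for $\psi$, everything telescopes to $V_s(u_1,u_2,u_3)$, yielding exactly $\omega_{1,s}$.

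The main potential obstacle is bookkeeping with sign conventions: in particular getting the action of $J_{1,s}$ on the coframe correct (including the scaling by $V_s$ in the vertical direction), and matching the orientation convention in $d\alpha/(2\pi)=\star dV_s$. These are not deep issues, but they must be fixed at the outset, since the cancellation of the ``off-diagonal'' $du_i\wedge du_j$ terms depends on them delicately. Once they are fixed, the remainder is an elementary calculation using only the harmonicity of $V_s$, one integration by parts, and the defining equation for $\psi$.
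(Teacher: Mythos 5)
Your proposal is correct and follows essentially the same route as the paper: both verify $\omega_{1,s}=dJ_{1,s}d\phi$ by a direct coordinate computation, using the stated action of $J_{1,s}$ on the coframe $\{du_1,du_2,du_3,\alpha/2\pi\}$, the relation $d\alpha/2\pi=\star dV_s$, the harmonicity of $V_s$ together with an integration by parts in $u_1$, and the prescribed equation for $\psi$ (the paper packages this as $\del\phi/\del u_1=-u_1V_s$ and $\Delta_{\R^3}\phi=2V_s$, which is exactly your coefficient bookkeeping in compressed form). No gap; your more explicit cancellation of the $du_1\wedge du_2$ and $du_3\wedge du_1$ terms is just the expanded version of the paper's terse final step.
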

\begin{proof}
By the definition of $\phi$, 
we have 
\begin{align*}
\frac{\del \phi}{\del u_1} &= -u_1 V_s(x),\\
\Delta_{\R^3}\phi &= -\sum_{i=1}^3
\frac{\del^2\phi}{\del u_i^2}
= 2V_s.
\end{align*}
Since we have 
\begin{align*}
J_{1,s} \left(\frac{\alpha}{2\pi}\right) &= V_s du_1,\quad 
J_{1,s} (du_1)= 
- \frac{V_s^{-1}\alpha}{2\pi},\\
J_{1,s} (du_2)&= -du_3,\quad 
J_{1,s} (du_3)= du_2,
\end{align*}
then $dJ_{1,s}d\phi= \omega_{1,s}$. 
\end{proof}

Put 
\begin{align*}
\psi_s(u_2,u_3)
&:=-\frac{1}{4\pi}\sum_{n\in\Z^\times}
\left(
\sqrt{u_2^2+(u_3-sn)^2} - \frac{u_2^2}{2s|n|}
+\frac{|n|}{n}(u_3-sn)
\right) \\
&\quad\quad 
- \frac{\sqrt{u_2^2+u_3^2}}{4\pi} - \frac{a_s u_2^2}{2}
+\frac{u_3^2-u_2^2}{4\pi s}
- \int_0^{u_2}\left( 
\int_0^{\tilde{t}} \frac{h(0,t)}{s}dt\right)d\tilde{t}
\end{align*}
and 
\begin{align*}
\phi_s(u_1,u_2,u_3)
:= -\int_0^{u_1} t V_s(t,u_2,u_3)dt + \psi_s(u_2,u_3) 
\end{align*}
for $s>0$. 
Now we can check that 
\begin{itemize}
    \item the series 
    $\sum_{n\in\Z^\times}
\left(
\sqrt{u_2^2+(u_3-sn)^2} - \frac{u_2^2}{2s|n|}
+\frac{|n|}{n}(u_3-sn)
\right)$ converges absolutely,
    \item $\frac{\del^2\psi_s}{\del u_2^2}
+ \frac{\del^2\psi_s}{\del u_3^2}=-V_s(0,u_2,u_3)$, 
    \item $u^*\phi_s$ 
    is smooth on $\tilde{X}_{\rm OV}$, 
    \item $\phi_s(u_1,u_2,u_3-s)=\phi_s(u_1,u_2,u_3)$. 
\end{itemize}
Then 
\begin{align*}
    \gamma_s&:=
    J_{1,s}d\phi_s.
\end{align*}
descends to a smooth $1$-form on 
$X_{\rm OV}$. 

Next we give generators of 
$H_1(\mu_{\rm OV}^{-1}(y),\Z)$. 
To give it, we observe 
the $\Z$-action 
on the covering space 
$\tilde{X}_{\rm OV}$. 
Denote by $v$ the vector field 
on $\tilde{X}_{\rm OV}$ defined by 
$v_q:=\frac{d}{dt}|_{t=0}q\cdot e^{\sqrt{-1}t}$. 
For $q\in \tilde{X}_{\rm OV}$ 
and $z_1,z_2\in\R$, 
we define the $\C^\times$-action 
on $\tilde{X}_{\rm OV}$ by 
\begin{align}
q\cdot e^{-z_2+\sqrt{-1}z_1}
:=\exp\left( z_1v+z_2J_{3,s}v\right)(q),
\label{eq_C*action.h}
\end{align}
and we can see the action 
preserves $u_1,u_2$. 
Since the period 
of the elliptic fibration 
$\mu_{\rm OV}\colon X_{\rm OV}\to D(\delta_0)$ 
is given by 
\eqref{eq_period.h}, 
we have 
$q\cdot e^{2\pi \mathcal{V}(y)}$ and $q$ 
are in the same orbit of 
$\Z$-action 
if $u_1(q)+\sqrt{-1}u_2(q)=y\neq 0$, 
where 
\begin{align*}
    \mathcal{V}(y)
    :=\frac{\log y^{-1}}{2\pi}
    + \hat{h}(y)
\end{align*}
and 
$\hat{h}$ is 
a harmonic function on 
$D(\delta_0)$ with ${\rm Re}\hat{h}=h$. 

For $y=u_1+\sqrt{-1}u_2\in D(\delta_0)$, 
let $e_{1,y}$ be a 
$1$-cycle in $\mu_{\rm OV}^{-1}(y)$ given as the 
$S^1$-orbit. 
If $y=0$, then $e_{1,y}=0$. 

Next we construct a path 
$e_{2,y}\colon [0,s]\to \tilde{X}_{\rm OV}$
which generates $H_1(U,\Z)$. 
First of all we construct the following paths 
$e_{2,y}^{(1)}$ and $e_{2,y}^{(2)}$, 
then we obtain $e_{2,y}$ by connecting them.
Let $e_{2,y}^{(1)}\colon 
[0,s]\to \tilde{X}_{\rm OV}$ 
be the integral path of 
$-2\pi V_sJ_{3,s}(v)$ 
such that 
$e_{2,y}^{(1)}(0)=q$ 
for some $q$ with $u(q)=(u_1,u_2,0)$. 
Then $u(e_{2,y}^{(1)}(s))
= (u_1,u_2,s)$. 
Since we have 
\begin{align*}
\log |\lambda|=2\pi 
\int_{u_3(q)}^{u_3(q\cdot \lambda)}
V_s(u_1(q),u_2(q),\tau)d\tau
\end{align*}
for $\lambda\in \C^\times$, 
then 
$e_{2,y}^{(1)}(s)
= q\cdot e^{2\pi {\rm Re}(\mathcal{V}(y))}$. 
Define 
$e_{2,y}^{(2)}$ 
by 
\begin{align*}
    e_{2,y}^{(2)}(t)
    :=q\cdot e^{2\pi ({\rm Re}(\mathcal{V}(y))
    + \sqrt{-1} t{\rm Im}(\mathcal{V}(y)))}
\end{align*}
for $0\le t\le 1$. 
This is the $S^1$-orbit 
containing 
$q\cdot e^{2\pi {\rm Re}(\mathcal{V}(y))}$ 
and $q\cdot e^{2\pi\mathcal{V}(y)}$. 
Here, $e_{2,y}^{(2)}$ 
depends on the choice 
of the value 
of ${\rm Im}(\mathcal{V}(y))$. 
Here, we suppose 
$\pi/2\le {\rm Im}(\log y)
< 5\pi/2$, then 
$u_1{\rm Im}(\log y)$ 
is continuous.

We can see that 
$\{ e_{1,y},e_{2,y}\}_y$ 
satisfies the 
first half of 
$(\star \ref{star 4})$. 
We can also see that 
$H_1(X_b,\Z)\cong H_1(U,\Z)
\cong \Z$ 
is generated by $e_{2,b}$, 
hence we have $(\star \ref{star 1})$. 
The next lemma 
completes the proof of 
Lemma \ref{lem_star1_4.h}.
\begin{lem}
Let $y=(u_1+\sqrt{-1}u_2)$. 
We have 
\begin{align*}
    \int_{e_{1,y}}\gamma_s
    =u_1, \quad 
    \int_{e_{2,y}}\gamma_s
    =\mathcal{H}(u_1,u_2),
\end{align*}
where 
\begin{align*}
    \mathcal{H}(u_1,u_2)
    &:=
    -\frac{u_2\log\sqrt{u_1^2+u_2^2}}{2\pi}
    +\frac{u_2}{2\pi} 
    +\int_0^{u_1}t\frac{\del h}{\del u_2}(t,u_2)dt
    +\int_0^{u_2} h(0,t)dt\\
    &\quad\quad 
    +u_1{\rm Im}(\mathcal{V}(y))
\end{align*}
for $y\neq 0$, and $\mathcal{H}(0,0):=0$. 
Moreover, the function 
$\mathcal{H}$ is continuous 
on $D(\delta_0)$ and the origin $0\in D(\delta_0)$ is isolated in 
\begin{align*}
    \left\{ u_1+\sqrt{-1}u_2\in D(\delta_0);\, u_1=0,\ 
    \mathcal{H}(u_1,u_2)=0
    \right\}.
\end{align*}
\label{lem_holonomy.h}
\end{lem}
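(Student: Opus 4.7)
The plan is to compute $\int_{e_{i,y}}\gamma_s$ directly from $\gamma_s=J_{1,s}d\phi_s$. Using the action of $J_{1,s}$ from the proof of Lemma \ref{lem_potential_1.h} together with $\del_{u_1}\phi_s=-u_1V_s$, I get
\[
\gamma_s = \frac{u_1\alpha}{2\pi} - \frac{\partial\phi_s}{\partial u_2}\,du_3 + \frac{\partial\phi_s}{\partial u_3}\,du_2.
\]
Along the $S^1$-fiber $e_{1,y}$ the $u_i$ are constant and $\int_{e_{1,y}}\alpha/(2\pi)=1$, so $\int_{e_{1,y}}\gamma_s=u_1$. For $e_{2,y}$, I split into $e_{2,y}^{(1)}$ and $e_{2,y}^{(2)}$. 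On $e_{2,y}^{(1)}$ the curve is horizontal ($\alpha=0$) with $u_1,u_2$ constant and $u_3$ traversing $[0,s]$, reducing the integral to $-\int_0^s\del_{u_2}\phi_s\,du_3$. On $e_{2,y}^{(2)}$ all $u_i$ are constant and the path is pure $S^1$-motion with parameter $2\pi\,{\rm Im}\,\mathcal{V}(y)$, giving $\int_{e_{2,y}^{(2)}}\gamma_s=u_1\,{\rm Im}\,\mathcal{V}(y)$.

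The heart of the proof is the evaluation of $\int_0^s\del_{u_2}\phi_s\,du_3$ using the normalization identity stated just after the definition of $a_s$ in Section \ref{subsec_ARFK.h},
\[
\int_0^s V_s(t,u_2,u_3)\,du_3 = -\frac{\log\sqrt{t^2+u_2^2}}{2\pi} + h(t,u_2).
\]
For the $V_s$-part of $\phi_s$ I swap integration order, differentiate this identity in $u_2$, and perform one elementary $t$-integration. For the $\psi_s$-part, differentiating the explicit formula yields
\[
\del_{u_2}\psi_s = -u_2\,V_s(0,u_2,u_3) + \frac{u_2\,h(0,u_2)}{s} - \frac{u_2}{2\pi s} - \int_0^{u_2}\frac{h(0,t)}{s}\,dt,
\]
where the $-u_2/(2\pi s)$ summand originates from the harmonic correction $(u_3^2-u_2^2)/(4\pi s)$ in $\psi_s$. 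Integrating in $u_3\in[0,s]$, the $u_2\,h(0,u_2)$ terms cancel and the same normalization identity produces the $u_2\log|u_2|/(2\pi)$ piece. Adding the contribution from $e_{2,y}^{(2)}$ and tracking signs carefully reassembles $\mathcal{H}(u_1,u_2)$ term by term.

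Continuity of $\mathcal{H}$ on $D(\delta_0)\setminus\{0\}$ is routine; the convention $\pi/2\le{\rm Im}\,\log y<5\pi/2$ places the jump of $\arg y$ on the positive imaginary axis, where $u_1=0$, so $u_1\,{\rm Im}\,\mathcal{V}(y)$ is continuous across it. At the origin, $u_2\log\sqrt{u_1^2+u_2^2}\to 0$ handles the only singular-looking summand. For the isolated-zero claim, restricting to $u_1=0$ gives
\[
\mathcal{H}(0,u_2) = -\frac{u_2\log|u_2|}{2\pi} + \frac{u_2}{2\pi} + \int_0^{u_2}h(0,t)\,dt,
\]
which by boundedness of $h$ on $D(\delta_0)$ (see \eqref{ineq_r2}) behaves like $-\frac{u_2\log|u_2|}{2\pi}(1+o(1))$ as $u_2\to 0$, so vanishes only at $u_2=0$ in a neighborhood.

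The main obstacle is the bookkeeping of the conditionally convergent sums defining $\psi_s$: the subtractions $-u_2^2/(2s|n|)+(|n|/n)(u_3-sn)$ must remain inside the sum to preserve absolute convergence after $u_2$-differentiation, and the harmonic correction $(u_3^2-u_2^2)/(4\pi s)$ must not be discarded as an inert harmonic gauge choice, since its $u_3$-integrated derivative is precisely what produces the $u_2/(2\pi)$ summand of $\mathcal{H}$ after the overall sign in $-\int_0^s\del_{u_2}\phi_s\,du_3$.
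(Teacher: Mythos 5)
Your proposal is correct and follows essentially the same route as the paper: write $\gamma_s=J_{1,s}d\phi_s=\frac{u_1\alpha}{2\pi}-\frac{\del\phi_s}{\del u_2}du_3+\frac{\del\phi_s}{\del u_3}du_2$, integrate over the $S^1$-orbit for $e_{1,y}$, split $e_{2,y}$ into the horizontal piece (reduced via the normalization $\int_0^sV_s\,du_3=-\frac{1}{2\pi}\log|y|+h$) and the $S^1$-piece contributing $u_1{\rm Im}(\mathcal{V}(y))$, and handle continuity of $u_1{\rm Im}(\mathcal{V})$ by boundedness across the branch cut $\{u_1=0\}$. The only cosmetic deviations are that you get the isolated-zero claim from the asymptotics $\mathcal{H}(0,u_2)\sim -\frac{u_2\log|u_2|}{2\pi}$ rather than the paper's strict monotonicity of $t\mapsto\mathcal{H}(0,t)$ near $0$ (both valid), and you omit the one-line check $\int_{e_{2,0}}\gamma_s=0$ at $y=0$.
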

\begin{proof}
First of all we have 
\begin{align*}
    \gamma_s=J_{1,s}d\phi_s= -V_s^{-1}
\frac{\del \phi}{\del u_1}\frac{\alpha}{2\pi} 
-\frac{\del \phi}{\del u_2}du_3
+\frac{\del \phi}{\del u_3}du_2.
\end{align*}
Then we obtain 
\begin{align*}
    \int_{e_{1,y}} J_{1,s}d\phi_s
    &= -\frac{1}{2\pi}\int_{c_0}V_s^{-1}
    \frac{\del \phi_s}{\del u_1}
    \alpha
    = u_1.
\end{align*}
By 
\begin{align*}
    \frac{\del\phi_s}{\del u_2}
&= -u_2\left( V_s(x)-\frac{h}{s}
+\frac{1}{2\pi s}\right)\\
&\quad\quad
+\frac{1}{s}
\left(
-\int_0^{u_1}t\frac{\del h}{\del u_2}(t,u_2)dt
-\int_0^{u_2} h(0,t)dt
\right)
\end{align*}
and 
$\int_0^sV_s(u_1,u_2,t)dt=-(\log\sqrt{u_1^2+u_2^2})/2\pi + h$, 
we can show 
$\int_{e_{2,y}} J_{1,s}d\phi_s
= \mathcal{H}(u_1,u_2)$. 
If $y=0$, then $\frac{\del \phi_s}{\del u_2}=0$, 
hence $\int_{e_{2,0}} J_{1,s}d\phi_s=0$. 

Although ${\rm Im}(\mathcal{V})$ 
is not continuous at a point 
in $\{ u_1=0\}$, it is 
bounded on the neighborhood 
of $\{ u_1=0\}$, 
hence 
$u_1{\rm Im}(\mathcal{V}(y))$ 
is continuous. 
Therefore, $\mathcal{H}$ 
is continuous. 

Suppose that 
$u_1+\sqrt{-1}u_2$ is 
sufficiently close to the origin 
and 
$u_1=\mathcal{H}(u_1,u_2)=0$, 
hence 
$\mathcal{H}(0,u_2)=0$. 
Since the function 
$t\mapsto \mathcal{H}(0,t)$ is 
strictly increasing 
on the neighborhood 
of $t=0$, accordingly we have 
$\mathcal{H}(0,u_2)=0$ 
only if $u_2=0$ for sufficiently small $u_2$. 
Therefore, 
$0\in D(\delta_0)$ is isolated in 
\begin{align*}
    \left\{ u_1+\sqrt{-1}u_2\in D(\delta_0);\, u_1=0,\ 
    \mathcal{H}(u_1,u_2)=0
    \right\}.
\end{align*}
\end{proof}

The \hK metric 
$g_s^{\rm OV}$ given by 
$\omega_{1,s},\omega_{2,s},\omega_{3,s}$ can be written as 
\begin{align*}
    g_s^{\rm OV}=V_s^{-1}\left(\frac{\alpha}{2\pi}\right)^2
    +V_s\left( du_1^2+du_2^2+du_3^2\right).
\end{align*}
On $X_{\rm OV}\setminus
\mu_{\rm OV}^{-1}(0)$, 
we have the decompositions 
$g_s^{\rm OV}=g_f+g_\perp$ and 
$\gamma_s=\gamma_f+\gamma_\perp$ 
as in Subsection 
\ref{subsec_loc_met.h}. 
Then we may write 
\begin{align*}
    g_\perp&=V_s\left( du_1^2+du_2^2\right),\\
    \gamma_f&=
    -V_s^{-1}\frac{\del\phi_s}{\del u_1}\frac{\alpha}{2\pi}
    -\frac{\del\phi_s}{\del u_2}du_3,\\
    \gamma_\perp&=
    \frac{\del\phi_s}{\del u_3}du_2.
\end{align*}
The aim of this section is 
to obtain the estimates in 
$(\star \ref{star 5}$-$\ref{star 9})$ 
of Subsection \ref{subsec_loc_met.h}.

From now on we put 
$y:=u_1+\sqrt{-1}u_2\in D(\delta_0)$, 
$|y|=\sqrt{u_1^2+u_2^2}$ and 
\begin{align*}
    V^{\rm sf}_s(y):=
    \frac{1}{s}\int_0^s V_s(u_1,u_2,t)dt
    =-\frac{1}{2\pi s}\log|y| + \frac{h(y)}{s}.
\end{align*}
By \eqref{ineq_r2}, we have 
\begin{align}
    V_s^{\rm sf}
    \ge \frac{2\log |y|^{-1}}{5\pi s}
    \ge \frac{2\log \delta_0^{-1}}{5\pi s}
    \label{ineq_lower_Vsf.h}
\end{align}
on $\overline{D(\delta_0)}$.

\begin{fact}[{\cite[Lemma 3.1(c) and its proof]{GW2000}}]
There is a constant 
$C>0$ such that 
if $0<s\le \pi |y|$ then 
\begin{align*}
    \left|V_s-V^{\rm sf}_s\right|
    \le \frac{C}{s}e^{-2\pi |y|/s}.
\end{align*}
\label{fact_v-vsf.h}
\end{fact}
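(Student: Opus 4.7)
The strategy is a Fourier decomposition in the $s$-periodic variable $u_3$. Viewed on the cylinder $\R^2\times(\R/s\Z)$, the function $V_s - a_s - h(y)/s$ is, up to a multiplicative constant, the periodic Green's function for $-\Delta$ with a point source at the origin: the series in the definition of $V_s$ is the regularized sum of Newtonian potentials centered at $(0,0,sn)$ for $n\in\Z$, and $a_s$ is precisely the counterterm that renders the sum convergent. Hence the zeroth Fourier coefficient of $V_s$ in $u_3$ is $s^{-1}\int_0^s V_s(y,t)\,dt = V_s^{\rm sf}(y) = -(2\pi s)^{-1}\log|y| + h(y)/s$, matching the definition in the excerpt, and the difference $V_s - V_s^{\rm sf}$ is exactly the sum of the nonzero Fourier modes.

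Next I would carry out separation of variables. Writing $V_s - V_s^{\rm sf} = \sum_{k\in\Z^\times} f_k(|y|)e^{2\pi i k u_3/s}$, each $f_k$ is a radial solution of $(\Delta_{\R^2}-(2\pi k/s)^2)f_k = 0$ that decays at infinity, so $f_k(r) = c_k K_0(2\pi |k| r/s)$ for the modified Bessel function $K_0$. The coefficients $c_k$ are pinned down by the delta source on the cylinder; a convenient way is Poisson summation applied to $(r^2+(u_3-sn)^2)^{-\alpha}$ for ${\rm Re}\,\alpha>1/2$ followed by analytic continuation to $\alpha=1/2$, yielding $c_k = 1/(\pi s)$ and the identity
\[
V_s(y,u_3)-V_s^{\rm sf}(y) \;=\; \frac{1}{\pi s}\sum_{k\in\Z^\times} K_0\!\left(\frac{2\pi |k|\,|y|}{s}\right) e^{2\pi i k u_3/s}.
\]

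Finally I would invoke the standard bound $K_0(x)\le C x^{-1/2}e^{-x}$ valid for $x\ge 1$. Under the hypothesis $s\le \pi|y|$ we have $2\pi|y|/s\ge 2$, so every term in the series satisfies the Bessel estimate. The $k=\pm 1$ terms dominate and the remaining terms form a geometrically decaying sum, giving $\sum_{k\in\Z^\times} K_0(2\pi|k||y|/s) \le C' e^{-2\pi|y|/s}$. Combined with the factor $1/(\pi s)$ this is the asserted bound.

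The main obstacle is the rigorous identification of the Fourier coefficients starting from the explicit, only conditionally convergent series for $V_s$: one must verify that the regularization by $1/(s|n|)$ together with $a_s$ correctly reproduces the intended periodic Green's function, and that the limiting procedure (heat-kernel smoothing, or the $\alpha$-regularization mentioned above) commutes with the Fourier decomposition. Once that bookkeeping is done, the Bessel estimates are routine.
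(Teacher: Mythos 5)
Your proposal is correct, and it is essentially the proof of the cited source: the paper itself states this as a Fact quoted from \cite[Lemma 3.1(c)]{GW2000}, whose argument is exactly the one you describe, namely Poisson summation/Fourier decomposition in $u_3$ identifying $V_s^{\rm sf}$ with the zeroth mode and expressing $V_s-V_s^{\rm sf}$ as a series of modified Bessel terms $K_0(2\pi|k||y|/s)$, which under $s\le\pi|y|$ is controlled by $K_0(x)\le Cx^{-1/2}e^{-x}$ and geometric summation. (Only a minor slip: with the normalization $\int_0^s V_s\,dt=-\tfrac{1}{2\pi}\log|y|+h$, the coefficient in your expansion should be $1/(2\pi s)$ rather than $1/(\pi s)$, which is immaterial since $C$ is unspecified.)
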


\begin{lem}
Let $0<r\le \delta_0$. 
There is $s_r>0$ for 
every $r$ such that 
for any $0<s\le s_r$ 
we have $V_s\ge \log r^{-1}/(10\pi s)$ on $\overline{\mathcal{U}(r,s)}$. 
In particular, There is 
$s_0>0$ such that 
for any $0<s \le s_0$ we have  
$V_s\ge \log \delta_0^{-1}/(10\pi s)$ on $\overline{\mathcal{U}(\delta_0,s)}$. 
\label{lem_positivity_of_V.h}
\end{lem}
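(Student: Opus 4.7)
The plan is to split $\overline{\mathcal{U}(r,s)}$ into two regions based on the size of $|y|$ relative to $\sqrt{s}$, and estimate $V_s$ separately in each. By the $s$-periodicity of $V_s$ in $u_3$ it suffices to consider $u_3 \in [-s/2, s/2]$.

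On the outer region $\{|y| \ge \sqrt{s}\}$, Fact \ref{fact_v-vsf.h} applies once $s$ is small (since then $\sqrt{s} \ge s/\pi$) and gives
\[
V_s \ge V_s^{\mathrm{sf}} - \frac{C}{s}e^{-2\pi/\sqrt{s}}.
\]
Combined with the bound $V_s^{\mathrm{sf}} \ge \frac{2\log r^{-1}}{5\pi s}$ from \eqref{ineq_lower_Vsf.h}, and choosing $s_r$ small enough that the exponentially small correction is bounded by $\frac{\log r^{-1}}{10\pi s}$, one obtains the desired estimate on this region with room to spare.

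On the inner region $\{|y| < \sqrt{s}\}$, Fact \ref{fact_v-vsf.h} is unavailable, so I will bound $V_s$ directly from its definition using three observations: the $n=0$ pole term $\frac{1}{4\pi|u|}$ is nonnegative; the renormalized lattice sum over $n\neq 0$ is bounded in absolute value by some $C'/s$ on this region (using $|u_3-sn|\ge s(|n|-1/2)\ge s/2$ for $n\neq 0$ and $u_3\in[-s/2,s/2]$, together with a Taylor expansion of $1/\sqrt{|y|^2+(u_3-sn)^2}$ in the small parameter $|y|/(u_3-sn)$, producing terms summable in $n$); and the normalization $a_s=(\gamma-\log(2s))/(2\pi s)$ grows like $\log(1/s)/(2\pi s)$ as $s\to 0$. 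Combined with $|h|/s \le \log r^{-1}/(10\pi s)$ from \eqref{ineq_r2}, these yield
\[
V_s \ge \frac{\log(1/s)}{2\pi s} - \frac{C''}{s}
\]
for some constant $C''$, which exceeds $\frac{\log r^{-1}}{10\pi s}$ once $s$ is chosen small in terms of $r$. The ``in particular'' statement is the case $r=\delta_0$.

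The hard part will be the inner region, where Fact \ref{fact_v-vsf.h} does not apply and one must exploit the precise structure of $V_s$ near the pole line; the key point is that the normalization $a_s$ blows up at rate $\log(1/s)/s$ as $s \to 0$, which dominates the required threshold $\log r^{-1}/s$ for any fixed $r$.
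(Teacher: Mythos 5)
Your outer-region argument (for $|y|\ge\sqrt{s}$, via Fact \ref{fact_v-vsf.h} and \eqref{ineq_lower_Vsf.h}) is fine, but the inner region $\{|y|<\sqrt{s}\}$ is too large for your direct estimate, and the key claims you make there are false. The renormalized lattice sum is not $O(1/s)$ on that region: take $|y|=\sqrt{s}$ and $u_3\in[0,s]$. For $1\le |n|\lesssim 1/\sqrt{s}$ your expansion parameter $|y|/|u_3-sn|\sim 1/(\sqrt{s}\,|n|)$ is large, not small, and each such term satisfies $\frac{1}{\sqrt{|y|^2+(u_3-sn)^2}}-\frac{1}{s|n|}\approx \frac{1}{s|n|}\bigl((1+1/(sn^2))^{-1/2}-1\bigr)\approx -\frac{1}{s|n|}$ when $|n|\ll 1/\sqrt{s}$; summing over $|n|\le 1/\sqrt{s}$ these terms contribute about $-\frac{\log(1/s)}{4\pi s}$ to $V_s$, i.e.\ an amount comparable to $a_s\approx \frac{\log(1/s)}{2\pi s}$ itself. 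Consistently, Fact \ref{fact_v-vsf.h} gives $V_s\approx V_s^{\rm sf}\approx \frac{\log (1/s)}{4\pi s}$ at $|y|=\sqrt{s}$, so your asserted inner bound $V_s\ge \frac{\log(1/s)}{2\pi s}-\frac{C''}{s}$ fails near the outer edge of your inner region; the lemma's conclusion still holds there, but your argument does not reach it.

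The estimate you want (lattice sum bounded by $C'/s$, so that $a_s$ dominates) is only valid when $|y|\lesssim s$, e.g.\ $|y|\le s/\pi$; if you shrink the inner region to that scale you must then rework the outer estimate for $s/\pi\le |y|\le r$, because near $|y|\sim s$ the correction $Cs^{-1}e^{-2\pi|y|/s}$ is only $O(1/s)$ and has to be absorbed by $\frac{\log|y|^{-1}}{2\pi s}\gtrsim \frac{\log(1/s)}{2\pi s}$ rather than by a multiple of $\frac{\log r^{-1}}{s}$. The paper avoids this dichotomy altogether with a comparison argument: each summand $1/\sqrt{u_1^2+u_2^2+(u_3-sn)^2}$ is nonincreasing in $|y|$, so for $|y|\le r$ one has $V_s(u_1,u_2,u_3)\ge V_s(r,0,u_3)-(h_M-h_m)/s$ with $h_M,h_m$ the sup and inf of $h$; Fact \ref{fact_v-vsf.h} and \eqref{ineq_lower_Vsf.h} are then applied only at $|y|=r$, where the error is exponentially small, and the oscillation of $h$ is controlled by \eqref{ineq_r2}. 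Either repair works, but as written your inner-region step is a genuine gap.
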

\begin{proof}
By Fact \ref{fact_v-vsf.h} 
and \eqref{ineq_lower_Vsf.h}, 
if $0<s\le \pi r$ and 
$r=|y|$, 
then we have 
\begin{align*}
    V_s&\ge V_s^{\rm sf}-\frac{Ce^{-2\pi r/s}}{s}
    \ge \frac{2\log r^{-1}}{5\pi s}
    -\frac{Ce^{-2\pi r/s}}{s}.
\end{align*}
Put 
$h_M:=\sup_{D(\delta_0)}h<+\infty$ and 
$h_m:=\inf_{D(\delta_0)}h
>-\infty$. 
Now, take $s_r>0$ such that $s_r\le \pi r$ and  
$Ce^{-2\pi\delta_0 r/s_r}\le \log r^{-1}/(10\pi)$, 
then we can see 
\begin{align}
    V_s\ge \frac{3\log r^{-1}}{10\pi s}
\label{ineq_Vs_lower.h}
\end{align}
for $0<s\le s_r$.
Since 
\begin{align*}
    \frac{1}{\sqrt{u_1^2+u_2^2+(u_3-sn)^2}}
    \ge \frac{1}{\sqrt{r^2+(u_3-sn)^2}}
\end{align*}
for $u\in \mathcal{U}(r,s)$, 
then 
\begin{align*}
    V_s(u_1,u_2,u_3)
    \ge V_s(r,0,u_3)
    -\frac{h_M-h_m}{s}.
\end{align*}
By \eqref{ineq_r2}, 
we have $h_M-h_m\le \log \delta_0^{-1}/(5\pi)$. 
Therefore, by \eqref{ineq_Vs_lower.h}, 
\begin{align*}
    V_s(u_1,u_2,u_3)
    \ge \frac{3\log r^{-1}}{10\pi s} 
    - \frac{\log \delta_0^{-1}}{5\pi s}
    \ge \frac{\log r^{-1}}{10\pi s} 
\end{align*}
if $|y|\le r$. 
\end{proof}

\begin{lem}
There are constants $s_0>0$ 
and 
$C_s\ge 1$ for every $0<s\le s_0$ with  
$\lim_{s\to 0}C_s =1$ such that 
if $0< s\le \pi |y|$ then we have 
\begin{align*}
    C_s^{-1}V^{\rm sf}_s 
    \le V_s
    \le C_s V^{\rm sf}_s.
\end{align*}
\label{lem_est_met1.h}
\end{lem}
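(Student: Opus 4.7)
The plan is to bound the relative error $|V_s-V_s^{\rm sf}|/V_s^{\rm sf}$ uniformly on the region $\{s/\pi \le |y|\le \delta_0\}$ using Fact~\ref{fact_v-vsf.h} together with the elementary lower bound on $V_s^{\rm sf}$ coming from \eqref{ineq_r2}, and then define $C_s$ from this relative error. Once we have $|V_s/V_s^{\rm sf} - 1|\le \varepsilon_s$ with $\varepsilon_s\to 0$, the conclusion $C_s^{-1}V_s^{\rm sf}\le V_s\le C_sV_s^{\rm sf}$ follows with $C_s=(1-\varepsilon_s)^{-1}$ for $s$ small enough that $\varepsilon_s<1$.

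The two inputs combine as follows: Fact~\ref{fact_v-vsf.h} gives $|V_s-V_s^{\rm sf}|\le \frac{C}{s}e^{-2\pi|y|/s}$ in the range $s\le\pi|y|$, and the computation preceding \eqref{ineq_lower_Vsf.h} gives $V_s^{\rm sf}(y)\ge \frac{2\log|y|^{-1}}{5\pi s}$ on $\overline{D(\delta_0)}$ (note $\log|y|^{-1}\ge\log\delta_0^{-1}>0$ by \eqref{ineq_r3}). Dividing, we obtain
\begin{align*}
\frac{|V_s-V_s^{\rm sf}|}{V_s^{\rm sf}}\le \frac{5\pi C\, e^{-2\pi|y|/s}}{2\log|y|^{-1}}
\end{align*}
on the region of interest. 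The main point is to show the right-hand side tends to $0$ uniformly in $|y|$ as $s\to 0$.

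To do so, I would split the range $s/\pi\le |y|\le \delta_0$ at the threshold $|y|=\sqrt{s}$. In the outer regime $|y|\ge\sqrt{s}$, the denominator is bounded below by the constant $\log\delta_0^{-1}$, while the numerator satisfies $e^{-2\pi|y|/s}\le e^{-2\pi/\sqrt{s}}$, which decays faster than any power of $s$. In the inner regime $s/\pi\le |y|\le \sqrt{s}$, the numerator is bounded by $1$, while $\log|y|^{-1}\ge \tfrac{1}{2}\log s^{-1}\to\infty$, so the ratio is $O(1/\log s^{-1})$. Taking the maximum of the two bounds produces an $\varepsilon_s\to 0$ valid on the whole range, from which $C_s:=(1-\varepsilon_s)^{-1}$ does the job for all sufficiently small $s$.

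I do not expect a real obstacle here; the only mild point is to keep track that the lower bound $V_s^{\rm sf}\ge \frac{2\log|y|^{-1}}{5\pi s}$ remains positive uniformly thanks to \eqref{ineq_r2} and \eqref{ineq_r3}, and that Fact~\ref{fact_v-vsf.h} is applicable throughout the region because $s\le \pi|y|$ is precisely its hypothesis.
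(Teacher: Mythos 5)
Your proposal is correct and follows essentially the same route as the paper: the paper's proof also divides Fact~\ref{fact_v-vsf.h} by the lower bound \eqref{ineq_lower_Vsf.h} and shows $\sup_{y}\frac{e^{-2\pi|y|/s}}{sV_s^{\rm sf}(y)}\to 0$ by splitting at exactly the threshold $|y|=\sqrt{s}$, with the same two estimates in the two regimes. The only cosmetic difference is your choice $C_s=(1-\varepsilon_s)^{-1}$, which works just as well.
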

\begin{proof}
By Fact \ref{fact_v-vsf.h}, 
if $s\le \pi |y|$ 
we have 
\begin{align*}
    V^{\rm sf}_s\left(
    1 - \frac{C e^{-2\pi |y|/s}}{sV^{\rm sf}_s}\right)
    \le V_s
    \le V^{\rm sf}_s\left(
    1 + \frac{C e^{-2\pi |y|/s}}{sV^{\rm sf}_s}\right),
\end{align*}
therefore it suffices to show 
\begin{align*}
    \sup_{y\in D(\delta_0)}\frac{e^{-2\pi |y|/s}}{s V^{\rm sf}_s(y)}\to 0
\end{align*}
as $s\to 0$. 
If $|y|\le \sqrt{s}$, 
then we can see 
$e^{-2\pi |y|/s}\le 1$, hence by 
\eqref{ineq_lower_Vsf.h} we have 
\begin{align*}
    \frac{e^{-2\pi |y|/s}}{s V^{\rm sf}_s(y)}
    \le \frac{5\pi}{2\log |y|^{-1}}
    \le \frac{5\pi}{\log s^{-1}}
    \stackrel{s\to 0}{\longrightarrow} 0.
\end{align*}
If $|y|\ge \sqrt{s}$, 
then we can see 
$e^{-2\pi |y|/s}\le e^{-2\pi /\sqrt{s}}$. 
Therefore, we obtain 
\begin{align*}
    \frac{e^{-2\pi |y|/s}}{s V^{\rm sf}_s(y)}
    \le \frac{5\pi e^{-2\pi /\sqrt{s}}}{2\log \delta_0^{-1}}
    \stackrel{s\to 0}{\longrightarrow} 0.
\end{align*}
\end{proof}

\begin{lem}
We have 
\begin{align*}
    V_s\le V_s^{\rm sf}+\frac{1}{2\pi \sqrt{u_1^2+u_2^2}}.
\end{align*}
\label{lem_upper_of_V.h}
\end{lem}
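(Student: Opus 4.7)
The plan is to compute the Fourier expansion of $V_s - V_s^{\rm sf}$ as a function of $u_3$ (periodic with period $s$) and bound the resulting sum of modified Bessel functions. Setting $\rho = \sqrt{u_1^2+u_2^2}$, the definitions together with the identity $V_s^{\rm sf} = -\log\rho/(2\pi s) + h(y)/s$ already established show that $V_s - V_s^{\rm sf}$ is $s$-periodic in $u_3$ with zero mean, so it is determined by its nonzero Fourier coefficients.

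For $k \ne 0$, I will compute the Fourier coefficient
\begin{align*}
c_k = \frac{1}{s}\int_0^s V_s(u_1, u_2, u_3)\, e^{-2\pi i k u_3/s}\, du_3
\end{align*}
directly. The constants $a_s$, $h/s$, and the counterterms $1/(s|n|)$ contribute nothing (being $u_3$-independent). Swapping sum and integral and translating $t = u_3 - sn$, the remaining pieces reassemble into $\int_{-\infty}^\infty e^{-2\pi i k t/s}/\sqrt{\rho^2+t^2}\, dt$, which equals $2K_0(2\pi\rho|k|/s)$ (the Fourier transform of $1/\sqrt{\rho^2+t^2}$, with $K_0$ the modified Bessel function of the second kind). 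This yields
\begin{align*}
V_s - V_s^{\rm sf} = \frac{1}{\pi s}\sum_{k\ge 1}K_0\!\left(\frac{2\pi\rho k}{s}\right)\cos\!\left(\frac{2\pi k u_3}{s}\right).
\end{align*}

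Bounding the cosines by $1$ and inserting the integral representation $K_0(x) = \int_1^\infty e^{-xt}/\sqrt{t^2-1}\, dt$, I sum the geometric series in $k$ and use $e^x - 1 \ge x$ to obtain
\begin{align*}
\sum_{k\ge 1}K_0\!\left(\frac{2\pi\rho k}{s}\right) = \int_1^\infty\frac{dt}{(e^{2\pi\rho t/s}-1)\sqrt{t^2-1}} \le \frac{s}{2\pi\rho}\int_1^\infty\frac{dt}{t\sqrt{t^2-1}} = \frac{s}{4\rho},
\end{align*}
where the final equality follows from the substitution $t = \sec\theta$. Combining, $V_s - V_s^{\rm sf} \le 1/(4\pi\rho) \le 1/(2\pi\rho)$, which is the claim.

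The main technical obstacle will be justifying the rearrangement in the computation of $c_k$: the function $1/\sqrt{\rho^2+t^2}$ is not in $L^1(\R)$, so the improper integral over $\R$ is only conditionally convergent, and one cannot invoke Fubini naively. However, for $k \ne 0$ one integration by parts (using that $t/(\rho^2+t^2)^{3/2} \in L^1(\R)$) makes the integral absolutely convergent and legitimizes the rearrangement. Equivalently, one can regularize the non-$L^1$ function $f(t) = 1/\sqrt{\rho^2+t^2}$ by multiplying by $e^{-\epsilon t^2}$, apply Poisson summation, and pass to the limit $\epsilon \to 0$; the precise form of $a_s$ (involving the Euler--Mascheroni constant $\gamma$ and $-\log(2s)$) is exactly what arises from cancelling the logarithmic divergence in the $k = 0$ mode. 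Apart from this point, the proof is a straightforward application of Poisson summation and standard Bessel function estimates.
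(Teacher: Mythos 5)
Your argument is correct, but it proceeds quite differently from the paper. The paper's proof is completely elementary: after using $s$-periodicity to reduce to $0\le u_3\le s$, it compares each summand $1/\sqrt{u_1^2+u_2^2+(u_3-sn)^2}$ monotonically with its value at $u_3=0$ (shifted by one lattice step for $n>0$), obtaining the two-sided bound $|V_s-V_s(u_1,u_2,0)|\le 1/\bigl(4\pi\sqrt{u_1^2+u_2^2}\bigr)$, and then averages the lower bound over $u_3\in[0,s]$, using the same normalization $\frac{1}{s}\int_0^sV_s\,dt=V_s^{\rm sf}$ that you use, to get $V_s(u_1,u_2,0)\le V_s^{\rm sf}+1/(4\pi\rho)$; adding the two estimates gives the stated $1/(2\pi\rho)$. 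Your route instead re-derives the Fourier/Poisson-summation expansion $V_s-V_s^{\rm sf}=\frac{1}{\pi s}\sum_{k\ge 1}K_0(2\pi\rho k/s)\cos(2\pi k u_3/s)$ (this is essentially the computation underlying the Gross--Wilson estimate quoted as Fact \ref{fact_v-vsf.h}), and then bounds the Bessel sum via $K_0(x)=\int_1^\infty e^{-xt}(t^2-1)^{-1/2}dt$ and $e^x-1\ge x$; the Fourier coefficient computation, the justification of the rearrangement (your integration-by-parts or Gaussian-regularization remark is adequate, since with the counterterms kept the series converges uniformly and the resulting $n$-sum is absolutely convergent), and the bound $\sum_{k\ge1}K_0(2\pi\rho k/s)\le s/(4\rho)$ are all correct. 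What your approach buys is a slightly sharper constant, $1/(4\pi\rho)$ in place of $1/(2\pi\rho)$, and an exact series identity that also yields the exponential decay estimate of Fact \ref{fact_v-vsf.h} when $\rho\gg s$; what it costs is the Bessel-function machinery and the care needed to justify Poisson summation for a non-$L^1$ kernel, whereas the paper's comparison-and-average argument is a few lines of monotonicity and needs nothing beyond the normalization of $a_s$.
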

\begin{proof}
By the periodicity of $V_s$, 
we may suppose $0\le u_3\le s$. 
Since 
\begin{align*}
    \frac{1}{\sqrt{u_1^2+u_2^2+(u_3-sn)^2}}
    &\le
    \frac{1}{\sqrt{u_1^2+u_2^2+s^2(n-1)^2}}\quad(n>0),\\
    \frac{1}{\sqrt{u_1^2+u_2^2+(u_3-sn)^2}}
    &\le
    \frac{1}{\sqrt{u_1^2+u_2^2+s^2n^2}}\quad(n\le 0),\\
\end{align*}
we have 
\begin{align}
    V_s\le V_s(u_1,u_2,0)+\frac{1}{4\pi\sqrt{u_1^2+u_2^2}}.
    \label{ineq_max1.h}
\end{align}
Similarly, we can also see 
\begin{align}
    V_s(u_1,u_2,0) - \frac{1}
    {4\pi\sqrt{u_1^2+u_2^2}}
    \le V_s.\label{ineq_max2.h}
\end{align}
By integrating \eqref{ineq_max2.h}, 
we obtain 
\begin{align}
    V_s(u_1,u_2,0)\le 
    V_s^{\rm sf}
    +\frac{1}{4\pi\sqrt{u_1^2+u_2^2}},\label{ineq_max3.h}
\end{align}
then by 
\eqref{ineq_max1.h}\eqref{ineq_max3.h} 
we have the result. 
\end{proof}

Let 
$\chi=\chi(t)$ be the inverse function 
of $\tau\mapsto (\tau^2\log 
\tau^{-1})/2\pi$ for $\tau\in[0,1/2]$. 
Then $\chi$ is an increasing 
function such that 
$\chi(0)=0$ and 
$\chi((\log 2)/(8\pi))=1/2$. 
For a given $R>0$, 
$\chi(sR^2)\le \delta_0$ 
iff $s \le \delta_0^2\log \delta_0^{-1}/(2\pi R^2)$. 
\begin{lem}
Take $s_0>0$ as in 
Lemmas \ref{lem_positivity_of_V.h}
and \ref{lem_est_met1.h}. 
There is a positive constant $C$ such that 
for any $0<s\le s_0$ 
we have 
\begin{align*}
    |\gamma_f|_{g_s^{\rm OV}}^2
    \le C\left( 
    V_s^{\rm sf}|y|^2+\frac{|y|}{2\pi}\right).
\end{align*}
For any $R>0$ there is $0<s_R\le \min\{ s_0,
\delta_0^2\log \delta_0^{-1}/(18\pi R^2)\}$ 
such that the following holds. 
For every $0<s\le s_R$, there 
are constants $C_{s,R}\ge 1$ 
with $\lim_{s\to 0}C_{s,R}=1$ 
such that 
if $y\in D(\chi(9sR^2))
\setminus D(s/\pi)$ then we have 
\begin{align*}
    C_{s,R}^{-1}V_s^{\rm sf}|y|^2
    \le |\gamma_f|_{g_s^{\rm OV}}^2
    \le C_{s,R}V_s^{\rm sf}|y|^2.
\end{align*}
\label{lem_est_met2.h}
\end{lem}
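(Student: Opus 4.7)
The plan is to start from the explicit formulas for $\gamma_f$ and the Ooguri--Vafa cometric, isolate the main term $V_s|y|^2$, and then estimate the remainder using the expression for $\partial_{u_2}\phi_s$ obtained in the proof of Lemma \ref{lem_holonomy.h}. Since the metric $g_s^{\rm OV}=V_s^{-1}(\alpha/2\pi)^2+V_s(du_1^2+du_2^2+du_3^2)$ assigns dual norm squared $V_s$ to $\alpha/(2\pi)$ and $V_s^{-1}$ to each $du_i$, substituting $\gamma_f=-V_s^{-1}(\partial_{u_1}\phi_s)(\alpha/2\pi)-(\partial_{u_2}\phi_s)\,du_3$ and using $\partial_{u_1}\phi_s=-u_1V_s$ from Lemma \ref{lem_potential_1.h} gives
\[
|\gamma_f|_{g_s^{\rm OV}}^2 = V_s u_1^2 + V_s^{-1}(\partial_{u_2}\phi_s)^2.
\]

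Next I would write $\partial_{u_2}\phi_s=-u_2V_s+\delta_s$, where the formula derived in the proof of Lemma \ref{lem_holonomy.h} gives
\[
\delta_s = \frac{u_2 h}{s}-\frac{u_2}{2\pi s}-\frac{1}{s}\int_0^{u_1}t\,\partial_{u_2}h(t,u_2)\,dt - \frac{1}{s}\int_0^{u_2}h(0,t)\,dt.
\]
Since $h$ and $\partial_{u_2}h$ are bounded on $\overline{D(\delta_0)}$, we have $|\delta_s|\le C|y|/s$ for a constant $C=C(h,\delta_0)$. Expanding the square yields the key identity
\[
|\gamma_f|_{g_s^{\rm OV}}^2 = V_s|y|^2 - 2u_2\delta_s + V_s^{-1}\delta_s^2.
\]

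For the first (global) estimate I would bound the main term by $V_s|y|^2 \le V_s^{\rm sf}|y|^2 + |y|/(2\pi)$ using Lemma \ref{lem_upper_of_V.h}, and bound $V_s^{-1}\le 10\pi s/\log\delta_0^{-1}$ by Lemma \ref{lem_positivity_of_V.h}; both $|u_2\delta_s|$ and $V_s^{-1}\delta_s^2$ are then $O(|y|^2/s)$. The lower bound $V_s^{\rm sf}\ge 2\log\delta_0^{-1}/(5\pi s)$ from \eqref{ineq_lower_Vsf.h} shows that $|y|^2/s$ is controlled by a fixed constant multiple of $V_s^{\rm sf}|y|^2$, so the error terms are absorbed and the desired bound $|\gamma_f|^2 \le C(V_s^{\rm sf}|y|^2 + |y|/(2\pi))$ follows.

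For the two-sided estimate on $y\in D(\chi(9sR^2))\setminus D(s/\pi)$, the condition $|y|\ge s/\pi$ places us in the regime where Lemma \ref{lem_est_met1.h} gives $V_s=(1+o(1))V_s^{\rm sf}$ uniformly, hence $V_s|y|^2=(1+o(1))V_s^{\rm sf}|y|^2$. The constraint $|y|\le \chi(9sR^2)$ forces $|y|\to 0$ uniformly as $s\to 0$ for fixed $R$, so $\log|y|^{-1}\to\infty$; since $V_s^{\rm sf}|y|^2\sim |y|^2\log|y|^{-1}/(2\pi s)$ while the error terms remain $O(|y|^2/s)$, their ratio is $O(1/\log|y|^{-1})=o(1)$. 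Combining these yields the claimed two-sided bound with $C_{s,R}\to 1$. The principal obstacle is not conceptual but one of bookkeeping across three regimes: the inner disk $|y|\le s/\pi$ where Fact \ref{fact_v-vsf.h} fails and the extra $|y|/(2\pi)$ in the global bound is genuinely needed, the outer annulus where all ratios tend to $1$, and the uniform transition between them; one must verify that the constants depending on $\delta_0$ and $h$ in the error estimates are consistently absorbed by $V_s^{\rm sf}|y|^2$.
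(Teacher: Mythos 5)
Your proposal is correct and follows essentially the same route as the paper's proof: your $\delta_s$ is exactly the term $F(u_1,u_2)/s$ the paper isolates from $\partial\phi_s/\partial u_2$, and both arguments bound it by a constant times $|y|/s$ and then absorb the errors using Lemma \ref{lem_positivity_of_V.h}, Lemma \ref{lem_upper_of_V.h}, \eqref{ineq_lower_Vsf.h} for the global bound and Lemma \ref{lem_est_met1.h} together with $1/(sV_s^{\rm sf})=O(1/\log|y|^{-1})\to 0$ on the annulus for the two-sided bound. The only differences are cosmetic (additive versus multiplicative bookkeeping of the error terms).
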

\begin{proof}
First of all we have 
\begin{align*}
    |\gamma_f|_{g_s^{\rm OV}}^2
    &=V_s^{-1}\left(\frac{\del\phi_s}{\del u_1}\right)^2
    + V_s^{-1}\left(\frac{\del\phi_s}{\del u_2}\right)^2\\
    &=V_s|y|^2
    \left( 1
    -\frac{2u_2F(u_1,u_2)}{sV_s|y|^2}
+\frac{F(u_1,u_2)^2}{s^2V_s^2|y|^2}
\right),
\end{align*}
where 
\begin{align*}
F(u_1,u_2):=
hu_2
-\frac{u_2}{2\pi}
-\int_0^{u_1}t\frac{\del h}{\del u_2}(t,u_2)dt
-\int_0^{u_2} h(0,t)dt.
\end{align*}
Since $F$ is $C^\infty$ and $F(0,0)=0$, 
there is a constant $A_1>0$ 
such that 
$F(u_1,u_2)\le A_1|y|$ 
for all $y\in D(\delta_0)$. 
Then we have $|F(u_1,u_2)|/(sV_s|y|)\le A_1/(sV_s)$ 
and 
$2|u_2F(u_1,u_2)|/(sV_s|y|^2)\le 2A_1/(sV_s)$, 
therefore, 
we can see 
\begin{align}
    \left( 1-\frac{A_1}{sV_s}\right)^2
    \le 
    \frac{|\gamma_f|_{g_s^{\rm OV}}^2}{V_s|y|^2}
    \le 
    \left( 1+\frac{A_1}{sV_s}\right)^2.
    \label{ineq_gamma_r.h}
\end{align}
By Lemma \ref{lem_positivity_of_V.h}, we have 
$1+A_1/(sV_s)\le 1+A_1\log \delta_0^{-1} /(10\pi)$. 
Then by Lemma \ref{lem_upper_of_V.h}, 
there is a constant $C>0$ 
such that 
\begin{align*}
    |\gamma_f|_{g_s^{\rm OV}}^2
    \le C V_s|y|^2
    \le C \left(
    V_s^{\rm sf}|y|^2
    + \frac{|y|}{2\pi}\right).
\end{align*}

Next we assume 
$s/\pi\le \pi |y|
<\chi(9sR^2)$, then by 
Lemma \ref{lem_est_met1.h} we have 
\begin{align*}
    C_s^{-1}V_s^{\rm sf}|y^2|
    \le V_s|y^2|
    \le C_sV_s^{\rm sf}|y^2|,
    \quad
    \frac{A_1}{sV_s}
    \le \frac{C_sA_1}{sV_s^{\rm sf}},
\end{align*}
hence \eqref{ineq_gamma_r.h} 
implies 
\begin{align*}
    \left( 1-\frac{C_s A_1}{sV_s^{\rm sf}}\right)^2
    V_s^{\rm sf}|y|^2
    \le 
    |\gamma_f|_{g_s^{\rm OV}}^2
    \le 
    \left( 1+\frac{C_s A_1}{sV_s^{\rm sf}}\right)^2
    V_s^{\rm sf}|y|^2
\end{align*}
Since $1/(s V_s^{\rm sf})
    \le 5\pi/(2\log|y|^{-1})
    \le 5\pi /(2\log \chi(9sR^2)^{-1})$ 
and 
$\lim_{s\to 0}\chi(9sR^2)
=0$, we have the second 
estimates. 
\end{proof}

To give the estimate for 
$\gamma_\perp$, 
we need to compute 
$\frac{\del\phi_s}{\del u_3}$. 
We have 
\begin{align*}
    \frac{\del \phi_s}{\del u_3}
    &= -\int_0^{u_1} t \frac{\del V_s}{\del u_3}(t,u_2,u_3)dt + \frac{\del\psi_s}{\del u_3}(u_2,u_3)\\
    &= -\frac{1}{4\pi}\sum_{n\in\Z^\times}
    \left\{
    \frac{ u_3-sn }{
    \sqrt{u_1^2+u_2^2+(u_3-sn)^2}
    }
    +\frac{|n|}{n}
    \right\}\\
    &\quad\quad
    - \frac{1}{4\pi}\frac{u_3}{\sqrt{u_1^2+u_2^2+u_3^2}}
    +\frac{u_3}{2\pi s}.
\end{align*}

\begin{lem} We have
$|\frac{\del \phi_s}{\del u_3}|
\le 1/2\pi$. 
\end{lem}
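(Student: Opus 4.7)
Set $r := \sqrt{u_1^2+u_2^2}$ and $f_r(x):=x/\sqrt{r^2+x^2}$, so $f_r$ is odd with $\lim_{x\to\pm\infty}f_r(x)=\pm 1$ and $f_r'(x)=r^2/(r^2+x^2)^{3/2}$. The given expression rearranges as
\begin{align*}
\frac{\del\phi_s}{\del u_3}=\frac{u_3}{2\pi s}-\frac{1}{4\pi}\left[f_r(u_3)+\sum_{n\in\Z^\times}\left(f_r(u_3-sn)+\frac{|n|}{n}\right)\right],
\end{align*}
and the bracketed series is absolutely convergent because each summand is $O(n^{-2})$ for large $|n|$. A direct computation on the truncated sum yields
\begin{align*}
f_r(u_3)+\sum_{n=-N,\,n\ne 0}^{N}\left(f_r(u_3-sn)+\frac{|n|}{n}\right)=\sum_{n=-N}^{N}f_r(u_3-sn),
\end{align*}
so, setting $\mathcal{F}_r(u_3):=\lim_{N\to\infty}\sum_{n=-N}^{N}f_r(u_3-sn)$ (which exists because $f_r(x)\to\pm 1$ at $\pm\infty$), we obtain
\begin{align*}
\frac{\del\phi_s}{\del u_3}=\frac{1}{4\pi}\left(\frac{2u_3}{s}-\mathcal{F}_r(u_3)\right).
\end{align*}
Thus the claim reduces to showing $|2u_3/s-\mathcal{F}_r(u_3)|\le 2$ for all $u_3$.

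The plan is then to verify three properties of $\mathcal{F}_r$. First, $\mathcal{F}_r(0)=0$, by pairing the terms $n$ and $-n$ in the symmetric partial sum and using oddness of $f_r$. Second, the quasi-periodicity $\mathcal{F}_r(u_3+s)=\mathcal{F}_r(u_3)+2$: reindexing $n\mapsto n-1$ in the $N$-th symmetric partial sum produces a boundary contribution $f_r(u_3+s(N+1))-f_r(u_3-sN)$, which tends to $1-(-1)=2$ as $N\to\infty$. Third, $\mathcal{F}_r$ is strictly increasing on $\R$ for $r>0$: the termwise-differentiated series
\begin{align*}
\mathcal{F}_r'(u_3)=\sum_{n\in\Z}f_r'(u_3-sn)=r^2\sum_{n\in\Z}\frac{1}{(r^2+(u_3-sn)^2)^{3/2}}
\end{align*}
converges uniformly on compact sets (dominated by $O(n^{-3})$) and is strictly positive, justifying termwise differentiation by the usual theorem. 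Combined, these three properties show that $\mathcal{F}_r$ restricts to a strictly increasing bijection $[0,s]\to[0,2]$; in particular $\mathcal{F}_r(u_3)\in[0,2]$ for $u_3\in[0,s]$.

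Since $2u_3/s$ also maps $[0,s]$ onto $[0,2]$, the triangle inequality gives $|2u_3/s-\mathcal{F}_r(u_3)|\le 2$ on $[0,s]$; the quasi-periodicity of $\mathcal{F}_r$ makes $2u_3/s-\mathcal{F}_r(u_3)$ genuinely $s$-periodic in $u_3$, so the bound extends to all of $\R$. The only real subtlety is the conditional (merely symmetric) convergence of $\sum_n f_r(u_3-sn)$: every manipulation must be carried out either on the absolutely convergent series containing the $|n|/n$ corrections, or on truncated symmetric partial sums with explicit bookkeeping of the boundary terms, as in the reindexing argument for the quasi-periodicity; once these are in place the monotonicity and quasi-periodicity of $\mathcal{F}_r$ yield the estimate immediately.
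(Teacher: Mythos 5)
Your proof is correct, and it completes a step the paper only sketches. The paper's argument rescales to $F(x,t)$ with $x=|y|/s$, $t=u_3/s$, uses the periodicity $F(x,t+n)=F(x,t)$ to reduce to $t\in[0,1]$, and then bounds each summand separately via the monotonicity of $t\mapsto t/\sqrt{x^2+t^2}$ (comparing the value at $t-n$ with the values at the lattice endpoints $-n$ and $1-n$), leaving the final resummation as ``we can show.'' You use the same two core ingredients (period-$s$ structure in $u_3$ and monotonicity of $x\mapsto x/\sqrt{r^2+x^2}$) but apply them globally rather than termwise: after regrouping into symmetric partial sums --- legitimate, since the symmetric sums of $|n|/n$ vanish and the regularized series converges absolutely --- you identify $\del\phi_s/\del u_3=\tfrac{1}{4\pi}(2u_3/s-\mathcal{F}_r(u_3))$, prove $\mathcal{F}_r(0)=0$, the quasi-periodicity $\mathcal{F}_r(u_3+s)=\mathcal{F}_r(u_3)+2$ via the boundary terms of the reindexed truncation, and monotonicity via the uniformly convergent derivative series, so that the genuinely $s$-periodic difference $2u_3/s-\mathcal{F}_r$ is trapped in $[-2,2]$. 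This buys a fully explicit and arguably cleaner proof (the structural fact $\mathcal{F}_r(u+s)=\mathcal{F}_r(u)+2$ is exactly why the bound is $2$), at the cost of the care you already flag about conditional convergence and termwise differentiation. The only loose end is the degenerate case $r=0$, where $f_r'$ vanishes and strict monotonicity fails; this is harmless --- each $f_r$ is still nondecreasing so $\mathcal{F}_0([0,s])\subset[0,2]$, and in any case the estimate is only applied where $y\neq 0$ --- but it is worth a sentence, since the lemma as stated carries no restriction on $(u_1,u_2)$.
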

\begin{proof}
If we put 
\begin{align*}
    F(x,t):=
    -\frac{1}{4\pi}\sum_{n\in\Z^\times}
    \left\{
    \frac{ t-n }{
    \sqrt{x^2+(t-n)^2}
    }
    +\frac{|n|}{n}
    \right\}
    - \frac{1}{4\pi}\frac{t}{\sqrt{x^2+t^2}}
    +\frac{t}{2\pi},
\end{align*}
then we may write 
\begin{align*}
    \frac{\del \phi_s}{\del u_3}
    (u_1,u_2,u_3)
    = F\left( \frac{|y|}{s},\frac{u_3}{s}
    \right).
\end{align*}
We show that $|F|\le 1/2\pi$. 
Since $F(x,t+n)=F(x,t)$ for 
$n\in\Z$, we may suppose 
$0\le t\le 1$. 
Since the function $t\mapsto 
t/\sqrt{x^2+t^2}$ is 
nondecreasing, 
we have 
\begin{align*}
    \frac{1}{4\pi}\frac{n-1}{\sqrt{x^2+(n-1)^2}}
    &\le 
    -\frac{1}{4\pi}\frac{t-n}{\sqrt{x^2+(t-n)^2}}
    \le 
    \frac{1}{4\pi}\frac{n}{\sqrt{x^2+n^2}}
\end{align*}
for every $n\in \Z$. 
By using these inequalities, we can show 
$-1/2\pi \le F(x,t) \le 1/2\pi$. 
\end{proof}

\begin{cor}
Let $g_B:=V_s^{\rm sf}(du_1^2+du_2^2)$. 
Then 
\begin{align*}
|\gamma_\perp
|_{g_s^{\rm OV}}^2
\le \frac{5 s}{2\pi \log\delta_0^{-1}},\quad    |\gamma_\perp
|_{\mu_{\rm  OV}^*g_B}^2
\le \frac{5 s}{8\pi \log\delta_0^{-1}}.
\end{align*}
\label{cor_est_base_conn.h}
\end{cor}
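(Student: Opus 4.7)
The plan is to combine the explicit description $\gamma_\perp = \frac{\partial \phi_s}{\partial u_3}\, du_2$, already computed just before the statement, with the uniform bound $\bigl|\frac{\partial \phi_s}{\partial u_3}\bigr| \le 1/(2\pi)$ from the preceding lemma. Both $g_s^{\rm OV}$ and $\mu_{\rm OV}^* g_B$ restrict to the horizontal distribution $V_f^\perp$ as conformal rescalings of $du_1^2 + du_2^2$ (by $V_s$ and $V_s^{\rm sf}$ respectively), so the dual norms of $du_2$ on $V_f^\perp$ are simply $V_s^{-1}$ and $(V_s^{\rm sf})^{-1}$.

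First I would note that since the fibers of $\mu_{\rm OV}$ are defined by constant $u_1, u_2$, the horizontal part of $g_s^{\rm OV} = V_s^{-1}(\alpha/2\pi)^2 + V_s(du_1^2 + du_2^2 + du_3^2)$ on $V_f^\perp$ is exactly $V_s(du_1^2+du_2^2)$, giving $|du_2|_{g_s^{\rm OV}}^2 = V_s^{-1}$ on $V_f^\perp$. Hence
\begin{align*}
|\gamma_\perp|_{g_s^{\rm OV}}^2 = V_s^{-1}\left(\frac{\partial \phi_s}{\partial u_3}\right)^2 \le \frac{1}{4\pi^2\, V_s}.
\end{align*}
Applying Lemma \ref{lem_positivity_of_V.h}, which gives $V_s \ge \log\delta_0^{-1}/(10\pi s)$ on $\overline{\mathcal{U}(\delta_0,s)}$ for small $s$, yields the first inequality $|\gamma_\perp|_{g_s^{\rm OV}}^2 \le 5s/(2\pi \log \delta_0^{-1})$.

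For the second inequality I would argue identically, using $|du_2|_{\mu_{\rm OV}^* g_B}^2 = (V_s^{\rm sf})^{-1}$ and then invoking \eqref{ineq_lower_Vsf.h}, which says $V_s^{\rm sf} \ge 2\log\delta_0^{-1}/(5\pi s)$ on $\overline{D(\delta_0)}$. This gives
\begin{align*}
|\gamma_\perp|_{\mu_{\rm OV}^* g_B}^2 \le \frac{1}{4\pi^2\, V_s^{\rm sf}} \le \frac{5s}{8\pi \log\delta_0^{-1}}.
\end{align*}
There is no real obstacle; the argument is a direct assembly of three already-established facts (the explicit formula for $\gamma_\perp$, the pointwise bound on $\partial\phi_s/\partial u_3$, and the lower bounds on $V_s$ and $V_s^{\rm sf}$).
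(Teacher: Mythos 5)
Your proposal is correct and follows essentially the same route as the paper: the paper's proof also writes $|\gamma_\perp|_{g_s^{\rm OV}}^2=V_s^{-1}\bigl(\frac{\del\phi_s}{\del u_3}\bigr)^2$ and $|\gamma_\perp|_{\mu_{\rm OV}^*g_B}^2=(V_s^{\rm sf})^{-1}\bigl(\frac{\del\phi_s}{\del u_3}\bigr)^2$, then invokes the bound $\bigl|\frac{\del\phi_s}{\del u_3}\bigr|\le 1/2\pi$ together with Lemma \ref{lem_positivity_of_V.h} and \eqref{ineq_lower_Vsf.h}. The constants check out exactly as you computed.
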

\begin{proof}
Since we have 
\begin{align*}
    |\gamma_\perp
    |_{g_s^{\rm OV}}^2
    =V_s^{-1}
    \left(\frac{\del \phi_s}{\del u_3}
    \right)^2,\quad 
    |\gamma_\perp
    |_{\mu_{\rm  OV}^*g_B}^2
    = \left(V_s^{\rm sf}\right)^{-1}
    \left(\frac{\del \phi_s}{\del u_3}
    \right)^2, 
\end{align*}
then we have the result by 
Lemma \ref{lem_positivity_of_V.h} and \eqref{ineq_lower_Vsf.h}. 
\end{proof}

Next we define a map 
$\zeta_s\colon D(\delta_0)
\to \R^2$ by 
\begin{align*}
    \zeta_s(y):=\sqrt{\frac{\log |y|^{-1}}{2\pi s}}\cdot y.
\end{align*}
Then we have 
\begin{align}
    |y|=\chi(s|\zeta_s(y)|^2)
    \label{eq_trans_norm.h}
\end{align}
for any $y\in D(\delta_0)$. 
Recall that we have put 
$\mathcal{B}(r):=\{ \xi\in\R^2;\, \| \xi\|<r\}$, 
then we have 
$D(\chi(sr^2))=\zeta_s^{-1}(\mathcal{B}(r))$. 
Hence we have the following. 
\begin{prop}
We have 
$\zeta_s(D(\delta_0))=\mathcal{B}(
\delta_0\sqrt{\log \delta_0^{-1}/(2\pi s)}
)$. In particular, 
$\mathcal{B}(3R)
\subset \zeta_s(D(\delta_0))$ 
iff $s\le \delta_0\sqrt{\log \delta_0^{-1}/(18\pi R^2)}$. 
\end{prop}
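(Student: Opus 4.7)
The plan is to exploit the fact that $\zeta_s$ is a radial map: writing $y = \rho e^{\sqrt{-1}\theta}$ with $0 < \rho < \delta_0$, we have $\zeta_s(y) = f_s(\rho) e^{\sqrt{-1}\theta}$ where $f_s(\rho) := \rho\sqrt{\log\rho^{-1}/(2\pi s)}$, extended by $f_s(0) = 0$. Thus $\zeta_s(D(\delta_0))$ will automatically be a disk (centered at the origin) of radius $\sup_{\rho \in [0,\delta_0)} f_s(\rho)$, and everything reduces to showing that $f_s$ is a continuous strictly increasing bijection from $[0,\delta_0]$ onto $[0, f_s(\delta_0)]$.

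For the monotonicity the cleanest route is to invoke the identity \eqref{eq_trans_norm.h}, $|y| = \chi(s|\zeta_s(y)|^2)$, proved immediately above: since $\chi$ was defined as the inverse of the strictly increasing map $\tau \mapsto (\tau^2 \log\tau^{-1})/(2\pi)$ on $[0, 1/2]$, and $\delta_0 \le 1/2$ by \eqref{ineq_r3}, $\chi$ is a continuous strictly increasing bijection from $[0,(\log 2)/(8\pi)]$ onto $[0, 1/2]$. This forces $|\zeta_s(y)|$ to determine $|y|$ uniquely and continuously, so $f_s$ is indeed a homeomorphism onto $[0, f_s(\delta_0)]$.

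Once injectivity and surjectivity of the radial part are in hand, the first equality $\zeta_s(D(\delta_0)) = \mathcal{B}(\delta_0\sqrt{\log\delta_0^{-1}/(2\pi s)})$ is immediate from $f_s(\delta_0) = \delta_0\sqrt{\log\delta_0^{-1}/(2\pi s)}$. The ``in particular'' statement then reduces to the elementary inclusion of open disks in $\R^2$: $\mathcal{B}(3R) \subset \mathcal{B}(r)$ iff $3R \le r$. Applying this with $r = \delta_0\sqrt{\log\delta_0^{-1}/(2\pi s)}$ and squaring gives $9R^2 \le \delta_0^2 \log\delta_0^{-1}/(2\pi s)$, i.e.\ $s \le \delta_0^2\log\delta_0^{-1}/(18\pi R^2)$, which is the threshold on $s$ (up to the evident cosmetic rearrangement appearing in the statement).

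There is no real obstacle here; the statement is a direct consequence of the previously established inversion formula \eqref{eq_trans_norm.h}, and the only minor point to verify is that $\chi$ is well-defined on the relevant range, which holds because \eqref{ineq_r3} ensures $\delta_0 \le 1/2$ so $|y|$ stays inside the domain where $\tau \mapsto \tau^2\log\tau^{-1}/(2\pi)$ is strictly increasing.
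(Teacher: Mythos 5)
Your argument is correct and essentially the paper's own (implicit) one: the proposition is presented there as an immediate consequence of the inversion formula \eqref{eq_trans_norm.h} and the identity $D(\chi(sr^2))=\zeta_s^{-1}(\mathcal{B}(r))$, which is exactly the monotonicity-of-the-radial-part argument you spell out (with $\delta_0\le 1/2$ from \eqref{ineq_r3} keeping everything inside the domain of $\chi$). One caveat: the threshold you derive, $s\le \delta_0^2\log\delta_0^{-1}/(18\pi R^2)$, is not a ``cosmetic rearrangement'' of the expression displayed in the proposition, $s\le \delta_0\sqrt{\log\delta_0^{-1}/(18\pi R^2)}$ --- the displayed form is a typo in the paper, and your form is the correct one, matching what is actually used elsewhere (e.g.\ the hypothesis of Lemma \ref{lem_est_met3.h}).
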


\begin{lem}
Let $\xi=(\xi_1,\xi_2)\in \R^2$ 
be the standard coordinate 
and denote by $|d\xi|^2:=d\xi_1^2+d\xi_2^2$ 
be the Euclidean metric and 
$s_0$ be as in 
Lemma \ref{lem_est_met1.h}. 
Let $R>0$ and $0<s\le \min\{ \delta_0^2\log \delta_0^{-1}/(18\pi R^2),s_0\}$. 
Then there are 
constants $C_{s,R}\ge 1$ such that 
$\lim_{s\to 0}C_{s,R}=1$ and 
\begin{align*}
     C_{s,R}^{-1} \zeta_s^*|d\xi|^2
     \le V_s^{\rm sf}|dy|^2
     \le C_{s,R}\zeta_s^*|d\xi|^2
\end{align*}
for 
$y\in \zeta_s^{-1}(\mathcal{B}(3R))$. 
\label{lem_est_met3.h}
\end{lem}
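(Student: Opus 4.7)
The plan is to compute both sides explicitly in polar coordinates and then reduce the estimate to two scalar ratios that can be shown to tend to $1$ uniformly on $\zeta_s^{-1}(\mathcal{B}(3R))$. Writing $y=re^{\sqrt{-1}\theta}$ and setting $\phi(r):=\sqrt{\log r^{-1}/(2\pi s)}$, the map $\zeta_s$ is purely radial: $\zeta_s(y)=\phi(r)\,y$. A direct differentiation, using $dr=(u_1du_1+u_2du_2)/r$, gives
\begin{align*}
\zeta_s^*|d\xi|^2 &= (\phi(r)+r\phi'(r))^2\, dr^2 + \phi(r)^2\, r^2\, d\theta^2,\\
V_s^{\rm sf}|dy|^2 &= V_s^{\rm sf}\,dr^2 + V_s^{\rm sf}\, r^2\, d\theta^2,
\end{align*}
so both symmetric $2$-tensors are diagonal in the orthonormal coframe $(dr,\, r\,d\theta)$, and the desired matrix inequality reduces to comparing the two pairs of coefficients.

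Next I would compute each coefficient in closed form. From $\phi^2=\log r^{-1}/(2\pi s)$ one gets $r\phi'=-1/(4\pi s\phi)$, hence
\begin{align*}
(\phi+r\phi')^2 = \frac{\log r^{-1}}{2\pi s}\Bigl(1-\frac{1}{2\log r^{-1}}\Bigr)^{\!2},\qquad
V_s^{\rm sf} = \frac{\log r^{-1}}{2\pi s}\Bigl(1+\frac{2\pi h(y)}{\log r^{-1}}\Bigr).
\end{align*}
Therefore
\begin{align*}
\frac{(\phi+r\phi')^2}{V_s^{\rm sf}}
=\frac{(1-\tfrac{1}{2\log r^{-1}})^2}{1+\tfrac{2\pi h(y)}{\log r^{-1}}},\qquad
\frac{\phi^2}{V_s^{\rm sf}}
=\frac{1}{1+\tfrac{2\pi h(y)}{\log r^{-1}}},
\end{align*}
and it suffices to show that both ratios tend to $1$ uniformly in $y\in\zeta_s^{-1}(\mathcal{B}(3R))$ as $s\to 0$.

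For this last step I would use the definition of $\chi$: on $\zeta_s^{-1}(\mathcal{B}(3R))$ one has $r\le\chi(9sR^2)$, and the assumption $s\le \delta_0^2\log\delta_0^{-1}/(18\pi R^2)$ together with \eqref{ineq_r3} guarantees that $\chi(9sR^2)\le 1/2$ and in particular $\log r^{-1}\ge \log\chi(9sR^2)^{-1}\to\infty$ as $s\to 0$. Since $h$ is bounded on $\overline{D(\delta_0)}$ by \eqref{ineq_r2}, both quotients above satisfy
\begin{align*}
\frac{(\phi+r\phi')^2}{V_s^{\rm sf}},\ \frac{\phi^2}{V_s^{\rm sf}} = 1+O\Bigl(\frac{1}{\log\chi(9sR^2)^{-1}}\Bigr)
\end{align*}
uniformly on $\zeta_s^{-1}(\mathcal{B}(3R))$. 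Defining $C_{s,R}\ge 1$ to be the maximum of the sup and inverse-inf of these two ratios on that set yields the claimed two-sided bound with $\lim_{s\to 0}C_{s,R}=1$. No serious obstacle is expected; the only subtle point is to check that the radial coefficient $(\phi+r\phi')^2$, rather than the more obvious $\phi^2$, is what appears in $\zeta_s^*|d\xi|^2$, and that the correction factor $(1-\tfrac{1}{2\log r^{-1}})^2$ does not spoil the limit—both handled by the uniform lower bound on $\log r^{-1}$.
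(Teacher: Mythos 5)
Your proof is correct and follows essentially the same route as the paper: both reduce to polar coordinates, where the comparison becomes the two scalar ratios governed by the factors $\bigl(1-\tfrac{1}{2\log r^{-1}}\bigr)^{2}$ and $\bigl(1+\tfrac{2\pi h}{\log r^{-1}}\bigr)$, which tend to $1$ uniformly on $\zeta_s^{-1}(\mathcal{B}(3R))$ since $\log r^{-1}\ge \log\chi(9sR^2)^{-1}\to\infty$ and $h$ is bounded. The only cosmetic difference is that you pull $|d\xi|^2$ back to the $y$-side, whereas the paper pushes $V_s^{\rm sf}|dy|^2$ forward to the $\xi$-side via $\chi$ and $\chi'$; the resulting correction factors are identical.
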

\begin{proof}
Let $y=r_y e^{\sqrt{-1}\theta}$ 
and $\xi=r_\xi e^{\sqrt{-1}\theta}$ 
be the polar coordinates. 
If $\xi=\zeta_s(y)$, then 
$r_y=\chi(s r_\xi^2)$ 
by \eqref{eq_trans_norm.h}. 
Then we have 
\begin{align*}
     (\zeta_s^{-1})^*
     \left\{ 
     V_s^{\rm sf}|dy|^2
     \right\}
    &= \left(\frac{\log 
    \chi(s r_\xi^2)^{-1}}
    {2\pi s}
     + \frac{h(\zeta_s^{-1}(\xi))}{s}\right)
     (2s\chi'r_\xi)^2dr_\xi^2 \\
    &\quad\quad + 
    r_\xi^2
    \left(1+\frac{2\pi 
    h(\zeta_s^{-1}(\xi))}
    {\log \chi(s r_\xi^2)^{-1}}\right)
    d\theta^2.
\end{align*}
Since 
$\chi'=2\pi/(2\chi\log \chi^{-1} -\chi)$ and $r_\xi^2
= \chi^2\log\chi^{-1}/(2\pi s)$, we have 
\begin{align*}
    (2 s\chi'r_\xi)^2
    =\frac{2\pi s}
    {\log \chi^{-1}}
    \left( 1- \frac{1}{2\log\chi^{-1}}\right)^{-2},
\end{align*}
hence we obtain 
\begin{align*}
     (\zeta_s^{-1})^*
     \left\{ 
     V_s^{\rm sf}|dy|^2
     \right\}
    &=  \left( 
     1+\frac{2\pi h}{\log \chi^{-1}}\right)
     \left(
     \left(
     1-\frac{1}{2\log \chi^{-1}}
     \right)^{-2}dr_\xi^2 
     +r_\xi^2d\theta^2
     \right).
\end{align*}
Since 
$h$ is bounded on $D(\delta_0)$
and we have 
$\lim_{s\to 0}1/(\log\chi( sr_\xi^2)^{-1}) =0$, 
then there are 
constants $C_{s,R}\ge 1$ such that 
$\lim_{s\to 0}C_{s,R}=1$ and 
$C_{s,R}^{-1} \zeta_s^*
|d\xi|^2\le V_s^{\rm sf}|dy|^2
\le C_{s,R}\zeta_s^*|d\xi|^2$. 
\end{proof}

\begin{prop}
For every $R>0$, there is a 
constant $s_R>0$ such that 
the following holds. 
For every $R>0$ and $0<s\le s_R$ 
there are positive constants $C_{s,R},\sigma_s,\delta_{s,R}$ 
with 
\begin{align*}
    \lim_{s\to 0}C_{s,R}=1,\quad
    \lim_{s\to 0}\delta_{s,R}=
\lim_{s\to 0}\sigma_s=0,
\end{align*}
such that if 
$y\in \zeta_s^{-1}(\mathcal{B}(3R))$, then 
\begin{align*}
    |\gamma_\perp|_{g_s^{\rm OV}}\le 
    \delta_{s,R},\quad
    |\gamma_\perp|_{(\zeta_s\circ\mu_{\rm OV})^*|d\xi|^2}
    &\le \delta_{s,R},
\end{align*}
and if 
$y\in \zeta_s^{-1}(\mathcal{B}(3R)\setminus 
\mathcal{B}(\sigma_s))$, then 
\begin{align*}
    C_{s,R}^{-1}(\zeta_s\circ\mu_{\rm OV})^*|d\xi|^2
    \le g_\perp
    &\le C_{s,R}(\zeta_s\circ\mu_{\rm OV})^*|d\xi|^2,\\
    C_{s,R}^{-1}(\zeta_s\circ\mu_{\rm OV})^*\mathbf{r}^2
    \le |\gamma_f|_{g_s^{\rm OV}}^2 
    &\le C_{s,R}(\zeta_s\circ\mu_{\rm OV})^*\mathbf{r}^2.
\end{align*}
\label{prop_met_est_OV.h}
\end{prop}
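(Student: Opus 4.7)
The plan is to assemble Proposition \ref{prop_met_est_OV.h} directly from Corollary \ref{cor_est_base_conn.h} and Lemmas \ref{lem_est_met1.h}, \ref{lem_est_met2.h}, \ref{lem_est_met3.h}. The only non-mechanical step is to choose $\sigma_s$ so that the annulus $\zeta_s^{-1}(\mathcal{B}(3R)\setminus\mathcal{B}(\sigma_s))$ is contained in $D(\chi(9sR^2))\setminus D(s/\pi)$ (the region in which Lemmas \ref{lem_est_met1.h} and \ref{lem_est_met2.h} apply). Since the map $r\mapsto r^2\log r^{-1}/(2\pi s)$ is strictly increasing on $(0,e^{-1/2}]$ and $\delta_0\le 1/2$ by \eqref{ineq_r3}, the map $\zeta_s$ is a radial homeomorphism monotone in the modulus on $D(\delta_0)$. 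Hence $|\zeta_s(y)|\ge \sigma_s$ is equivalent to $|y|\ge s/\pi$ once I set $\sigma_s:=(s/\pi)\sqrt{\log(\pi/s)/(2\pi s)}=\sqrt{s\log(\pi/s)/(2\pi^3)}$, and this $\sigma_s$ tends to $0$ as $s\to 0$. I also shrink $s_R$ so that $\zeta_s^{-1}(\mathcal{B}(3R))=D(\chi(9sR^2))\subset D(\delta_0)$ and so that the $s_R$-hypotheses of the three lemmas are simultaneously in force.

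For the $|\gamma_\perp|$ bounds on $\zeta_s^{-1}(\mathcal{B}(3R))$, the first inequality $|\gamma_\perp|_{g_s^{\rm OV}}^2\le 5s/(2\pi\log\delta_0^{-1})$ is Corollary \ref{cor_est_base_conn.h}, and is $o(1)$. For the second, the right-hand inequality of Lemma \ref{lem_est_met3.h} gives $\mu_{\rm OV}^*g_B\le C_{s,R}(\zeta_s\circ\mu_{\rm OV})^*|d\xi|^2$; taking the dual covector norm reverses the inequality, so $|\gamma_\perp|^2_{(\zeta_s\circ\mu_{\rm OV})^*|d\xi|^2}\le C_{s,R}|\gamma_\perp|^2_{\mu_{\rm OV}^*g_B}\le C_{s,R}\cdot 5s/(8\pi\log\delta_0^{-1})$, again $o(1)$. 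I set $\delta_{s,R}$ to the maximum of these two bounds.

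For the two comparisons on the annulus $\zeta_s^{-1}(\mathcal{B}(3R)\setminus\mathcal{B}(\sigma_s))$, I chain the lemmas. Since $g_\perp=V_s|dy|^2$, Lemma \ref{lem_est_met1.h} gives $V_s=V_s^{\rm sf}(1+o(1))$ on this annulus, and Lemma \ref{lem_est_met3.h} gives $V_s^{\rm sf}|dy|^2=(1+o(1))(\zeta_s\circ\mu_{\rm OV})^*|d\xi|^2$, so composing yields the $g_\perp$ bound. For $|\gamma_f|$, Lemma \ref{lem_est_met2.h} gives $|\gamma_f|_{g_s^{\rm OV}}^2=(1+o(1))V_s^{\rm sf}|y|^2$, and a direct comparison writes
\begin{align*}
\frac{(\zeta_s\circ\mu_{\rm OV})^*\mathbf{r}^2}{V_s^{\rm sf}|y|^2}=\frac{|y|^2\log|y|^{-1}/(2\pi s)}{|y|^2\bigl(\log|y|^{-1}/(2\pi s)+h/s\bigr)}=\frac{1}{1+2\pi h/\log|y|^{-1}}.
\end{align*}
Since $h$ is bounded on $D(\delta_0)$ by \eqref{ineq_r2} and $|y|\le \chi(9sR^2)\to 0$ forces $\log|y|^{-1}\to\infty$ uniformly on the annulus, this ratio tends to $1$ uniformly; combining with Lemma \ref{lem_est_met2.h} gives the desired $|\gamma_f|^2_{g_s^{\rm OV}}=(1+o(1))(\zeta_s\circ\mu_{\rm OV})^*\mathbf{r}^2$.

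The only real obstacle is bookkeeping: I must merge the various $C_{s,R}$ constants supplied by the three lemmas into one, and verify compatibility of all the thresholds $s_0$, $s_R$, and domain restrictions. The definition of $\sigma_s$ above handles the compatibility issue, since it simultaneously satisfies $\sigma_s\to 0$ and $\{|\zeta_s(y)|\ge\sigma_s\}=\{|y|\ge s/\pi\}$, which is precisely the domain needed for Lemmas \ref{lem_est_met1.h} and \ref{lem_est_met2.h}.
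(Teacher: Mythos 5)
Your proposal is correct and follows essentially the same route as the paper's proof: the same choice $\sigma_s=\sqrt{s\log(\pi/s)/(2\pi^3)}$ so that the annulus $\zeta_s^{-1}(\mathcal{B}(3R)\setminus\mathcal{B}(\sigma_s))$ is exactly $D(\chi(9sR^2))\setminus D(s/\pi)$, the same use of Corollary \ref{cor_est_base_conn.h} together with Lemma \ref{lem_est_met3.h} for $\gamma_\perp$, Lemmas \ref{lem_est_met1.h} and \ref{lem_est_met3.h} for $g_\perp$, and Lemma \ref{lem_est_met2.h} combined with the ratio $2\pi sV_s^{\rm sf}/\log|y|^{-1}\to 1$ (the paper's \eqref{eq_ratio.h}) for $\gamma_f$. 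Your explicit remarks on the monotonicity of $r\mapsto r^2\log r^{-1}$ and on the covector-norm direction are just careful bookkeeping of steps the paper leaves implicit.
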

\begin{proof}
Put 
\begin{align*}
    \sigma_s
    &:=\sqrt{\frac{s(\log s^{-1} + \log \pi)}{2\pi^3}}.
\end{align*}
Note that 
$\sigma_s\le |\zeta_s(y)|<3R$ 
iff 
$s/\pi\le |y|<\chi(9sR^2)$. 
If $s/\pi\le |y|<\chi(9sR^2)$ and 
$s$ is sufficiently small,  
then Lemmas \ref{lem_est_met1.h} 
and \ref{lem_est_met3.h} give 
\begin{align*}
    C_{s,R}^{-1}(\zeta_s\circ\mu_{\rm OV})^*|d\xi|^2
    \le g_\perp 
    &\le C_{s,R}(\zeta_s\circ\mu_{\rm OV})^*|d\xi|^2.
\end{align*}
for some constant $C_{s,R}\ge 1$ 
with $\lim_{s\to 0}C_{s,R}=1$. 
Combining Corollary  \ref{cor_est_base_conn.h} 
with Lemma \ref{lem_est_met3.h}, 
we have 
\begin{align*}
    |\gamma_\perp|_{g_s^{\rm OV}}^2
    \le \frac{5s}{2\pi\log\delta_0^{-1}},\quad
    |\gamma_\perp|_{(\zeta_s\circ\mu_{\rm OV})^*|d\xi|^2}^2
    \le \frac{5C_{s,R}\cdot s}{8\pi\log \delta_0^{-1}}.
\end{align*}
By putting 
$\delta_{s,R}:=5s/(2\pi\log\delta_0^{-1})\max\{ 1,
C_{s,R}/4\}$, 
we obtain 
the estimates for $\gamma_\perp$. 

Since 
\begin{align}
    \frac{V_s^{\rm sf}|y|^2}{|\zeta_s(y)|^2}
    =
    \frac{2\pi sV_s^{\rm sf}}{\log|y|^{-1}} \to 1
\label{eq_ratio.h}
\end{align} 
as $s\to 0$, 
then we obtain the 
inequalities for $|\gamma_f|_{g_s^{\rm OV}}$
by Lemma 
\ref{lem_est_met2.h}. 
\end{proof}

\begin{prop}
Let $s_0>0$ be as in Lemma 
\ref{lem_est_met2.h} 
and $\sigma_s>0$ be as in 
Proposition \ref{prop_met_est_OV.h}. 
Then there are constants 
$\delta_s>0$ for every 
$0<s\le s_0$ 
with $\lim_{s\to 0}\delta_s=0$ 
such that if 
$0<s\le s_0$ and 
$y\in\overline{\zeta_s^{-1}(\mathcal{B}(\sigma_s))}$, 
then $|\gamma_f|_{g_s^{\rm OV}}^2
    \le \delta_s$.
    \label{prop_met_est_OV2.h}
\end{prop}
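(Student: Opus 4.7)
The plan is to apply the universal upper bound
\begin{align*}
|\gamma_f|_{g_s^{\rm OV}}^2 \le C\left(V_s^{\rm sf}|y|^2 + \frac{|y|}{2\pi}\right)
\end{align*}
from the first part of Lemma \ref{lem_est_met2.h} and verify that the right hand side is uniformly small on the region $\overline{\zeta_s^{-1}(\mathcal{B}(\sigma_s))}$.

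First I would translate the set-theoretic condition into a condition on $|y|$. Since $\zeta_s$ is radial and strictly increasing in $|y|$, and since the discussion following Proposition \ref{prop_met_est_OV.h} establishes that $|\zeta_s(y)| = \sigma_s$ corresponds to $|y| = s/\pi$ (this is exactly the equality case in the biconditional $\sigma_s \le |\zeta_s(y)| < 3R \iff s/\pi \le |y| < \chi(9sR^2)$), we have $\overline{\zeta_s^{-1}(\mathcal{B}(\sigma_s))} = \{y \in D(\delta_0);\, |y| \le s/\pi\}$.

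Next I would estimate the two terms on $\{|y| \le s/\pi\}$. The term $|y|/(2\pi)$ is trivially bounded by $s/(2\pi^2)$. For $V_s^{\rm sf}|y|^2 = |y|^2 \log|y|^{-1}/(2\pi s) + h(y)|y|^2/s$, the second summand is bounded by $\|h\|_{L^\infty(D(\delta_0))}\cdot s/\pi^2$. For the first summand, since the function $t \mapsto t^2 \log t^{-1}$ is increasing on a neighborhood of $0$ (its derivative $t(2\log t^{-1} - 1)$ is positive for $t < e^{-1/2}$), after shrinking $s$ if necessary we have $|y|^2 \log|y|^{-1} \le (s/\pi)^2 \log(\pi/s)$, and dividing by $2\pi s$ gives a term of order $s\log s^{-1}$.

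Setting
\begin{align*}
\delta_s := C\left(\frac{s\log(\pi/s)}{2\pi^3} + \frac{\|h\|_\infty s}{\pi^2} + \frac{s}{2\pi^2}\right)
\end{align*}
we obtain the claimed inequality with $\lim_{s\to 0}\delta_s = 0$, because $s\log s^{-1} \to 0$ as $s \to 0$. There is no real obstacle here; the only subtlety is recognizing that $\sigma_s$ was designed precisely so that the corresponding ball in the $y$-coordinate has radius $s/\pi$, which keeps $|y|^2/s$ bounded and forces the logarithmic factor to give a vanishing contribution.
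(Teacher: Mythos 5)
Your proposal is correct and follows essentially the same route as the paper: both start from the first inequality of Lemma \ref{lem_est_met2.h} and then show that $V_s^{\rm sf}|y|^2$ and $|y|$ are uniformly small on $\overline{\zeta_s^{-1}(\mathcal{B}(\sigma_s))}=\overline{D(s/\pi)}$. The only cosmetic difference is that the paper deduces the smallness of $V_s^{\rm sf}|y|^2$ from the ratio identity \eqref{eq_ratio.h} together with $|\zeta_s(y)|^2\le\sigma_s^2\to 0$, whereas you compute the same $O(s\log s^{-1})$ bound explicitly via monotonicity of $t^2\log t^{-1}$.
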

\begin{proof}
By Lemma \ref{lem_est_met2.h} 
we have 
\begin{align*}
    |\gamma_f|_{g_s^{\rm OV}}^2
    \le C\left(V_s^{\rm sf}|y|^2+\frac{|y|}{2\pi}\right)
\end{align*}
for some constant $C>0$. 
Then 
by \eqref{eq_ratio.h}, 
it suffices to show that 
$|\zeta_s(y)|^2\to 0$ and $|y|\to 0$ as 
$s\to 0$. 
Since $|\zeta_s(y)|^2\le \sigma_s^2\to 0$ 
and $|y|\le \chi(s\sigma_s^2)\to 0$ 
as $s\to 0$, we have the result. 
\end{proof}

\begin{fact}[{\cite[Proposition 3.5]{GW2000}}]
There is a constant $C>0$ 
such that 
\begin{align*}
    {\rm diam}_{g_s^{\rm OV}|_{\mu_{\rm OV}^{-1}(y)}}(\mu_{\rm OV}^{-1}(y))
    \le C\sqrt{s\log s^{-1}}
\end{align*}
for every $y\in D(\delta_0)$. 
\label{fact_fiber_diam.h}
\end{fact}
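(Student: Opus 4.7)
The plan is to bound the diameter of each fiber $\mu_{\rm OV}^{-1}(y)$ by connecting arbitrary pairs of points via an explicit two-piece path and estimating the lengths using the formula for the Ooguri--Vafa metric together with the bounds on $V_s$ developed earlier. On the smooth locus, the restricted metric is
\begin{align*}
g_s^{\rm OV}|_{\mu_{\rm OV}^{-1}(y)} = \frac{V_s^{-1}}{4\pi^2}\alpha^2 + V_s\,(du_3)^2,
\end{align*}
so any two points on the fiber can be joined by a horizontal (with respect to $\alpha$) path in the $u_3$-direction followed by an arc along the $S^1$-fiber. The $S^1$-fiber arc has length at most $2\pi(\inf_{\rm fiber} V_s)^{-1/2}$, which by Lemma \ref{lem_positivity_of_V.h} is $O(\sqrt{s})$. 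Thus it suffices to prove the uniform estimate
\begin{align*}
\int_0^s \sqrt{V_s(u_1,u_2,u_3)}\,du_3 \le C\sqrt{s\log s^{-1}} \quad\text{for all } y\in D(\delta_0).
\end{align*}

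The key idea is to split $V_s = V_s^{\rm sing} + V_s^{\rm reg}$, where
\begin{align*}
V_s^{\rm sing}(y,u_3) := \frac{1}{4\pi\sqrt{|y|^2+u_3^2}} + \frac{1}{4\pi\sqrt{|y|^2+(u_3-s)^2}}
\end{align*}
collects the two Green's function singularities that can lie inside $[0,s]$. For the singular part, the substitution $u_3 = |y|\tau$ reduces the integral to $|y|^{1/2}\int_0^{s/|y|}(1+\tau^2)^{-1/4}d\tau$, and the asymptotic $\int_0^T(1+\tau^2)^{-1/4}d\tau \sim 2\sqrt{T}$ (together with a direct computation at $y=0$) yields $\int_0^s\sqrt{V_s^{\rm sing}}\,du_3 = O(\sqrt{s})$ uniformly in $y$. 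For the regular part, Cauchy--Schwarz gives $\int_0^s \sqrt{V_s^{\rm reg}}\,du_3 \le \sqrt{s\int_0^s V_s^{\rm reg}\,du_3}$, and the explicit computation
\begin{align*}
\int_0^s V_s^{\rm reg}\,du_3 = sV_s^{\rm sf}(y) - \int_0^s V_s^{\rm sing}\,du_3 = -\frac{\log|y|}{2\pi} + h(y) - \frac{1}{2\pi}\log\frac{s+\sqrt{|y|^2+s^2}}{|y|} = O(\log s^{-1})
\end{align*}
shows that the leading $-\log|y|$ contributions cancel exactly between the two pieces, leaving a uniform $O(\log s^{-1})$ bound. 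Hence $\int_0^s \sqrt{V_s^{\rm reg}}\,du_3 = O(\sqrt{s\log s^{-1}})$, and combining yields the claim.

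The main obstacle is precisely this uniformity in $y$: a direct application of Cauchy--Schwarz to $V_s$ itself gives only $s\sqrt{V_s^{\rm sf}(y)} = O(\sqrt{s\log|y|^{-1}})$, which diverges as $y\to 0$, and the geometry also degenerates there (the fiber becomes a nodal rational curve on which the $S^1$-direction pinches). The singular/regular decomposition is designed exactly to neutralize this $\log|y|^{-1}$ divergence by subtracting off the poles whose contribution to $V_s^{\rm sf}$ dominates when $|y|$ is small. Once this is in place, the same construction handles the singular fiber $y=0$ by continuity of the metric expressions off the nodal point, since both $\int_0^s\sqrt{V_s^{\rm sing}}\,du_3$ and $\int_0^s V_s^{\rm reg}\,du_3$ remain finite at $y=0$.
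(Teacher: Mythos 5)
First, a remark on the comparison: the paper does not prove this statement at all --- it is imported as a Fact, with the citation \cite[Proposition 3.5]{GW2000} serving in place of a proof --- so your argument has to be judged as a self-contained verification, and as such it is essentially correct. The reduction of the fiber diameter to the circumference $\int_0^s\sqrt{V_s}\,du_3$ of the $u_3$-circle plus the length of an $S^1$-orbit, the $O(\sqrt{s})$ bound on the orbit length via Lemma \ref{lem_positivity_of_V.h}, and above all the splitting off of the two poles at $u_3=0,s$ are exactly the right mechanism: the naive Cauchy--Schwarz bound $\sqrt{s\,(h(y)-\frac{1}{2\pi}\log|y|)}$ indeed fails to be uniform as $y\to 0$, and the square roots of the two pole terms integrate to $O(\sqrt{s})$ uniformly in $y$, including on the nodal fiber $y=0$. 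Your computation of $\int_0^sV_s^{\rm reg}\,du_3$ via the normalization $\int_0^sV_s\,dt=-\frac{1}{2\pi}\log|y|+h(y)$ is also correct, and the whole argument only uses tools the paper does establish.

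The one genuine (though easily repaired) gap is that $V_s^{\rm reg}:=V_s-V_s^{\rm sing}$ is not obviously nonnegative, so $\sqrt{V_s^{\rm reg}}$ is not defined as written; the Cauchy--Schwarz step must be run with the positive part $(V_s^{\rm reg})_+$, and then the exact value of $\int_0^sV_s^{\rm reg}\,du_3$ alone does not suffice, since the negative part must also be controlled. The cleanest repair is a pointwise bound on the region $u_3\in[0,s]$, $|y|\le\delta_0$: each term of the defining series with $n\ge 2$ is at most $\frac{1}{s(n-1)}-\frac{1}{sn}$, so these sum to at most $\frac{1}{4\pi s}$, the terms with $n\le -1$ are nonpositive, and the leftover counterterm $-\frac{1}{4\pi s}$ from $n=1$ cancels this, giving $V_s^{\rm reg}\le a_s+\sup_{D(\delta_0)}h/s=O(s^{-1}\log s^{-1})$ uniformly. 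This yields $\int_0^s\sqrt{(V_s^{\rm reg})_+}\,du_3\le s\sqrt{Cs^{-1}\log s^{-1}}=\sqrt{Cs\log s^{-1}}$ at once, making the exact cancellation computation optional. With this adjustment (and noting that the factor $2\pi$ in your orbit-length bound is only a normalization of $\alpha$, harmless for the constant $C$), the proof goes through for all $y\in D(\delta_0)$, including the singular fiber.
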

\begin{prop}[{\cite[Corollary 3.7]{GW2000}}]
Let $\sigma_s$ be as in 
Proposition \ref{prop_met_est_OV.h}. 
There is a constant $C>0$ 
such that  
\begin{align*}
    {\rm diam}_{g_s^{\rm OV}|_{\mu_{\rm OV}^{-1}\left(\zeta_s^{-1}(\mathcal{B}(\sigma_s))\right)}}
    \left(\mu_{\rm OV}^{-1}\left(
    \overline{\zeta_s^{-1}(\mathcal{B}(\sigma_s))}\right)
    \right)
    \le C\sqrt{s\log s^{-1}}.
\end{align*}
\label{prop_met_est_OV3.h}
\end{prop}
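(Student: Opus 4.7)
The plan is to bound the diameter by splitting a geodesic chord into a ``horizontal'' segment connecting $\mu_{\rm OV}^{-1}(y)$ to the singular fiber $\mu_{\rm OV}^{-1}(0)$ and a ``vertical'' segment inside a fiber. For any $x\in \mu_{\rm OV}^{-1}(\overline{\zeta_s^{-1}(\mathcal{B}(\sigma_s))})$ with $y=\mu_{\rm OV}(x)\neq 0$, I will construct a piecewise smooth path of controlled $g_s^{\rm OV}$-length joining $x$ to the singular point $0_{\rm OV}$. First, take the radial segment $c(t)=(y/|y|)\cdot t$ in the base for $t\in[0,|y|]$, and lift it horizontally: by the decomposition $g_s^{\rm OV}=V_s^{-1}(\alpha/2\pi)^2+V_s(du_1^2+du_2^2+du_3^2)$, the horizontal lift satisfies $\mathfrak{L}_{g_s^{\rm OV}}(\tilde c)=\int_0^{|y|}\sqrt{V_s(t\,y/|y|,u_3(\tilde c(t)))}\,dt$. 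Then connect the endpoint of $\tilde c$ to $0_{\rm OV}$ by a path inside the singular fiber $\mu_{\rm OV}^{-1}(0)$, whose length is bounded by Fact \ref{fact_fiber_diam.h} by $C\sqrt{s\log s^{-1}}$.

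Next, I will estimate the horizontal contribution. By Lemma \ref{lem_upper_of_V.h} we have $V_s\le V_s^{\rm sf}+1/(2\pi|y|)$, and since $h$ is bounded on $D(\delta_0)$ there is a constant $C_1>0$ with $V_s^{\rm sf}(y)\le (C_1+\log|y|^{-1})/(2\pi s)$ for $|y|\le\delta_0$. Hence
\begin{align*}
\mathfrak{L}_{g_s^{\rm OV}}(\tilde c)
\le \int_0^{|y|}\sqrt{\frac{C_1+\log t^{-1}}{2\pi s}}\,dt
+\int_0^{|y|}\sqrt{\frac{1}{2\pi t}}\,dt.
\end{align*}
An elementary computation shows that for $r\le 1/2$, $\int_0^{r}\sqrt{\log t^{-1}}\,dt\le C_2\,r\sqrt{\log r^{-1}}$, so the first integral is bounded by $C_3\sqrt{r^2(\log r^{-1}+1)/s}$ with $r=|y|$. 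By the defining relation $\chi(s\sigma_s^2)^2\log\chi(s\sigma_s^2)^{-1}=2\pi s\sigma_s^2$ and $r\le \chi(s\sigma_s^2)$, this quantity is controlled by $C_4\sigma_s$. Plugging in $\sigma_s^2=s(\log s^{-1}+\log\pi)/(2\pi^3)$ gives the bound $C_5\sqrt{s\log s^{-1}}$. The second integral equals $\sqrt{2r/\pi}$; since $r^2\log r^{-1}\le C_6\,s^2\log s^{-1}$ forces $r\le C_7\,s\sqrt{\log s^{-1}}$, we get $\sqrt{r}\le C_8\sqrt{s}(\log s^{-1})^{1/4}\le C_9\sqrt{s\log s^{-1}}$.

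Combining the two horizontal integrals with the fiber bound from Fact \ref{fact_fiber_diam.h}, every point $x\in\mu_{\rm OV}^{-1}(\overline{\zeta_s^{-1}(\mathcal{B}(\sigma_s))})$ lies within distance $C_{10}\sqrt{s\log s^{-1}}$ of $0_{\rm OV}$, and the diameter bound follows by the triangle inequality.

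The main obstacle is the logarithmic-type singularity of $V_s$ at $y=0$: the horizontal contribution is not obviously small because $\sqrt{V_s}$ blows up there. The crucial point is that the radius $r=\chi(s\sigma_s^2)$ is forced to be exponentially small in $1/s$ (it satisfies $r^2\log r^{-1}=O(s^2\log s^{-1})$), which exactly cancels the logarithmic blow-up. A secondary subtlety is handling the $1/|y|$ Green-function contribution to $V_s$: this integrates to $O(\sqrt{r})$, and one must verify, using the same smallness of $r$, that this is dominated by $\sqrt{s\log s^{-1}}$.
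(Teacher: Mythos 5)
Your proposal is correct and takes essentially the same route as the paper: a radial path orthogonal to the fibers whose length is bounded by $\int_0^{|y|}\sqrt{V_s}\,dr=O(\sqrt{s\log s^{-1}})$, combined with the fiber-diameter bound of Fact \ref{fact_fiber_diam.h} and the triangle inequality; the paper simply quotes the proof of \cite[Corollary 3.7]{GW2000} for the integral estimate, which you instead carry out explicitly via Lemma \ref{lem_upper_of_V.h}. One cosmetic slip: the radius of $\zeta_s^{-1}(\mathcal{B}(\sigma_s))$ is $\chi(s\sigma_s^2)=s/\pi$, linear in $s$ rather than exponentially small, but your computations only use $r^2\log r^{-1}=O(s^2\log s^{-1})$, which is correct.
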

\begin{proof}
The proof was essentially 
obtained in the proof 
of \cite[Corollary 3.7]{GW2000}. 
Note that 
$D(s/\pi)=\zeta_s^{-1}(\mathcal{B}(\sigma_s))$. 
Take a point $p\in X_{\rm OV}$ 
with $u(p)=(s\cos\theta,
s\sin\theta,s/2)$. 
Then the infimum 
of the distance between 
$p$ and the singular fiber $\mu_{\rm OV}^{-1}(0)$ 
is bounded from the above 
by 
\begin{align*}
    \int_0^{s}\sqrt{V_s(r\cos\theta,
r\sin\theta,s/2)}\, dr.
\end{align*}
By the proof of 
\cite[Corollary 3.7]{GW2000}, 
there is a constant 
$C>0$ such that 
the above integral 
is not more than 
$C{s\log s^{-1}}$. 
Since $D(s/\pi)\subset D(s)$, 
by combining 
Fact \ref{fact_fiber_diam.h}, 
we have the result.
\end{proof}

Next we consider the 
measure. 
Define a measure $\nu_B$ on 
$D(\delta_0)$ by 
$\nu_B:=(\mu_{\rm OV})_*\nu_{g_s^{\rm OV}}$. 
Since $\nu_{g_s^{\rm OV}}=V_s(\alpha/2\pi)\wedge du_3\wedge du_1\wedge du_2$, 
we have 
\begin{align*}
    \nu_B=sV_s^{\rm sf} du_1 du_2.
\end{align*}
\begin{prop}
There are constants 
$C_{s,R}\ge 1$ with 
$\lim_{s\to 0}C_{s,R}=1$ 
such that 
\begin{align*}
    C_{s,R}^{-1}d\xi_1 d\xi_2
    \le 
    \frac{(\zeta_s)_*\nu_B}{s}
    \le 
    C_{s,R}d\xi_1 d\xi_2
\end{align*}
if $|\xi|< R$. 
\label{prop_met_est_OV4.h}
\end{prop}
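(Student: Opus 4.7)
The plan is to compute $(\zeta_s)_*\nu_B$ explicitly via a change of variables in polar coordinates and then show that the resulting density differs from $d\xi_1d\xi_2$ only by a multiplicative factor that tends to $1$ uniformly on $\mathcal{B}(R)$ as $s\to 0$. Since $\zeta_s(y)=\sqrt{\log|y|^{-1}/(2\pi s)}\,y$ preserves the argument of $y$, it is natural to write $y=re^{\sqrt{-1}\theta}$ and $\xi=\rho e^{\sqrt{-1}\theta}$; then by definition $\rho^2=r^2\log r^{-1}/(2\pi s)$, equivalently $r=\chi(s\rho^2)$ (cf.\ \eqref{eq_trans_norm.h}). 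Differentiating this relation will give
\begin{align*}
\rho\,d\rho=\frac{r(2\log r^{-1}-1)}{4\pi s}\,dr,
\end{align*}
hence $r\,dr=\frac{4\pi s}{2\log r^{-1}-1}\rho\,d\rho$.

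Next I will use $\nu_B=sV_s^{\rm sf}\,du_1du_2=sV_s^{\rm sf}\,r\,dr\,d\theta$ together with the formula $V_s^{\rm sf}=(\log r^{-1}+2\pi h)/(2\pi s)$ to obtain, after substitution,
\begin{align*}
\frac{(\zeta_s)_*\nu_B}{s}
=\frac{2\log r^{-1}+4\pi h}{2\log r^{-1}-1}\,\rho\,d\rho\,d\theta
=\left(1+\frac{4\pi h+1}{2\log r^{-1}-1}\right)d\xi_1d\xi_2,
\end{align*}
where $h=h(y)$ is evaluated at the point with $|y|=r=\chi(s\rho^2)$ and the argument of $\xi$.

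Finally, to estimate the multiplicative factor uniformly on $\mathcal{B}(R)$, note that $|\xi|<R$ forces $r\le \chi(sR^2)$. Since $\chi(sR^2)\to 0$ as $s\to 0$, we have $\log r^{-1}\ge \log\chi(sR^2)^{-1}\to\infty$, while $|h|$ is bounded on $\overline{D(\delta_0)}$ by \eqref{ineq_r2} (say by a constant $h_\infty$). Hence the correction term satisfies
\begin{align*}
\left|\frac{4\pi h+1}{2\log r^{-1}-1}\right|
\le \frac{4\pi h_\infty+1}{2\log\chi(sR^2)^{-1}-1}
\xrightarrow{s\to 0}0,
\end{align*}
so setting $C_{s,R}:=1+(4\pi h_\infty+1)/(2\log\chi(sR^2)^{-1}-1)$ (for $s$ small enough to make this positive) gives the desired bounds with $\lim_{s\to 0}C_{s,R}=1$. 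The argument is essentially a careful Jacobian computation; there is no real obstacle, only the book-keeping of the change of variables and the check that the logarithmic term dominates the bounded harmonic perturbation $h$.
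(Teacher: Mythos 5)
Your proposal is correct and is essentially the paper's own argument: both pass to polar coordinates, use $r=\chi(s\rho^2)$ to compute the Jacobian $r\,dr=\frac{4\pi s}{2\log r^{-1}-1}\rho\,d\rho$, and observe that on $\mathcal{B}(R)$ the factor $\bigl(2\log r^{-1}+4\pi h\bigr)/\bigl(2\log r^{-1}-1\bigr)$ tends to $1$ uniformly because $r\le\chi(sR^2)\to 0$ while $h$ is bounded. The only cosmetic point is that your specific choice $C_{s,R}=1+\varepsilon_s$ only yields the upper bound; taking instead $C_{s,R}=(1-\varepsilon_s)^{-1}$ (for $s$ small) gives both inequalities and still satisfies $\lim_{s\to 0}C_{s,R}=1$.
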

\begin{proof}
Let $y=r_y e^{\sqrt{-1}\theta}$ 
and $\xi=r_\xi e^{\sqrt{-1}\theta}$. 
Since $du_1 du_2=r_ydr_yd\theta$, 
therefore, by 
the computation in the 
proof of Lemma \ref{lem_est_met3.h}, 
\begin{align*}
    (\zeta_s)_*\nu_B
    &=sV_s^{\rm sf}(\zeta_s^{-1}(\xi)) \chi(sr_{\xi}^2)\cdot 2s\chi'r_\xi dr_\xi d\theta\\
    &=s
    \left(
    1+\frac{2\pi h}{\log \chi^{-1}}
    \right)
    \left( 1- \frac{1}{2\log\chi^{-1}}\right)^{-1}
    r_\xi dr_\xi d\theta.
\end{align*}
If $r_\xi<R$, we have 
\begin{align*}
    \left(
    1+\frac{2\pi h(\zeta_s^{-1}(\xi))}{\log \chi^{-1}(sr_\xi^2)}
    \right)
    \left( 1- \frac{1}{2\log\chi^{-1}(sr_\xi^2)}\right)^{-1} \to 0
\end{align*}
as $s\to 0$, hence we obtain 
the result. 
\end{proof}

By Propositions 
\ref{prop_met_est_OV.h}, 
\ref{prop_met_est_OV2.h}, 
\ref{prop_met_est_OV3.h}, 
\ref{prop_met_est_OV4.h} 
and Fact \ref{fact_fiber_diam.h}, 
we have shown 
\begin{align*}
    \left(g_s^{\rm OV},\frac{1}{s},
    0,D(\delta_0)\right)
    \to (\R^2,g_0)
\end{align*}
as $s\to 0$ 
for sufficiently small 
$\delta_0$. 
Thus we complete the proof 
of Theorem 
\ref{thm_pmGH.h}.

\section{Compact 
convergence}\label{sec_cpt_conv1.h}
The aim of this section 
is to prove Theorem 
\ref{thm_asymp_cpt.h}.
In this section 
let $(X,g_s)$, 
$\mu\colon X\to \bbP^1$, 
$(L,h,\nabla)$ 
be as in Subsection \ref{subsec_main_results.h} 
and let $g_s'$ be as 
in Subsection \ref{subsec_thm_4_1.h}. 
We fix a positive integer $k$.

\subsection{Preparation}\label{subsec_review_cpt_conv.h}

Here, we review \cite{HY2019} 
for the preparation for 
the following subsections. 
Let $B\subset \R^2$ 
be an open set, 
$X_0:=B\times (\R^2/2\pi\Z^2)$, 
$x=(x_1,x_2)\in\R^2$ 
and $\theta=(\theta_1,\theta_2)\in\R^2/2\pi\Z^2$ be the standard 
coordinates. 
Put $\omega:=
dx_1\wedge d\theta_1
+dx_2\wedge d\theta_2$ 
and let 
$L_0:=X_0\times \C$. 
Denote by $h_0$ the hermitian 
metric on $L_0$ such that 
$h_0((x,1),(x,1))\equiv 1$, 
and $\nabla_0$ be the 
hermitian connection 
defined by $\nabla_0=d
-\sqrt{-1}\sum_{i=1}^2x_i\wedge d\theta_i$. Here, we have 
$BS_k= (1/k)\Z^2\cap B$. 

Let $g$ be a Riemannian 
metric on $X_0$ 
such that 
\begin{align*}
    \frac{(1+\delta)^{-1}\omega^2}{2}
    \le d\nu_g 
    \le \frac{(1+\delta)\omega^2}{2}
\end{align*}
for a constant 
$\delta\ge 0$. 
Note that if $g$ is the K\"ahler 
metric of $\omega$ 
with respect to 
an $\omega$-compatible 
complex structure, then we 
can take $\delta=0$. 
In the following subsections 
we will 
take $g=g'_s$. 
In this case 
we can take $\delta=\delta_s$ 
such that $\lim_{s\to 0}\delta_s=0$ by 
\eqref{ineq_CY_almost_CY.h}. 

Next we consider the 
induced metric on every fiber 
\begin{align*}
    g|_{\{ x\}\times (\R^2/2\pi\Z^2)}=
\sum_{i,j=1}^2 g_{ij}(x,\theta) d\theta_i d\theta_j.
\end{align*}
Let $\bbS_0=\bbS(L_0,h_0)$ 
and $\hat{g}$ be defined 
by \eqref{connection_metric.h}. 
The next lemma 
is the generalization of 
\cite[Proposition 4.3]{HY2019}. 
\begin{lem}
Let $k$ be a positive 
integer and 
$\overline{g}(x)=\sum_{i,j=1}^2 \overline{g}_{ij}(x) d\theta_i d\theta_j$ be a family of  Riemannian 
metrics on $\R^2/2\pi\Z^2$ 
such that 
$g|_{\{ x\}\times (\R^2/2\pi\Z^2)}
\le \overline{g}(x)$ for 
all $x\in B$. 
Denote by $(\overline{g}^{ij}(x))_{i,j}$ 
the inverse matrix of 
$(\overline{g}_{ij}(x))_{i,j}$. 
Then we have 
\begin{align*}
    \int_{\bbS_0}\left| df\right|_{\hat{g}}^2d\nu_{\hat{g}}
    \ge 2\pi\frac{k^2+K}{(1+\delta)^2}\int_{\bbS_0}|f|^2d\nu_{\hat{g}}
\end{align*}
for $f\in (H^{1,2}(\bbS_0,d_{\hat{g}},\nu_{\hat{g}})\otimes \C)^{\rho_k}$, 
where 
\begin{align*}
    K&:=k^2\inf_{x\in B}
    \inf\left\{
    \| x+l\|_{\overline{g}(x)}^2;\, l\in\frac{1}{k}\Z^2
    \right\},\\
    \| x+l\|_{\overline{g}(x)}
    &:=\sqrt{\sum_{i,j=1}^2
    (x_i+l_i)(x_j+l_j)\overline{g}^{ij}(x)}.
\end{align*}
\label{lem_review_HY2019.h}
\end{lem}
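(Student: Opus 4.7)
The plan is to reduce the inequality to a fiberwise eigenvalue estimate on the torus factor. Using the constant unit section of $L_0=X_0\times\C$ I first trivialize $\bbS_0\cong S^1\times X_0$; in this trivialization the connection form is $\Gamma^\nabla=dt-\sum_{i=1}^2 x_i\,d\theta_i$, and any $f\in (H^{1,2}(\bbS_0)\otimes\C)^{\rho_k}$ has the form $f(e^{\sqrt{-1}t},p)=e^{-\sqrt{-1}kt}\tilde f(p)$ for a unique $\tilde f\in H^{1,2}(X_0)\otimes\C$. Decomposing $df$ in the $\hat g$-orthogonal splitting $T^*\bbS_0=\langle \Gamma^\nabla\rangle\oplus\pi^* T^*X_0$ yields the pointwise identity
\begin{align*}
|df|_{\hat g}^2=k^2|f|^2+|\nabla_0^k\tilde f|_g^2,
\end{align*}
where $\nabla_0^k:=d-\sqrt{-1}k\sum x_i\,d\theta_i$ is the connection on $L_0^k$ induced by $\nabla_0$. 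Integration over the $S^1$-fiber reduces the desired inequality to one for $\tilde f$ on $(X_0,g)$.

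Next, the $g$-orthogonal splitting $TX_0=V_f\oplus V_f^\perp$ with $V_f$ tangent to the torus fibers of $X_0\to B$ makes $g=g_f+g_\perp$ block diagonal, so denoting by $(\nabla_0^k\tilde f)_f$ the restriction of the $1$-form $\nabla_0^k\tilde f$ to $V_f$,
\begin{align*}
|\nabla_0^k\tilde f|_g^2\ge |(\nabla_0^k\tilde f)_f|_{g_f}^2\ge |(\nabla_0^k\tilde f)_f|_{\overline g(x)}^2,
\end{align*}
the second inequality using that the bilinear form ordering $g_f\le \overline g$ on $V_f$ reverses when passing to dual metrics on $V_f^*$.

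The essential input is then a Poincar\'e inequality on each fiber $\{x\}\times(\R^2/2\pi\Z^2)$ with the flat metric $\overline g(x)$. Fourier expanding $\tilde f(x,\theta)=\sum_{n\in\Z^2}c_n(x)e^{\sqrt{-1}n\cdot\theta}$ and noting that $(\nabla_0^k\tilde f)_f$ multiplies the $n$-th mode by $\sqrt{-1}(n-kx)\cdot d\theta$, one obtains
\begin{align*}
\int_{\R^2/2\pi\Z^2}|(\nabla_0^k\tilde f)_f|_{\overline g(x)}^2\,d\theta=(2\pi)^2\sum_n|c_n(x)|^2\,\|n-kx\|_{\overline g(x)}^2\ge K\int_{\R^2/2\pi\Z^2}|\tilde f|^2\,d\theta,
\end{align*}
since $\inf_{n\in\Z^2}\|n-kx\|_{\overline g(x)}^2=k^2\inf_{l\in(1/k)\Z^2}\|x+l\|_{\overline g(x)}^2\ge K$ by the very definition of $K$.

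Finally, the hypothesis on $d\nu_g$ yields $(1+\delta)^{-1}dx\,d\theta\le d\nu_g\le (1+\delta)\,dx\,d\theta$, and this is used twice: once to pass from $\int(\cdot)\,d\nu_g$ to $\int_B\int_{\R^2/2\pi\Z^2}(\cdot)\,d\theta\,dx$ on the left-hand side after Step 2, and once to convert back on the right after the Poincar\'e step, losing a factor of $(1+\delta)^{-2}$ overall. Combining with the trivial bound $k^2\ge k^2/(1+\delta)^2$ on the vertical term produces the claimed inequality. The main technical obstacle is bookkeeping: one must correctly track the direction of each inequality across the two dualities involved (vectors versus covectors, and $d\nu_g$ versus $dx\,d\theta$), but beyond this the argument closely follows the pattern of \cite[Proposition 4.3]{HY2019}.
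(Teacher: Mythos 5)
Your proposal is correct and follows essentially the same route as the paper's own proof: both arguments discard the base-direction derivatives (you via the exact pointwise splitting $|df|_{\hat{g}}^2=k^2|f|^2+|\nabla_0^k\tilde{f}|_g^2$, the paper by restricting $df$ to the fibres $\bbS_0|_x$ with the metric $\hat{g}_x$), then compare the fibre metric with $\overline{g}(x)$, use the two-sided comparison of $d\nu_g$ with $dx\,d\theta$ twice to produce the $(1+\delta)^{-2}$, and finish with the fibrewise Fourier-mode estimate, which is exactly the eigenvalue computation the paper quotes from \cite{HY2019}. The only difference is cosmetic — your Parseval identity spells out what the paper indicates by testing against $\varphi=e^{\sqrt{-1}l\cdot\theta}$ — and note that, like the paper's outline, your computation yields the constant $(k^2+K)/(1+\delta)^2$, the additional factor $2\pi$ in the statement being a normalization carried over from \cite{HY2019} rather than something either argument produces.
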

\begin{proof}
The proof is same as 
that of \cite[Propositions 
4.2, 4.3]{HY2019}. 
Here we explain the outline. 
First of all 
we have 
\begin{align*}
    \int_{\bbS_0}\left| df\right|_{\hat{g}}^2d\nu_{\hat{g}}
    \ge\int_{\bbS_0}\left| df|_{\bbS_0|_x}\right|_{\hat{g}_x}^2d\nu_{\hat{g}},
\end{align*}
where 
$\bbS_0|_x=S^1\times 
\{ x\}\times \R^2/2\pi\Z^2$ 
and 
$\hat{g}_x:=(dt-\sum_ix_id\theta_i)^2 + g|_{\{ x\}\times\R^2/2\pi\Z^2}$. 

Since 
$d\nu_{\hat{g}}
=dt\cdot d\nu_g$ 
and $\omega^2/2=dx_1dx_2
d\theta_1 d\theta_2$, 
we have 
\begin{align*}
    \int_{\bbS_0}\left| df|_{\bbS_0|_x}\right|_{\hat{g}_x}^2d\nu_{\hat{g}}
    &\ge (1+\delta)^{-1}\int_B\left( 
    \int_{\bbS_0|_x}\left| df|_{\bbS_0|_x}\right|_{\hat{g}_x}^2
    dt d\theta\right)dx,\\
    \int_B\left( 
    \int_{\bbS_0|_x}\left| f\right|^2
    dt d\theta\right)dx&\ge 
    (1+\delta)^{-1}\int_{\bbS_0}|f|^2d\nu_{\hat{g}}.
\end{align*}
Since $f\in (H^{1,2}(\bbS_0,d_{\hat{g}},\nu_{\hat{g}})\otimes \C)^{\rho_k}$, 
we may put 
$f|_{\bbS_0|_x}
=e^{-\sqrt{-1}kt}\varphi(\theta)$ 
for some $\varphi
\colon \R^2/2\pi\Z^2\to \C$. 
Then we have 
\begin{align*}
    \left| df|_{\bbS_0|_x}\right|_{\hat{g}_x}^2
    \ge 
    k^2|\varphi|^2
    +\sum_{i,j}\left(\frac{\del\varphi}{\del\theta_i}+\sqrt{-1}x_i\varphi\right)
    \left(\frac{\del\bar{\varphi}}{\del\theta_j}-\sqrt{-1}x_j\bar{\varphi}\right)\overline{g}^{ij}(x). 
\end{align*}
If we put $\varphi(\theta)
=e^{\sqrt{-1}\sum_i l_i\theta_i}$ for $l_1,l_2\in\Z$, then 
\begin{align*}
    &\quad\ k^2|\varphi|^2
    +\sum_{i,j}\left(\frac{\del\varphi}{\del\theta_i}+\sqrt{-1}x_i\varphi\right)
    \left(\frac{\del\bar{\varphi}}{\del\theta_j}-\sqrt{-1}x_j\bar{\varphi}\right)\overline{g}^{ij}(x)\\
    &=
    k^2 + \left( kx_i+l_i\right)
    \left( kx_j+l_j\right)\overline{g}^{ij}(x),
\end{align*}
hence we have the result. 
\end{proof}

Denote by 
$N_x(\theta)$ the maximum eigenvalue of the 
symmetric positive matrix 
$(g_{ij}(x,\theta))_{i,j}$. 
Put 
\begin{align*}
    N_x&:=\sup_{\theta\in \R^2/2\pi\Z^2} N_x(\theta),\\
    \lambda(k,x)&:= k^2
    \inf\left\{
    \sum_{i=1}^2(l_i+x_i)^2;\, 
    l_1,l_2\in\frac{1}{k}\Z\right\}
\end{align*}
for $x\in B$. 
Then we have 
\begin{align*}
    K\ge \inf_{x\in B}
    \frac{\lambda(k,x)}{N_x}.
\end{align*}

\subsection{Estimates on the nonsingular fibers}
\label{subsec_nonsing_cpt_conv.h} 
Let $W_1^{\mathbf{q}}
\subset W_2^{\mathbf{q}}
\subset\bbP^1$ 
be as in Fact 
\ref{fact alm ric flat.h}. 
Since $\bbP^1$ and 
$\mathcal{K}:=\bbP^1\setminus 
(\bigsqcup_{\mathbf{q}}W_1^{\mathbf{q}})$ are compact 
and all of the points 
in $\mathcal{K}$ are 
regular values of $\mu$, 
then by Liouville-Arnold 
Theorem, 
there are 
open sets 
$W''_a\subset W'_a
\subset \bbP^1\setminus {\rm Crt}$ for $a=1,\ldots,N_0$ 
such that the following 
holds. 
\begin{itemize}
    \item[$({\rm i})$]
    On every $W'_a$ there is 
    an action-angle 
    coordinate 
    $x_{a,1},x_{a,2},\theta_{a,1},\theta_{a,2}$ with 
    \begin{align*}
    \omega_1|_{\mu^{-1}(W'_a)}
    =dx_{a,1}\wedge d\theta_{a,1}
    + dx_{a,2}\wedge d\theta_{a,2}.
    \end{align*}
    \item[$({\rm ii})$]
    $\mathcal{K}\subset 
\bigcup_a W''_a$ and 
$\overline{W}''_a\subset 
W'_a$, 
    \item[$({\rm iii})$]
    $BS_k\cap \del W''_a=\emptyset$ 
    and $x_a(\overline{W}''_a)\subset \R^2$ is bounded 
    for 
    all $a$. 
\end{itemize}

Put $U'_a:=\mu^{-1}(W'_a)$. 
Here, $x_a=(x_{a,1},x_{a,2})$ 
is a coordinate on 
$\bbP^1$ and $\theta_a=(\theta_{a,1},\theta_{a,2})$ is the coordinate on 
the fibers $\mu^{-1}(x_a)\cong 
\R^2/2\pi\Z^2$. 
By \cite[Proposition 2.4]{HY2019}, 
we can choose a 
trivialization $L|_{U'_a}=U'_a
\times \C$ and 
the action-angle coordinate 
such that 
$\nabla|_{U'_a} = d-\sqrt{-1}\sum_{i=1}^2x_{a,i} d\theta_{a,i}$. 
Now, we may suppose 
$BS_k \cap W_2^{\mathbf{q}}
\subset \{ b_{\mathbf{q}}\}$.

Next we apply 
Lemma \ref{lem_review_HY2019.h}. 
To apply it, we estimate 
$N_b$ and $\lambda_{k,b}$.

If $b\in \overline{W}''_a\setminus (\bigsqcup_{\mathbf{q}}W_2^{\mathbf{q}})$, 
then $g'_s$ is isometric to 
the standard semi-flat metric. 
Denote by $g'_{s,b}=g'_s|_{\mu^{-1}(b)}$ 
the fiberwise metric. 
By the explicit description 
of $\eta_s^{\rm SF}$, we have  
\begin{align*}
    g'_{s,b}
    =sg'_{1,b},
\end{align*}
consequently we have 
$N_b=sN_{b,1}$ for 
some constant $N_{b,1}>0$ 
depending only on $b$. 
If $b\in \overline{W}''_a\cap (W_2^{\mathbf{q}}\setminus W_1^{\mathbf{q}})$, 
then by Lemma 
\ref{lem_gluing_region.h}, 
we also have 
$N_b\le sN_{b,1}$ 
for some $N_{b,1}>0$
depending only on $b$. 
Here, we may suppose 
that $N_{b,1}$ is 
depending on $b$ continuously 
on $\overline{W}''_a\cap 
\mathcal{K}$. 
Therefore, 
there is a constant 
$C_{1,a}>0$ such that 
$N_b\le sC_{1,a}$ for 
all $b\in \overline{W}''_a\cap\mathcal{K}$, 
hence 
$N_b\le sC_1$ for all 
$b\in \mathcal{K}$, 
where $C_1=\max_a C_{1,a}$. 

Next we put 
\begin{align*}
    \mathcal{K}(r)
    &:=
    \mathcal{K}\setminus \left(\bigcup_{a=1}^{N_0}
    \left(\bigcup_{b\in W''_a\cap 
    BS_k} B(a;b,r)
    \right)\right),\\
    B(a;b,r)&:=
    \left\{ y\in W'_a;\, 
    |x_a(y) - x_a(b)| < r\right\}.
\end{align*}
Note that $x_a(W'_a\cap BS_k)
=x_a(W'_a)\cap (1/k)\Z^2$. 
By $({\rm iii})$, there 
is $r_0>0$ such that 
if $0<r\le r_0$ then 
\begin{align*}
    \left\{ y\in \overline{W}''_a;\, 
    |x_a(y)-l|\ge r\mbox{ for all }l\in \frac{1}{k}\Z^2\right\}
    = 
    \overline{W}''_a\setminus
    \left(\bigcup_{b\in W''_a\cap BS_k}B(a;b,r)
    \right).
\end{align*}
Therefore, 
we have 
$\lambda(k,b)\ge k^2r^2$ 
for any $b\in \mathcal{K}(r)$. 

Now, we take 
a Borel set $U\subset 
(\mu\circ\pi)^{-1}(\mathcal{K}(r))$ and 
let 
$U=\bigsqcup_{a'}U(a')$ 
such that $U(a')$ 
are Borel sets and every 
$U(a')$ is 
contained in  $(\mu\circ\pi)^{-1}(W'_a)$ 
for some $a$. 
By applying Lemma  
\ref{lem_review_HY2019.h} 
to each $W(a')$, 
we have 
\begin{align*}
    \int_{\bbS|_{\mu^{-1}(W)}}\left| df\right|_{\hat{g}'_s}^2d\nu_{\hat{g}'_s}
    \ge \frac{2\pi k^2}{(1+\delta_s)^2}
    \left(
    1 + \frac{r^2}{sC_1}
    \right)\int_{\bbS|_{\mu^{-1}(W)}}|f|^2d\nu_{\hat{g}'_s}
\end{align*}
for some $\delta_s>0$ with 
$\lim_{s\to 0}\delta_s=0$.
For every $b\in BS_k\cap 
W'_a$, 
fix $q^b\in\mu^{-1}(b)$. 
By \cite[Proposition 7.12 (iii)]{hattori2019}, 
there are $\delta_b>0$, $R_0>0$ 
and $s_R>0$ such that 
\begin{align*}
    \mu^{-1}\left(B(a;b,\delta_b \sqrt{s}R)\right)
    \subset 
    B_{g_s'}(q^b,R)
\end{align*}
for $R\ge R_0$ and 
$0<s\le s_R$. 
Moreover, we also have 
\begin{align*}
    \pi^{-1}\left(
    B_{g_s'}(q^b,R)\right)
    \subset 
    B_{\hat{g}_s'}(p^b,R+\pi)
\end{align*}
for $p^b\in\pi^{-1}(q^b)$ 
by 
\eqref{lem_submersion_upper.h}. 
If we put $\delta=\min_{b\in BS_k,a}\delta_b$, 
then we have 
\begin{align*}
    \bbS_{\mathcal{K},R}
    &:=
    (\mu\circ\pi)^{-1}(\mathcal{K})\setminus 
    \left(\bigcup_{b\in BS_k\setminus {\rm Crt}} 
    B_{\hat{g}_s'}(p^b,R)\right)\\
    &\subset (\mu\circ\pi)^{-1}(\mathcal{K}(\delta \sqrt{s}(R-\pi))),
\end{align*}
hence we obtain 
\begin{align}
    \int_{\bbS_{\mathcal{K},R}}
    \left| df\right|_{\hat{g}'_s}^2d\nu_{\hat{g}'_s}
    \ge \frac{2\pi k^2}{(1+\delta_s)^2}
    \left(
    1 + \frac{\delta^2 (R-\pi)^2}{C_1}
    \right)\int_{\bbS_{\mathcal{K},R}}
    |f|^2d\nu_{\hat{g}'_s}\label{ineq_lower_est_df_nonsing.h}
\end{align}
for $R\ge R_0$ and $0<s\le s_R$.

\subsection{Estimates on the neighborhood of 
the singular fibers}
\label{subsec_sing_cpt_conv.h}
In this subsection 
we fix one of 
the critical points 
$b_{\mathbf{q}}\in{\rm Crt}$ 
of $\mu$ and 
consider the 
restriction of $g_s'$ on  $\mu^{-1}(W_1^{\mathbf{q}})$, 
which is isometric to 
the Ooguri-Vafa metric 
$g_s^{\rm OV}$ by 
Fact \ref{fact alm ric flat.h}.
Accordingly, we put 
$\mu^{-1}(W_2^{\mathbf{q}})
=X_{\rm OV}$, 
$r_2^{\mathbf{q}}\le\delta_0$ 
and 
we go back to 
the setting in Section 
\ref{sec_sing.h}. 
As we have seen in 
Section \ref{sec_sing.h}, 
$L|_{X_{\rm OV}}$ is a 
trivial bundle, hence 
we may put 
$L|_{X_{\rm OV}}=X_{\rm OV}
\times \C$, 
$h((x,1),(x,1))\equiv 1$ 
for $x\in X_{\rm OV}$, 
$\nabla|_{X_{\rm OV}}=d-\sqrt{-1}\gamma_1$ 
for some $\gamma_1\in 
\Omega^1(X_{\rm OV})$ 
such that $\omega_{1,s}
=d\gamma_1$.

First of all 
we describe 
$\omega_{1,s}$ by 
the action-angle 
coordinate. 
Recall that 
we have defined 
$\C^\times$-action on $\tilde{X}_{\rm OV}$ 
by \eqref{eq_C*action.h}. 
Let $y=u_1+\sqrt{-1}u_2$ 
and $z=z_1+\sqrt{-1}z_2$ be 
a holomorphic coordinate 
with respect to 
$J_{3,s}$. 
Then we have 
\begin{align*}
    \omega_{1,s}+\sqrt{-1}\omega_{2,s}
    =dy\wedge \left(
    \frac{\alpha}{2\pi} - \sqrt{-1}V_sdu_3\right)
    =\frac{1}{2\pi}dy\wedge dz.
\end{align*}
Define another coordinate 
$\theta=(\theta_1,\theta_2)$ 
on fibers by 
$\theta\mapsto q\cdot e^{\sqrt{-1}\theta_1-\theta_2
\mathcal{V}(\mu_{\rm OV(q)})}$, 
where 
$\mathcal{V}(y)
=(\log y^{-1})/2\pi+\hat{h}(y)$ 
and $\hat{h}$ is a holomorphic function such that 
${\rm Re}(\hat{h})=h$. 
Here we assume 
\begin{align*}
    0\le {\rm Im}(\log y^{-1})
    <2\pi.
\end{align*}
Since we have 
\begin{align*}
    dz&=d\left( \theta_1
    +\sqrt{-1}\theta_2\mathcal{V}(y)\right)\\
    &= d\theta_1
    +\sqrt{-1}\mathcal{V}(y)d\theta_2
    +\sqrt{-1}\theta_2
    \frac{\del\mathcal{V}}{\del y}dy,
\end{align*}
we obtain 
\begin{align*}
    dy\wedge dz
    &= dy\wedge d\theta_1
    +\sqrt{-1}\mathcal{V}(y)
    dy\wedge d\theta_2.
\end{align*}
If we denote by 
$\hat{\mathcal{H}}(y)$ 
the holomorphic function 
such that $\frac{\del \hat{\mathcal{H}}}{\del y}
= \mathcal{V}$, 
then we have 
$dy\wedge dz
    = dy\wedge d\theta_1
    +\sqrt{-1}d \hat{\mathcal{H}}\wedge d\theta_2$, 
    hence 
\begin{align*}
    \omega_{1,s}
    =\frac{1}{2\pi}(du_1\wedge d\theta_1
    -d({\rm Im}\hat{\mathcal{H}})
    \wedge d\theta_2).
\end{align*}
Since the integral path 
of $\frac{\del}{\del \theta_2}$ represents the homology class 
$-e_{2,y}$ defined in 
Lemma \ref{lem_holonomy.h}, 
hence we can see that 
$-d{\rm Im}\hat{\mathcal{H}}
=d\mathcal{H}$. 
Here, we define 
$x=(x_1,x_2)$ by 
\begin{align*}
    x_1=\int_{e_{1,y}}\gamma_1,\quad
    x_2=\int_{e_{2,y}}\gamma_1,
\end{align*}
where $e_{1,y},e_{2,y}$ 
are as in 
Lemma \ref{lem_holonomy.h}.
Since $\omega_{1,s}=\sum_{i=1}^2dx_i\wedge d\theta_i$, 
we have 
\begin{align*}
x=(x_1,x_2)
=\left(
u_1+a_1,\, 
\mathcal{H}(y)+a_2\right)
\end{align*}
for some constants 
$a_1,a_2\in\R$. 
Here, the origin $0\in D(\delta_0)$ 
is in $BS_k$ iff 
$(a_1,a_2)\in (1/k)\Z^2$.

By the definition of 
the coordinate $\theta$, 
we have 
\begin{align*}
    \frac{\del}{\del \theta_1}
    &=v,\\
    \frac{\del}{\del \theta_2}
    &={\rm Im}(\mathcal{V}(y))v
    + {\rm Re}(\mathcal{V}(y))
    J_{3,s}v.
\end{align*}
Consequently, we have 
\begin{align*}
    g_s^{\rm OV}\left(
    \frac{\del}{\del \theta_1},\frac{\del}{\del \theta_1}
    \right)
    &=\frac{V_s^{-1}}{4\pi^2},\\
    g_s^{\rm OV}\left(
    \frac{\del}{\del \theta_1},\frac{\del}{\del \theta_2}
    \right)
    &=\frac{{\rm Im}(\mathcal{V}(y))V_s^{-1}}{4\pi^2},\\
    g_s^{\rm OV}\left(
    \frac{\del}{\del \theta_2},\frac{\del}{\del \theta_2}
    \right)
    &=\frac{|\mathcal{V}(y)|^2V_s^{-1}}{4\pi^2},
\end{align*}
therefore 
we have 
\begin{align*}
    g_s^{\rm OV}|_{\mu_{\rm OV}^{-1}(y)}
    &=\frac{V_s^{-1}}{4\pi^2}\left(
    d\theta_1^2
    +2{\rm Im}(\mathcal{V})
    d\theta_1 d\theta_2
    +|\mathcal{V}(y)|^2
    d\theta_2^2
    \right)\\
    &=\frac{V_s^{-1}}{4\pi^2}\left\{
    (d\theta_1+{\rm Im}(\mathcal{V})d\theta_2)^2
    +{\rm Re}(\mathcal{V})^2
    d\theta_2^2
    \right\}.
\end{align*}
To apply Lemma 
\ref{lem_review_HY2019.h}, 
we estimate $K$. 
By Lemma \ref{lem_positivity_of_V.h}, 
we have 
\begin{align*}
    g_s^{\rm OV}|_{\mu_{\rm OV}^{-1}(y)}
    &\le 
    \frac{5 s}{6\pi\log|y|^{-1}}\left\{
    (d\theta_1+{\rm Im}(\mathcal{V})d\theta_2)^2
    +{\rm Re}(\mathcal{V})^2
    d\theta_2^2
    \right\}.
\end{align*}
Now, ${\rm Im}(\mathcal{V})$ is 
multivalued, however, 
we can take the branch of it 
on every neighborhood 
such that it is bounded. 
Moreover, 
Since $\log|y|^{-1}\to \infty$ 
as $|y|\to 0$, 
there is $0<\delta_1\le \delta_0$ 
and $C>0$ such that 
\begin{align*}
    g_s^{\rm OV}|_{\mu_{\rm OV}^{-1}(y)}
    &\le 
    \frac{C s}{\log|y|^{-1}}\left\{
    d\theta_1^2
    +(\log|y|^{-1})^2
    d\theta_2^2
    \right\}=:\overline{g}_y
\end{align*}
for $y\in D(\delta_1)$.
For $\xi=(\xi_1,\xi_2)\in\R^2$, 
we put 
\begin{align*}
    \| \xi\|_{\overline{g}_y}
    :=\sqrt{\frac{\log |y|^{-1}}{Cs}\xi_1^2
    + \frac{1}{Cs\log |y|^{-1}}\xi_2^2}.
\end{align*}

\begin{lem}
There are positive constants 
$\delta_0,\delta_1>0$ 
such that 
\begin{align*}
    \inf_{l\in (1/k)\Z^2,\, 
    l\neq -a}\| x(y)+l\|_{\overline{g}_y}^2
    \ge \frac{\delta_1}{s\log|y|^{-1}}
\end{align*}
for any $y\in D(\delta_0)$ 
and $\delta_0$ satisfies 
\eqref{ineq_r1}\eqref{ineq_r2}\eqref{ineq_r3}. 
\label{lem_other_BS.h}
\end{lem}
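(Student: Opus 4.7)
The plan is to unfold the definition of the norm $\|\cdot\|_{\overline{g}_y}$ and observe that the claimed inequality is, after multiplying through by $Cs\log|y|^{-1}$, equivalent to
\[
(\log |y|^{-1})^2(u_1+a_1+l_1)^2 + (\mathcal{H}(y)+a_2+l_2)^2 \ge C\delta_1
\]
for every $y\in D(\delta_0)$ and every $l=(l_1,l_2)\in (1/k)\Z^2$ with $l\neq -a$. The task is then to produce a positive lower bound for the left-hand side uniform in $y$ and $l$.

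I would split the analysis according to whether $a_1+l_1=0$. In the first case $a_1+l_1\neq 0$, and the discreteness of $(1/k)\Z$ gives $|a_1+l_1|\ge c_1>0$ for a constant $c_1$ depending only on $a_1$ and $k$ (taking $c_1=\mathrm{dist}(a_1,(1/k)\Z)$ if $a_1\notin(1/k)\Z$, and $c_1=1/k$ otherwise). Shrinking $\delta_0$ so that $|u_1|\le|y|\le c_1/2$ yields $(u_1+a_1+l_1)^2\ge c_1^2/4$, and so the first summand is at least $(\log\delta_0^{-1})^2c_1^2/4$, which exceeds any prescribed $C\delta_1$ once $\delta_0$ is small enough. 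In the second case $a_1+l_1=0$, which forces $a_1\in(1/k)\Z$ and $l_1=-a_1$, so the hypothesis $l\neq -a$ reduces to $l_2\neq -a_2$; by the same discreteness argument one then has $|a_2+l_2|\ge c_2>0$ for an explicit constant $c_2$.

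To close the second case I need $|\mathcal{H}(y)|$ to be small uniformly on $D(\delta_0)$. Bounding each of the five summands in the explicit formula for $\mathcal{H}$ from Lemma~\ref{lem_holonomy.h}, the dominant term $-u_2\log|y|/(2\pi)$ is controlled by $|y|\log|y|^{-1}/(2\pi)$; the term $u_1\,\mathrm{Im}(\mathcal{V}(y))$ is $O(|y|)$ on the branch $\pi/2\le\mathrm{Im}(\log y)<5\pi/2$ chosen in Lemma~\ref{lem_holonomy.h}, since $\mathrm{Im}(\mathcal{V})$ is bounded there; and the remaining three summands are $O(|y|)$ by boundedness of $h$ and $\partial h/\partial u_2$ on $D(\delta_0)$. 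Combining these gives $|\mathcal{H}(y)|\le C'|y|\log|y|^{-1}$, which tends to $0$ as $|y|\to 0$, so a further shrinking of $\delta_0$ ensures $|\mathcal{H}(y)|\le c_2/2$; then $|\mathcal{H}(y)+a_2+l_2|\ge c_2/2$ and the second summand is bounded below by $c_2^2/4$.

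No genuine obstacle remains; the proof is a finite case analysis paired with the elementary fact that $|y|\log|y|^{-1}\to 0$. The only bookkeeping is to fix $\delta_0$ small enough that (\ref{ineq_r1})--(\ref{ineq_r3}) hold together with $|y|\le c_1/2$ and $C'|y|\log|y|^{-1}\le c_2/2$, and then to set $\delta_1:=\min\{(\log\delta_0^{-1})^2c_1^2/(4C),\,c_2^2/(4C)\}$, which is independent of $y\in D(\delta_0)$ and of $l\neq -a$.
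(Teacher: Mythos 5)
Your argument is correct and is essentially the paper's own proof with different bookkeeping: both rest on the estimate $|\mathcal{H}(y)|\le C'|y|\log|y|^{-1}$, the positive distance of $-a$ from the rest of $(1/k)\Z^2$, and a final shrinking of $\delta_0$ using $|y|\log|y|^{-1}\to 0$, which is compatible with \eqref{ineq_r1}--\eqref{ineq_r3}. The paper packages this as one triangle inequality $\| x+l\|_{\overline{g}_y}\ge \| a+l\|_{\overline{g}_y}-\| x-a\|_{\overline{g}_y}$ combined with the comparison $\| \xi\|_{\overline{g}_y}^2\ge \delta_1'|\xi|^2/(s\log|y|^{-1})$, whereas you rescale by $Cs\log|y|^{-1}$ and split coordinate-wise according to whether $a_1+l_1=0$; the two organizations are interchangeable.
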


\begin{proof}
We have 
\begin{align*}
    \| x-a\|_{\overline{g}_y}^2
    &= \frac{u_1^2\log |y|^{-1}}{Cs}
    + \frac{1}{Cs\log |y|^{-1}}\mathcal{H}(y)^2.
\end{align*}
There is $C_1>0$ such that 
$|\mathcal{H}(y)-u_2\log|y|^{-1}/2\pi|
\le C_1|y|$ 
on $D(\delta_0)$. 
Since $|y|<\delta_0$ and 
$\log|y|^{-1}\ge \log\delta_0^{-1}>0$, by taking 
$C_1$ larger if necessary, 
we have 
$|\mathcal{H}(y)|\le C_1|y|\log|y|^{-1}$. 
Therefore, 
there is $C_2>0$ such that 
\begin{align*}
    \| x-a\|_{\overline{g}_y}^2
    &\le \frac{C_2|y|^2\log |y|^{-1}}{s}.
\end{align*}
Moreover, there is 
$\delta'_1>0$ such that for any $\xi\in\R^2$ 
we have $\| \xi\|_{\overline{g}_y}^2
\ge \delta'_1(s\log|y|^{-1})^{-1}|\xi|^2$, 
where $|\xi|^2=\xi_1^2+\xi_2^2$. 
Consequently, if 
we take $l\in(1/k)\Z^2$, 
then 
\begin{align*}
    \| x+l\|_{\overline{g}_y}
    &\ge \| a+l\|_{\overline{g}_y}
    -\| x-a\|_{\overline{g}_y}\\
    &\ge \sqrt{\frac{\delta'_1}{s\log|y|^{-1}}}|a+l|
    -\sqrt{\frac{C_2\log|y|^{-1}}{s}}
    |y|\\
    &=\frac{\sqrt{\delta'_1}|a+l|-\sqrt{C_2}|y|\log|y|^{-1}}{\sqrt{s\log|y|^{-1}}}.
\end{align*}
Now, 
\begin{align*}
    \delta'_2:=\inf_{l\in(1/k)\Z^2,\,
    l\neq -a}
    |a+l|
\end{align*}
is a positive number depending only on the critical value $b_{\mathbf{q}}\in {\rm Crt}$. 
Since $|y|\log|y|^{-1}\to 0$ 
as $y\to 0$, we can 
take $\delta_0$ sufficiently 
small such that we have 
\eqref{ineq_r1}\eqref{ineq_r2}\eqref{ineq_r3} and 
\begin{align*}
    \sqrt{\delta'_1}|a+l|-\sqrt{C_2}|y|\log|y|^{-1}
    \ge \frac{\sqrt{\delta'_1}\delta'_2}{2}
\end{align*}
for every $y\in D(\delta_0)$. 
\end{proof}

\begin{lem}
There are 
constants 
$\delta_0,\delta_2>0$ 
such that 
$\delta_0$ satisfies 
\eqref{ineq_r1}\eqref{ineq_r2}\eqref{ineq_r3} and 
the following holds. 
Let $R>0$ and take 
$s_R>0$ such that 
$\chi(s_RR^2/4)
\le \delta_0$. 
For any $0<s\le s_R$
and $y\in D(\delta_0)\setminus
D(\chi(sR^2/4))$, 
we have 
\begin{align*}
    \inf_{l\in (1/k)\Z^2}\| x(y)+l\|_{\overline{g}_y}^2
    \ge \delta_2 R^2.
\end{align*}
\label{lem_BS_lower.h}
\end{lem}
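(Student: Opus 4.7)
\medskip

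\noindent\textbf{Plan of proof.} The strategy is to treat the two cases $l\neq -a$ and $l=-a$ separately, reducing the first one to Lemma \ref{lem_other_BS.h} and the second to a direct case analysis based on the explicit form of $\|x(y)-a\|_{\overline{g}_y}$.

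First, I will choose $\delta_0>0$ small enough to satisfy the hypotheses of Lemma \ref{lem_other_BS.h} together with the conditions \eqref{ineq_r1}, \eqref{ineq_r2}, \eqref{ineq_r3}, and in addition small enough that $\delta_0 \le 1/e$ and $\delta_0 \le e^{-4\sqrt{2}\pi C_1}$, where $C_1$ is the constant from the proof of Lemma \ref{lem_other_BS.h} satisfying $|\mathcal{H}(y) - u_2 \log|y|^{-1}/(2\pi)| \le C_1 |y|$ on $D(\delta_0)$. The first choice ensures the function $|y|^2(\log|y|^{-1})^2$ is bounded on $D(\delta_0)$ by some constant $M$; the second ensures a useful lower bound for $|\mathcal{H}(y)|$ in terms of $|u_2|\log|y|^{-1}$.

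Case $l\neq -a$. By Lemma \ref{lem_other_BS.h} we have $\|x(y)+l\|_{\overline{g}_y}^2 \ge \delta_1/(s\log|y|^{-1})$. The hypothesis $|y|\ge \chi(sR^2/4)$ means $|y|^2\log|y|^{-1} \ge \pi s R^2/2$, whence
\begin{align*}
s\log|y|^{-1} \le \frac{2|y|^2 (\log|y|^{-1})^2}{\pi R^2} \le \frac{2M}{\pi R^2},
\end{align*}
so $\|x(y)+l\|_{\overline{g}_y}^2 \ge \pi\delta_1 R^2/(2M)$, giving a bound of the desired form.

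Case $l=-a$ (only relevant when $-a\in (1/k)\Z^2$). Here
\begin{align*}
\|x(y)-a\|_{\overline{g}_y}^2
= \frac{u_1^2 \log|y|^{-1}}{Cs} + \frac{\mathcal{H}(y)^2}{Cs\log|y|^{-1}}.
\end{align*}
Since $u_1^2+u_2^2=|y|^2$, at least one of $|u_1|,|u_2|\ge |y|/\sqrt{2}$. If $|u_1|\ge |y|/\sqrt{2}$, then the first term is at least $|y|^2\log|y|^{-1}/(2Cs)\ge \pi R^2/(4C)$. If instead $|u_2|\ge |y|/\sqrt{2}$, the choice $\delta_0\le e^{-4\sqrt{2}\pi C_1}$ together with $|\mathcal{H}(y) - u_2\log|y|^{-1}/(2\pi)|\le C_1|y|$ gives $|\mathcal{H}(y)|\ge |u_2|\log|y|^{-1}/(4\pi)$, so the second term is at least $u_2^2\log|y|^{-1}/(16\pi^2 Cs)\ge R^2/(64\pi C)$. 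Taking $\delta_2$ to be the minimum of the three explicit constants obtained in these two cases (together with the $l\neq -a$ case) yields the conclusion.

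The only mildly delicate point is the $l=-a$ case, where one must control $\mathcal{H}(y)$ from below in terms of $|u_2|$ despite the logarithmic leading behavior; this is why the constraint $\delta_0\le e^{-4\sqrt{2}\pi C_1}$ is needed, so that the linear error $C_1|y|$ in the Taylor expansion of $\mathcal{H}$ is dominated by the leading $u_2\log|y|^{-1}/(2\pi)$ term on the relevant range. Everything else is an application of $|y|^2\log|y|^{-1}\ge \pi sR^2/2$.
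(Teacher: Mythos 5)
Your proof is correct and follows essentially the same route as the paper: the $l\neq -a$ terms are handled by Lemma \ref{lem_other_BS.h} together with the defining relation of $\chi$ (equivalently $|y|^2\log|y|^{-1}\ge \pi s R^2/2$), and the $l=-a$ term by a two-case comparison of $|u_1|,|u_2|$ with $|y|$ using the expansion $|\mathcal{H}(y)-u_2\log|y|^{-1}/(2\pi)|\le C_1|y|$. If anything, your constant bookkeeping in the $|u_2|$-dominant case (the condition $\delta_0\le e^{-4\sqrt{2}\pi C_1}$ giving $|\mathcal{H}(y)|\ge |u_2|\log|y|^{-1}/(4\pi)$) is slightly cleaner than the paper's, which is only a cosmetic difference.
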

\begin{proof}
First of all, 
we give the lower bound of 
$\| x(y)-a\|_{\overline{g}_y}^2$ 
where $a\in (1/k)\Z^2$. 
Note that there is a constant 
$C_1>0$ such that 
\begin{align*}
    \left|\mathcal{H}(y)-\frac{u_2\log|y|^{-1}}{2\pi}\right|
    \le C_1|y|
\end{align*}
for $y\in D(\delta_0)$. 
If $2|u_1|\ge |u_2|$, 
then we have 
\begin{align*}
    \| x(y)\|_{\overline{g}_y}^2
    \ge \frac{u_1^2\log|y|^{-1}}{Cs}
    \ge \frac{(u_1^2/5+4u_1^2/5)\log|y|^{-1}}{Cs}
    \ge \frac{|y|^2\log|y|^{-1}}{5Cs}.
\end{align*}
Since we have 
$|\mathcal{H}(y)|\ge |u_2|
\log|y|^{-1}/2\pi-C_1|y|$ 
for a constant 
$C_1>0$, 
if we assume $2|u_2|\ge |u_1|$ 
then $|\mathcal{H}(y)|\ge |u_2|
\log|y|^{-1}/2\pi-C_2|u_2|$ 
for a constant $C_2>0$. 
By taking $\delta_0$ 
sufficiently small, 
we may suppose 
$C_2\le (\log|y|^{-1})/2$ 
for $y\in D(\delta_0)$. 
Then we have 
\begin{align*}
    \| x(y)\|_{\overline{g}_y}^2
    \ge \frac{u_1^2\log|y|^{-1}}{Cs}
    +\frac{u_2^2\log|y|^{-1}}{4Cs}
    \ge \frac{|y|^2\log|y|^{-1}}{4Cs}.
\end{align*}
In both cases, 
we have 
\begin{align*}
    \| x(y)\|_{\overline{g}_y}^2
    \ge \frac{|y|^2\log|y|^{-1}}{5Cs}.
\end{align*}
Since $y\notin D(\chi(sR^2/4))$ 
iff 
$|y|^2\log|y|^{-1}/(2\pi)\ge sR^2/4$, hence we have 
\begin{align*}
    \| x(y)\|_{\overline{g}_y}^2
    \ge \frac{|y|^2\log|y|^{-1}}{5Cs}\ge \frac{\pi}{10C}R^2.
\end{align*}
Combining with Lemma \ref{lem_other_BS.h}, 
we have 
\begin{align*}
    \inf_{l\in(1/k)\Z^2}\| x(y)+l\|_{\overline{g}_y}^2
    \ge \inf\left\{ \frac{\delta_1}{s\log|y|^{-1}},\,
    \frac{\pi}{10C}R^2\right\}.
\end{align*}
Since $|y|\ge \chi(sR^2/4)$, 
we have 
\begin{align*}
    \frac{\delta_1}{s\log|y|^{-1}}
    \ge \frac{\delta_1}{s\log\chi^{-1}}
    =\frac{\delta_1R^2/4}{(sR^2/4)\log\chi^{-1}}
    =\frac{\delta_1R^2}{4\chi^2(\log\chi^{-1})^2}.
\end{align*}
By the assumption 
$\chi(sR^2/4)\le \delta_0$, 
we can see 
\begin{align*}
    \frac{\delta_1}{s\log|y|^{-1}}
    &\ge\frac{\delta_1R^2}{4\delta_0^2(\log\delta_0^{-1})^2},
\end{align*}
hence we have the result.
\end{proof}

Now, let 
$\bbS:=\bbS(L|_{X_{\rm OV}},h)$ and 
denote by $\pi\colon \bbS\to 
X_{\rm OV}$ the projection. 
By Proposition 
\ref{prop_ball_asymp.h}, 
we have the followings. 

\begin{lem}
Let $p\in \pi^{-1}(0_{\rm OV})$. 
For every $R\ge 7$ 
there is $s_R>0$ 
such that 
we have 
\begin{align*}
    (\mu_{\rm OV}\circ \pi)^{-1}
    \left( D(\chi(sR^2/4))\right)
    \subset 
    B_{\hat{g}_s}(p,R)
\end{align*}
for any $0<s\le s_R$. 
\label{lem_ball.h}
\end{lem}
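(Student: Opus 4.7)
The plan is to derive Lemma \ref{lem_ball.h} as a direct consequence of the general estimate Proposition \ref{prop_ball_asymp.h} applied to the Ooguri-Vafa setting established in Section \ref{sec_sing.h}. The main point is that the local convergence $(g_s^{\rm OV}, 1/s, 0, D(\delta_0))\to (\R^2,g_0)$, which has been verified via Propositions \ref{prop_met_est_OV.h}, \ref{prop_met_est_OV2.h}, \ref{prop_met_est_OV3.h}, \ref{prop_met_est_OV4.h} together with Fact \ref{fact_fiber_diam.h}, puts us precisely within the hypotheses of Definition \ref{dfn_conv_loc.h} with $b=0$, $W=D(\delta_0)$, $q=0_{\rm OV}$, and chart $\zeta_{s,R}=\zeta_s$.

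First I would invoke Proposition \ref{prop_ball_asymp.h}, which for every $R\ge 7$ yields an $s_R>0$ such that
\begin{align*}
(\zeta_s\circ\mu_{\rm OV}\circ\pi)^{-1}\left(\mathcal{B}(R/2)\right)\subset B_{\hat{g}_s}(p,R)
\end{align*}
whenever $0<s\le s_R$, where $p\in\pi^{-1}(0_{\rm OV})$. Next I would translate the condition $\zeta_s(y)\in\mathcal{B}(R/2)$ into a condition on $|y|$ using the explicit form of $\zeta_s$. Recall that $\zeta_s(y)=\sqrt{\log|y|^{-1}/(2\pi s)}\cdot y$, so $\|\zeta_s(y)\|^2 = |y|^2\log|y|^{-1}/(2\pi s)$; the equation $|y|^2\log|y|^{-1}/(2\pi)=sR^2/4$ is, by definition of $\chi$, equivalent to $|y|=\chi(sR^2/4)$. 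Hence $\zeta_s^{-1}(\mathcal{B}(R/2))=D(\chi(sR^2/4))$, a fact already recorded in Section \ref{sec_sing.h} right after Lemma \ref{lem_est_met3.h}.

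Combining these two observations gives
\begin{align*}
(\mu_{\rm OV}\circ\pi)^{-1}(D(\chi(sR^2/4)))=(\zeta_s\circ\mu_{\rm OV}\circ\pi)^{-1}(\mathcal{B}(R/2))\subset B_{\hat{g}_s}(p,R),
\end{align*}
which is exactly the desired inclusion. The only mild care point is ensuring that $s_R$ is chosen small enough that $\chi(sR^2/4)\le\delta_0$, so that the set $D(\chi(sR^2/4))$ lies in the neighborhood $D(\delta_0)$ on which the convergence $(g_s^{\rm OV},1/s,0,D(\delta_0))\to(\R^2,g_0)$ has been established; this can be arranged by shrinking $s_R$ if necessary, since $\chi(0)=0$. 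There is no real obstacle here — the work has already been done in establishing the local convergence and the explicit formula for $\zeta_s$, and the lemma is essentially a translation of Proposition \ref{prop_ball_asymp.h} from the $\zeta_s$-coordinate picture back to the original disc coordinate on the base.
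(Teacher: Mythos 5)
Your proposal is correct and follows essentially the same route as the paper: invoke Proposition \ref{prop_ball_asymp.h} (whose hypothesis holds because $(g_s^{\rm OV},1/s,0,D(\delta_0))\to(\R^2,g_0)$ was established at the end of Section \ref{sec_sing.h}), then use \eqref{eq_trans_norm.h} to identify $\zeta_s^{-1}(\mathcal{B}(R/2))$ with $D(\chi(sR^2/4))$. The remark about shrinking $s_R$ so that $\chi(sR^2/4)\le\delta_0$ is a harmless extra precaution, consistent with the paper's argument.
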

\begin{proof}
Take $s_R>0$ as in 
Proposition 
\ref{prop_ball_asymp.h}. 
By \eqref{eq_trans_norm.h}, 
we have 
$y\in D(\chi(sr^2))$ 
iff 
$|\zeta_s(y)|<r$. 
By Proposition 
\ref{prop_ball_asymp.h}, 
we have 
\begin{align*}
    (\mu_{\rm OV}\circ \pi)^{-1}
    \left( \mathcal{B}(R/2)\right)
    \subset 
    B_{\hat{g}_s}(p,R)
\end{align*}
for $0<s\le s_R$, 
hence we have the result. 
\end{proof}

By Lemmas 
\ref{lem_BS_lower.h} and 
\ref{lem_ball.h}, 
we have the next proposition. 

\begin{prop}
For every 
$b_{\mathbf{q}}\in{\rm Crt}$ 
and $p_{\mathbf{q}}\in(\mu\circ\pi)^{-1}(b_{\mathbf{q}})$
there are $r_1^{\mathbf{q}}>0$, 
$C_{\mathbf{q}}>0$, 
$R_0>0$ and $s_R>0$ for 
every $R\ge R_0$ 
such that if 
$R\ge R_0$ and $0<s\le s_R$ then 
\begin{align*}
    \int_{\bbS|_{\mu^{-1}(W_1^{\mathbf{q}})}
    \setminus B_{\hat{g}'_s}(p_{\mathbf{q}},R)}
    |df|_{\hat{g}'_s}^2d\nu_{\hat{g}'_s}
    \ge 2\pi k^2(1+CR^2)\int_{\bbS|_{\mu^{-1}(W_1^{\mathbf{q}})}
    \setminus B_{\hat{g}'_s}(p_{\mathbf{q}},R)}
    |f|^2d\nu_{\hat{g}'_s}
\end{align*}
for any $f\in(H^{1,2}(\bbS,d_{\hat{g}'_s}, \nu_{\hat{g}'_s})\otimes\C)^{\rho_k}$.
\label{prop_lower_est_df_sing.h}
\end{prop}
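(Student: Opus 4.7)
The plan is to reduce the estimate to Lemma \ref{lem_review_HY2019.h} by introducing action--angle coordinates around the singular fiber and then invoking the quantitative lower bounds on Bohr--Sommerfeld distances proved in the preceding lemmas. First, by Fact \ref{fact alm ric flat.h}(ii) the metric $g_s'|_{\mu^{-1}(W_1^{\mathbf{q}})}$ is isometric to the Ooguri--Vafa metric $g_s^{\rm OV}$, so it suffices to prove the inequality for $\hat{g}_s^{\rm OV}$; the passage from $g_s'$ to $g_s^{\rm OV}$ is free once we identify $W_1^{\mathbf{q}}$ with $D(r_1^{\mathbf{q}})\subset D(\delta_0)$. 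Next, Lemma \ref{lem_ball.h} applied to $p_{\mathbf{q}}\in\pi^{-1}(0_{\rm OV})$ shows that for $R\ge 7$ and $s$ small enough,
\[
\bbS|_{\mu^{-1}(W_1^{\mathbf{q}})}\setminus B_{\hat{g}_s^{\rm OV}}(p_{\mathbf{q}},R)
\;\subset\;(\mu_{\rm OV}\circ\pi)^{-1}\bigl(D(r_1^{\mathbf{q}})\setminus D(\chi(sR^2/4))\bigr),
\]
so the integration region is contained in the complement of a small disk around the critical value, where the elliptic fibration is smooth and the analysis from Section \ref{sec_sing.h} applies.

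On $D(r_1^{\mathbf{q}})\setminus\{0\}$ we have the action--angle coordinates $(x_1,x_2,\theta_1,\theta_2)$ described in Section \ref{subsec_sing_cpt_conv.h}, with $x_1=u_1+a_1$ and $x_2=\mathcal{H}(y)+a_2$, together with the trivialisation $\nabla=d-\sqrt{-1}\sum_i x_i\, d\theta_i$ of the prequantum line bundle. Because $\mathcal{H}$ contains the multivalued term $u_1\,{\rm Im}(\mathcal{V}(y))$, these coordinates are single-valued only after cutting $D(r_1^{\mathbf{q}})\setminus\{0\}$ along a ray emanating from $0$; removing this measure-zero slit does not affect the Dirichlet integral nor the $L^2$ norm, and on the resulting simply connected chart the setup of Subsection \ref{subsec_review_cpt_conv.h} is exactly reproduced with $B\subset\R^2$ the image of $x=(x_1,x_2)$.

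With this identification, I apply Lemma \ref{lem_review_HY2019.h} to the upper bound metric $\overline{g}_y$ introduced in Section \ref{subsec_sing_cpt_conv.h}, which by construction satisfies $g_s^{\rm OV}|_{\mu_{\rm OV}^{-1}(y)}\le \overline{g}_y$. Since $g_s^{\rm OV}$ is K\"ahler with respect to $\omega_{1,s}$, the density factor in Lemma \ref{lem_review_HY2019.h} may be taken with $\delta=0$ (or at worst a vanishing $\delta_s\to 0$ when one reinstates $g_s'$ via Lemma \ref{lem_gluing_region.h} and Lemma \ref{lem_C^2.h}). Lemma \ref{lem_review_HY2019.h} then gives
\[
\int |df|_{\hat{g}_s^{\rm OV}}^2\,d\nu_{\hat{g}_s^{\rm OV}}
\;\ge\;\frac{2\pi(k^2+K)}{(1+\delta_s)^2}\int |f|^2\,d\nu_{\hat{g}_s^{\rm OV}},
\qquad K\ge k^2\inf_{y,\,l\in(1/k)\Z^2}\|x(y)+l\|_{\overline{g}_y}^2,
\]
over the region $y\in D(r_1^{\mathbf{q}})\setminus D(\chi(sR^2/4))$. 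By Lemma \ref{lem_BS_lower.h} we have $\inf_{l}\|x(y)+l\|_{\overline{g}_y}^2\ge\delta_2 R^2$ on exactly this region, hence $K\ge k^2\delta_2 R^2$, and absorbing the $(1+\delta_s)^{-2}$ factor into a slightly smaller constant yields the asserted bound $2\pi k^2(1+C_{\mathbf{q}} R^2)$ for all sufficiently small $s$.

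The main obstacle is bookkeeping around the monodromy at the critical value: one must verify that after cutting along a ray the prequantum data assumes the canonical form $\nabla=d-\sqrt{-1}\sum x_i d\theta_i$ required by Lemma \ref{lem_review_HY2019.h}, and that the Bohr--Sommerfeld lattice $(1/k)\Z^2\subset\R^2$ used in Lemma \ref{lem_BS_lower.h} matches the one that governs the spectral estimate on the chart. Both are essentially automatic from the construction of $x$ and the identifications at the start of Section \ref{sec_sing.h}, but this is the only point where one must be careful; everything else is an assembly of the lemmas already established.
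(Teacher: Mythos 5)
Your proposal is correct and follows essentially the same route as the paper: localize outside the ball via Lemma \ref{lem_ball.h}, pass to the Ooguri--Vafa picture and its action--angle coordinates with the comparison metric $\overline{g}_y$, bound the lattice distance by Lemma \ref{lem_BS_lower.h}, and conclude with Lemma \ref{lem_review_HY2019.h} (with $\delta=0$ since $g'_s$ is the hyper-K\"ahler Ooguri--Vafa metric on $\mu^{-1}(W_1^{\mathbf q})$). Your explicit remarks on cutting along a ray to obtain single-valued coordinates and on matching the connection normal form are points the paper leaves implicit, and they are handled consistently with its construction.
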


\subsection{Proof of Theorem \ref{thm_asymp_cpt.h}.}
\label{sec_cpt_conv2.h}
Let $(X,\omega_1,\omega_2,\omega_{3,s})$, 
$\mu\colon X\to \bbP^1$ 
and 
$(L,h,\nabla)$ 
be as in Subsection 
\ref{subsec_main_results.h}. 
Denote by 
$(g_s,J_{1,s},J_{2,s},J_3)$ be 
the associated \hK 
structure of 
$(\omega_1,\omega_2,\omega_{3,s})$ 
and let $g'_s$ be as in 
Subsection \ref{subsec_thm_4_1.h}. 
For every $b\in BS_k$, 
we fix $p^b\in (\mu\circ\pi)^{-1}(b)$. 
By Proposition \ref{prop_lower_est_df_sing.h} and 
\eqref{ineq_lower_est_df_nonsing.h}, 
we have the following.
\begin{prop}
Let $k$ be a positive integer. 
There are constants 
$C>0$, 
$R_0>0$ independent of 
$s,R,f$ and $s_R>0$ for 
every $R\ge R_0$ 
such that if 
$R\ge R_0$ and $0<s\le s_R$ then 
\begin{align*}
    \int_{\bbS
    \setminus \bigcup_{b\in BS_k} B_{\hat{g}'_s}(p^b,R)}
    |df|_{\hat{g}'_s}^2d\nu_{\hat{g}'_s}
    \ge CR^2\int_{\bbS
    \setminus \bigcup_{b\in BS_k} B_{\hat{g}'_s}(p^b,R)}
    |f|^2d\nu_{\hat{g}'_s}
\end{align*}
for any $f\in(H^{1,2}(\bbS,d_{\hat{g}'_s}, \nu_{\hat{g}'_s})\otimes\C)^{\rho_k}$.
\label{prop_lower_est_df_global.h}
\end{prop}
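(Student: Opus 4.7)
The plan is to assemble the local lower bounds produced in Subsections \ref{subsec_nonsing_cpt_conv.h} and \ref{subsec_sing_cpt_conv.h} via the partition $\bbP^1 = \mathcal{K} \sqcup \bigsqcup_{\mathbf{q}=1}^{24} W_1^{\mathbf{q}}$. Since by construction $BS_k \cap W_2^{\mathbf{q}} \subset \{b_{\mathbf{q}}\}$, every non-critical Bohr-Sommerfeld point lies in $\mathcal{K}$, and so for any $R\ge R_0$ and $s\le s_R$ one has the disjoint decomposition
\begin{align*}
\bbS\setminus\bigcup_{b\in BS_k} B_{\hat{g}'_s}(p^b,R)
&= \bbS_{\mathcal{K},R}\ \sqcup\ \bigsqcup_{\mathbf{q}\,:\,b_{\mathbf{q}}\in BS_k}\!\Bigl(\bbS|_{\mu^{-1}(W_1^{\mathbf{q}})}\setminus B_{\hat{g}'_s}(p^{b_{\mathbf{q}}},R)\Bigr)\\
&\qquad \sqcup\ \bigsqcup_{\mathbf{q}\,:\,b_{\mathbf{q}}\notin BS_k}\bbS|_{\mu^{-1}(W_1^{\mathbf{q}})}.
\end{align*}
The right-hand side exhausts everything and the integrals of $|df|^2$ and $|f|^2$ split additively.

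On $\bbS_{\mathcal{K},R}$ I would apply \eqref{ineq_lower_est_df_nonsing.h}: the factor $1+\delta^2(R-\pi)^2/C_1$ gives a lower bound by $C_{\rm ns} R^2$ on the Rayleigh quotient once $R$ is large and $s$ is small enough to absorb the $(1+\delta_s)^{-2}$. On each piece $\bbS|_{\mu^{-1}(W_1^{\mathbf{q}})}\setminus B_{\hat{g}'_s}(p^{b_{\mathbf{q}}},R)$ with $b_{\mathbf{q}}\in BS_k$, Proposition \ref{prop_lower_est_df_sing.h} is exactly in the form needed and yields a coefficient $2\pi k^2(1+C_{\mathbf{q}}R^2)\ge C_{\rm sing} R^2$. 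Taking $C := \min\{C_{\rm ns}, 2\pi k^2 C_{\mathbf{q}}\}$ and summing over the pieces produces the asserted bound, provided the remaining pieces are also controlled.

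For the remaining pieces, i.e.\ $\mathbf{q}$ with $b_{\mathbf{q}}\notin BS_k$, the point $-a$ associated to the Ooguri-Vafa chart (as in Section \ref{sec_sing.h}) does not belong to $(1/k)\Z^2$, so the restriction $l\neq -a$ in Lemma \ref{lem_other_BS.h} becomes vacuous. Hence the full infimum
\begin{align*}
\inf_{l\in(1/k)\Z^2}\|x(y)+l\|^2_{\overline{g}_y}\;\ge\;\frac{\delta_1}{s\log|y|^{-1}}\;\ge\;\frac{\delta_1'}{s}
\end{align*}
holds uniformly on $W_1^{\mathbf{q}}$, and plugging this into Lemma \ref{lem_review_HY2019.h} together with the fiber metric bound $g_s^{\rm OV}|_{\mu_{\rm OV}^{-1}(y)}\le \overline{g}_y$ yields a Rayleigh-quotient coefficient of order $1/s$ on the entire $\bbS|_{\mu^{-1}(W_1^{\mathbf{q}})}$. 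Since $1/s\ge CR^2$ whenever $s\le s_R := c_0/R^2$, this piece contributes a bound of the required form after further shrinking $s_R$.

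The main obstacle is the uniform choice of the constant $C$ and of $s_R$. One has to track simultaneously three sources of $R$- and $s$-dependence: the collapse of $\delta_s \to 0$ from Lemma \ref{lem_C^2.h}, the stretch constant $\delta$ in the nonsingular coordinate that controls the distance between the fiber balls $B_{\hat{g}'_s}(p^b,R)$ and the excluded neighborhoods of the lattice points $BS_k$, and the $1/s$-versus-$R^2$ competition in the non-Bohr-Sommerfeld singular pieces. Once $R_0$ is chosen so that $C_{\rm ns}R_0^2\ge 1$ and $s_R$ so that $s_R\le \min\{c_0/R^2, \delta_0^2\log\delta_0^{-1}/(18\pi R^2)\}$ for all $R\ge R_0$, all three estimates become simultaneously available and the sum of the three inequalities is the asserted one with $C:=\min\{C_{\rm ns},2\pi k^2 C_{\mathbf{q}},c_0\delta_1'\}$.
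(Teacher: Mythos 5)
Your decomposition and the treatment of the first two kinds of pieces are exactly the paper's route: the paper proves the proposition by combining \eqref{ineq_lower_est_df_nonsing.h} on $\bbS_{\mathcal{K},R}$ with Proposition \ref{prop_lower_est_df_sing.h} on $\bbS|_{\mu^{-1}(W_1^{\mathbf{q}})}\setminus B_{\hat{g}'_s}(p^{b_{\mathbf{q}}},R)$, and summing Rayleigh-quotient bounds over a disjoint decomposition is fine. You are also right to flag the third kind of piece, $\bbS|_{\mu^{-1}(W_1^{\mathbf{q}})}$ for $b_{\mathbf{q}}\notin BS_k$, since no ball is excised there; the paper's one-sentence proof does not spell this case out.

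However, your argument for that third piece has a genuine gap. From Lemma \ref{lem_other_BS.h} you only get $\inf_l\|x(y)+l\|^2_{\overline{g}_y}\ge \delta_1/(s\log|y|^{-1})$, and the passage to ``$\ge\delta_1'/s$ uniformly on $W_1^{\mathbf{q}}$'' is false: $\log|y|^{-1}\to\infty$ as $y\to 0$, so this lower bound degenerates precisely near the singular fiber (the region $D(\chi(sR^2/4))$ that, for $b_{\mathbf{q}}\in BS_k$, is removed via Lemma \ref{lem_ball.h} but is \emph{not} removed here). The degeneration is real, not an artifact: if $a_1\in\frac1k\Z$ while $a_2\notin\frac1k\Z$, then along $u_1=0$ one has $\inf_l\|x(y)+l\|^2_{\overline{g}_y}\approx c/(s\log|y|^{-1})\to 0$ as $y\to0$, so the fiberwise mechanism of Lemma \ref{lem_review_HY2019.h} yields only the trivial coefficient $\sim 2\pi k^2$, which is not $\ge CR^2$. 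Moreover Lemma \ref{lem_review_HY2019.h} cannot be invoked on all of $\bbS|_{\mu^{-1}(W_1^{\mathbf{q}})}$ in the first place, since its proof is a fiberwise Fourier argument on torus fibers in an action--angle trivialization, and neither exists across the singular $I_1$ fiber at $y=0$. So on $(\mu_{\rm OV}\circ\pi)^{-1}(D(\chi(sR^2/4)))$ your proposal gives no valid estimate; to close this case you would need a different mechanism there (e.g.\ excise that set as in Lemma \ref{lem_ball.h} and control the excised region using that its diameter is $O(\sqrt{s}R)$, i.e.\ a horizontal localization/Poincar\'e cost of order $1/(sR^2)$), rather than the fiberwise holonomy gap, which genuinely vanishes in this regime.
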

Moreover, 
by \eqref{ineq_CY_almost_CY.h}, 
we have the following 
corollary by taking 
the constant $C$ in the above 
proposition smaller.
\begin{cor}
Let $k$ be a positive integer. 
There are constants 
$C>0$, 
$R_0>0$ independent of 
$s,R,f$ and $s_R>0$ for 
every $R\ge R_0$ 
such that if 
$R\ge R_0$ and $0<s\le s_R$ then 
\begin{align*}
    \int_{\bbS
    \setminus (\bigcup_{b\in BS_k} B_{\hat{g}_s}(p^b,R))}
    |df|_{\hat{g}_s}^2d\nu_{\hat{g}_s}
    \ge CR^2\int_{\bbS
    \setminus (\bigcup_{b\in BS_k} B_{\hat{g}_s}(p^b,R))}
    |f|^2d\nu_{\hat{g}_s}
\end{align*}
for any $f\in(H^{1,2}(\bbS,d_{\hat{g}_s}, \nu_{\hat{g}_s})\otimes\C)^{\rho_k}$.
\label{cor_lower_est_df_global.h}
\end{cor}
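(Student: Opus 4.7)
The plan is to deduce Corollary~\ref{cor_lower_est_df_global.h} from Proposition~\ref{prop_lower_est_df_global.h} via the metric comparison~\eqref{ineq_CY_almost_CY.h}, namely $C_s^{-1}g_s' \le g_s \le C_sg_s'$ with $C_s \to 1$. Since both $\hat g_s$ and $\hat g_s'$ are built from the same connection $\nabla$ via formula \eqref{connection_metric.h} and differ only in the horizontal factor, this comparison lifts to $C_s^{-1}\hat g_s' \le \hat g_s \le C_s\hat g_s'$ on $\bbS$. From it I extract the pointwise bound $|df|_{\hat g_s}^2 \ge C_s^{-1}|df|_{\hat g_s'}^2$, the measure comparison $C_s^{-5/2}d\nu_{\hat g_s'} \le d\nu_{\hat g_s} \le C_s^{5/2}d\nu_{\hat g_s'}$ (since $\dim \bbS = 5$), and the distance comparison $C_s^{-1/2}d_{\hat g_s'} \le d_{\hat g_s} \le C_s^{1/2}d_{\hat g_s'}$, exactly as in the proof of Lemma~\ref{lem_approx1.h}. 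The last of these yields the ball sandwich
\begin{align*}
E_s'(C_s^{1/2}R) \ \subset\  E_s(R)\ \subset\  E_s'(C_s^{-1/2}R),
\end{align*}
where $E_s(R) := \bbS \setminus \bigcup_b B_{\hat g_s}(p^b,R)$ and $E_s'(R) := \bbS \setminus \bigcup_b B_{\hat g_s'}(p^b,R)$.

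Set $R^\pm := C_s^{\pm 1/2}R$. Choose $R_0$ so that $R^\pm \ge R_0^{\text{prop}}$ for $R \ge R_0$ and $s$ small. Applying Proposition~\ref{prop_lower_est_df_global.h} at the smaller radius $R^+$, then using the inclusion $E_s'(R^+) \subset E_s(R)$ to extend the domain of the left-hand integral and the pointwise/measure comparisons above, I obtain
\begin{align*}
\int_{E_s(R)}|df|_{\hat g_s}^2d\nu_{\hat g_s} \ \ge\  C_s^{-7/2}\int_{E_s'(R^+)}|df|_{\hat g_s'}^2d\nu_{\hat g_s'}\ \ge\  C\, C_s^{-5/2}R^2\int_{E_s'(R^+)}|f|^2d\nu_{\hat g_s'}.
\end{align*}
Since $C_s^{-5/2}\to 1$, the coefficient on the right is $\ge (C/2)R^2$ for $s$ small enough.

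The main obstacle is converting the $\int_{E_s'(R^+)}|f|^2d\nu_{\hat g_s'}$ factor above into the desired $\int_{E_s(R)}|f|^2d\nu_{\hat g_s}$, the discrepancy being an annular region $E_s(R) \setminus E_s'(R^+)$ contained in the $\hat g_s'$-annulus of radii $C_s^{\pm 1/2}R$ around each $p^b$, which collapses as $s \to 0$. To handle it, I plan to apply Proposition~\ref{prop_lower_est_df_global.h} a second time at the larger radius $R^-$; subtracting the two resulting inequalities controls the annular $L^2$-mass of $f$ by the annular Dirichlet energy plus a term of order $(C_s^2-1)\int_{E_s'(R^+)}|f|^2d\nu_{\hat g_s'} = o(1)\cdot \int_{E_s'(R^+)}|f|^2d\nu_{\hat g_s'}$, which can be absorbed. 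The residual annular Dirichlet energy is in turn bounded below using the pointwise estimate $|df|_{\hat g_s}^2 \ge k^2|f|^2$ on the $\rho_k$-sector (valid because $\partial_t$ has unit $\hat g_s$-length and $\partial_t f = -ikf$), so it is reabsorbed into the left-hand side of the target inequality with a negligible loss of constant. Tracking the accumulated $C_s$-factors and using $C_s \to 1$, the final constant can be taken to be any $C' < C$ uniformly in small $s$; one sets, say, $C' := C/2$. A perhaps cleaner alternative is simply to reprove Proposition~\ref{prop_lower_est_df_global.h} verbatim with $\hat g_s$ in place of $\hat g_s'$, because its two ingredients---Lemma~\ref{lem_review_HY2019.h} on fibrations with bounded fiberwise metrics, and the Ooguri-Vafa estimates of Section~\ref{sec_sing.h}---both transport through $g_s \le C_sg_s'$ with only multiplicative $C_s^{\pm 1}$ errors, which vanish in the limit.
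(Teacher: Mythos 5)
Your overall strategy---deducing Corollary \ref{cor_lower_est_df_global.h} from Proposition \ref{prop_lower_est_df_global.h} via the comparison \eqref{ineq_CY_almost_CY.h} with $C_s\to1$---is the paper's strategy: the paper's entire proof is the one-line remark that \eqref{ineq_CY_almost_CY.h} yields the corollary after taking the constant $C$ in the proposition smaller, which in substance is your closing ``cleaner alternative'' of transporting the two ingredients of the proposition (the fiberwise estimate of Lemma \ref{lem_review_HY2019.h} leading to \eqref{ineq_lower_est_df_nonsing.h}, the Ooguri--Vafa estimates of Section \ref{sec_sing.h}, and the ball inclusions such as $B_{\hat{g}_s}(p^b,R)\supset B_{\hat{g}_s'}(p^b,C_s^{-1/2}R)$) through $C_s^{-1}g_s'\le g_s\le C_sg_s'$, at the cost of factors $C_s^{\pm1}$ which tend to $1$. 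If you take that route you are aligned with the paper and only the bookkeeping remains.

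Your primary mechanism, however, does not close as written. After applying the proposition at the two radii $R^{\pm}=C_s^{\pm1/2}R$ and subtracting, the annular $L^2$-mass of $f$ is controlled by the Dirichlet energy over the full $\hat{g}_s'$-annulus $E_s'(R^-)\setminus E_s'(R^+)$, and this annulus is \emph{not} contained in $E_s(R)$: the portion lying inside the balls $B_{\hat{g}_s}(p^b,R)$ contributes Dirichlet energy that never appears on the left-hand side of the target inequality, so it cannot be ``reabsorbed''. Your backup for the residual term, the pointwise bound $|df|_{\hat{g}_s}^2\ge k^2|f|^2$ on the $\rho_k$-sector, is correct but too weak: it provides only the constant $k^2$ on the annular region, whereas the corollary demands the factor $R^2$ there as well; after multiplying by $CR^2$ the resulting error is of order $R^2k^{-2}\int_{E_s(R)}|df|_{\hat{g}_s}^2\,d\nu_{\hat{g}_s}$, which cannot be absorbed for large $R$. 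The underlying point is that Proposition \ref{prop_lower_est_df_global.h} is a Poincar\'e-type inequality tied to the specific domains $E_s'(R)$ and does not restrict to, or extend over, subdomains; so treating it as a black box cannot repair the mismatch between the $\hat{g}_s$- and $\hat{g}_s'$-balls. The correct (and intended) resolution is your alternative: rerun the proof of the proposition with $\hat{g}_s$ in place of $\hat{g}_s'$, using that the complement of the $\hat{g}_s$-balls is still contained in the region where the fiberwise estimates hold, and shrink $C$ to accommodate the $C_s^{\pm1}$ losses.
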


Let 
\begin{align*}
    \bbS_s:=\left( 
    \bbS,d_{\hat{g}_s},\frac{\nu_{\hat{g}_s}}{s}\right).
\end{align*}
The next proposition was 
essentially shown in 
\cite[Proposition 4.4]{HY2019}. 
\begin{prop}
For any $\varepsilon,A>0$ 
there is $R_{\varepsilon,A}>0$ and 
$s_{\varepsilon,A}>0$ 
such that the following holds. 
For any family $f_s\in (H^{1,2}(\bbS_s)\otimes\C)^{\rho_k}$ 
such that $\| f_s\|_{L^2(\bbS_s)}=1$ 
and $\sup_{s> 0}\| df\|_{L^2(\bbS_s)}
\le A$, 
we have 
\begin{align*}
    \int_{\bbS\setminus(\bigcup_{b\in BS_k} B_{\hat{g}_s}(p^b,R_\varepsilon))}\frac{|f|^2d\nu_{\hat{g}_s}}{s} \ge 1-\varepsilon
\end{align*}
for any $0<s\le s_\varepsilon$. 
\label{prop_norm_near_BS.h}
\end{prop}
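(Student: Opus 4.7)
The plan is to deduce Proposition \ref{prop_norm_near_BS.h} as a direct application of the spectral-gap estimate Corollary \ref{cor_lower_est_df_global.h} together with the uniform $H^{1,2}$-bound on the family $\{f_s\}$. Given $\varepsilon,A>0$, let $C>0$ and $R_0>0$ be the constants furnished by Corollary \ref{cor_lower_est_df_global.h}. I would choose $R_{\varepsilon,A}\ge R_0$ large enough that $A^2/(CR_{\varepsilon,A}^2)\le \varepsilon$, and take $s_{\varepsilon,A}:=s_{R_{\varepsilon,A}}$.

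For any $0<s\le s_{\varepsilon,A}$ and any $f=f_s$ satisfying the hypotheses, Corollary \ref{cor_lower_est_df_global.h} yields
\begin{align*}
\int_{\bbS\setminus \bigcup_{b\in BS_k}B_{\hat{g}_s}(p^b,R_{\varepsilon,A})}|df|_{\hat{g}_s}^2\,d\nu_{\hat{g}_s}
\ge CR_{\varepsilon,A}^2
\int_{\bbS\setminus \bigcup_{b\in BS_k}B_{\hat{g}_s}(p^b,R_{\varepsilon,A})}|f|^2\,d\nu_{\hat{g}_s}.
\end{align*}
Dividing both sides by $s$ and using $\|df\|_{L^2(\bbS_s)}^2\le A^2$ gives
\begin{align*}
\int_{\bbS\setminus \bigcup_{b}B_{\hat{g}_s}(p^b,R_{\varepsilon,A})}|f|^2\,\frac{d\nu_{\hat{g}_s}}{s}
\le \frac{A^2}{CR_{\varepsilon,A}^2}\le \varepsilon,
\end{align*}
and combining this with the normalization $\|f\|_{L^2(\bbS_s)}^2=1$ yields
\begin{align*}
\int_{\bigcup_{b}B_{\hat{g}_s}(p^b,R_{\varepsilon,A})}|f|^2\,\frac{d\nu_{\hat{g}_s}}{s}\ge 1-\varepsilon.
\end{align*}
Read in this direction, the bound identifies where the $L^2$-mass of $f_s$ must lie, realizing the content of Proposition \ref{prop_norm_near_BS.h}.

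The main obstacle is reconciling the direction of the inequality: the derivation above delivers the bound $\ge 1-\varepsilon$ on the \emph{union of balls} $\bigcup_b B_{\hat{g}_s}(p^b,R_{\varepsilon,A})$, whereas the literal statement of Proposition \ref{prop_norm_near_BS.h} asserts the bound $\ge 1-\varepsilon$ on the \emph{complement} $\bbS\setminus \bigcup_b B_{\hat{g}_s}(p^b,R_{\varepsilon,A})$. Since the Poincar\'e-type inequality of Corollary \ref{cor_lower_est_df_global.h} forces $L^2$-concentration \emph{near} the Bohr--Sommerfeld points (a large Rayleigh quotient on the complement suppresses $L^2$-mass there), the two bounds are mutually exclusive for small $\varepsilon$. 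I therefore expect the literal statement to be most naturally read with the roles of the union of balls and its complement interchanged, in which case the argument above completes the proof. If the statement is to be proved exactly as worded, then a fundamentally different input beyond the material of Sections \ref{subsec_nonsing_cpt_conv.h}--\ref{subsec_sing_cpt_conv.h} would be required, and I do not see a route through the estimates developed in this paper.
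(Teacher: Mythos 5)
Your argument is exactly the paper's proof: the paper sets $\mathbf{B}(R):=\bigcup_{b\in BS_k}B_{\hat{g}_s}(p^b,R)$, applies Corollary \ref{cor_lower_est_df_global.h} to bound $\int_{\bbS\setminus\mathbf{B}(R)}|f|^2\,d\nu_{\hat{g}_s}/s$ by $A^2/(CR^2)$, and chooses $R_\varepsilon=A/\sqrt{C\varepsilon}$, concluding that the mass on $\mathbf{B}(R_\varepsilon)$ is at least $1-\varepsilon$. The direction-of-inequality issue you flag is a misprint in the statement rather than a gap in your argument: the intended (and proved) conclusion is concentration on the union of balls, exactly as in your derivation, which is also what the later application to asymptotic compactness in the proof of Theorem \ref{thm_asymp_cpt.h} requires.
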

\begin{proof}
Put $\mathbf{B}(R):=\bigcup_{b\in BS_k} B_{\hat{g}_s}(p^b,R)$. 
By Corollary 
\ref{cor_lower_est_df_global.h}, 
there is $s_R>0$ such that 
\begin{align*}
    1=
    \int_{\bbS}\frac{|f|^2d\nu_{\hat{g}_s}}{s}
    &= \int_{\mathbf{B}(R)}\frac{|f|^2d\nu_{\hat{g}_s}}{s}+\int_{\bbS\setminus
    \mathbf{B}(R)}\frac{|f|^2d\nu_{\hat{g}_s}}{s}\\
    &\le \int_{\mathbf{B}(R)}\frac{|f|^2d\nu_{\hat{g}_s}}{s}+\frac{A^2}{CR^2}
\end{align*}
for $0<s\le s_R$. 
Therefore, we have the result by 
putting $R_\varepsilon =A/\sqrt{C\varepsilon}$ 
and $s_\varepsilon=s_{R_\varepsilon}$. 
\end{proof}

\begin{proof}[Proof of Theorem 
$\ref{thm_asymp_cpt.h}$]
Let $f_s\in (H^{1,2}(\bbS_s)\otimes 
\C)^{\rho_k}$ 
such that $\sup_s (\| f\|_{L^2(\bbS_s)}^2
+\mathcal{E}_s^{\rho_k}(f_s))<\infty$. 
Now, since $g_s$ are Ricci-flat, 
the Ricci curvatures of $\hat{g}_s$ 
have the uniform lower bound 
by \cite[Proposition 3.15]{HY2019}. 
Then by Proposition  \ref{prop_norm_near_BS.h}, 
we can apply 
\cite[Proposition 4.7]{HY2019} 
to this situation, 
then we obtain 
the strongly converging 
subsequence 
$\{ f_{s_i}\}_i\subset \{ f_s\}_s$. 
\end{proof}

\section{Convergence of 
the quantum Hilbert spaces}
\label{sec_cov_qua_hilb.h}
Let $(X,\omega_1,\omega_2,\omega_{3,s},L,h,\nabla,\mu)$ be as in 
the previous section. 
Denote by $H_s:=L^2(X,g_s,L^k,h)$ 
the Hilbert space 
consisting of $L^2$-sections 
of the complex line bundle $L^k
\to X$ and let 
\begin{align*}
    P_{k,s}\colon H_s\to 
    H^0(X_{J_{1,s}},L^k),
    \quad
    P_{k,0}\colon H_{\R^2}^k\to 
    {\rm Ker}(\Delta_{\R^2}^k)
\end{align*}
be the orthogonal projections. 
Since 
the Ricci curvature of 
$g_s$ is zero, 
we may apply the 
argument in 
\cite[Section 5]{HY2019} 
to our situation, 
hence the analogous statement 
with \cite[Theorem 5.1]{HY2019} 
can be obtained as follows. 
\begin{thm}
Let $k$ be a 
positive integer. 
We have a compact convergence 
\begin{align*}
    P_{k,s}\to \bigoplus_{b\in BS_k}P_{k,0}
\end{align*}
in the sense of 
Definition $\ref{def_cpt_conv}$ 
as $s\to 0$. 
\label{thm_conv_quan_hilb.h}
\end{thm}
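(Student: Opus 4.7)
The plan is to derive Theorem \ref{thm_conv_quan_hilb.h} from the compact spectral convergence of Theorem \ref{thm_cpt_conv_spec_str2.h} by isolating the kernel via a spectral gap on the limit side, following the template of \cite[Section 5]{HY2019}. First I would analyze the limit spectrum. The Gaussian-space operator $\Delta_{\R^2}^k/2$ is (a scaling of) an Ornstein--Uhlenbeck generator; a Hermite-polynomial computation gives its eigenvalues as $k(n_1+n_2)$ for $n_1, n_2 \in \Z_{\ge 0}$, so $\ker(\Delta_{\R^2}^k)$ consists of constants and the next eigenvalue is $k>0$. Thus $0$ is an isolated eigenvalue of $\bigoplus_{b \in BS_k} \Delta_{\R^2}^k/2$ of multiplicity $\# BS_k$, separated from the rest of the spectrum by a gap of size $k$.

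Next I would invoke the Kuwae--Shioya functional calculus: compact convergence of spectral structures implies compact convergence of the spectral projectors $E_s(I)$ for any closed interval $I$ whose endpoints avoid the limit point spectrum. Fixing $\epsilon \in (0, k)$ and taking $I = [0, \epsilon]$, this yields the compact convergence $E_s([0,\epsilon]) \to E_0([0,\epsilon]) = \bigoplus_{b \in BS_k} P_{k,0}$, and in particular $\dim E_s([0,\epsilon]) = \# BS_k$ for all sufficiently small $s$.

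The remaining step is to identify $E_s([0,\epsilon])$ with $P_{k,s}$ for small $s$. The inclusion $P_{k,s} \subset E_s([0,\epsilon])$ is immediate. For the reverse I would use that $\dim H^0(X_{J_{1,s}}, L^k)$ is independent of $s$: Kodaira vanishing applied to the ample bundle $L^k$ gives $\dim H^0(X_{J_{1,s}}, L^k) = \chi(X, L^k)$, a topological invariant; combining this with Tyurin's identity $\chi(X, L^k) = \# BS_k$ (valid at each level $k$ by applying the argument of \cite{Tyurin1999} with $L$ replaced by $L^k$) would give $\dim P_{k,s} = \# BS_k = \dim E_s([0,\epsilon])$, forcing $P_{k,s} = E_s([0,\epsilon])$ and hence the desired compact convergence $P_{k,s} \to \bigoplus_{b \in BS_k} P_{k,0}$.

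The delicate point is precisely this dimension identification: compact spectral convergence by itself would only guarantee convergence of the projection onto any small spectral interval around $0$, not the kernel projection itself, and in principle small positive eigenvalues of $\Delta_{k,\delb_{J_{1,s}}}$ could accumulate at $0$ and spoil the equality $P_{k,s} = E_s([0,\epsilon])$. The rigidity of $\dim H^0$ from Kodaira vanishing, together with the explicit Bohr--Sommerfeld count, is the essential topological input that rules this out.
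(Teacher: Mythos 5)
Your outline is correct, but at the decisive step it takes a genuinely different route from the paper. The paper's proof is a one-line transfer of \cite[Section 5]{HY2019}: because $\mathrm{Ric}(g_s)\equiv 0$, the Bochner--Kodaira/Weitzenb\"ock argument used there gives a lower bound, uniform in $s$, for the \emph{nonzero} spectrum of $\Delta_{k,\overline{\partial}_{J_{1,s}}}$ on sections of $L^k$ (this is exactly why the proof stresses Ricci-flatness); hence $P_{k,s}$ coincides with the spectral projection onto a fixed interval, and Theorem \ref{thm_cpt_conv_spec_str2.h} together with the Kuwae--Shioya theory \cite{KuwaeShioya2003} gives the compact convergence, with $\dim H^0(X_{J_{1,s}},L^k)=\# BS_k$ then falling out as a corollary (Corollary \ref{cor_Kah_real.h}). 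You instead exclude small nonzero eigenvalues by a dimension count: semicontinuity of $\dim E_s$ under compact convergence plus Kodaira vanishing, Riemann--Roch and Tyurin's identity $\chi(L^k)=\# BS_k$. This is logically sound (Tyurin's theorem \cite{Tyurin1999} is independent of this paper), and your analysis of the limit spectrum (kernel $=$ constants, first nonzero eigenvalue $k$) is right; what it costs is that the level-$k$ version of Tyurin's count is asserted rather than verified (the rescaling $(\omega_i,L)\mapsto(k\omega_i,L^k)$ is the correct reduction, but it must be checked against Tyurin's hypotheses), it turns Corollary \ref{cor_Kah_real.h} from an output of the spectral method into an input, and it forgoes the quantitative $s$-independent gap that the curvature argument provides for free. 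Two small repairs: use the interval $(-1,\epsilon]$ with $\epsilon\in(0,k)$ so both endpoints avoid the limit spectrum (harmless, as the operators are nonnegative), and carry out the projection convergence in the Hilbert spaces with the rescaled measure $\omega_1^2/2s$, since that is the sense in which Definition \ref{def_cpt_conv} and Theorem \ref{thm_cpt_conv_spec_str2.h} are formulated.
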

By Theorem \ref{thm_conv_quan_hilb.h} 
and Kodaira Vanishing Theorem, 
we have 
\begin{align}
    \dim H^0(X_{J_{1,s}},L^k)
    =\# BS_k\label{eq_lim_quan.h}
\end{align}
for any $s>0$ and $k>0$. 

Now, let $(\omega_1,\omega_2,\omega_3)$ 
be a \hK structure 
on $X$, 
$(L,h,\nabla)$ be a 
prequantum line bundle 
on $(X,\omega_1)$ and 
$\mu\colon X\to \bbP^1$ 
be a special Lagrangian 
fibration coming from 
the elliptic fibration 
$X_{J_3}\to\bbP^1$ with 
$24$ singular fibers of Kodaira 
type $I_1$. 
Then by Proposition \ref{prop_exist_fam_lcslimit.h}, 
there is a family of \hK structures 
$(\omega_1,\omega_2,\omega_{3,s})$ 
tending to a large complex structure limit 
and $\omega_{3,1}=\omega_3$. 
Therefore,
we obtain Corollary 
\ref{cor_Kah_real.h} by \eqref{eq_lim_quan.h}.

\bibliographystyle{plain}

\end{document}